\newlength{\hchng}
\newlength{\vchng}
\newcommand {\bc} {\begin{center}}
\newcommand {\ec} {\end{center}}
\theoremstyle{plain}
\newtheorem{thm}{Theorem}[section]
\newtheorem{lem}[thm]{Lemma}
\newtheorem{defn}[thm]{Definition}
\newtheorem{prop}[thm]{Proposition}
\newtheorem{cor}[thm]{Corollary}
\newtheorem{rem}[thm]{Remark}
\newenvironment{proof}[1]{\begin{trivlist} \item[] {\em Proof of #1:}}{\hfill $\Box$
                      \end{trivlist}}
\newcommand{\tmtextsc}[1]{{\scshape{#1}}}
\newcommand{\ud}{\,\mathrm{d}}
\newcommand{\la}{\lambda}
\newcommand{\R}{\mathbb{R}}
\newcommand{\Om}{\Omega}
\newcommand{\pa}{\partial}
\newcommand{\eps}{\epsilon}
\newcommand{\ty}{\tilde{y}}
\newcommand{\lqu}{\textquoteleft}
\newcommand{\eigx}{\psi^{(x)}_1(y)}
\newcommand{\paeigx}{\pa_x\psi^{(x)}_1(y)}
\newcommand{\tphi}{\tilde{\phi}}
\title{The Shape of the Level Sets of the First Eigenfunction of a Class of Two Dimensional Schr\"odinger Operators}
\date{\today}     
\author{Thomas Beck}
\begin{document}
\maketitle

\begin{abstract}

\noindent We study the first Dirichlet eigenfunction of a class of Schr\"odinger operators with a convex potential $V$ on a domain $\Om$. We  find two length scales $L_1$ and $L_2$, and an orientation of the domain $\Om$, which determine the shape of the level sets of the eigenfunction. As an intermediate step, we also establish bounds on the first eigenvalue in terms of the first eigenvalue of an associated ordinary differential operator.

\end{abstract}

\section{Introduction}

We are interested in studying a class of Schr\"odinger operators
\begin{align*}
\mathcal{L} = - \Delta_{x,y} + V(x,y).
\end{align*}
This operator acts on functions defined on the bounded, convex domain $\Om \subset \R^2$, and $V(x,y)$ is a convex potential.

The operator $\mathcal{L}$ has an increasing sequence of Dirichlet eigenvalues
\begin{align*}
\la_1 < \la_2 \leq \cdots \leq \la_j   \nearrow \infty,
\end{align*}
with corresponding eigenfunctions $u_j(x,y)$ satisfying
\begin{eqnarray*}
    \left\{ \begin{array}{rlcc}
    (-\Delta_{x,y} + V(x,y))u_j(x,y) & = \la_j u_j(x,y) && \text{in } \Om \\
   u_j(x,y) & = 0&& \text{on } \pa \Om.
    \end{array} \right.
\end{eqnarray*}
Our main focus will be to study the first eigenvalue $\la = \la_1$ and eigenfunction $u(x,y) = u_1(x,y)$. The first eigenfunction $u(x,y)$ does not change sign inside $\Om$ and so we normalise $u(x,y)$ so that it is positive inside $\Om$, and attains a maximum of $1$. In Definitions \ref{def:Om} and \ref{def:V} below, we will define the class of convex domains $\Om$ and potentials $V(x,y)$ that we are interested in. We will see that one consequence of the assumptions on $\Om$ and $V(x,y)$ is that it ensures that the superlevel sets of $u(x,y)$, 
\begin{align*}
W_c \coloneqq \{ (x,y) \in \Om: u(x,y) \geq c \},
\end{align*}
are convex subsets of $\Om$ for all $0 \leq c \leq 1$.

A theorem of John, \cite{Jo}, therefore implies that for each $c$ we can find an ellipse $E_c$ contained within this superlevel set $W_c$, such that a dilate of $E_c$, with scaling factor bounded by an absolute constant contains $W_c$. We are interested in determining the shape of the level sets of $u(x,y)$, and to do this we will study the lengths and orientation of the axes of the ellipse $E_c$. One of the main steps in establishing the shape of the level sets of $u(x,y)$ will be to prove sufficiently precise bounds on the first eigenvalue $\la$.

We know that the level set $\{(x,y)\in\Om:u(x,y) = 0\}$ is equal to the boundary, $\pa\Om$, and so in particular the shape of this level set is determined solely by the geometry of $\Om$. However, we will see that, in general, for the intermediate level sets, for example $\{(x,y)\in\Om:u(x,y) = \tfrac{1}{2}\}$, it is not solely the shape of $\pa\Om$ that governs its shape, but instead the two length scales $L_1$ and $L_2$. These length scales $L_1$ and $L_2$ will be given in Definitions \ref{def:L1} and \ref{def:L2}, but the key feature of their definitions is the following: The length scale $L_1$ will be defined purely in terms of the geometry of $\Om$ and properties of the potential $V(x,y)$, but the length scale $L_2$ will also depend on a family of associated one dimensional Schr\"odinger operators. Moreover, the definition of $L_2$ will also describe the orientation of these level sets of $u(x,y)$.

Our motivation for studying this problem is as follows: First, $\la$ and $\Psi(t,x,y) = e^{\la t}u(x,y)$ are the lowest energy and ground state eigenfunction of the quantum system governed by the Schr\"odinger operator
\begin{align*}
\pa_t \Psi(t,x,y) +  \mathcal{L}\Psi(t,x,y) = 0 .
\end{align*}
The main motivation comes from the series of papers \cite{J1}, \cite{GJ1}, \cite{GJ2}. There, the authors study the first two Dirichlet eigenfunctions on two dimensional convex domains $\Om$, normalised so that the inner radius is comparable to $1$, and the diameter is equal to the large parameter $N$. We will describe their results and techniques in more detail below, but for now we will briefly describe one of the techniques used that is most relevant for us: Using their normalisation of the domain $\Om$, they write it as 
\begin{align*}
\Om = \{ (x,y): f_1(x) < y  < f_2(x), a<x<b \},
\end{align*}
for functions $f_1(x)$ and $f_2(x)$, which are convex and concave respectively, and they consider the concave \textit{height function} $h(x)$,
\begin{align*}
h(x) = f_2(x) - f_1(x),
\end{align*}
with $\max_{x\in[a,b]}h(x) = 1$. This allows us to define a large parameter $L$, purely in terms of the function $h(x)$ (and hence just depending on the geometry of the domain). This number $L$ is the largest value such that
\begin{align} \label{eqn:L}
h(x) \geq 1 - L^{-2}
\end{align}
on an interval $I$ of length at least $L$. Rather than the length of the diameter $N$, this parameter $L$ is the relevant length scale to study the low energy eigenfunctions. Since the inner radius of their domain is comparable to $1$, while the projection of the domain onto the $x$-axis is large compared to $1$, it is natural to study the two dimensional problem via an approximate separation of variables. For each fixed $x$, the domain $\Om$ consists of the interval $[f_1(x),f_2(x)]$ of length $h(x)$, which has first eigenvalue $\pi^2h(x)^{-2}$. Thus, the ordinary differential operator on the interval $[a,b]$, which is naturally associated with this separation of variables is
 \begin{align} \label{eqn:1dimJ}
 -\frac{d^2}{dx^2} + \frac{\pi^2}{h(x)^2},
 \end{align}
 with zero boundary conditions. In \cite{J1} the eigenvalues and eigenfunctions of this operator are used to generate appropriate test functions to provide bounds on the first eigenvalue in terms of $L$, and to estimate the location and width of the nodal line of the second eigenfunction. In \cite{GJ1}, they give a sharper estimate on the nodal line, and in \cite{GJ2} they study the location of the maximum of the first eigenfunction of $\Om$, and its behaviour near this maximum where they use this approximate separation of variables to relate it to the first eigenfunction of the one dimensional operator. As a straightforward consequence of their work, it is this length scale $L$ and orientation of the domain $\Om$ given above, which determines the shape of the level sets of the eigenfunction $u(x,y)$ in this special case.

The papers \cite{J1}, \cite{GJ1}, \cite{GJ2} also provide more motivation for studying the operators $\mathcal{L}$. In the same way that the one dimensional Schr\"odinger operator in \eqref{eqn:1dimJ} is used in a crucial way to study the eigenfunctions of two dimensional convex domains, it will be important to understand the properties of the eigenfunctions of $\mathcal{L}$ when considering the eigenfunctions of three (and higher) dimensional convex domains.

Before stating our results, let us define precisely the class of domains $\Om$ and potentials $V(x,y)$ that we will be considering here.

\begin{defn}[The Domain $\Om$] \label{def:Om}
The domain $\Om$ is a bounded, convex two dimensional domain with inner radius $N_1$, and diameter $N_2$. We assume that the diameter is large compared to an absolute constant, while the inner radius is bounded below by an absolute constant.
\end{defn}

\begin{rem}
Throughout, the constants that appear will depend on these absolute constants, but the dependence of any bounds on  the diameter and inner radius themselves (and the other parameters introduced below) will be explicitly stated.
\end{rem}
We now state the class of potentials of interest.

\begin{defn}[The Potential $V(x,y)$] \label{def:V}
The potential $V(x,y)$ on the domain $\Om$ satisfies
\begin{align*}
V(x,y) = \frac{1}{h(x,y)^{2}},
\end{align*}
where $h(x,y)$ is a concave function with $0\leq h(x,y) \leq 1$ and $\max_{\Om}h(x,y) = 1$. In other words, $V(x,y)^{-1/2}$ is concave on $\Om$ and
\begin{align*}
\min_{\Om}V(x,y) = 1.
\end{align*}
In particular, this also ensures that $V(x,y)$ is convex.
\end{defn}
We see that this ensures that the first derivatives of $V$ are bounded almost everywhere, and that the second derivatives of $V$ are positive measures. However, we do not impose any further regularity assumptions on the potential. Before continuing, let us briefly discuss the motivation behind Definition \ref{def:V}.
\begin{enumerate}
\item One allowed potential is the constant potential $V(x,y) = 1$. In this case, our operator is analogous to the purely two dimensional operator studied in \cite{J1}. In particular, we can renormalise our domain $\Om$ to ensure that the inner radius is comparable to $1$. Note in general, our potential $V(x,y)$ is not scale invariant, and so this is not as useful a normalisation for us.

\item The assumption that $V(x,y)^{-1/2}$ is concave is a natural one when we recall the motivation for studying this class of Schr\"odinger operators. In the same way that the operator in \eqref{eqn:1dimJ} has been used to study the eigenfunctions of two dimensional domains, the potential $V(x,y)$ that we are considering is naturally related to the three dimensional domain with height function proportional to $h(x,y)$. This assumption that $V(x,y)^{-1/2}$ is concave also appears in the work of Borell, \cite{B1}, \cite{B2}, when studying the concavity properties of the Green's functions associated to these Schr\"odinger operators. 

\item We do not claim that this is the only class of potentials for which the results below will be valid. In fact, many of the results can be restated to hold for a more general class of convex potentials (including those related to the harmonic oscillator). However, at times we will see that it is convenient to restrict to those potentials given in Definition \ref{def:V}, and so we will only state the results for this class of potentials.

\end{enumerate}

We can now introduce the crucial parameters $L_1$ and $L_2$ that will appear as important length scales in our study of the first eigenfunction $u(x,y)$. For each $c\geq0$, let us define the sublevel sets of $V(x,y)$ by
\begin{align*}
\Om_{c} \coloneqq \{ (x,y) \in \Om: V(x,y) \leq 1+c\}.
\end{align*}
Since $V(x,y)$ is convex, these sublevel sets $\Om_{c}$ are convex subsets of $\Om$.

\begin{defn}[The Parameter $L_1$] \label{def:L1}
Let $L_1$ be the largest value such that the sublevel set $\Om_{L_1^{-2}}$ has inner radius at least equal to $L_1$.
\end{defn}
\begin{rem}
This definition is analogous to the definition of the parameter $L$ from \cite{J1} described above, and roughly speaking is equal to the largest length scale $L_1$ on which the potential increases by at most $L_1^{-2}$ from its minimum.
\end{rem}

With $L_1$ fixed, we let $\tilde{L}_1$ be the diameter of the set $\Om_{L_1^{-2}}$. If $L_1$ and $\tilde{L}_1$ are comparable in size, then we define $L_2$ to be equal to $L_1$, but if
\begin{align*}
\tilde{L}_1 \gg L_1,
\end{align*}
then we now describe how to find $L_2$.

\begin{rem}
Throughout, the notation $A \gg B$ denotes $A\geq \tilde{C}B$, for some large fixed absolute constant $\tilde{C}>0$, and if this, and  the converse $B \gg A$, do not hold then we say that $A$ and $B$ are comparable. In particular, we  are not interested in the exact values of $L_1$ and $L_2$, but instead are interested in knowing whether any length scale is, or is not, comparable to $L_1$ and $L_2$. We will use the notation $C$ to represent an absolute constant, that is small compared to $\tilde{C}$, which may change from line to line.
\end{rem}

To obtain a value for $L_2$, we first rotate our domain $\Om$, so that  the projection of $\Om_{L_1^{-2}}$ onto the $y$-axis is of the smallest length amongst the projections onto any line. In particular, this means that the projection of $\Om_{L_1^{-2}}$ onto the $x$-axis is comparable to $\tilde{L}_1$, while the projection of $\Om_{L_1^{-2}}$ onto the $y$-axis is comparable to $L_1$. This also fixes the orientation of $\Om$.

For each fixed $x$, let the interval $\Om(x)$ be the cross-section of $\Om$ at $x$, and consider the ordinary differential operator
\begin{align} \label{eqn:L(x)}
 \mathcal{L}(x)\coloneqq -\frac{d^2}{dy^2} + V(x,y),
\end{align}
with zero boundary conditions on $\Om(x)$. We let $\mu(x)$ be the first eigenvalue of $\mathcal{L}(x)$, and define the minimum of these eigenvalues,
\begin{align*}
\mu^* \coloneqq \min_{x}\mu(x).
\end{align*}
We can now define the parameter $L_2$.
\begin{defn}[The Parameter $L_2$] \label{def:L2}
We define $L_2$ to be the largest value such that 
\begin{align*}
\mu^* \leq \mu(x) \leq \mu^* + L_2^{-2},
\end{align*}
for all $x$ in an interval $I$ of length at least $L_2$.
\end{defn}
\begin{rem} \label{rem:L2def}
Note that in this definition of $L_2$, we have used the orientation of $\Om_{L_1^{-2}}$ fixed above. Therefore, from now on, whenever we consider any property of the eigenvalue or eigenfunction that depends on the value of $L_2$, we will have to use this orientation of $\Om_{L_1^{-2}}$. In contrast, the definition of $L_1$ does not depend on the orientation of $\Om_{L_1^{-2}}$.
\end{rem}

Our main aim in the study of the first eigenfunction is to give precise information about the shape of the level sets $\{(x,y)\in\Om:u(x,y)=c\}$ which are near to the point where $u(x,y)$ attains its maximum of $1$. Since the potential $V(x,y)$ is a convex function and $\Om$ is a convex set, Theorem 6.1 in \cite{BL2}  tells us that $u(x,y)$ is log concave. Alternative proofs of this result have also been given in \cite{CF}, \cite{K}, \cite{KL}. In particular, this tells us that the superlevel sets are all convex. Since $\{(x,y)\in\Om :u(x,y) \geq 0 \} = \Om$, one way of viewing this result is that 
\begin{align*}
\{(x,y)\in\Om :u(x,y)\geq0 \} \text{ convex }  \Rightarrow  \{(x,y)\in\Om :u(x,y) \geq c \} \text{ convex}
\end{align*}
for all $0\leq c \leq 1$.

We will use the convexity of the superlevel sets of $u(x,y)$ in a crucial way to describe their shape near its maximum.

\begin{thm} \label{thm:shape}
Let $\Om$ and $V(x,y)$ be a domain and potential from Definitions \ref{def:Om} and \ref{def:V}. Fix a small absolute constant $c_1>0$, and let $L_1$ and $L_2$ be as in Definitions \ref{def:L1} and \ref{def:L2}. In particular, this means that we have fixed the orientation of the set $\Om_{L_1^{-2}}$. Then, for any fixed absolute constant $c$, with $c_1<c<1-c_1$, the level set $\{(x,y)\in\Om:u(x,y) = c\}$ has the following shape: There exists an ellipse $E$ with minor axis in the $y$-direction of length comparable to $L_1$ and major axis in the $x$-direction of length comparable to $L_2$, such that $E$ is contained inside this level set, and a dilate of $E$, with a scaling factor bounded by an absolute constant, contains this level set.
\end{thm}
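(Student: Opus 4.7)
The plan rests on three ingredients: a sharp two-sided estimate for $\la$ in terms of $\mu^*$, $L_1$, and $L_2$; quantitative pointwise bounds on $u$ inside and outside an ellipse of dimensions $L_1\times L_2$ centered near the maximum of $u$; and John's theorem applied to the superlevel sets, which are convex by the log-concavity of $u$.

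First, I would prove the eigenvalue estimate $\la = \mu^* + \Theta(L_1^{-2}+L_2^{-2})$, which is the intermediate result the abstract alludes to. For the upper bound, let $x^*$ minimize $\mu(x)$ and use a separated test function $\phi(x)\psi^{(x)}_1(y)$, where $\psi^{(x)}_1$ is the first eigenfunction of $\mathcal{L}(x)$ on $\Om(x)$ and $\phi(x)$ is a cutoff supported on the interval $I$ of length $L_2$ from Definition~\ref{def:L2}. Since $\mu(x)\leq \mu^*+L_2^{-2}$ on $I$ and the cutoff contributes kinetic energy $O(L_2^{-2})$, the Rayleigh quotient is at most $\mu^*+CL_2^{-2}$; truncating further in $y$ on the scale $L_1$ yields the $L_1^{-2}$ term. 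For the matching lower bound, I would project $\mathcal{L}u=\la u$ onto $\psi^{(x)}_1$ in each fiber, writing $u(x,y)=a(x)\psi^{(x)}_1(y)+r(x,y)$ with $r(x,\cdot)\perp \psi^{(x)}_1$. The projected 1D problem has potential $\mu(x)$, whose ground state energy is $\mu^*+\Theta(L_2^{-2})$ by Definition~\ref{def:L2}, while the fiber spectral gap (controlled by Definition~\ref{def:L1} since $V\sim 1$ on $\Om_{L_1^{-2}}$ and the inner radius there is $L_1$) forces $r$ to be small.

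Second, the same approximate separation shows that $u$ is quantitatively close to a product $\phi_0(x)\psi^{(x)}_1(y)$, where $\phi_0$ is the ground state of $-d^2/dx^2+\mu(x)$. Since $\phi_0$ concentrates on an $x$-interval of length $\sim L_2$ around $x^*$ and $\psi^{(x)}_1$ concentrates on a $y$-slice of length $\sim L_1$, one obtains an ellipse $E$ of dimensions $\sim L_1\times L_2$, with axes aligned as in Remark~\ref{rem:L2def}, on which $u\geq c_0$ for an absolute constant $c_0>0$. This gives $E\subset W_c$ for $c\leq c_0$; other values $c\in(c_1,1-c_1)$ are reduced to this case by showing that consecutive superlevel sets are comparable up to an absolute dilation, a Harnack-type consequence of the quantitative product structure together with log-concavity.

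Third, for the complementary inclusion $W_c\subset C\cdot E$, I would prove Agmon-type exponential decay of $u$ outside a dilate of $E$. The Agmon weight $\rho(x,y)$ is built to exploit two independent sources of spectral gap: at points $(x,y)\notin \Om_{L_1^{-2}}$ the pointwise bound $V(x,y)-\la\geq cL_1^{-2}$ (from Definition~\ref{def:L1} and the eigenvalue upper bound) forces decay on scale $L_1$ in $y$; at points with $x$ outside the $L_2$-interval around $x^*$, the fiber excess $\mu(x)-\la \geq cL_2^{-2}$, projected onto $\psi^{(x)}_1$, forces decay on scale $L_2$ in $x$. Combining the regimes in a single Agmon identity yields $u(x,y)\leq \exp(-c\rho(x,y))$ with $\rho$ comparable to the anisotropic distance $L_1^{-1}|y-y^*|+L_2^{-1}|x-x^*|$ outside $E$; in particular $u<c$ outside a bounded dilate of $E$. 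Convexity of $W_c$ combined with John's theorem then packages $E\subset W_c\subset C\cdot E$ into the claimed ellipse with the stated axes and orientation.

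The main obstacle is the Agmon step: the effective potential is anisotropic and the two sources of gap live in different variables for genuinely different geometric reasons, so $\rho$ must be designed to switch between the $y$-decay regime inside the $L_2$-strip and the $x$-decay regime outside it, and controlling the cross terms in the Agmon inequality requires both Definitions~\ref{def:L1} and \ref{def:L2} in an essential way. A secondary difficulty is upgrading the approximate product structure at one favorable level $c_0$ into uniform ellipse bounds on every $W_c$ with $c\in(c_1,1-c_1)$, which needs quantitative control of the aspect ratio of successive superlevel sets via log-concavity.
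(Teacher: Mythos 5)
Your plan shares the paper's two structural pillars --- the sharp $L_2^{-2}$-precision eigenvalue estimate and John's theorem applied to the convex superlevel sets --- but the middle of the argument is genuinely different. The paper never establishes a quantitative product approximation $u \approx \phi_0(x)\psi^{(x)}_1(y)$; the product $\chi(x)\psi^{(x)}_1(y)$ is used only as a Rayleigh-quotient test function for the eigenvalue upper bound, and the shape theorem is then extracted from direct $L^2(\Om)$ estimates on $u$ and $\nabla u$ (Propositions \ref{prop:uL2bound}, \ref{prop:ydecayupper}, \ref{prop:xdecayupper}). Likewise, the paper does not construct a single anisotropic Agmon weight: the $x$-decay is obtained from the Carleman-type differential inequality $H''(x) \geq 2(\mu(x)-\la)H(x)$ for $H(x)=\int_{\Om(x)}u^2\,dy$ (Proposition \ref{prop:H(x)decay}), while the $y$-direction containment is handled by maximum-principle barriers (Propositions \ref{prop:firstlocation} and \ref{prop:levelylower}) and the lower bound on $x$-extent by the $L^2$ bound on $\pa_x u$ (Proposition \ref{prop:levelxlower}). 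Your route, if carried out, would give stronger pointwise information on $u$ (a genuine product structure), but it also demands correspondingly harder estimates; the paper's route is more economical because it only ever needs integral bounds and two-sided level-set-extent bounds before invoking convexity. One minor slip: your stated eigenvalue estimate $\la = \mu^* + \Theta(L_1^{-2}+L_2^{-2})$ has the wrong error scale --- since $\mu^*$ itself is $1+\Theta(L_1^{-2})$, the content of Theorem \ref{thm:eigenvalue} is precisely that $\la-\mu^* = O(L_2^{-2})$, which is the precision you quietly rely on in the decay step; with $\Theta(L_1^{-2})$ precision the $x$-direction decay argument would not close.

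The concrete gap is in the Agmon step, and it is more than a technicality. In the transition region where $V(x,y) \leq 1 + CL_1^{-2}$ but $|x-x^*|\gg L_2$, the pointwise quantity $V(x,y)-\la$ is not positive; the excess $\mu(x)-\la \geq c L_2^{-2}$ that you want to exploit appears only after projecting onto $\psi^{(x)}_1$ in the fiber, i.e.\ only at the level of the cross-sectional quantity $H(x)$. The Agmon machinery (Theorem \ref{thm:Agmon} as stated) needs a pointwise positive effective potential $\nu(x,y)$ on the region where decay is claimed, so it cannot directly yield a pointwise bound $u(x,y)\leq \exp(-c\rho(x,y))$ with $\rho \sim L_2^{-1}|x-x^*|$ in this strip --- that is why the paper's own Agmon estimate (Proposition \ref{prop:Agmon}) is stated only on $\Om_1=\{V\geq 1+CL_1^{-2}\}$, and the $x$-decay is proved separately at the $H(x)$ level. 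Unless the weight is constructed at the fiber-averaged level (at which point you have essentially rediscovered the Carleman inequality for $H$), the outer inclusion $W_c \subset C\cdot E$ is not secured by your proposed single Agmon identity. Your second obstacle (upgrading one good level $c_0$ to all $c\in(c_1,1-c_1)$) is also real, but it is handled in the paper simply by running the barrier and $L^2$ arguments at each fixed level rather than propagating from one.
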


\begin{rem}
The level set $\{(x,y)\in\Om:u(x,y) = 0\}$ is equal to $\pa\Om$, the boundary of $\Om$. We will see that in general the parameters $L_1$ and $L_2$ are not comparable to the inner radius and diameter of the original domain $\Om$. Thus, the result of Theorem \ref{thm:shape} does not remain valid when $c$ becomes close to $0$.
\end{rem}

\begin{cor}
For a convex set $W$, we define the eccentricity of $W$, \emph{ecc}$(W)$ in the usual way:
\begin{align*}
\emph{ecc}(W) = \frac{\emph{diam}(W)}{\emph{inradius}(W)}.
\end{align*}
For $c=0$, the eccentricity of the superlevel set $\{(x,y)\in\Om:u(x,y) \geq c\}$ is equal to the eccentricity of $\Om$, but as $c$ increases (while bounded above by $1-c_1$), the eccentricity of the superlevel set becomes comparable to $L_2/L_1$.
\end{cor}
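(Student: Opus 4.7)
The plan is to deduce the corollary directly from Theorem \ref{thm:shape}. The case $c=0$ is immediate: $W_0 = \{(x,y)\in\Om:u(x,y)\geq 0\} = \Om$, so $\emph{ecc}(W_0) = \emph{ecc}(\Om)$ by definition. The substance lies in the range $c_1 \leq c \leq 1-c_1$, where Theorem \ref{thm:shape} furnishes an ellipse $E$, with minor axis of length comparable to $L_1$ and major axis of length comparable to $L_2$, such that $E$ is contained inside the level set $\{u=c\}$ and some dilate $\tilde{C} E$ (with $\tilde{C}$ an absolute constant) contains it.

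First I would promote these statements about the level curve to statements about the superlevel set $W_c = \{u\geq c\}$ itself. Since $u$ is log-concave, $W_c$ is convex, and since $c>0$ and $u$ attains its maximum of $1$ inside $\Om$, the boundary of $W_c$ is precisely the curve $\{u=c\}$. A short separating-hyperplane argument then yields the set inclusion $W_c \subset \tilde{C} E$: if some interior point of $W_c$ lay outside the convex set $\tilde{C} E$, one could separate it from $\tilde{C} E$ by a line, contradicting $\tilde{C} E \supset \pa W_c$. The opposite inclusion $E\subset W_c$ is immediate since $E$ lies inside the bounding curve of the convex region $W_c$.

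The remaining step uses only that inradius and diameter are monotone under set inclusion and scale linearly under dilation. For the ellipse $E$, the inradius equals its semi-minor axis (comparable to $L_1$) and the diameter equals its major axis (comparable to $L_2$). Thus
\begin{align*}
L_1 \sim \emph{inradius}(E) \leq \emph{inradius}(W_c) \leq \emph{inradius}(\tilde{C} E) \sim L_1,
\end{align*}
and the analogous chain of inequalities yields $\emph{diam}(W_c) \sim L_2$, so dividing gives $\emph{ecc}(W_c) \sim L_2/L_1$. The whole argument is essentially formal once Theorem \ref{thm:shape} is in hand; the only mildly delicate point is the brief convexity argument used to transfer the containment from the level curve to the full superlevel set, which is handled cleanly by log-concavity of $u$.
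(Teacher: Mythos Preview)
Your proposal is correct and follows the intended route: the paper states this corollary without proof, treating it as an immediate consequence of Theorem~\ref{thm:shape}, and your argument makes that deduction explicit. The only remark is that your separating-hyperplane step, while valid, is slightly more than needed---in the paper's Section~\ref{sec:shape} the inradius and diameter of $W_c$ are shown directly to be comparable to $L_1$ and $L_2$ respectively, so the eccentricity statement is literally a quotient of those two estimates; but working from the ellipse formulation of Theorem~\ref{thm:shape} as you do is equally legitimate.
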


The log concavity of the eigenfunction, and resulting convexity of its superlevel sets has been used previously in various situations. For example, in \cite{AC} moduli of convexity and concavity are introduced. Under certain conditions on the potential $V$, it is then possible to strengthen the log concavity of the first eigenfunction by finding an appropriate modulus of concavity. This allows the spectral gap for a class of Schr\"odinger operators to be compared to the case where the potential is identically zero,  and allows them to prove the Fundamental Gap Conjecture. In \cite{FJ} the convexity of the superlevel sets of the Green's function are used in a crucial way to prove third derivative estimates on the eigenfunction which are valid up to the boundary of the convex domain.

As well as the convexity of the superlevel sets of $u(x,y)$, a very important part of the proof of Theorem \ref{thm:shape} will be to obtain sufficiently precise eigenvalues bounds for the first eigenvalue $\la$. For $\mu(x)$ equal to  the first eigenvalue of the operator $\mathcal{L}(x)$, we consider the ordinary differential operator
\begin{align} \label{eqn:A}
\mathcal{A} = - \frac{d^2}{dx^2} + \mu(x),
\end{align}
and let $\mu$ be the first eigenvalue of this operator. Our eigenvalue bounds relate the value of $\la$ to this eigenvalue $\mu$.

\begin{thm} \label{thm:eigenvalue}
Let $\Om$ and $V(x,y)$ be a domain and potential from Definitions \ref{def:Om} and \ref{def:V}. If $L_2$ is defined as in Definition \ref{def:L2} and $\mu$ is the first eigenvalue of the operator $\mathcal{A}$ in \eqref{eqn:A}, then the first eigenvalue $\la$ of the operator $\mathcal{L}$ satisfies
\begin{align*}
\mu \leq \la \leq \mu + CL_2^{-2},
\end{align*}
for an absolute constant $C$.
\end{thm}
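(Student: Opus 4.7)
\medskip

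\textbf{Plan.} The two inequalities decouple. For the lower bound $\mu\leq\la$ I would run a Fubini-type variational comparison against the eigenfunctions of the slice operators $\mathcal{L}(x)$. For the upper bound $\la\leq\mu+CL_2^{-2}$ I would construct the natural product trial function $w(x,y)=\phi(x)\psi_x(y)$, where $\phi$ is the first eigenfunction of $\mathcal{A}$ and $\psi_x$ is the $L^2$-normalised first eigenfunction of $\mathcal{L}(x)$, and then control the resulting cross error.

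\medskip

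For the lower bound, write $U(x)=\bigl(\int_{\Om(x)}u(x,y)^2\,dy\bigr)^{1/2}$, where $u$ is the first eigenfunction of $\cl$. Splitting $|\nabla u|^2=(\pa_x u)^2+(\pa_y u)^2$, the variational characterisation of $\mu(x)$ applied slice-by-slice gives
\[
\int_{\Om(x)}(\pa_y u)^2+V u^2\,dy\;\ge\;\mu(x)\,U(x)^2,
\]
while Cauchy--Schwarz gives $(U'(x))^2\le\int_{\Om(x)}(\pa_x u)^2\,dy$ (modulo a brief check that $U$ vanishes at the $x$-endpoints of the projection of $\Om$ so that $U\in H^1_0$). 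Integrating in $x$ and using the variational characterisation of $\mu$ for $\mathcal{A}$ then yields $\la\int U^2\ge\mu\int U^2$.

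\medskip

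For the upper bound, plug $w=\phi(x)\psi_x(y)$ into the Rayleigh quotient. The identity $\int\psi_x\pa_x\psi_x\,dy=\tfrac{1}{2}\pa_x\int\psi_x^2\,dy=0$ kills one cross-term, and the eigenvalue equation for $\psi_x$ applied in the $y$-integral gives
\[
\frac{\int_\Om |\nabla w|^2+Vw^2}{\int_\Om w^2}\;=\;\mu\;+\;\frac{\int\phi(x)^2\bigl(\int_{\Om(x)}(\pa_x\psi_x(y))^2\,dy\bigr)dx}{\int\phi^2\,dx}.
\]
What remains is to bound the second term by $CL_2^{-2}$.

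\medskip

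The main obstacle is precisely this error term, and both the regularity of $\psi_x$ in $x$ and the concentration of $\phi$ enter. The boundary of the slice $\Om(x)$ moves with $x$, so before differentiating $\psi_x$ in $x$ I would pull back to the fixed interval $[0,1]$ via $\tilde y=(y-f_1(x))/(f_2(x)-f_1(x))$, so that $\pa_x\psi_x$ lives in a single Hilbert space with $x$-dependent coefficients. Two regimes then appear. On the interval $I$ of length $\ge L_2$ from Definition \ref{def:L2}, the slice eigenvalue $\mu(x)$ differs from its minimum $\mu^*$ by at most $L_2^{-2}$; combining this with the fixed spectral gap of $\cl(x)$ and standard second-order perturbation theory (applied to the transported operator) gives $\int(\pa_x\psi_x)^2\,dy\lesssim L_2^{-2}$ uniformly on $I$. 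Off $I$ the slice eigenvalue $\mu(x)$ exceeds $\mu+CL_2^{-2}$ on scales $\gtrsim L_2$ by maximality in Definition \ref{def:L2}, so an Agmon-type exponential decay estimate for $\phi$ on length scale $L_2$ makes the contribution of the complement of $I$ negligible, even against the crude bound $\int(\pa_x\psi_x)^2\,dy=O(1)$. Adding the two contributions yields the advertised $CL_2^{-2}$ error. If exponential decay of $\phi$ turns out to be inconvenient at this stage, an equivalent route is to replace $\phi$ by a truncation $\chi\phi$ supported on a slight enlargement of $I$ and absorb the truncation error into the same $CL_2^{-2}$.
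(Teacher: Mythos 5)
Your lower bound argument is sound and is essentially the paper's: the paper integrates the slice inequality $\int_{\Om(x)}(\pa_y u)^2+Vu^2\,dy\ge\mu(x)\int_{\Om(x)}u^2\,dy$ and then applies the one-dimensional variational principle for $\mathcal{A}$ directly to $u(\cdot,y)$ for each fixed $y$ after extending by zero, which sidesteps any regularity concerns about $U(x)=\bigl(\int u^2\,dy\bigr)^{1/2}$ at the ends of the projection; but your version through $U$ and Cauchy--Schwarz reaches the same conclusion.

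Your upper bound is structured correctly: the product trial function, the orthogonality identity $\int\psi_x\pa_x\psi_x\,dy=0$, and the resulting exact formula for the Rayleigh quotient all match the paper (which uses a cutoff $\chi$ on the middle of $I$ in place of $\phi$, as in your final sentence, precisely to avoid having to estimate $\int(\pa_x\psi_x)^2\,dy$ outside $I$). The genuine gap is in the claim that $\int_{\Om(x)}(\pa_x\psi_x)^2\,dy\lesssim L_2^{-2}$ follows from ``$|\mu(x)-\mu^*|\le L_2^{-2}$ on $I$, the fixed spectral gap of $\mathcal{L}(x)$, and standard second-order perturbation theory.'' That deduction is not valid, for two reasons. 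First, eigenvalue stability does not imply eigenfunction stability: one can perturb $V$ in $x$ so that the slice eigenvalue is frozen while the eigenfunction translates, so $\mu'(x)\equiv0$ but $\pa_x\psi_x$ is as large as you like; the smallness of $\mu(x)-\mu^*$ cannot be the source of the bound. Second, even granted a quantified perturbation formula $\|\pa_x\psi_x\|\lesssim\text{gap}^{-1}\,\|(\dot{\mathcal{L}}\psi_x)^\perp\|$, the gap of $\mathcal{L}(x)$ is of order $L_1^{-2}$ (not an absolute constant), so you need $\|(\dot{\mathcal{L}}\psi_x)^\perp\|\lesssim L_1^{-2}L_2^{-1}$, and this is far from automatic: $\dot{\mathcal{L}}=\pa_xV+(\text{moving-boundary terms})$, and $\pa_xV$ is unbounded near $\pa\Om(x)$ since $V=h^{-2}\to\infty$ there. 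What actually makes the estimate work in the paper is a geometric input you have not identified, namely the bound
\[
|\pa_xV(x,y)|\;\lesssim\;\bigl(|y-y^*|+L_1\bigr)L_2^{-1}\,|\pa_yV(x,y)|,
\]
which comes from the convexity of the sublevel sets of $V$ and the fact that the level set $\Om_{L_1^{-2}}$ has length $\sim\tilde L_1\gtrsim L_2$ in $x$, so its sides have slope $\lesssim L_1 L_2^{-1}$. This is where $L_2^{-1}$ enters. That bound, paired with quantitative pointwise decay estimates for $\psi_x$ and $\psi_x'$ on $\Om(x)$ (to control the region where $\pa_xV$ is large), a variation-of-parameters representation of $\pa_x\psi_x$, a bound $|\mu'(x)|\lesssim L_2^{-3}$ obtained from the convexity of $\mu(\cdot)$, and a separate estimate for the boundary value $\pa_x\psi_x(g_2(x))$ produced by the moving endpoint of $\Om(x)$, is what yields $\int(\pa_x\psi_x)^2\,dy\lesssim L_2^{-2}$. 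Pulling back to a fixed interval and invoking perturbation theory, without any of this structural input, will not produce the $L_2^{-2}$ rate; it would have to be supplemented with essentially the same hierarchy of estimates. So the proposal correctly reduces the theorem to the key inequality but treats as routine the step that constitutes the bulk of the proof.
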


\begin{rem} \label{rem:uniformconstants}
Theorems \ref{thm:shape} and \ref{thm:eigenvalue} are valid for all domains and potentials satisfying the assumptions of Definitions \ref{def:Om} and \ref{def:V}, and the bounds are uniform for domains $\Om$ and potentials $V$ leading to the same values for $L_1$ and $L_2$.
\end{rem}

While it is much more straightforward to locate the eigenvalue $\la$ to an interval of length comparable to $L_1^{-2}$, we will see that the more precise bound obtained in Theorem \ref{thm:eigenvalue} is necessary to obtain sharp information about the length scale on which the eigenfunction $u(x,y)$ decays in the $x$-direction, and hence prove Theorem \ref{thm:shape}.

Theorem \ref{thm:eigenvalue} locates the first eigenvalue $\la$ to an interval of length comparable to $L_2^{-2}$, provided we know the value of $\mu$. However, $\mu$ is also an eigenvalue of a differential operator, and so it may seem like we have only been able to locate the unknown $\la$ in terms of another unknown $\mu$. Another reason why this theorem still has value is that whereas $\la$ is the first eigenvalue of a two dimensional partial differential operator (with a potential), $\mu$ is the first eigenvalue of an ordinary differential operator $\mathcal{A}$. Thus, from a computational standpoint, it is much easier to accurately approximate the value of $\mu$ compared to $\la$. Also, we notice that the parameter $L_2$ depends on the geometric properties of the domain $\Om$ and potential $V(x,y)$, together with the eigenvalues of the differential operator $\mathcal{L}(x)$ given in \eqref{eqn:L(x)}. In other words, $L_2$ also only depends on knowledge of ordinary differential operators. Thus, the bound given in Theorem \ref{thm:eigenvalue} gives information about the eigenvalue of a  two dimensional partial differential operator purely in terms of ordinary differential operators.

The idea of relating the eigenfunctions and eigenvalues of a two dimensional problem to an associated ordinary differential operator has also been used extensively by Friedlander and Solomyak in \cite{FS1}, \cite{FS2}, \cite{FS3}. In these papers, they use this approximate separation of variables to obtain asymptotics for the eigenvalues, and the resolvent of the Dirichlet Laplacian. They use a semiclassical method by sending a small parameter $\eps$ to $0$ in order to give a one-parameter of \lqu narrow' domains, and then write asymptotics in terms of this small parameter.
\\

Let us now describe how we will proceed in the sections below.

In Section \ref{sec:L1} we study the parameters $L_1$ and $L_2$ from Definitions \ref{def:L1} and \ref{def:L2} in more detail. In particular, we will obtain bounds on $L_1$ and $L_2$ in terms of the diameter and inner radius of the domain and the potential, and construct domains $\Om$ and potentials $V(x,y)$ to show to what extent these estimates are sharp. We will also give a straightforward bound on $\la$ in terms of $L_1$ by using the variational formulation for the first eigenvalue.

In Section \ref{sec:la} we will prove the eigenvalue bounds in Theorem \ref{thm:eigenvalue}. For each fixed $x$, $u(x,y)$ is an admissible test function for the operator $\mathcal{L}(x)$ from \eqref{eqn:L(x)}, and the lower bound on $\la$ will follow straightforwardly from this. The proof of the upper bound on $\la$ in Theorem \ref{thm:eigenvalue} is more involved. The starting point of the proof is to use the first eigenfunction, $\psi^{(x)}(y)$, of the operator $\mathcal{L}(x)$ to construct a suitable test function in the variational formulation for the first eigenvalue. To obtain the required upper bound on $\la$ it will be necessary to study the first variation of $\psi^{(x)}(y)$ in the cross-sectional variable $x$. To do this, we will derive the ordinary differential equation that this first variation satisfies for each fixed $x$. The bounds then follow from using the method of variation of parameters. It will be particularly important to have estimates on the relative size of the first derivative of the potential $V(x,y)$ and the size of $\psi^{(x)}(y)$.

Once we have established the bounds on $\la$ in Theorem \ref{thm:eigenvalue}, in Section \ref{sec:L2} we use them to study the first eigenfunction $u(x,y)$ itself. Our first aim is to prove a $L^2(\Om)$-bound on $u(x,y)$ which is consistent with the shape of the level sets required in Theorem \ref{thm:shape}. We begin by using Theorem \ref{thm:eigenvalue} to prove a Carleman-type estimate to show how the $L^2(\Om(x))$-norm of the cross-sections of $u(x,y)$,
\begin{align*}
H(x) = \int_{\Om(x)}u(x,y)^2 \ud y,
\end{align*}
decays from its maximum exponentially on a length scale comparable to $L_2$. To find the required bound on the $L^2(\Om)$-norm of $u(x,y)$, we then need to estimate the size of the maximum of $H(x)$. We will do this by proving $L^2(\Om)$-bounds on the first derivatives of $u(x,y)$, which are again consistent with Theorem \ref{thm:shape}. We finish Section \ref{sec:L2} by proving an Agmon-type estimate to give an indication of the behaviour of $u(x,y)$ at points at a large distance from its maximum.

In Section \ref{sec:shape} we study the shape of the level sets of $u(x,y)$ and complete the proof of Theorem \ref{thm:shape}. To do this we will use the results of Section \ref{sec:L2} on the $L^2(\Om)$-norms of $u(x,y)$ itself, and also its first derivatives. We will also use the log-concavity of the eigenfunction $u(x,y)$ in a crucial way, since it is this that ensures that the superlevel sets are convex.

Theorem \ref{thm:shape} gives information about the level sets, $\{(x,y)\in\Om:u(x,y) = c\}$ whenever $c$ is bounded away from $0$ and $1$. In Section \ref{sec:max}, we want to study the behaviour of the eigenfunction $u(x,y)$ near its maximum. In particular, we will relate the location of the maximum to the region where $V(x,y) - \la$ is bounded above by $-c^*L_1^{-2}$, for an absolute constant $c^*>0$. We will do this by first using a maximum principle to restrict attention to the part of $\Om$ where $V(x,y)-\la$ is at most comparable to $L_1^{-2}$. This will then be used to convert the $L^2(\Om)$-bounds on $\nabla_{x,y}u(x,y)$ from Section \ref{sec:L2} into pointwise bounds near the maximum of $u(x,y)$. These bounds are then  in turn used to prove the sharper estimate on the location of the maximum. We finish by giving two consequences of this estimate of the location of the maximum. The first is that we obtain sharper bounds on the derivative $\pa_yu(x,y)$ as we approach the maximum, and we also obtain an improved pointwise bound on $\pa_xu(x,y)$ in a region around the maximum of height comparable to $L_1$ in the $y$-direction, and  length comparable to $L_2$ in the $x$-direction.

 \subsection{Acknowledgements}
 
I would like to thank David Jerison for suggesting this problem to me and for many enlightening conversations. I would also like to thank my advisor Charles Fefferman for many useful discussions and for his help in improving the exposition in this paper.

\section{The Parameters $L_1$ and $L_2$} \label{sec:L1}

Before proving Theorems \ref{thm:eigenvalue} and \ref{thm:shape}, we first give some more properties of the parameters $L_1$ and $L_2$ defined in Definitions \ref{def:L1} and \ref{def:L2}. 

We first want to give upper and lower bounds for $L_1$, where we recall that $L_1$ is the largest value for which the sublevel set $\{(x,y)\in\Om: V(x,y) \leq 1+ L_1^{-2} \}$ has inner radius at least $L_1$. We can think of this as being analogous to the parameter $L$ from \cite{J1}, which we described earlier in \eqref{eqn:L}. In \cite{J1}, it was shown that this parameter $L$ satisfies
\begin{align*}
N^{1/3} \leq L \leq N,
\end{align*}
where $N$ is the diameter of the two dimensional domain. The upper bound on $L$ is attained by an exactly  rectangular domain, $[0,N]\times[0,1]$, and the lower bound is attained by a right triangle of height $1$ and length $N$. Moreover, any intermediate value for $L$ can be attained by interpolating between these two extreme cases and forming the appropriate trapezoidal shape.

We now give an analogous description for the possible values of $L_1$. Rather than the potential $V(x,y)$, it will be more convenient to work with the \textit{height function}
\begin{align} \label{eqn:h}
h(x,y) = V(x,y)^{-1/2},
\end{align}
which, by the assumptions on the potential, is a concave function, satisfying
\begin{align*}
0 \leq h(x,y) \leq 1,
\end{align*}
and attaining its maximum of $1$ at the minimum of $V(x,y)$.
\begin{prop} \label{prop:L1bounds}
Recalling that $N_1$ is the inner radius of the domain $\Om$, we have the bounds
\begin{align*}
cN_1^{1/5} \leq L_1 \leq N_1,
\end{align*}
for some absolute constant $c>0$.
\end{prop}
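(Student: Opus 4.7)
The upper bound $L_1 \le N_1$ is immediate from the definition: since $\Om_{L_1^{-2}} \subseteq \Om$, any disk inscribed in $\Om_{L_1^{-2}}$ is also inscribed in $\Om$, so $L_1 \le N_1$.

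For the lower bound, my plan is to exploit the concavity of $h = V^{-1/2}$ directly. Let $p_0 \in \overline{\Om}$ be any point at which $h$ attains its maximum value of $1$, which exists because $\min_\Om V = 1$. The concavity of $h$ together with $h \ge 0$ gives, for every $q \in \Om$ and every $t \in [0,1]$,
\[
h\bigl(p_0 + t(q - p_0)\bigr) \;\ge\; (1-t)\, h(p_0) + t\, h(q) \;\ge\; 1 - t,
\]
so the homothet $p_0 + t(\Om - p_0)$ of $\Om$ about $p_0$ with ratio $t$ lies inside the superlevel set $\{h \ge 1 - t\}$. Because homotheties scale lengths linearly, if $B(q_0, N_1) \subset \Om$ realizes the inner radius of $\Om$, then $B\bigl(p_0 + t(q_0 - p_0),\, t N_1\bigr)$ is contained in this homothet, so $\{h \ge 1 - t\}$ has inner radius at least $t N_1$.

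To translate this into a bound on $\Om_c$, I would use the identity $\Om_c = \{h \ge (1+c)^{-1/2}\}$ together with the elementary bound $1 - (1+c)^{-1/2} \ge c/4$ for $0 < c \le 1$, verifiable by a short Taylor expansion. Setting $t = 1 - (1+c)^{-1/2}$ in the previous step shows that $\Om_c$ has inner radius at least $c N_1 / 4$. Specializing to $c = L^{-2}$, the sublevel set $\Om_{L^{-2}}$ has inner radius at least $N_1/(4L^2)$, and imposing the defining inequality $N_1/(4L^2) \ge L$ (i.e.\ $L^3 \le N_1/4$) shows that any $L \le (N_1/4)^{1/3}$ satisfies the condition defining $L_1$. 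Hence $L_1 \ge c_0 N_1^{1/3}$ for an absolute constant $c_0 > 0$, which is in fact stronger than the claimed $c N_1^{1/5}$.

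No step presents a genuine obstacle; the whole argument reduces to concavity at the maximum of $h$ combined with the linear scaling of inradii under homothety, and the constants can be tracked explicitly. The construction of examples showing sharpness (or near-sharpness) of the lower bound—presumably the remaining content of the section—would be handled separately by choosing a concrete one-parameter family such as $h(x,y) = 1 - |y|/N_1$ on a long rectangle, where an explicit calculation pins down the behavior of $L_1$.
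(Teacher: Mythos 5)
Your proof is correct, and it takes a genuinely different (and tighter) route than the paper's. The paper works one-dimensionally: it applies the concavity of $h$ along a chord $J$ through the maximizer $p_0$ of $h$ and the centre of John's ellipse, obtaining $M \leq C L_1^2 A$ from Lemma \ref{lem:concave1}, and then, in the case $A \gg L_1^3$, applies one-dimensional concavity a second time in a direction transverse to $J$; chaining the two one-dimensional estimates loses a factor and yields only $L_1 \gtrsim N_1^{1/5}$. You instead use the concavity of $h$ in all directions at once: the homothet $p_0 + t(\Om - p_0)$ of the whole domain about $p_0$ lies inside $\{h \geq 1-t\}$, and inradii scale linearly under homothety, so $\Om_{L^{-2}}$ has inner radius $\gtrsim L^{-2}N_1$, which forces $L_1 \gtrsim N_1^{1/3}$. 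This is strictly stronger than the stated $N_1^{1/5}$, so your argument is an improvement rather than a mere alternative. You should be aware, though, that it is in tension with Lemma \ref{lem:sharpL1}, which purports to exhibit a pair $(\Om,V)$ with $L_1$ comparable to $N_1^{1/5}$; your bound shows that no admissible pair can achieve this. Inspecting that construction reveals why: the $h$ described there takes the value $1$ at the vertex $(N_2,0)$ while simultaneously ``decaying linearly to $0$ in the $y$-direction,'' hence vanishing on the rest of the edge $y=0$, and a concave function cannot equal $1$ at one endpoint of a segment and $0$ on the remainder of it. So that $h$ is not in fact concave and the example does not satisfy Definition \ref{def:V}. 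Your homothety argument, together with the rectangular example you sketch at the end (which gives $L_1 \approx N_1^{1/3}$), indicates that the sharp exponent is $1/3$, not $1/5$.
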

\begin{rem}
We will see in the proof of the proposition, that we are using the stronger assumption that $h(x,y) = V(x,y)^{-1/2}$ is concave, instead of just the convexity of $V(x,y)$.
\end{rem}
\begin{proof}{Proposition \ref{prop:L1bounds}}
The proposition follows easily when the inner radius $N_1$ is comparable to a constant, and so throughout we will assume that $N_1\gg 1$.

The upper bound follows trivially from the definition of $L_1$, and is attained, for example, when $V(x,y)$ (and hence $h(x,y)$) is identically equal to $1$.

Before proving the lower bound, we recall the following theorem of John, \cite{Jo}:
\begin{thm} \label{thm:john}
Let $K \subset \R^m$ be a convex domain. Then, there exists an ellipsoid $E$ such that if $c^* \in \R^m$ is the centre of $E$, then we have
\begin{align*}
E \subset K \subset c^*+m(E-c^*).
\end{align*}
That is, the ellipsoid $E$ is contained within the convex set $K$, but if it is dilated by a constant depending only on the dimension, then it contains $K$.
\end{thm}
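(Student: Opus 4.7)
The plan is to prove this classical theorem of John by taking $E$ to be the ellipsoid of \emph{maximal volume} contained in $K$. First I would verify that such a maximiser exists: ellipsoids in $\R^m$ lying inside the bounded convex set $K$ can be parameterised by a pair (positive-definite matrix, centre) ranging over a compact set once one fixes a lower bound on the volume, and the volume functional is continuous, so the supremum is attained. Let $c^*$ be the centre of this maximal ellipsoid $E$.

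Next, since the statement $E \subset K \subset c^* + m(E - c^*)$ is invariant under affine transformations of $\R^m$ (both containments and the notion of maximal volume transform equivariantly), I would apply the affine map taking $E$ to the closed unit ball $B$ centred at the origin, reducing the problem to showing that $K \subset m B$ whenever $B$ is the maximal-volume inscribed ellipsoid of $K$.

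The heart of the argument is a proof by contradiction. Suppose there is a point $p \in K$ with $|p| > m$; after a rotation take $p = (d, 0, \ldots, 0)$ with $d > m$. By convexity $K$ contains the convex hull $C$ of $B \cup \{p\}$, which consists of $B$ together with the cone of tangent line segments from $p$ to $\partial B$ touching at the latitude $x_1 = 1/d$. I would then try to inscribe in $C$ an ellipsoid of revolution
\[
E' = \Bigl\{\, x \in \R^m : \tfrac{(x_1 - t)^2}{a^2} + \tfrac{x_2^2 + \cdots + x_m^2}{b^2} \leq 1 \,\Bigr\},
\]
choosing $t > 0$ slightly (shift towards $p$) and $a,b$ so that $E'$ is simultaneously (i) tangent from inside to the sphere $\partial B$ on the side away from $p$, and (ii) tangent from inside to the cone from $p$ on the side towards $p$. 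These two tangency equations determine $a$ and $b$ as explicit functions of $t$ and $d$. The volume of $E'$ relative to $B$ is $a\, b^{m-1}$, and one shows by a direct one-variable calculus computation that if $d > m$ then for all sufficiently small $t > 0$ one has $a\, b^{m-1} > 1$, contradicting the maximality of $B$. Hence $d \leq m$, i.e., $K \subset m B$, and undoing the affine reduction gives $K \subset c^* + m(E - c^*)$.

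The main obstacle is the explicit optimisation in the last paragraph: the tangency conditions to both $\partial B$ and to the tangent cone from $p$ have to be set up carefully so that the resulting inequality $a\, b^{m-1} > 1$ produces exactly the dimensional constant $m$ rather than some weaker constant. The qualitative geometric picture — that if $K$ sticks out too far along one direction, a long thin ellipsoid of revolution beats the unit ball in volume — is intuitively clear, but pinning down the sharp constant $m$ is where the bulk of the work lies.
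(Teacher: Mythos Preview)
The paper does not give its own proof of this theorem; it is simply recalled as a classical result of John \cite{Jo} and used as a black box in the proof of Proposition~\ref{prop:L1bounds}. So there is nothing in the paper to compare your argument against.

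That said, your outline is one of the standard routes to John's theorem and is correct in structure: existence of the maximal-volume inscribed ellipsoid, affine reduction to $E=B$, and then a perturbation showing that a point of $K$ at distance greater than $m$ from the centre would allow a strictly larger inscribed ellipsoid of revolution. The sharp constant $m$ really does fall out of the one-variable computation you describe, once the two tangency constraints (to the far cap of $\partial B$ and to the tangent cone from $p$) are written down and differentiated at $t=0$. John's original argument proceeds differently, analysing the contact set $\partial E\cap\partial K$ via Lagrange multipliers and producing a decomposition of the identity in terms of the contact directions; that approach yields the bound $m$ more structurally and gives extra information (the contact points span $\R^m$), whereas yours is more hands-on. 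For the purposes of this paper only the statement is needed, so either is fine.
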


We will also need the following simple property of concave functions:
\begin{lem} \label{lem:concave1}
Suppose $g(x)$ is a concave function on an interval of length $M$, with $0 \leq g(x) \leq 1$, and $g(0) = 1$. Let $0<\beta<1$ and suppose that $g(z) = 1 - \beta$ at some point $z \in (0,M)$. Then, we have the bound
\begin{align*}
M \leq \beta^{-1} z .
\end{align*}
\end{lem}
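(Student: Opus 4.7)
The plan is to exploit concavity in its standard ``graph lies below the secant line extended'' form, together with the nonnegativity hypothesis $g \geq 0$. Since we know $g$ at the two points $0$ and $z$, we get complete control of $g$ for $x > z$ from above, and the floor at $0$ then forces the length $M$ to be small.

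More concretely, first I would introduce the linear function
\[
\ell(x) \coloneqq 1 - \frac{\beta}{z}\,x,
\]
so that $\ell(0) = g(0) = 1$ and $\ell(z) = g(z) = 1-\beta$. On the segment $[0,z]$, concavity gives $g(x) \geq \ell(x)$, but this is not what we want. The useful direction is the following standard consequence of concavity: if $g$ is concave and $a < b$ are any two points in the domain, then for any $x$ with $x > b$ that still lies in the domain, the point $(x,g(x))$ lies on or below the line through $(a,g(a))$ and $(b,g(b))$. I would either quote this or derive it in one line from the definition of concavity applied to the convex combination $b = \tfrac{x-b}{x-a}\cdot a + \tfrac{b-a}{x-a}\cdot x$ (rearranging the concavity inequality for $g(b)$ yields $g(x) \leq \ell(x)$ where $\ell$ is the secant through $(a,g(a))$ and $(b,g(b))$).

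Applying this with $a = 0$, $b = z$, I conclude that $g(x) \leq \ell(x) = 1 - (\beta/z)x$ for every $x \in (z, M]$. Combining with the hypothesis $g(x) \geq 0$, I get
\[
0 \leq 1 - \frac{\beta}{z}\,x \qquad \text{for all } x \in (z, M],
\]
equivalently $x \leq z/\beta$ on that range. Taking $x = M$ (which is in the range since $z < M$) gives $M \leq \beta^{-1} z$, as desired.

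There is essentially no hard step here; the only thing to be careful about is orienting the concavity inequality correctly, i.e., remembering that while the graph of a concave function lies above secant lines on the interior of a chord, it lies below their extensions outside the chord. Once that direction is pinned down, the lemma is immediate from the constraint $g \geq 0$.
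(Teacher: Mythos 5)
Your proof is correct. It is essentially the same argument as the paper's, just with the roles of the two chords swapped: the paper compares $g$ from below against the secant through $(0,1)$ and $(M,g(M))$ (graph above an interior chord, then evaluate at $x=z$), while you compare $g$ from above against the extension of the secant through $(0,1)$ and $(z,1-\beta)$ (graph below an exterior extension, then evaluate at $x=M$). The two are algebraically equivalent rearrangements of the same concavity-plus-nonnegativity constraint, and both are one-liners.
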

\begin{proof}{Lemma \ref{lem:concave1}}
By the assumptions on the function $g(x)$, it decreases by at most $1$ over an interval of length $M$. Thus, since it is a concave function, it must satisfy
\begin{align*}
g(x) \geq 1-\frac{x}{M}.
\end{align*}
Since $g(z) = 1- \beta$, this gives
\begin{align*}
1- \beta \geq 1 - \frac{z}{M}, \qquad \text{or equivalently} \qquad  M \leq \beta^{-1} z,
\end{align*}
as required.
\end{proof}

\begin{figure}[h!]
\begin{center}
\includegraphics[width=0.45\textwidth]{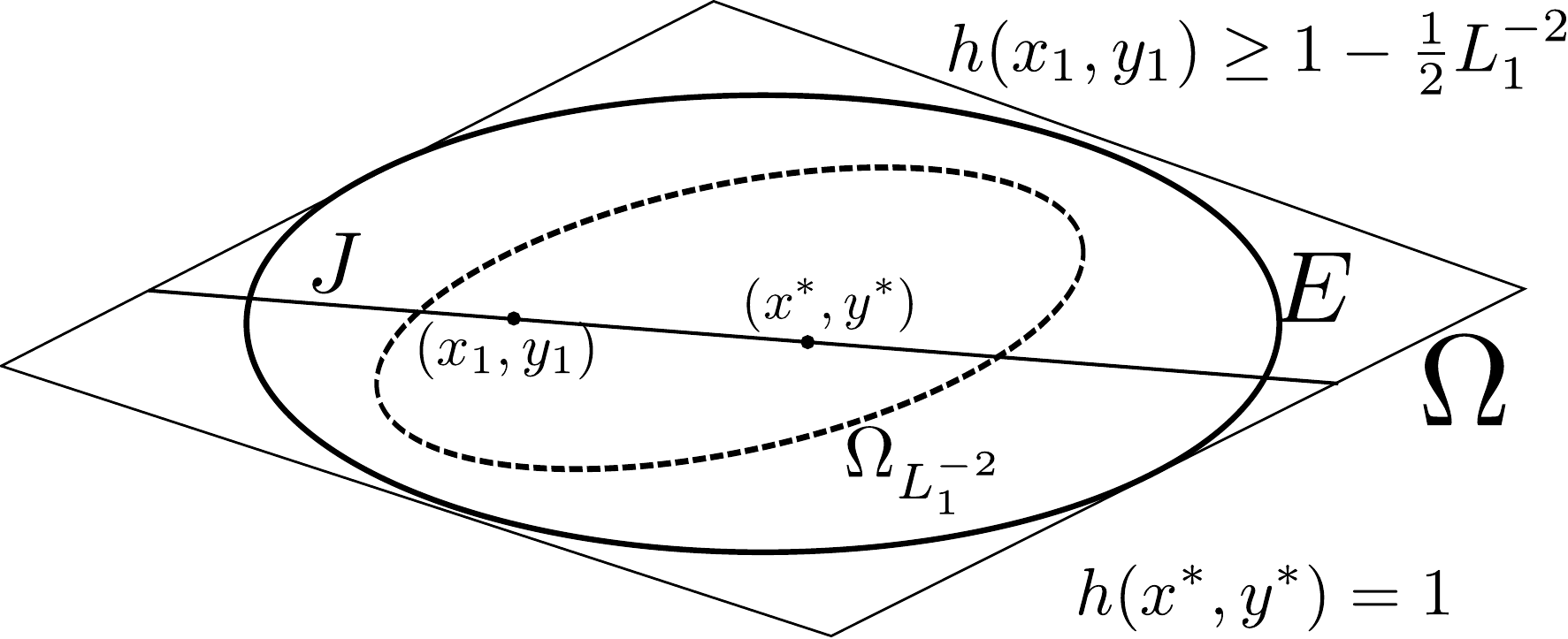}
\caption{The Domain $\Om$, and other sets appearing in the proof of Proposition \ref{prop:L1bounds}}
 \label{fig:L1bounds}
\end{center}
\end{figure}

We can now prove Proposition \ref{prop:L1bounds}. Let $E$ be the ellipse coming from Theorem \ref{thm:john} for our two dimensional domain $\Om$, and let $(x^*,y^*)$ be a point where $h(x,y)$ attains its maximum of $1$. Consider the ray $J$ which is the intersection of our domain $\Om$, and the line containing the point $(x^*,y^*)$ and the centre of the ellipse $E$ (see Figure \ref{fig:L1bounds}).

Since $\Om$ has inner radius equal to $N_1$, by the properties of the ellipse $E$, we know that the ray $J$ has length $M$ with
\begin{align} \label{eqn:concave1}
M \geq c_1 N_1,
\end{align}
for some small absolute constant $c_1>0$. Now consider the intersection of $J$ with the interior of the sublevel set
\begin{align*}
\Om_{L_1^{-2}} = \{(x,y)\in\Om: V(x,y) \leq 1 + L_1^{-2} \}.
\end{align*}
Let $J_1$ be this interval. If $V(x,y) = 1+ L_1^{-2}$, then $1- h(x,y) = 1- V(x,y)^{-2}$ will be comparable to $L_1^{-2}$, and so applying Lemma \ref{lem:concave1} with $\beta = L_1^{-2}$, we see that $J_1$ will be of length $A$, where
\begin{align} \label{eqn:concave2}
M \leq C_1L_1^{2} A,
\end{align}
for a large absolute constant $C_1$.

Combining \eqref{eqn:concave1} and \eqref{eqn:concave2} gives us
\begin{align} \label{eqn:concave2a}
c_1N_1 \leq M \leq C_1L_1^{2} A.
\end{align}
Thus, the lower bound of the proposition is established unless
\begin{align} \label{eqn:concave3}
A \geq C_2L_1^3,
\end{align}
for a large constant $C_2>0$.

Therefore, we will assume that \eqref{eqn:concave3} holds, and so in particular, $A$ is large compared to $L_1$. Let $E_{L_1^{-2}}$ be the ellipse from  Theorem \ref{thm:john} for the set $\Om_{L_1^{-2}}$, and rotate so that the minor axis of $E_{L_1^{-2}}$ lies in the $y$-direction. Then, by the definition of $L_1$, the minor axis of $E_{L_1^{-2}}$ has length comparable to $L_1$.

This means that the ray of length $A$ must approximately lie in the $x$-direction. $\Om$ is a convex set with inner radius $N_1$, and the original ray, $J$, through $\Om$ is of length $M$. Therefore, if we pick a point $(x_1,y_1)$ in the interval $J_1$, which is  at a distance of at least $A/4$ from the ends of $J_1$, then the height of $\Om$ in the $y$-direction at $x=x_1$ must be at least
\begin{align} \label{eqn:concave4}
 c_2 A N_1/M,
\end{align}
for a constant $c_2>0$. In contrast, the height of $\Om_{L_1^{-2}}$ at $x=x_1$ must be bounded above by $C_3 L_1$, since the minor axis of $E_{L_1^{-2}}$ lies in the $y$-direction and has length comparable to $L_1$.

Moreover, the concave function $h(x,y)$ varies from $1$ to $1- L_1^{-2}$  in the interval $J_1$ of length $A$. Thus, using Lemma \ref{lem:concave1} again, we have
\begin{align} \label{eqn:concave5}
h(x_1,y_1) \geq 1 - \frac{3}{4L_1^2},
\end{align}
at this point on the ray. 

Thus, combining \eqref{eqn:concave4} and \eqref{eqn:concave5}, we see that, for $x=x_1$ fixed, $h(x_1,y)$ is a concave function of $y$, which decreases by at most $1$ on an interval of length comparable to $AN_1/M$, and decreases by $\tfrac{1}{4}L_1^{-2}$ on an interval of length comparable to $L_1$. Thus, using Lemma \ref{lem:concave1} one more time, we see that
\begin{align} \label{eqn:concave6}
\frac{AN_1}{M} \leq C_4L_1^2L_1 = C_4L_1^3, 
\end{align}
for a constant $C_4$. Combining \eqref{eqn:concave2a} and \eqref{eqn:concave6} we see that
\begin{align*}
 M \leq C_1L_1^{2} A \leq C_1L_1^{2}C_4L_1^3 \frac{M}{N_1} = C_5 L_1^5 \frac{M}{N_1} ,
\end{align*}
for a constant $C_5>0$. Rearranging this inequality gives the desired lower bound on $L_1$.

\end{proof}

We noted in the proof of Proposition \ref{prop:L1bounds} that it is straightforward to give an example showing that the upper bound on $L_1$ is sharp. We now want to construct an example showing that the lower bound on $L_1$ is also optimal.

\begin{lem} \label{lem:sharpL1}
We can find a domain $\Om$ and potential $V(x,y)$ satisfying the assumptions of Definitions \ref{def:Om} and \ref{def:V} such that
\begin{align*}
L_1 \geq cN_1^{1/5},
\end{align*}
for some absolute constant $c>0$.
\end{lem}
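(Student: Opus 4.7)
The plan is to exhibit a specific domain $\Om$ and potential $V(x,y)$ satisfying the hypotheses of Definitions \ref{def:Om} and \ref{def:V}, for which the value of $L_1$ saturates the lower bound of Proposition \ref{prop:L1bounds}. While the inequality $L_1 \geq cN_1^{1/5}$ is automatic from that proposition, the content of the lemma is to exhibit such an example, demonstrating that the exponent $\tfrac{1}{5}$ cannot be improved.

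I would be guided by the saturation analysis of the proof of Proposition \ref{prop:L1bounds}: both concavity estimates used there must be tight in the extremal example. Writing $h = V^{-1/2}$, this means I want a chord $J$ of $\Om$ of length $M \sim N_1$ along which $h$ decreases linearly from $1$ to $0$; a sub-chord $J_1 \subset J$ of length $A \sim L_1^3$ on which $h$ drops by $L_1^{-2}$; and, at the midpoint of $J_1$, a transverse width of $\Om$ of order $L_1^3$ together with $h$ decreasing linearly in the transverse direction by $L_1^{-2}$ over distance $L_1$. Accordingly, I would take $\Om$ to be a tapered convex domain that is narrow (transverse width $\sim L_1^3 = N_1^{3/5}$) in a neighbourhood of the maximum of $h$, and widens elsewhere to accommodate an inscribed disk of radius $N_1$. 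Concretely this can be realized as a trapezoidal or triangular domain with vertices chosen so that both the fat and narrow regions have the prescribed dimensions. The height function $h(x,y)$ is then built as a piecewise-linear concave function realizing both saturating decay rates: slope $\sim 1/N_1$ along $J$, and slope $\sim L_1^{-3}$ transverse to $J$ near the tip.

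The main obstacle is the joint consistency of the construction. Concavity of $h$ in the transverse direction, combined with a slope of $\sim L_1^{-3}$ at the location of $J_1$, forces $h$ to vanish at transverse distance $\sim L_1^3$, which caps the width of $\{h \geq 0\}$ near the maximum. Reconciling this with $\Om$ having inner radius $N_1 = L_1^5 \gg L_1^3$ requires the inscribed disk of $\Om$ to sit at longitudinal distance $\sim N_1$ from the maximum of $h$, in a region where $h$ has already decreased substantially and its transverse slope has flattened; the shape of $\pa\Om$ must then interpolate convexly between the narrow tip and the wide central region. Once this compatibility is arranged, the remaining step is a direct computation of the inner radius of $\Om_{L_1^{-2}}$: the sublevel set is an elongated convex region of dimensions $\sim L_1^3$ by $\sim L_1$, whose inner radius is dominated by the shorter axis and therefore equals $\sim L_1 = N_1^{1/5}$, as required.
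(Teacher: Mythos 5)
Your proposal follows the same route as the paper's proof: a tapered convex domain (the paper uses a right triangle with legs $N_1$, $N_2$), together with a piecewise-linear height function $h = V^{-1/2}$ that equals $1$ at the narrow vertex, decays linearly along a chord $J$ to the opposite side, and decays linearly in the transverse $y$-direction; you then extract $L_1 \sim N_1^{1/5}$ from the same saturation identities used in the paper. In that sense the two approaches coincide.

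However, the ``joint consistency'' worry you raise is not, I think, a technicality that can be arranged away: it appears to be a genuine obstruction, and it affects the paper's own proof as well. For any admissible pair $(\Om,V)$, pick $p_0\in\bar\Om$ with $h(p_0)=1$. For every $q\in\Om$ and $0<c<1$, concavity gives
\[
h\bigl((1-c)p_0+cq\bigr)\ \geq\ (1-c)\,h(p_0)+c\,h(q)\ \geq\ 1-c .
\]
Hence the homothetic image $(1-c)p_0+c\,\Om$, which lies inside $\Om$ by convexity, is contained in $\{h\geq 1-c\}$, and therefore in $\Om_\gamma$ once $(1-c)^{-2}\leq 1+\gamma$, i.e.\ for $c\approx \gamma/2$. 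This set has inner radius $c\,N_1\approx \gamma N_1/2$, so with $\gamma=L^{-2}$ one obtains $\mathrm{inradius}(\Om_{L^{-2}})\gtrsim L^{-2}N_1$, which is $\geq L$ whenever $L\lesssim N_1^{1/3}$. Thus $L_1\gtrsim N_1^{1/3}$ for \emph{every} admissible domain and potential, and no example can have $L_1\sim N_1^{1/5}$ once $N_1$ is large. Concretely, the obstacle is exactly the one you sensed: if $h$ is piecewise linear in $y$ with a tent along $J$, concavity of $h$ in $(x,y)$ jointly forces the transverse slope to be independent of $x$ (since otherwise the Hessian has a positive entry), so the slope cannot be steep $(\sim L_1^{-3})$ near the tip and gentle at the wide end; and the paper's described $h$ on the right triangle, written in coordinates $u=1-x/N_2$, $v=y$, reads $h=\frac{2(1-u)v}{uN_1}$ on $\{0\leq v\leq uN_1/2\}$, which has $h_{uu}=\frac{4v}{N_1u^3}>0$ and is therefore not concave. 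So as written neither the paper's construction nor your proposed variant produces an admissible $h$ saturating $N_1^{1/5}$; the sharp lower bound appears to be $N_1^{1/3}$ (achieved, e.g., by $\Om=\mathrm{conv}(p_0,D)$ for a disk $D$ of radius $N_1$ and $h=1-\rho_\Om(\cdot-p_0)$, the complementary gauge function). I would flag this explicitly rather than treat the compatibility as a detail to be filled in.
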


\begin{figure}[h!]
\begin{center}
\includegraphics[width=0.45\textwidth]{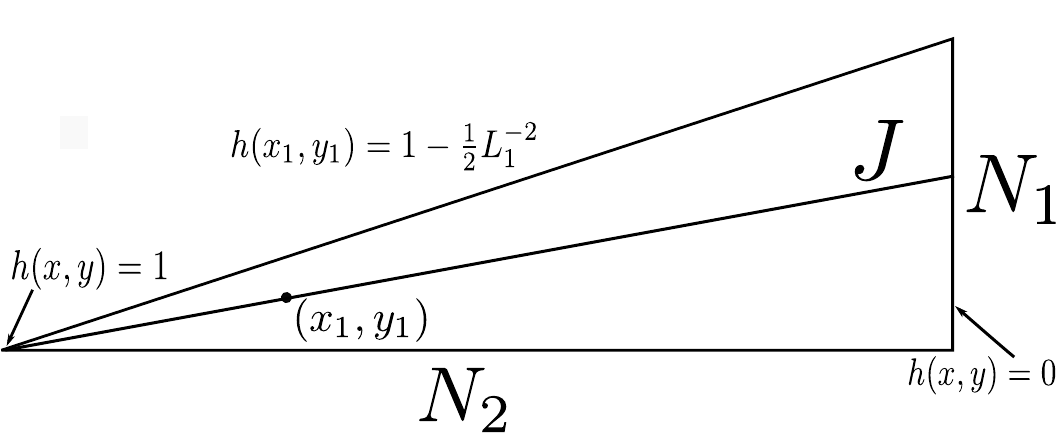}
\caption{The Domain in Lemma \ref{lem:sharpL1}}
 \label{fig:sharpL1}
\end{center}
\end{figure}

\begin{proof}{Lemma \ref{lem:sharpL1}}
We first construct the domain $\Om$. We have remarked earlier, that for the two dimensional domain case in \cite{J1}, a right triangle gives the smallest possible value for $L$. Motivated by this, we let $\Om$ be a right  triangle of side lengths $N_1$ in the $y$-direction, and side length $N_2$ in the $x$-direction (see Figure \ref{fig:sharpL1}). We note that while the inner radius of this domain is not identically to $N_1$, it is comparable to $N_1$ (independently of the size of $N_2$), and this is all we need.

We now define the potential $V(x,y)$, via the function $h(x,y) = V(x,y)^{-2}$. We let $h(x,y) = 1$ at the point where the hypotenuse joins the side of length $N_2$, and set $h(x,y) = 0$ at the midpoint of the side of length $N_1$. We then require $h(x,y)$ to decay linearly on the interval connecting these two points. Finally,  $h(x,y)$ decays linearly to $0$ in the $y$-direction as we move away from this interval. This defines $h(x,y)$ everywhere on $\Om$, and also ensures that $h(x,y)$ is a concave function. Thus the potential $V(x,y)$ satisfies the required properties.

We define $L_1$ as usual from Definition \ref{def:L1} for this domain $\Om$ and potential $V(x,y)$. Consider the line segment $J$ joining the vertex where $h(x,y) = 1$ to the midpoint of the opposite side, and let $M$ be the length of the line segment $J_1\subset J$ on which $h(x,y) \geq 1 - L_1^{-2}$. Then, since $h(x,y)$ decays linearly, and the whole of $J$ has length comparable to $N_2$, it is easy to see that
\begin{align} \label{eqn:sharp1}
M = c_1L_1^{-2}N_2,
\end{align}
for a constant $c_1>0$.

By the definition of $L_1$, the set $\{ (x,y)\in\Om: h(x,y) = 1-L_1^{-2}\}$ has inner radius comparable to $L_1$. Thus, at the point $(x_1,y_1)$ on the line segment $J$ with 
\begin{align} \label{eqn:sharp2}
h(x_1,y_1) = 1 - \tfrac{1}{2}L_1^{-2},
\end{align}
this set has height comparable to $L_1$ in the $y$-direction for $x=x_1$ fixed. Moreover, the point $(x_1,y_1)$ is at a distance comparable to $M$ from the vertex where $h(x,y) = 1$, and so the height of $\Om$ at this point is equal to
\begin{align} \label{eqn:sharp3}
c_2M \frac{N_1}{N_2},
\end{align}
for $c_2>0$. Thus, for $x=x_1$ fixed, $h(x_1,y)$ decays linearly to $0$ on an interval of length comparable to $L_1N_2/N_1$, and by \eqref{eqn:sharp2} decreases linearly by $\tfrac{1}{2}L_1^{-2}$ on an interval of length comparable to $L_1$. This tells us that
\begin{align} \label{eqn:sharp4}
L_1^3 = c_3 M \frac{N_1}{N_2}. 
\end{align}
Combining \eqref{eqn:sharp1} and \eqref{eqn:sharp4} gives
\begin{align*}
L_1^3 = c_3c_1L_1^{-2}N_2 \frac{N_1}{N_2} = c_3c_1L_1^{-2}N_1,
\end{align*}
and rearranging gives the desired estimate for $L_1$.
\end{proof}

\begin{rem}
By combining the two examples which show that the upper and lower bounds on $L_1$ from Proposition \ref{prop:L1bounds} are sharp, it is easy to construct examples where $L_1$ attains any intermediate length scale.
\end{rem}

We now want to consider the parameter $L_2$ introduced in Definition \ref{def:L2}. Before describing the bounds that $L_2$ must satisfy, we first give a simple bound on the eigenvalue $\la$.

\begin{prop} \label{prop:simpleeigenvalue}
The first eigenvalue $\la$ satisfies
\begin{align*}
1 \leq \la \leq 1 + C_1L_1^{-2},
\end{align*}
for an absolute constant $C_1>0$.
\end{prop}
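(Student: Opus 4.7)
The plan is to prove the two inequalities separately, using only the normalization $\min_\Omega V = 1$ for the lower bound, and a standard variational test function supported in the sublevel set $\Om_{L_1^{-2}}$ for the upper bound.

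For the lower bound, I would use the Rayleigh quotient characterization
\begin{align*}
\la = \inf_{\phi \in H^1_0(\Om)\setminus\{0\}} \frac{\int_\Om (|\nabla \phi|^2 + V\phi^2)\ud x\ud y}{\int_\Om \phi^2 \ud x\ud y}.
\end{align*}
Since by Definition \ref{def:V} we have $V(x,y) \geq 1$ throughout $\Om$, the numerator is bounded below by $\int_\Om \phi^2$ for every test function, which immediately yields $\la \geq 1$. (Alternatively, one could note that $\mathcal{L} \geq -\Delta + 1$ as operators and use the positivity of the first Dirichlet eigenvalue of $-\Delta$.)

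For the upper bound I would construct an explicit test function. By Definition \ref{def:L1}, the sublevel set $\Om_{L_1^{-2}}$ has inner radius at least $L_1$, so it contains a disk $B$ of radius $L_1$. Let $\phi$ be the first Dirichlet eigenfunction of $-\Delta$ on $B$, extended by zero to all of $\Om$; then $\phi \in H^1_0(\Om)$, and the Dirichlet eigenvalue of $-\Delta$ on a disk of radius $L_1$ equals $j_{0,1}^2 L_1^{-2}$, where $j_{0,1}$ is the first positive zero of the Bessel function $J_0$. Inserting $\phi$ into the Rayleigh quotient and using that $V(x,y) \leq 1 + L_1^{-2}$ on the support of $\phi$, I would estimate
\begin{align*}
\la \leq \frac{\int_B |\nabla\phi|^2}{\int_B \phi^2} + \sup_{\operatorname{supp}\phi} V \leq j_{0,1}^2 L_1^{-2} + 1 + L_1^{-2} = 1 + C_1 L_1^{-2},
\end{align*}
with $C_1 = j_{0,1}^2 + 1$ an absolute constant.

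There is no real obstacle here; the only thing to be careful about is to verify that the disk of radius $L_1$ fits inside $\Om_{L_1^{-2}}$ (which is exactly what the definition of $L_1$ supplies via the inner radius), and that the resulting $\phi$, extended by zero, is admissible in the Rayleigh quotient on $\Om$. Both are routine. The estimate is consistent with the sharper bound in Theorem \ref{thm:eigenvalue}: since $L_2 \leq L_1$ (we shall later show $L_2$ is no larger than $L_1$), and $\mu \geq 1$, the stronger bound $\la \leq \mu + CL_2^{-2}$ refines the crude $1 + C_1 L_1^{-2}$ obtained here, but the present argument suffices to pin $\la$ down to an interval of length $O(L_1^{-2})$ around $1$.
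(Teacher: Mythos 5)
Your proof is correct and follows essentially the same approach as the paper: the lower bound is immediate from $V \geq 1$ and the Rayleigh quotient, and the upper bound comes from a test function concentrated in the sublevel set $\Om_{L_1^{-2}}$ (whose inner radius is $\geq L_1$) where $V \leq 1 + L_1^{-2}$. The paper uses a square of side $\sim L_1$ with a product-of-cosines test function rather than your disk with a Bessel function, but this is a purely cosmetic difference and your version works just as well.

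One small side remark of yours is wrong and worth flagging, since it indicates a misreading of the length scales: you assert that $L_2 \leq L_1$, but in fact the paper sets $L_2 = L_1$ when $\tilde{L}_1 \sim L_1$ and shows $L_2 \gg L_1$ when $\tilde{L}_1 \gg L_1$ (Proposition~\ref{prop:L2bound} and the remark following it), so $L_2 \gtrsim L_1$ always. The reason Theorem~\ref{thm:eigenvalue} is a refinement is precisely that $L_2 \geq L_1$ makes the window $CL_2^{-2}$ \emph{narrower} than $L_1^{-2}$; with your sign of the inequality the stated ``refinement'' logic would not go through. This does not affect the correctness of the proof of Proposition~\ref{prop:simpleeigenvalue} itself, which stands.
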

\begin{proof}{Proposition \ref{prop:simpleeigenvalue}}
We will establish these bounds by using the variational formulation of the first eigenvalue, $\la$. That is,
\begin{align} \label{eqn:variation}
\la = \inf \left\{ \frac{\int_{\Om} \left|\nabla\psi(x,y)\right|^2 \ud x \ud y + \int_{\Om} V(x,y)\psi(x,y)^2 \ud x \ud y}{\int_{\Om} \psi(x,y)^2 \ud x \ud y} \bigg| \psi \in W^{1,2}(\Om), \psi|_{\pa\Om} = 0, \psi \not\equiv 0 \right\}
\end{align}
Since $V(x,y) \geq 1$ for all $(x,y) \in \Om$, the lower bound, $\la \geq1$ follows immediately.

To prove the upper bound, we need to construct a suitable test function $\psi(x,y)$ to use in \eqref{eqn:variation}. By the definition of $L_1$, we know that the sublevel set 
\begin{align*}
\Om_{L_1^{-2}} = \{(x,y):V(x,y) \leq  1 + L_1^{-2}\}
\end{align*}
has inner radius equal to $L_1$. Thus, we can choose a point $(x_0,y_0)$ and a constant $c>0$, such that the set
\begin{align*}
R = \{(x,y):|x-x_0| \leq cL_1, |y-y_0| \leq cL_1\}
\end{align*}
is contained in the interior of $\Om_{L_1^{-2}}$. We then define $\psi(x,y)$ as
\begin{align*}
\psi(x,y) = \cos\left(\frac{\pi (x-x_0)}{2cL_1}\right)\cos\left(\frac{\pi (y-y_0)}{2cL_1}\right)
\end{align*}
inside the square $R$, and set $\psi(x,y) = 0$ for all other $(x,y) \in\Om$. It is then clear that
\begin{align*}
 \frac{\int_{\Om} \left|\nabla\psi(x,y)\right|^2 \ud x \ud y}{\int_{\Om} \psi(x,y)^2 \ud x \ud y} \leq C_2L_1^{-2},
\end{align*}
and since $V(x,y) \leq 1+ L_1^{-2}$ on the support of the test function $\psi(x,y)$, we also have
\begin{align*}
 \frac{ \int_{\Om} V(x,y)\psi(x,y)^2 \ud x \ud y}{\int_{\Om} \psi(x,y)^2 \ud x \ud y} \leq 1 + C_3L_1^{-2}.
\end{align*}
Using these inequalities in \eqref{eqn:variation} gives the desired upper bound on $\la$.

\end{proof}

We now consider the parameter $L_2$ from Definition \ref{def:L2}. We recall that the sublevel set $\Om_{L_1^{-2}}$ has inner radius $L_1$ and diameter $\tilde{L}_1$, and that we set $L_2$ to be equal to $L_1$ unless $\tilde{L}_1 \gg L_1$. The upper and lower bound for $L_2$ from Definition \ref{def:L2} that we want to establish is the following:
\begin{prop} \label{prop:L2bound}
The parameter $L_2$ satisfies
\begin{align*}
c_1 \tilde{L}_1^{1/3} L_1^{2/3} \leq L_2 \leq \frac{1}{c_1}\tilde{L}_1,
\end{align*}
for some absolute constant $c_1>0$.
\end{prop}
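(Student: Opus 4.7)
Both bounds follow from comparing the Rayleigh quotient for $\mu(x)$ at nearby $x$-values and from concavity of $h = V^{-1/2}$, in the same spirit as Proposition \ref{prop:L1bounds}. Two facts will be used throughout. First, $1 \leq \mu^* \leq 1 + CL_1^{-2}$: the lower bound is immediate from $V \geq 1$, and the upper bound follows by using at some well-chosen $x_0$ a cosine trial function localized on a $y$-interval of length $\sim L_1$ inside $\Om_{L_1^{-2}}$, exactly as in the proof of Proposition \ref{prop:simpleeigenvalue}. Second, for any $K \geq 1$, the sublevel set $\Om_{K L_1^{-2}}$ has $x$-diameter at most $C_K \tilde L_1$ by chord extrapolation for the concave function $h$ (Lemma \ref{lem:concave1}).

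\textbf{Upper bound.} Suppose for contradiction $L_2 > K \tilde L_1$ for a large constant $K$. Then there is an interval $I$ of length $\geq L_2$ on which $\mu(x) \leq \mu^* + L_2^{-2}$. Since the $x$-projection of $\Om_{4C L_1^{-2}}$ has length at most $C_4 \tilde L_1 < L_2/2$, we may pick $x \in I$ outside this projection. At this $x$, $V(x,y) > 1 + 4CL_1^{-2}$ for every $y \in \Om(x)$, whence $\mu(x) > 1 + 4CL_1^{-2}$ and $\mu(x) - \mu^* > 3CL_1^{-2} \geq L_2^{-2}$ (using $L_2 \geq L_1$, which holds since either $L_2 = L_1$ or $L_2 \gtrsim \tilde L_1^{1/3} L_1^{2/3} \geq L_1$). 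This contradicts the defining property of $I$, proving $L_2 \leq (1/c_1) \tilde L_1$.

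\textbf{Lower bound.} Let $x^*$ realize $\mu^* = \mu(x^*)$, and set $\ell := c_1 \tilde L_1^{1/3} L_1^{2/3}$ with $c_1$ small. The plan is to show $\mu(x) - \mu^* \leq \ell^{-2}$ on $[x^* - \ell/2, x^* + \ell/2]$, which forces $L_2 \geq \ell$. We use the ground state $\psi^* := \psi^{(x^*)}$ of $\mathcal{L}(x^*)$ as a trial function in the Rayleigh quotient for $\mu(x)$, with a smooth cutoff near $\pa\Om(x)$ to enforce Dirichlet conditions on the varying interval $\Om(x)$; by convexity of $\Om$, the symmetric difference $\Om(x) \,\triangle\, \Om(x^*)$ has $y$-measure $O(|x-x^*|)$, so the kinetic correction from the cutoff is of lower order. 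Since $\mu^* \leq 1 + CL_1^{-2}$, an Agmon-type argument of the sort to appear in Section \ref{sec:L2} localizes $\psi^*$ on $J^* := \{y : h(x^*,y) \geq 1 - CL_1^{-2}\}$, an interval of length comparable to $L_1$ around the concentration point $y^{**}$. The Rayleigh quotient therefore reduces the problem to bounding $V(x,y) - V(x^*,y)$ weighted by $(\psi^*)^2$.

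The central estimate is
\begin{align*}
V(x^* + s, y) - V(x^*, y) \leq \frac{C\,|s|}{\tilde L_1 L_1^{2}}, \qquad y \in J^*, \ |s| \leq \tilde L_1 / 2.
\end{align*}
To prove it, apply John's theorem to an enlarged sublevel set $\Om_{K L_1^{-2}}$ with $K$ fixed large enough that $(x^*, y^{**})$ lies comfortably inside the resulting John ellipse (whose $x$-axis is $\sim \tilde L_1$ and $y$-axis is $\sim L_1$). By convexity, every horizontal slice $\{x : (x, y) \in \Om_{K L_1^{-2}}\}$ for $y \in J^*$ contains a segment of length $\sim \tilde L_1$ centered near $x^*$, so $h(x^* \pm \tilde L_1/2,\, y) \geq 1 - KL_1^{-2}$. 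The concave chord bound for $h(\cdot, y)$ between $x^*$ and $x^* \pm \tilde L_1/2$ then yields $h(x^*,y) - h(x^* + s, y) \leq C|s| L_1^{-2}/\tilde L_1$, and converting via $V = h^{-2}$ (using $h \geq 1/2$ on the chord) gives the displayed estimate. Integrating against $(\psi^*)^2$ and absorbing the cutoff correction produces $\mu(x^* + s) - \mu^* \leq C|s|/(\tilde L_1 L_1^2)$; taking $|s| \leq \ell/2$ and $c_1$ small enough so that $C \ell/(\tilde L_1 L_1^2) = C c_1^3 \ell^{-2} \leq \ell^{-2}$ completes the proof. The main obstacle is the John-type geometric step: the minimizer $x^*$ of $\mu$ need not sit at the center of the John ellipse for $\Om_{L_1^{-2}}$, which is why we must inflate the sublevel index to some $K$, an inflation that only affects the final constant $c_1$.
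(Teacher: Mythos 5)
Your upper bound argument is essentially the paper's (the parenthetical justification of $L_2 \geq L_1$ invokes the lower bound you are about to prove, a mild circularity that would need untangling, but the substance is fine). Your lower bound, however, takes a genuinely different route and has real gaps. The paper's lower bound is a short structural argument: Lemma \ref{lem:mubound} (Lemma 2.4(a) of \cite{J1} applied fibre-by-fibre) shows $1 + \frac{1}{CL_1^2} \leq \mu(x) \leq 1 + \frac{C}{L_1^2}$ for $x$ in the middle half of the projection $J$ of $\Om_{L_1^{-2}}$; Lemma \ref{lem:mu(x)convex} (from Corollary 1.15 of \cite{BL1}) says $\mu(x)$ is a convex function of $x$; and the $\tilde{L}_1^{1/3} L_1^{2/3}$ scale then falls out instantly by applying the chord extrapolation of Lemma \ref{lem:concave1} to the convex function $\mu$. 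You use neither fact, and instead try to control $\mu(x^*+s) - \mu^*$ directly by perturbing the Rayleigh quotient around $\psi^{(x^*)}$. Avoiding Brascamp--Lieb in this way is an interesting idea, but it trades a one-line convexity argument for a technical programme your sketch does not complete.

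Two gaps in particular. (1) Your central estimate needs the horizontal slices of $\Om_{KL_1^{-2}}$ at heights $y \in J^*$ to extend roughly $\tilde{L}_1/2$ past $x^*$ on \emph{both} sides, so the concave chord bound for $h(\cdot,y)$ applies at $x^* \pm \tilde{L}_1/2$. But all one can say a priori is that $x^*$ lies in the $x$-projection of a fixed dilate $\Om_{CL_1^{-2}}$, a set of $x$-length $\lesssim \tilde{L}_1$; $x^*$ can sit near one end of it. On the short side the chord bound gives nothing, and inflating the sublevel index $K$ need not help: if $V$ rises steeply past one end of $\Om_{L_1^{-2}}$, the set $\Om_{KL_1^{-2}}$ is only marginally wider in that direction, and the John centre does not move towards $x^*$. (A one-sided interval $[x^*, x^*+\ell]$ towards the long side would suffice, but even locating $x^*$ inside the $x$-projection of $\Om_{L_1^{-2}}$ itself is not established by your sketch.) (2) The cutoff correction to the Rayleigh quotient --- forcing $\psi^{(x^*)}$ to vanish on the moving interval $\Om(x)$ --- is dismissed as ``of lower order'' without an estimate. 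This is precisely the boundary quantity $\alpha = \pa_x \psi_1^{(x)}(g_2(x))$ that Proposition \ref{prop:boundary} controls with some effort in the eigenvalue section, and its size depends on the decay of $\psi^{(x^*)}$ out to $\pa\Om(x)$, which may be far from $J^*$. Both of these difficulties are exactly what the convexity of $\mu(x)$ lets the paper bypass.
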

\begin{rem}
In particular, the lower bound shows us that if we have $\tilde{L}_1 \gg L_1$, then also $L_2 \gg L_1$.
\end{rem}

\begin{proof}{Proposition \ref{prop:L2bound}}
The value of $L_2$ depends on the function $\mu(x)$, where $\mu(x)$ is the first eigenvalue of the operator
\begin{align} \label{eqn:Lx2}
\mathcal{L}(x) = - \frac{d^2}{dy^2} + V(x,y).
\end{align}
$L_2$ is the largest value such that $\mu(x)$ increases by $L_2^{-2}$ from its minimum value, $\mu^*$, on an interval of length at least $L_2$. Therefore, before proving the bounds on $L_2$, we first want to study the properties of the function $\mu(x)$.

We have rotated $\Om$ so that the projection of the set $\Om_{L_1^{-2}}$ onto the $y$-axis is of the smallest length amongst the projections onto any line. One immediate consequence of this is that if we set  $J$ to be the interval which is the projection of $\Om_{L_1^{-2}}$ onto the $x$-axis, then the length of $J$ is comparable to $\tilde{L}_1$, the diameter of $\Om_{L_1^{-2}}$.

We now give a bound on the eigenvalues $\mu(x)$ for $x \in J$.
\begin{lem} \label{lem:mubound}
For $x$ in the middle half of the interval $J$, there exists an absolute constant $C_1>0$ such that
\begin{align*}
1 + \frac{1}{C_1L_1^{2}} \leq \mu(x) \leq 1 + \frac{C_1}{L_1^2}.
\end{align*}
\end{lem}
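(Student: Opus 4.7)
The plan is to prove the two inequalities separately. The essential geometric input (used in both) is that, for $x$ in the middle half of $J$, the cross-section $A_x := \Om(x) \cap \Om_{L_1^{-2}}$ is an interval of length comparable to $L_1$. This follows from the concavity of $f(x) = |A_x|$ on $J$ (a consequence of the convexity of $\Om_{L_1^{-2}}$), together with the bounds $f(x) \leq CL_1$ (the length of the $y$-projection of $\Om_{L_1^{-2}}$, comparable to $L_1$ by our choice of orientation) and $f(x) \geq 2L_1$ at the center of the inscribed disc of radius $L_1$ in $\Om_{L_1^{-2}}$.

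For the upper bound $\mu(x) \leq 1 + C_1/L_1^2$, pick $y_0$ and an absolute constant $c>0$ such that $[y_0 - cL_1, y_0 + cL_1] \subset A_x$, and use the test function $\phi(y) = \cos(\pi(y - y_0)/(2cL_1))$ supported on this interval. Then $\int |\phi'|^2 \leq (\pi/(2cL_1))^2 \int \phi^2$, and $V(x,y) \leq 1 + L_1^{-2}$ on the support of $\phi$, so by the variational characterization of $\mu(x)$, $\mu(x) \leq 1 + L_1^{-2} + \pi^2/(2cL_1)^2 \leq 1 + C_1/L_1^2$.

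For the lower bound $\mu(x) \geq 1 + 1/(C_1 L_1^2)$, note that $V(x,y) - 1 \geq L_1^{-2}\mathbf{1}_{\Om(x) \setminus A_x}(y)$, so by monotonicity of the first eigenvalue in the potential, $\mu(x) - 1 \geq \nu^*(x)$, where $\nu^*(x)$ is the first Dirichlet eigenvalue of $-d^2/dy^2 + L_1^{-2}\mathbf{1}_{\Om(x) \setminus A_x}$ on $\Om(x)$. Rescaling $y = L_1 \tilde{y}$ converts this to $L_1^{-2}$ times the first Dirichlet eigenvalue $\tilde{\nu}^*(x)$ of $-d^2/d\tilde{y}^2 + \mathbf{1}_{\tilde{\Om}(x) \setminus \tilde{A}_x}$ on $\tilde{\Om}(x) := L_1^{-1}\Om(x)$, where the rescaled well $\tilde{A}_x$ has length in an absolute interval $[c, C]$ by the geometric input. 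By domain monotonicity (extending $\tilde{\Om}(x)$ to all of $\R$ only decreases the eigenvalue), $\tilde{\nu}^*(x)$ is bounded below by the ground-state energy of the finite square-well problem $-d^2/d\tilde{y}^2 + \mathbf{1}_{\R \setminus \tilde{A}_x}$ on $\R$, which is a positive continuous function of the well width $|\tilde{A}_x|$, hence bounded below by a positive absolute constant $c_0 > 0$ on $[c, C]$.

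The main obstacle is the lower bound. A naive Poincar\'e inequality on $\Om(x)$ alone is too weak, since the concavity arguments used in Proposition \ref{prop:L1bounds} allow $\Om(x)$ to be as long as a constant multiple of $L_1^3$. Exploiting the potential barrier of height $L_1^{-2}$ outside the well $A_x$ to effectively confine the eigenfunction to a region of size $L_1$ is the crucial step that produces the correct $L_1^{-2}$ scaling.
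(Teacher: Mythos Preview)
Your proof is correct. The geometric input and the upper bound via a cosine test function match the paper's approach exactly (the paper cites Lemma~2.4(a) of \cite{J1}, whose upper bound is proved in the same way).

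Your lower bound, however, takes a genuinely different route. The paper reduces to showing that the one-dimensional length scale $L(x)$ associated to $V(x,\cdot)$ is comparable to $L_1$, and then invokes Lemma~2.4(a) of \cite{J1}; that lemma's lower bound uses the \emph{convexity} of $V(x,\cdot)$ to force the potential to grow at a definite rate once $y$ leaves the well. You bypass this entirely: you only use that $V(x,\cdot)-1 \geq L_1^{-2}$ off the interval $A_x$, reduce by monotonicity to a square-well potential, rescale, and appeal to the explicit ground-state energy of the finite square well on $\R$. This is more elementary and more robust (it would work for any potential with the same barrier height, convex or not), and it is fully self-contained rather than citing \cite{J1}. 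What the paper's route buys in exchange is the identification $L(x)\sim L_1$, which is reused later (e.g.\ in the analysis of $\phi(y)$ in Section~\ref{sec:la}); your argument gives the eigenvalue bound directly without isolating that intermediate fact.
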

\begin{proof}{Lemma \ref{lem:mubound}}
Since $\mu(x)$ is the first eigenvalue in the ordinary differential operator in \eqref{eqn:Lx2}, we want to apply Lemma 2.4 (a) in \cite{J1}. This lemma implies that
\begin{align} \label{eqn:Lem2.4}
1 + \frac{1}{C_1L(x)^2} \leq \mu(x) \leq 1 + \frac{C_1}{L(x)^2},
\end{align}
where $L(x)$ in the length scale associated to $V(x,y)$. In other words, for each fixed $x$, $L(x)$ is the largest value such that $V(x,y)$ varies from its minimum by $L(x)^{-2}$ on an interval of length at least $L(x)$. Thus, to prove the lemma it is enough to show that $L(x)$ is comparable to $L_1$ whenever $x$ is in the middle half of the interval $J$.

The projections of $\Om_{L_1^{-2}}$ onto the $x$ and $y$-axes have lengths comparable to $\tilde{L}_1$ and  $L_1$ respectively. It follows from Theorem \ref{thm:john} that, for those $x$ in the middle half of $J$, the height of $\Om_{L_1^{-2}}$ in the $y$-direction is comparable to $L_1$. Since the potential $V(x,y)$ is convex, attains its minimum of $1$, and is equal to $1+L_1^{-2}$ on the boundary of $\Om_{L_1^{-2}}$, we know that for all $x$ in the middle half of  $J$, we must have $V(x,y) \leq 1 + \tfrac{1}{2}L_1^{-2}$ for some $y$.

As a result, for all $x$ fixed in the middle half of $J$, the potential $V(x,y)$ varies by an amount comparable to $L_1^{-2}$, for $y$ in an interval of length comparable to $L_1$.   Therefore, for each $x$ fixed the length scale $L(x)$ is comparable to $L_1$, and hence using \eqref{eqn:Lem2.4} we have the required bound.
\end{proof}
\begin{rem}
Since Lemma 2.4 (a) in \cite{J1} played a key role in the above, let us say a few words about its proof. The upper bound in \eqref{eqn:Lem2.4} follows easily by choosing the appropriate test function, just as in the proof of Proposition \ref{prop:simpleeigenvalue}. The proof of the lower bound is slightly more complicated and makes use of the convexity of the potential to ensure that it grows at a sufficiently fast rate once we move away from its minimum.
\end{rem}

Before completing the proof of Proposition \ref{prop:L2bound}, we need one more property of the function $\mu(x)$.
\begin{lem} \label{lem:mu(x)convex}
The first eigenvalue $\mu(x)$ is a convex function of $x$.
\end{lem}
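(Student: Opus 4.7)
My plan is to prove the equivalent statement that $\mu(x)^{-1/2}$ is concave, from which convexity of $\mu$ follows by a routine composition argument: since $u\mapsto u^{-2}$ is convex on $(0,\infty)$, concavity of $\mu^{-1/2}$ gives
\[
\mu(x_t) \leq \left[(1-t)\mu(x_0)^{-1/2} + t\mu(x_1)^{-1/2}\right]^{-2} \leq (1-t)\mu(x_0) + t\mu(x_1),
\]
where $x_t := (1-t)x_0 + tx_1$.

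To establish the concavity of $\mu^{-1/2}$, I would introduce on the cross-section two auxiliary objects: the Minkowski combination of the intervals
\[
K_t := (1-t)\Omega(x_0) + t\Omega(x_1),
\]
and the infimal convolution of the sliced potentials
\[
W_t(y) := \inf\left\{(1-t)V(x_0,y_0) + tV(x_1,y_1) : y = (1-t)y_0 + ty_1,\ y_i \in \Omega(x_i)\right\}
\]
defined on $K_t$. The convexity of the two-dimensional domain $\Omega$ gives $K_t \subseteq \Omega(x_t)$, while the joint convexity of $V$ (ensured by the concavity of $V^{-1/2}$) gives $V(x_t, y) \leq W_t(y)$ on $K_t$. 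The main tool is then the Brascamp--Lieb $(-1/2)$-concavity theorem for the first Dirichlet eigenvalue of a Schr\"odinger operator with convex potential: letting $\tilde\mu$ denote the first eigenvalue of $-d^2/dy^2 + W_t$ on $K_t$, one has
\[
\tilde\mu^{-1/2} \geq (1-t)\mu(x_0)^{-1/2} + t\mu(x_1)^{-1/2}.
\]
Combining this with domain and potential monotonicity -- extend the eigenfunction realizing $\tilde\mu$ by zero to $\Omega(x_t)$ and use it as a test function in the Rayleigh quotient for $\mu(x_t)$; since $K_t\subseteq\Omega(x_t)$ and $W_t\geq V(x_t,\cdot)$ on $K_t$, this gives $\mu(x_t)\leq\tilde\mu$ -- yields $\mu(x_t)^{-1/2} \geq (1-t)\mu(x_0)^{-1/2} + t\mu(x_1)^{-1/2}$, which is the required concavity.

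The main obstacle is the invocation of the Brascamp--Lieb concavity theorem. If one prefers a self-contained argument, the underlying inequality can be proved directly by forming the log-convex combination $\psi_t(y) := \sup\{\psi^{(x_0)}(y_0)^{1-t}\psi^{(x_1)}(y_1)^t : y = (1-t)y_0 + ty_1\}$ of the positive first eigenfunctions of $\mathcal{L}(x_0)$ and $\mathcal{L}(x_1)$ (which are log-concave, by the Brascamp--Lieb log-concavity theorem applied slicewise), using $\psi_t$ as a test function in the Rayleigh quotient on $K_t$, and controlling the resulting $L^2$-, gradient, and potential terms via Pr\'ekopa--Leindler together with the joint convexity of $V$.
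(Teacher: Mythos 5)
The central step of your argument fails. You invoke a ``Brascamp--Lieb $(-1/2)$-concavity theorem'' asserting that the first eigenvalue $\tilde\mu$ of $-d^2/dy^2 + W_t$ on $K_t$, with $W_t$ the \emph{infimal convolution} of the sliced potentials, satisfies $\tilde\mu^{-1/2} \geq (1-t)\mu(x_0)^{-1/2} + t\mu(x_1)^{-1/2}$. This is not a theorem, and it is in fact false. Take $\Omega(x_0)=\Omega(x_1)=[0,L]$ and $V(x_i,\cdot)\equiv c_i$ constants. Then $K_t=[0,L]$, $W_t\equiv(1-t)c_0+tc_1$, and $\tilde\mu = \pi^2/L^2 + (1-t)c_0+tc_1 = (1-t)\mu(x_0)+t\mu(x_1)$; by strict convexity of $s\mapsto s^{-1/2}$ this gives $\tilde\mu^{-1/2} < (1-t)\mu(x_0)^{-1/2}+t\mu(x_1)^{-1/2}$ whenever $c_0\neq c_1$, the reverse of what you need. (Constants are convex, and constant $V^{-1/2}$ is concave, so this counterexample sits squarely inside the hypotheses.) The infimal convolution is the natural potential for proving ordinary convexity of the eigenvalue, not $(-1/2)$-concavity; the two statements go with different interpolations, and you have mixed them.

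Ordinary convexity of $\mu(x)$ is all that is needed, and it is what the paper proves: it simply cites Corollary 1.15 of Brascamp--Lieb \cite{BL1}, which deduces convexity of the first eigenvalue from the log-concavity of the fundamental solution of the parabolic operator $\partial_\tau - d^2/dy^2 + V(x,y)$ jointly in $(x,y,y')$ (the paper's remark notes that the cited statement has $V$ independent of $x$ but that the proof goes through when $V$ depends convexly on $x$). The mechanism is exactly the one you sketch in your ``self-contained'' alternative: log-concavity of the heat kernel, a Pr\'ekopa--Leindler step applied to $\int p_\tau^{(x)}(y,y)\,dy = \sum_j e^{-\mu_j(x)\tau}$, and long-time asymptotics. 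If you carry that alternative out, the conclusion you get is $\mu(x_t)\leq(1-t)\mu(x_0)+t\mu(x_1)$, not the $(-1/2)$-concavity; and the ``control of the $L^2$-, gradient, and potential terms'' you defer is precisely the Pr\'ekopa--Leindler content, so deferring it leaves the argument incomplete. Aim for convexity with the infimal-convolution interpolation, or simply cite \cite{BL1} as the paper does.
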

\begin{proof}{Lemma \ref{lem:mu(x)convex}}
This convexity property follows from Corollary 1.15 in \cite{BL1}. The convexity of the eigenvalue is deduced from the log concavity of the fundamental solution of the associated diffusion operator.
\end{proof}
\begin{rem}
Although in the assumptions of Corollary 1.15 in \cite{BL1}, the potential does not depend on the $x$-variable, the proof of the log concavity of the fundamental solution (and hence the convexity of the first eigenvalue) follows in the same way if $V(x,y)$ is allowed to depend on $x$, provided it remains a convex function.
\end{rem}

We can now combine Lemmas \ref{lem:mubound} and \ref{lem:mu(x)convex} to complete the proof of Proposition \ref{prop:L2bound}: Since the interval $J$ is of length comparable to $\tilde{L}_1$, Lemma \ref{lem:mubound} tells us that $\mu(x)$ varies by an amount at most comparable to $L_1^{-2}$ for $x$ in an interval of length comparable to $\tilde{L}_1$. Thus, since $\mu(x)$ is a convex function, applying the same logic as in Lemma \ref{lem:concave1}, we immediately obtain the lower bound
\begin{align} \label{eqn:L2bound1}
L_2 \geq c_1\tilde{L}_1^{1/3}L_1^{2/3} .
\end{align}
By the convexity of $V(x,y)$, given $C_2>0$, we can find $C_3>0$ to ensure that
\begin{align*}
V(x,y) \geq 1 + C_2L_1^{-2} ,
\end{align*}
whenever the point $(x,y)$ is at least $C_3\tilde{L}_1$ from $\Om_{L_1^{-2}}$. This means that $\mu(x)$ certainly must increase by an amount comparable to $L_1^{-2}$ when $x$ is a distance comparable to $\tilde{L}_1$ from $J$, and this gives us the upper bound
\begin{align} \label{eqn:L2bound2}
L_2 \leq \frac{1}{c_1} \tilde{L}_1.
\end{align}
Combining the inequalities in \eqref{eqn:L2bound1} and \eqref{eqn:L2bound2} completes the proof of the proposition.
\end{proof}

\section{The Bound On The First Eigenvalue $\la$} \label{sec:la}

We recall from Proposition \ref{prop:simpleeigenvalue} that the first eigenvalue $\la$ satisfies
\begin{align*}
1 \leq \la \leq 1 + C_1L_1^{-2}.
\end{align*}

In this section we will assume that we have $\tilde{L}_1 \gg L_1$ (and hence $L_2\gg L_1$ also), and then prove the improved upper and lower bound on the eigenvalue $\la$ from Theorem \ref{thm:eigenvalue}. That is, we will show that $\la$ satisfies 
\begin{align} \label{eqn:lamu}
\mu \leq \la \leq \mu + CL_2^{-2},
\end{align}
where $\mu$ is the first eigenvalue of the ordinary differential operator
\begin{align} \label{eqn:Adefn}
\mathcal{A} = -\frac{d^2}{dx^2}  + \mu(x) .
\end{align}

The lower bound in \eqref{eqn:lamu} is more straightforward, and so we establish this bound first.

\begin{prop}[Lower bound on $\la$] \label{prop:lalower}
The first eigenvalue $\la$ satisfies
\begin{align*}
 \la \geq \mu. 
 \end{align*}
\end{prop}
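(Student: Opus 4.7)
{Plan for Proposition \ref{prop:lalower}}

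The natural approach is to use the variational characterizations of both $\la$ and $\mu$, with the first eigenfunction $u(x,y)$ itself providing the link between them. Let $I$ denote the projection of $\Om$ onto the $x$-axis, and recall that for each $x\in I$ the cross-section $\Om(x)$ is an interval on which $u(x,\cdot)$ vanishes at the endpoints. Hence $u(x,\cdot)$ is an admissible test function for $\mathcal{L}(x)$, and the variational principle gives
\begin{align*}
\int_{\Om(x)} \left( \pa_y u \right)^2 \ud y + \int_{\Om(x)} V(x,y) u(x,y)^2 \ud y \;\geq\; \mu(x)\, H(x),
\end{align*}
where $H(x) \coloneqq \int_{\Om(x)} u(x,y)^2 \ud y$. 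Integrating this inequality in $x$ over $I$ accounts for the $y$-derivatives and the potential terms in the Rayleigh quotient for $\la$; the only remaining contribution is the $x$-derivative term $\int_\Om (\pa_x u)^2$.

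To absorb this term I would introduce $v(x) \coloneqq \sqrt{H(x)}$ as a test function for $\mathcal{A}$. Since $u$ vanishes on $\pa\Om$, the boundary contributions when differentiating $H$ cancel, giving $H'(x) = 2\int_{\Om(x)} u\,\pa_x u \ud y$. Then, wherever $H(x)>0$, Cauchy--Schwarz yields
\begin{align*}
v'(x)^2 \;=\; \frac{\left( \int_{\Om(x)} u\, \pa_x u \ud y \right)^2}{\int_{\Om(x)} u^2 \ud y} \;\leq\; \int_{\Om(x)} (\pa_x u)^2 \ud y,
\end{align*}
and integrating in $x$ gives $\int_I v'(x)^2 \ud x \leq \int_\Om (\pa_x u)^2$. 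Also, $v$ vanishes at the endpoints of $I$, since $\Om(x)$ shrinks to a point there. The variational principle for $\mathcal{A}$ then reads
\begin{align*}
\mu \int_I v(x)^2 \ud x \;\leq\; \int_I v'(x)^2 \ud x + \int_I \mu(x)\, v(x)^2 \ud x.
\end{align*}
Since $\int_I v^2 \ud x = \int_\Om u^2$ and $\int_I \mu(x) v(x)^2 \ud x = \int_I \mu(x) H(x) \ud x$, combining everything gives
\begin{align*}
\mu \int_\Om u^2 \;\leq\; \int_\Om \left( \pa_x u \right)^2 + \int_\Om \left( \pa_y u \right)^2 + \int_\Om V u^2 \;=\; \la \int_\Om u^2,
\end{align*}
so $\mu \leq \la$ as claimed.

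The only potential obstacle is the regularity of $v(x) = \sqrt{H(x)}$, in particular where $H$ might vanish or where $v$ might fail to be differentiable. However, the log-concavity of $u$ already noted in the introduction ensures that $u(x,y)>0$ in the interior of $\Om$, so $H(x)>0$ on the interior of $I$ and $v$ is smooth there. Near the endpoints of $I$ the interval $\Om(x)$ collapses and $v(x) \to 0$, so $v \in W^{1,2}_0(I)$ and the Cauchy--Schwarz bound on $v'$ shows $\int_I v'(x)^2 \ud x$ is finite; this is enough to make $v$ admissible in the Rayleigh quotient for $\mu$.
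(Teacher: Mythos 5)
Your proof is correct and reaches the same conclusion, but the last step uses a genuinely different device than the paper. Both arguments start identically: for each fixed $x$, use $u(x,\cdot)$ as a test function for $\mathcal{L}(x)$ to conclude $\int_{\Om(x)} (\pa_y u)^2 + V u^2 \,dy \geq \mu(x) H(x)$, and integrate in $x$. The paper then handles the remaining $\int_\Om (\pa_x u)^2$ term by a Fubini argument: extend $u$ by zero outside $\Om$, treat $u(\cdot,y)$ for each fixed $y$ as an admissible test function for $\mathcal{A}$, obtaining $\int (\pa_x u(x,y))^2 + \mu(x) u(x,y)^2\,dx \geq \mu \int u(x,y)^2\,dx$ pointwise in $y$, and integrate in $y$. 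You instead construct the single test function $v(x) = \sqrt{H(x)}$ and use Cauchy--Schwarz to show $\int_I v'^2 \leq \int_\Om (\pa_x u)^2$. Both are valid; the paper's route avoids the regularity discussion for $\sqrt{H}$ entirely (you deal with it correctly, via positivity of $u$ in the interior and the Cauchy--Schwarz bound near the endpoints), while your route is the standard "square root of the marginal" trick and gives the same bound. Neither approach is clearly superior for this particular inequality.
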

\begin{proof}{Proposition \ref{prop:lalower}}
As before, for each $x$ fixed, let $\Om(x)$ be the cross-section of $\Om$ at $x$. Then, the first Dirichlet eigenfunction $u(x,y)$ satisfies $u(x,y) = 0$ whenever $y$ is at the endpoints of the interval $\Om(x)$. In particular, for each fixed $x$, the function $u(x,\cdot)$ is an admissible test function for the variational formulation of the first eigenvalue of the operator $\mathcal{L}(x)$. Thus,
\begin{align*}
 \int_{\Om(x)} (\pa_yu(x,y))^{2} + V(x,y)u(x,y)^2 \ud y \geq \mu(x) \int_{\Om(x)}u(x,y)^2 \ud y. 
 \end{align*}
Integrating this over $x$, and using
\begin{eqnarray*}
    \left\{ \begin{array}{rlcc}
    (-\Delta_{x,y} + V(x,y))u(x,y) & = \la u(x,y) &&   \text{in } \Om \\
   u(x,y) & = 0  && \text{on } \pa \Om,
    \end{array} \right.
\end{eqnarray*}
we see that
\begin{align*}
 \la\int_{\Om} u(x,y)^2 \ud x \ud y &= \int_{\Om} (\pa_xu(x,y))^{2} + (\pa_yu(x,y))^{2} + V(x,y)u(x,y)^2 \ud x \ud y \\
 & \geq \int_{\Om}  (\pa_xu(x,y))^{2} + \mu(x) u(x,y)^2 \ud x\ud y  \\
 & \geq \mu \int_{\Om} u(x,y)^2 \ud x \ud y.
 \end{align*}
 To get the final inequality, we have defined $u(x,y)=0$ outside $\Om$, used Fubini to calculate 
the interval in $x$ first, and then used the variational formulation for the first eigenvalue $\mu$ of the operator $\mathcal{A}$ in \eqref{eqn:Adefn}. This gives us the bound $\la \geq \mu$, as required.
\end{proof}

We now turn to the upper bound and prove:
\begin{prop}[Upper bound on $\la$] \label{prop:eigbound}
We have an upper bound on the first eigenvalue $\la$ of the form,
\[ \la \leq \mu + CL_2^{-2}, \]
for an absolute constant $C>0$.
\end{prop}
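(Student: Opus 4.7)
The plan is to use the variational characterization of $\la$ with a trial function built from the one-dimensional eigenfunctions. Let $\psi^{(x)}(y)$ denote the first Dirichlet eigenfunction of $\mathcal{L}(x) = -\pa_y^2 + V(x,y)$ on the cross-section $\Om(x)$, normalised so that $\int_{\Om(x)} \psi^{(x)}(y)^2 \ud y = 1$, and let $\phi(x)$ be the first Dirichlet eigenfunction of $\mathcal{A}$ on the relevant $x$-interval, with eigenvalue $\mu$. I would set $\Psi(x,y) := \phi(x)\psi^{(x)}(y)$, extended by $0$ outside $\Om$. Since $\psi^{(x)}$ vanishes at the endpoints of $\Om(x)$ and $\phi$ vanishes at the endpoints of the $x$-projection (where $\mu(x) \to \infty$), $\Psi$ vanishes on $\pa\Om$ and is an admissible trial function for the Rayleigh quotient defining $\la$.

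Next I would expand $\int_{\Om}(|\nabla \Psi|^2 + V\Psi^2)\ud x \ud y$. For each fixed $x$, the $y$-derivative and potential pieces assemble to $\mu(x)$ by the normalisation and the eigenvalue equation for $\psi^{(x)}$. The $x$-derivative produces $\int[\phi'(x)\psi^{(x)}(y) + \phi(x)\pa_x \psi^{(x)}(y)]^2 \ud x \ud y$; when integrated in $y$, the cross term equals $\phi'(x)\phi(x) \pa_x \int_{\Om(x)} (\psi^{(x)})^2 \ud y = 0$, the boundary contributions from the moving endpoints being killed by the Dirichlet condition. Combining the $\phi'(x)^2$ term with the $\mu(x)\phi(x)^2$ piece and invoking the eigenvalue equation for $\phi$ yields
\[
\la \le \mu + \frac{\int \phi(x)^2 \int_{\Om(x)}(\pa_x \psi^{(x)}(y))^2 \ud y \ud x}{\int \phi(x)^2 \ud x}.
\]
Everything therefore reduces to estimating the mean square of $\pa_x \psi^{(x)}$ on the cross-section, weighted by $\phi(x)^2$, by $CL_2^{-2}$.

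To control $\pa_x \psi^{(x)}$ I would differentiate the eigenfunction equation in $x$ to obtain
\[
(-\pa_y^2 + V(x,y) - \mu(x))(\pa_x \psi^{(x)})(y) = (\mu'(x) - \pa_x V(x,y))\,\psi^{(x)}(y).
\]
Pairing with $\psi^{(x)}$ gives the Feynman--Hellmann identity $\mu'(x) = \int_{\Om(x)} \pa_x V \cdot (\psi^{(x)})^2 \ud y$, which expresses the Fredholm compatibility of the right-hand side with the simple eigenvalue $\mu(x)$. I would then solve this second-order ODE in $y$ by the method of variation of parameters, taking $\psi^{(x)}$ as one homogeneous solution, constructing a second linearly independent solution by reduction of order, and selecting the particular solution orthogonal to $\psi^{(x)}$ (which is consistent with the differentiated normalisation). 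This gives an explicit representation of $\pa_x \psi^{(x)}(y)$ whose pointwise size is governed by that of $\pa_x V(x,y) - \mu'(x)$ and the shape of $\psi^{(x)}(y)$.

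The main obstacle is converting this representation into the quantitative $L^2(\Om(x))$-bound $\int_{\Om(x)}(\pa_x \psi^{(x)})^2 \ud y \le CL_2^{-2}$ for $x$ in the region where $\phi$ is concentrated. This requires sharp control on $\pa_x V$ relative to $\psi^{(x)}$, particularly near the moving endpoints of $\Om(x)$ where both factors degenerate and the variation-of-parameters kernel grows; one must exploit the convexity of $V$ (equivalently, concavity of $h = V^{-1/2}$) to show that the parts of $\pa_x V$ not already cancelled by $\mu'(x)$ are small where $\psi^{(x)}$ is large. One would then combine this cross-sectional estimate with the Agmon-type localisation of $\phi$ to the interval of length $L_2$ on which $\mu(x) - \mu^* \le L_2^{-2}$ (from Definition \ref{def:L2} and Lemma \ref{lem:mu(x)convex}), so that the contribution from outside this interval is negligible. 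Balancing the smallness of $\mu'(x)$ against the growth of $\pa_x V$ and the behaviour of $\psi^{(x)}$ near $\pa \Om(x)$ is where I expect the technical heart of the proof to lie.
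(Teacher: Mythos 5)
Your outline is correct and essentially coincides with the paper's proof: a separated trial function built from the cross-sectional eigenfunctions $\psi^{(x)}(y)$, the orthogonality $\int_{\Om(x)}\psi^{(x)}\pa_x\psi^{(x)}\ud y=0$ obtained by differentiating the $L^2$-normalisation and using the Dirichlet condition to kill the endpoint terms, the reduction to bounding $\int_{\Om(x)}(\pa_x\psi^{(x)})^2\ud y$ by $CL_2^{-2}$, the equation obtained by differentiating $\mathcal{L}(x)\psi^{(x)}=\mu(x)\psi^{(x)}$ in $x$, variation of parameters, and the need to control $\pa_xV$, $\mu'(x)$, and the nonzero boundary values of $\pa_x\psi^{(x)}$ at the moving endpoints of $\Om(x)$. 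This is exactly the paper's route through Propositions \ref{prop:eigboundinter}, \ref{prop:paxeigupperbound}, and \ref{prop:paxeigpointwise}.

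The one substantive difference is the $x$-localizer. You take the exact ground state $\phi$ of $\mathcal{A}$, which makes the reduction formally cleaner, giving $\la\le\mu+\int\phi(x)^2\int_{\Om(x)}(\pa_x\psi^{(x)})^2\ud y\ud x$ with no error term; but it then forces you to estimate $\int_{\Om(x)}(\pa_x\psi^{(x)})^2\ud y$ wherever $\phi$ is appreciable, and hence to prove a separate Agmon-type decay for $\phi$ and to control the cross-sectional estimate outside the interval $I$ of Definition \ref{def:L2}, where the convexity bound $|\mu'(x)|\le CL_2^{-3}$ is no longer available. The paper instead takes a smooth $L^2$-normalised cut-off $\chi$ compactly supported in the middle of $I$ (Definition \ref{def:chi}); this costs an explicit $CL_2^{-2}$ coming from $\int\chi'^2$ and from $|\mu(x)-\mu|\le CL_2^{-2}$ on $I$, which is harmless for the target, and in exchange confines the hard cross-sectional estimate to $x$ deep inside $I$, which is precisely where both Lemma \ref{lem:muprime} on $\mu'$ and the geometric control of $\pa_xV$ relative to $\pa_yV$ in Lemma \ref{lem:paxV} are derived. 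In short: the cut-off is the simpler route; if you persist with $\phi$, be prepared to balance the decay of $\phi$ outside $I$ against the loss of the $\mu'(x)$ and $\pa_xV$ bounds there, which is an additional piece of analysis the paper deliberately sidesteps.
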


\begin{rem}
 From Lemma 4.2 (e) in \cite{J1}, the operator $\mathcal{A}$ defined in \eqref{eqn:Adefn} has spectral gap bounded from below by a multiple of $L_2^{-2}$. Therefore, obtaining bounds on $\la$ up to a precision of $CL_2^{-2}$ is important if we want this separation of variables in the $x$ and $y$ variables to be of use to us.
\end{rem}

\begin{proof}{Proposition \ref{prop:eigbound}}

As in the proof of the simple bound on $\la$ in Proposition \ref{prop:simpleeigenvalue}, we will again make use of the variational formulation for $\la$ given in \eqref{eqn:variation}. To do this we need to construct an appropriate test function, and our motivation will come from performing an approximate change of variables in the $x$ and $y$-directions. Before stating our test function, we need some definitions. 

\begin{defn} \label{def:psi1}
For each fixed $x$, we define $\psi_1^{(x)}(y)$ to be the $L^2$-normalised first eigenfunction of the ordinary differential operator $\mathcal{L}(x)$. That is, $\psi_1^{(x)}(y)$ is $L^2$-normalised on the cross-section $\Om(x)$, and satisfies
\begin{eqnarray*}
    \left\{ \begin{array}{rlcc}
    \left(-\frac{d^2}{dy^2} + V(x,y)\right)\psi_1^{(x)}(y) & = \mu(x) \psi_1^{(x)}(y) &&   \text{in } \Om(x) \\
  \psi_1^{(x)}(y) & = 0  && \text{on } \pa \Om(x).
    \end{array} \right.
\end{eqnarray*}
\end{defn}
\begin{defn} \label{def:chi}
Let $I$ be the interval of length $L_2$ from Definition \ref{def:L2}. We define the cut-off function $\chi(x)$ to be a positive function which is comparable to its maximum in the middle half of the interval $I$, and supported in the middle three quarters of $I$, such that it decays smoothly to zero from its maximum. We also require that $\chi(x)$ is $L^2$-normalised on the interval $I$. In particular, this allows us to ensure that
\begin{align*}
|\chi'(x)| \leq C_1L_2^{-3/2},
\end{align*}
for some absolute constant $C_1$.
\end{defn}

We can now define the test function $f(x,y)$ which we will use in \eqref{eqn:variation}.
\begin{defn} \label{def:f(x,y)}
We define the test function $f(x,y)$ by
\begin{align*}
f(x,y) \coloneqq \chi(x)\psi_1^{(x)}(y).
\end{align*}
\end{defn}

As a first step towards proving Proposition \ref{prop:eigbound}, we prove the following intermediate step.
\begin{prop} \label{prop:eigboundinter}
We have an upper bound for $\la$ of the form
\begin{align*}
 \la \leq  \mu + \int_{\Om} \chi(x)^2(\paeigx)^2 \ud x \ud y + C_1L_2^{-2},
 \end{align*}
for a constant $C_1$.
\end{prop}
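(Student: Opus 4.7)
{Proposal for Proposition \ref{prop:eigboundinter}}
The plan is to plug the test function $f(x,y) = \chi(x)\psi_1^{(x)}(y)$ from Definition \ref{def:f(x,y)} into the variational formulation \eqref{eqn:variation} and read off the resulting estimate. Since $\chi$ is supported in $I \subset \Om$ and $\psi_1^{(x)}$ vanishes on $\pa\Om(x)$, the function $f$ lies in $W^{1,2}_0(\Om)$. Writing out the gradient gives
\begin{align*}
|\nabla f|^2 = (\chi'(x))^2 \psi_1^{(x)}(y)^2 + 2\chi(x)\chi'(x)\psi_1^{(x)}(y)\pa_x\psi_1^{(x)}(y) + \chi(x)^2(\pa_x\psi_1^{(x)}(y))^2 + \chi(x)^2(\pa_y\psi_1^{(x)}(y))^2,
\end{align*}
so the numerator of \eqref{eqn:variation} splits into four terms plus the potential contribution $\int_\Om V(x,y)\chi(x)^2\psi_1^{(x)}(y)^2\,\ud x\,\ud y$. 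The denominator is simply $\int_\Om \chi(x)^2 \psi_1^{(x)}(y)^2 \,\ud x\,\ud y = \int \chi(x)^2\,\ud x = 1$ by the $L^2$-normalisations of $\chi$ on $I$ and $\psi_1^{(x)}$ on $\Om(x)$.

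The next step is to simplify the pieces. First I would combine the $(\pa_y\psi_1^{(x)})^2$ term with the potential term, integrate by parts in $y$ on each cross-section $\Om(x)$ (using $\psi_1^{(x)}|_{\pa\Om(x)} = 0$), and invoke the eigenvalue equation of Definition \ref{def:psi1} together with the normalisation to obtain
\begin{align*}
\int_{\Om(x)} (\pa_y\psi_1^{(x)}(y))^2 + V(x,y)\psi_1^{(x)}(y)^2 \ud y = \mu(x),
\end{align*}
so this block contributes $\int \chi(x)^2\mu(x)\,\ud x$. Second, I would dispose of the cross term by differentiating the identity $\int_{\Om(x)} \psi_1^{(x)}(y)^2\,\ud y = 1$ in $x$: since $\psi_1^{(x)}$ vanishes on $\pa\Om(x)$, Leibniz's rule gives no boundary contribution and yields $\int_{\Om(x)} \psi_1^{(x)}(y)\pa_x\psi_1^{(x)}(y)\,\ud y = 0$, so the cross term in the numerator vanishes identically after integrating in $x$.

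What remains is the bound
\begin{align*}
\la \leq \int_I (\chi'(x))^2\,\ud x + \int_\Om \chi(x)^2(\pa_x\psi_1^{(x)}(y))^2\,\ud x\,\ud y + \int_I \chi(x)^2\mu(x)\,\ud x.
\end{align*}
The first term is at most $C L_2^{-2}$ because $|\chi'| \leq C_1 L_2^{-3/2}$ on an interval of length comparable to $L_2$. For the last term I would use that $\chi$ is supported in $I$, where by Definition \ref{def:L2} we have $\mu(x) \leq \mu^* + L_2^{-2}$; since $\chi$ is $L^2$-normalised this yields $\int \chi^2\mu(x)\,\ud x \leq \mu^* + L_2^{-2}$, and the variational characterisation of the first eigenvalue $\mu$ of $\mathcal{A} = -d^2/dx^2 + \mu(x)$ gives $\mu^* \leq \mu$ (since $\mathcal{A} \geq \mu^*$ as operators). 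Putting the three estimates together produces exactly the claimed inequality.

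The steps above are all routine once the correct bookkeeping is set up; the only point that requires a little care is the vanishing of the cross term, where one must justify differentiating an integral over the $x$-dependent cross-section $\Om(x)$. The Dirichlet boundary condition $\psi_1^{(x)}|_{\pa\Om(x)} = 0$ makes this differentiation painless, and that observation is the small piece of structure that makes the whole variational argument tight to order $L_2^{-2}$. The genuinely harder work is hidden inside the remaining term $\int \chi(x)^2(\pa_x\psi_1^{(x)})^2\,\ud x\,\ud y$, which this proposition carries forward; controlling it will require the analysis of the $x$-derivative of the one-dimensional eigenfunction promised in the introduction, not merely the choice of test function.
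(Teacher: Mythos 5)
Your proof is correct and follows essentially the same route as the paper: plug $f = \chi \psi_1^{(x)}$ into the Rayleigh quotient, use the $L^2$-normalisations to simplify the denominator, kill the cross term via the orthogonality relation obtained by differentiating $\int_{\Om(x)}(\psi_1^{(x)})^2 = 1$ in $x$, apply the one-dimensional eigenvalue equation to the remaining $y$-derivative and potential block, and estimate the two boundary terms. One small point where your write-up is actually a bit tighter than the paper's: you correctly observe that Definition \ref{def:L2} gives $\mu(x) \leq \mu^* + L_2^{-2}$ on $I$ and that $\mu^* \leq \mu$ because $\mathcal{A} \geq \mu^*$ as operators (the potential $\mu(x)$ is bounded below by $\mu^*$), which is exactly the one-sided fact needed; the paper abbreviates this to $|\mu(x)-\mu| \leq L_2^{-2}$, which strictly speaking requires the (easy but unstated) upper bound $\mu \leq \mu^* + CL_2^{-2}$ as well. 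A trivial slip: the cross term vanishes after integrating in $y$, not $x$.
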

\begin{proof}{Proposition \ref{prop:eigboundinter}}
To obtain an upper bound on the first eigenvalue $\la$, we will calculate the quotient from \eqref{eqn:variation}
\begin{align} \label{eqn:eigquot}
\frac{\int_\Om |\nabla f(x,y)|^2 \ud x\ud y + \int_\Om V(x,y)|f(x,y)|^2 \ud x\ud y}{\int_\Om |f(x,y)|^2 \ud x \ud y},
\end{align}
with $f(x,y)$ as in Definition \ref{def:f(x,y)}. Since $\psi^{(x)}(y)$ is $L^2(\Om(x))$-normalised in $y$ for any fixed $x$, and $\chi(x)$ is $L^2(I)$-normalised in $x$, first computing the integral in $y$, and then the integral in $x$, we see that the denominator in \eqref{eqn:eigquot} is equal to $1$. Thus, we have the bound
\begin{align} \label{eqn:eigquot1}
\la \leq \int_\Om  |\nabla_{x,y}\left(\chi(x)\psi_1^{(x)}(y)\right)|^2 \ud x \ud y + \int_\Om V(x,y) \chi(x)^2 \eigx^2 \ud x \ud y.
\end{align}
For each $x$, the function $\psi^{(x)}(y)$ satisfies
\begin{align} \label{eqn:L2norm}
\int_{\Om(x)} \eigx^2 \ud y = 1,
\end{align}
and it is equal to $0$ at the endpoints of the interval $\Om(x)$. Therefore, differentiating \eqref{eqn:L2norm} with respect to $x$, we obtain the orthogonality relation
\begin{align*}
 \int_{\Om(x)} \paeigx \eigx \ud y = 0.
 \end{align*}

 Thus, calculating the derivatives in the first integral in \eqref{eqn:eigquot1}, and using this orthogonality relation, we see that \eqref{eqn:eigquot1} becomes
 \begin{align*}
  \la \leq &\int_\Om  \chi'(x)^2\eigx^2 \ud x \ud y + \int_\Om \chi(x)^2(\paeigx)^2 \ud x \ud y \\
 &+ \int_{\Om}\chi(x)^2(\pa_y\eigx)^2 \ud x \ud y + \int_\Om V(x,y) \chi(x)^2 \eigx^2 \ud x \ud y.
 \end{align*}
The eigenfunction $\eigx$ of $\mathcal{L}(x)$ has eigenvalue $\mu(x)$, and so we have the inequality
 \begin{align*}
   \la \leq \int_I  \chi'(x)^2 \ud x + \int_\Om \chi(x)^2(\paeigx)^2 \ud x \ud y +\int_{I} \chi(x)^2\mu(x) \ud x.
 \end{align*}
 From Definition \ref{def:L2} we know that
 \begin{align*}
 |\mu(x) - \mu| \leq L_2^{-2}.
 \end{align*}
 Therefore, combining this with the bound on $\chi'(x)$ given in Definition \ref{def:chi}, we obtain the desired upper bound on $\la$ of 
 \begin{align*}
 \la \leq  \mu + \int_\Om \chi(x)^2(\paeigx)^2 \ud x \ud y +C_1L_2^{-2}.
 \end{align*}
 
\end{proof}

As a result of Proposition \ref{prop:eigboundinter}, to obtain an upper bound on $\la$, we need to consider the derivative with respect to $x$ of the eigenfunction $\eigx$. In particular, we want to bound
\begin{align*}
 \int_{\Om(x)} (\pa_x\eigx)^2 \ud y.
 \end{align*}
We will prove the following proposition:
\begin{prop} \label{prop:paxeigupperbound}
Let $x$ be fixed in the support of the cut-off function $\chi(x)$. Then,
\begin{align*}
 \int_{\Om(x)} (\paeigx)^2 \ud y \leq C_1L_2^{-2},
\end{align*}
with the constant $C_1$ independent of $x$.
\end{prop}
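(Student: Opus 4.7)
The plan is to differentiate the eigenvalue equation for $\eigx$ with respect to $x$, obtain an inhomogeneous second-order ODE for $\phi(y):=\paeigx$, and bound its $L^2(\Om(x))$-norm by inverting $\cl(x)-\mu(x)$ on the orthogonal complement of $\mathrm{span}\{\eigx\}$. Differentiating $(\cl(x)-\mu(x))\eigx=0$ in $x$ yields
\begin{align*}
(\cl(x)-\mu(x))\phi(y)=\bigl(\mu'(x)-\pa_xV(x,y)\bigr)\eigx,
\end{align*}
subject to boundary data on $\pa\Om(x)$ coming from the implicit differentiation of $\psi_1^{(x)}(\alpha(x))=0$ at each moving endpoint $y=\alpha(x)$, giving $\phi(\alpha(x))=-\alpha'(x)\pa_y\psi_1^{(x)}(\alpha(x))$. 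Pairing the equation against $\eigx$ and using the orthogonality $\int_{\Om(x)}\phi\,\eigx\ud y=0$ already recorded in the proof of Proposition \ref{prop:eigboundinter} produces the Hellmann--Feynman-type identity
\begin{align*}
\mu'(x)=\int_{\Om(x)}\pa_xV(x,y)\,\eigx^2\ud y,
\end{align*}
which simultaneously ensures that the right-hand side of the ODE is $L^2$-orthogonal to $\eigx$ (so the equation is solvable on $\{\eigx\}^{\perp}$) and shows that $\|(\mu'(x)-\pa_xV)\eigx\|_{L^2(\Om(x))}^2$ equals the $\eigx^2$-weighted variance of $\pa_xV$.

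Next, following the hint in the introduction, I apply variation of parameters: using $\eigx$ itself as one solution of the homogeneous equation $(\cl(x)-\mu(x))y=0$, a second independent solution is given by the Wronskian recipe $y_2(y)=\eigx\int_{y_0}^y\eigx(s)^{-2}\ud s$. The resulting explicit Green's function representation for $\phi$, combined with the spectral gap $\mu_2(x)-\mu(x)\gtrsim L_1^{-2}$ for $x$ in the middle of $J$ (implicit in the one-dimensional arguments underlying Lemma \ref{lem:mubound}), yields an inequality of the form
\begin{align*}
\int_{\Om(x)}\phi(y)^2\ud y\leq CL_1^4\int_{\Om(x)}\bigl(\pa_xV(x,y)\bigr)^2\eigx^2\ud y,
\end{align*}
modulo boundary contributions involving $\alpha'(x)\pa_y\psi_1^{(x)}$ that can be absorbed either by first straightening $\Om(x)$ to a fixed reference interval via a change of variables or by tracking them directly through the identity above.

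It then remains to estimate the right-hand side. Convexity of $\mu(x)$ (Lemma \ref{lem:mu(x)convex}) combined with Definition \ref{def:L2} and the assumption that $x$ lies in the middle portion of the interval $I$ of length $L_2$ forces $|\mu'(x)|\leq CL_2^{-3}$; for the more delicate quantity $\int(\pa_xV)^2\eigx^2\ud y$, the concavity of $h=V^{-1/2}$ together with the fact that the projection of $\Om_{L_1^{-2}}$ onto the $x$-axis has length comparable to $\tilde L_1\geq L_2$ yields the pointwise estimate $|\pa_xV|\leq CL_1^{-2}/L_2$ throughout the subset of $\Om(x)$ where $\eigx^2$ carries most of its mass, while in the complementary region (where $\pa_xV$ may be large because $V^{-1/2}$ tends to zero) rapid Agmon-type decay of $\eigx$ away from the one-dimensional well of width $L_1$ absorbs the loss. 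Combining these bounds gives $\int_{\Om(x)}(\pa_xV)^2\eigx^2\ud y\leq CL_1^{-4}L_2^{-2}$, and substitution produces the claimed bound $\|\phi\|_{L^2(\Om(x))}^2\leq CL_2^{-2}$. The main obstacle is precisely this last pointwise control of $\pa_xV$ relative to the mass of $\eigx$: the inhomogeneous term must save a factor of $L_2^{-1}$ beyond its natural $L_1^{-2}$ amplitude, and this saving depends on a delicate interplay between the geometry of $\Om_{L_1^{-2}}$ (of $x$-extent $\tilde L_1$) and the length scale $L_2$; a secondary technical issue is the honest accounting of the boundary terms generated by the $x$-dependence of $\Om(x)$, since $\phi$ does not vanish on $\pa\Om(x)$.
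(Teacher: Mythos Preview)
Your overall architecture---differentiate the eigenvalue equation, obtain an inhomogeneous ODE for $\paeigx$, and invert $\cl(x)-\mu(x)$---is exactly the paper's starting point, and your derivation of the Hellmann--Feynman identity and the orthogonality relation is correct. The paper also writes down the second homogeneous solution $\tphi(y)=\phi(y)\int^y\phi^{-2}$ and uses variation of parameters. Where you diverge is in trying to shortcut the inversion via the spectral gap $\mu_2(x)-\mu(x)\gtrsim L_1^{-2}$ to obtain an $L^2\to L^2$ bound of the form
\[
\int_{\Om(x)}(\paeigx)^2\ud y \;\le\; CL_1^{4}\int_{\Om(x)}(\pa_xV)^2\,\eigx^{\,2}\ud y,
\]
and then closing by the pointwise claim $|\pa_xV|\le CL_1^{-2}L_2^{-1}$ on the bulk of the mass of $\eigx$.

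There is a genuine gap here. Definition~\ref{def:V} allows $h=V^{-1/2}$ to vanish on $\pa\Om$, so $V$ may blow up like $d^{-2}$ near the boundary of $\Om(x)$, and then $\pa_xV\sim g_2'(x)\,d^{-3}$. The eigenfunction behaves like $\eigx\sim d^{\nu}$ with $\nu(\nu-1)$ equal to the coefficient of $d^{-2}$, and for perfectly admissible potentials one can have $\nu\le 5/2$, so that $\int(\pa_xV)^2\eigx^{\,2}$ diverges. The same difficulty hits your boundary correction: once you subtract a function carrying the boundary data $\alpha=-g_2'(x)\pa_y\psi_1^{(x)}(g_2(x))$, the right-hand side acquires a term $(V-\mu)\alpha$ whose $L^2(\Om(x))$-norm need not be finite either. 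So the abstract spectral-gap inversion is not available at the level of generality the proposition claims. The paper avoids this by never squaring the singular factors: it works pointwise via the explicit Green's function and controls the dangerous integrals through the energy identity
\[
\int_{\ty}^{g_2(x)}\phi(t)^2\,\pa_tV(x,t)\ud t \;\le\; \phi'(\ty)^2
\]
(Lemma~\ref{lem:energy}) and, for the $(V-\mu)\alpha$ term, through $\int\phi\,(V-\mu)=\int\phi''$, which are both finite and of the right size regardless of whether $V$ is bounded.

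Even restricting to bounded $V$, your reduction is not obviously a simplification. The pointwise bound $|\pa_xV|\le CL_1^{-2}L_2^{-1}$ is only valid in the centre; away from it the paper's Lemma~\ref{lem:paxV} gives instead $|\pa_xV|\le C(|y-y^*|+L_1)L_2^{-1}|\pa_yV|$, and turning this into the weighted $L^2$ estimate you need requires exactly the same dyadic decomposition into the intervals $J_k=\{\pa_yV\in[2^{-k},2^{-k+1}]\}$ and the sharp decay of $\phi$ and $\phi'$ on each $J_k$ (Propositions~\ref{prop:phi}--\ref{prop:phiy}) that the paper carries out. The phrase ``rapid Agmon-type decay \ldots\ absorbs the loss'' is precisely the content of Lemmas~\ref{lem:G1}--\ref{lem:G3b}, and it is the bulk of the work.
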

\begin{rem}
Combining Proposition \ref{prop:eigboundinter} with Proposition \ref{prop:paxeigupperbound} establishes
\begin{align*}
 \la \leq \mu + C_1L_2^{-2},
\end{align*}
and finishes the proof of Proposition \ref{prop:eigbound}.
\end{rem}

\begin{proof}{Proposition \ref{prop:paxeigupperbound}}

Throughout the proof of this proposition, $x \in I$ will be fixed in the support of the cut-off function $x$, and all bounds that appear will be uniform in $x$. We will also suppress the dependence of certain functions on $x$ where this simplifies the notation.

Since,
\begin{align*}
 \left( -\frac{d^2}{dy^2} + V(x,y)\right)\eigx = \mu(x) \eigx,
 \end{align*}
differentiating with respect to $x$ we find that for $y\in \Om(x)$, we have
\begin{align} \label{eqn:paeigx}
 \left( -\frac{d^2}{dy^2} + V(x,y) - \mu(x)\right)\pa_x\eigx = \mu'(x) \eigx - \pa_xV(x,y) \eigx,
 \end{align}
where the notation $'$ denotes differentiation with respect to $x$. Although, for each fixed $x$, $\eigx$ is equal to zero at the endpoints on $\Om(x)$, the function $\paeigx$ will not in general be zero here. 

Therefore, we will also need to take into account its boundary values. For those $x$ in the support of the cut-off function $\chi(x)$, we can write the two parts of $\pa \Om$ below and above in the $y$-direction as $\{ y= g_1(x)\}$ and $\{ y= g_2(x)\}$, where $g_1(x)$ and $g_2(x)$ are convex and concave functions respectively. We set $\alpha = \pa_x\psi_1^{(x)}(g_2(x))$, and define
\begin{align} \label{eqn:gdef}
g(y) \coloneqq \paeigx - \alpha.
\end{align} 
Our aim is to find an expression for the function $g(y)$ using \eqref{eqn:paeigx}. To do this we need to make the following definitions (again suppressing the dependence on $x$ throughout).

\begin{defn} \label{def:F}
We define the function $F(y)$ by,
\begin{align*}
 F(y) \coloneqq V(x,y) -\mu(x). 
 \end{align*}
\end{defn}
We know that $\mu(x) \leq 1+C_1L_1^{-2}$, and that $\min_yV(x,y) \leq \mu(x)$ for all $x$ in the support of $\chi(x)$. This allows us to define the three points $y_1$, $y_2$ and $y_3$.

\begin{figure}[h!] 
\begin{center}
\includegraphics[width=0.45\textwidth]{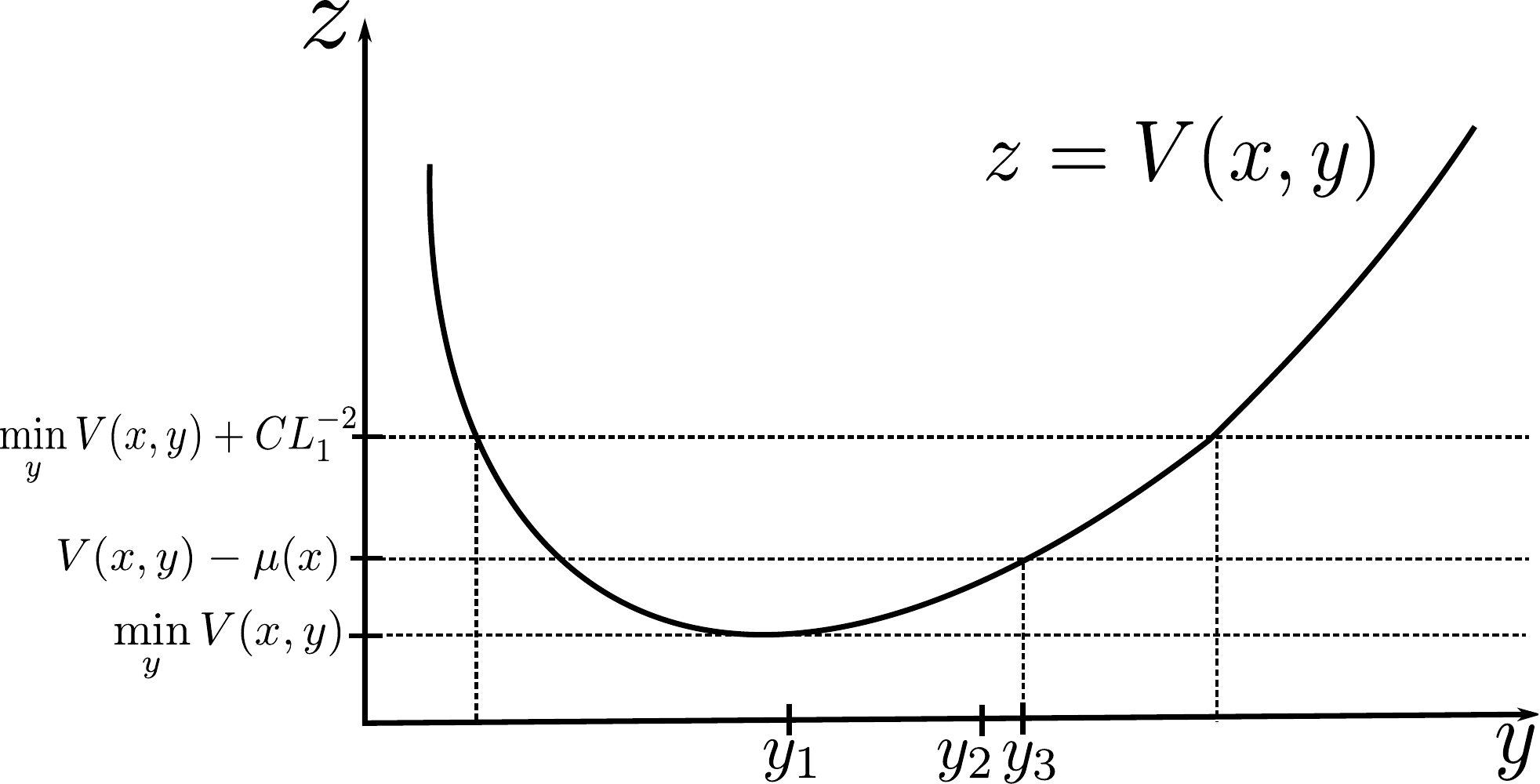}
\caption{The Points $y_1$, $y_2$ and $y_3$ from Definition \ref{def:y}}
\label{fig:y}
\end{center}
\end{figure}

\begin{defn} \label{def:y}
We fix an absolute constant $C$. We define $y_1$  to be the middle point of the \lqu centre', where the centre is  the interval on which $V(x,y) \leq \min_yV(x,y) + CL_1^{-2}$. We then choose  $y_2\geq y_1$ to be the largest value such that $[y_1,y_2]$ is contained in the middle half of the centre. Finally, we define $y_3\geq y_2$ to be the value of $y$ for which $F(y_3) = V(x,y_3) - \mu(x) = 0.$ (See Figure \ref{fig:y})
\end{defn}

\begin{defn} \label{def:phitilde}
We set $\phi(y)$ to be the first eigenfunction of $\mathcal{L}(x)$, but this time normalised to be positive with a maximum of $1$. Note that this function is equal to a multiple of $\eigx$ (where the multiple depends on the fixed value of $x$).

For $y \geq y_1$, we define the function $\tphi(y)$ by
\begin{align*}
\tphi(y) \coloneqq \phi(y) \int_{y_1}^{y} \phi(t)^{-2} \ud t.
\end{align*}
\end{defn}
We can now write down an expression for the function $g(y)$.

\begin{lem} \label{lem:gexpression}
 Let $c_0(x)$ be the value such that
\begin{align*}
 g(y) - c_0(x)\eigx = 0
 \end{align*}
at $y=y_1$. Then, for $y \geq y_1$, the function $g(y)$ satisfies
\begin{align} \label{eqn:gexpression} 
g(y) - c_0(x)\eigx = \phi(y)\int_{y_1}^y \tphi(t)G(x,t) \ud t + \tphi(y)\int_y^{g_2(x)}\phi(t)G(x,t) \ud t, 
\end{align}
where $G(x,y)$ is equal to
\begin{align*}
G(x,y)  =  \mu'(x) \eigx - \pa_xV(x,y) \eigx + (V(x,y) - \mu(x))\alpha . 
\end{align*}
\end{lem}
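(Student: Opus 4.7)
The plan is to recognize $u(y) := g(y) - c_0(x)\eigx$ as the solution of an inhomogeneous two-point Dirichlet problem on $[y_1, g_2(x)]$, and then produce the stated formula as the explicit variation-of-parameters solution. First I would verify the boundary data. At the left endpoint, $u(y_1) = 0$ by the defining choice of $c_0(x)$. At the right endpoint, the constant $\alpha$ was chosen precisely so that $g(g_2(x)) = \paeigx(g_2(x)) - \alpha = 0$, and since $\eigx$ vanishes on $\pa\Om(x)$ we also get $u(g_2(x)) = 0$.

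Next I would derive the ODE for $u$. Setting $L := -\pa_y^2 + F(y)$ with $F(y) = V(x,y) - \mu(x)$, the eigenvalue equation reads $L\eigx = 0$. Differentiating this relation in $x$ yields $L(\paeigx) = \mu'(x)\eigx - \pa_xV(x,y)\eigx$. Since $\alpha$ and $c_0$ are independent of $y$, applying $L$ to $u = g - c_0\eigx$ and using $L\eigx = 0$ together with $L(\alpha) = F(y)\alpha$ gives $Lu = G(x,y)$ on $[y_1, g_2(x)]$ (one isolates the $\alpha$ contribution on the right-hand side to reach the stated form of $G$).

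To execute variation of parameters I need two linearly independent homogeneous solutions with matching endpoint behavior. The rescaled first eigenfunction $\phi$ is one such solution, and it vanishes at $y = g_2(x)$. A second is produced by the standard reduction-of-order formula
\[
\tphi(y) = \phi(y)\int_{y_1}^y \phi(t)^{-2}\,\ud t,
\]
for which a direct computation shows $L\tphi = 0$, $\tphi(y_1) = 0$, and the Wronskian $W(\phi,\tphi) = \phi\tphi' - \phi'\tphi \equiv 1$ on $(y_1, g_2(x))$. The formula asserted in the lemma is then exactly the output of variation of parameters with this pair of solutions: the first integral is tailored so that it vanishes at $y=y_1$ (collapsing integration range, and in any case $\tphi(y_1)=0$), while the second is tailored so that it vanishes at $y=g_2(x)$ (collapsing integration range, and in any case $\phi(g_2(x))=0$), matching the two boundary conditions for $u$. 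Differentiating the candidate expression twice, using $\phi'' = F\phi$, $\tphi'' = F\tphi$, and $W=1$, converts the boundary jumps into $-G(x,y)$ in $u''$, verifying $Lu = G$.

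The main technical subtlety I anticipate is that $\phi(t)^{-2}$ is not integrable as $t \to g_2(x)^-$, since the eigenfunction vanishes linearly on approach to the Dirichlet boundary, so $\int_{y_1}^y \phi(t)^{-2}\,\ud t$ diverges at the right endpoint. What rescues the construction is that $\phi(y)$ vanishes at the matching linear rate, so the product $\tphi(y)$ extends continuously up to $g_2(x)$ and the two integrals in the formula remain well defined at the endpoint. Handling this cancellation carefully is the one place where the proof is not purely algebraic; everything else reduces to the standard variation-of-parameters recipe applied to the BVP identified in the first two paragraphs.
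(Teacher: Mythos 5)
Your proof is correct and takes essentially the same approach as the paper: derive the inhomogeneous ODE $(\mathcal{L}(x)-\mu(x))\bigl(g - c_0\psi_1^{(x)}\bigr)=G$ by differentiating the eigenvalue equation in $x$ and absorbing the constant $\alpha$, check the Dirichlet data at $y_1$ and $g_2(x)$, and then invoke variation of parameters with the pair $\phi$ (vanishing at $g_2(x)$) and the reduction-of-order solution $\tphi$ (vanishing at $y_1$). The paper leaves the Wronskian normalization $W(\phi,\tphi)\equiv 1$ and the integrability of $\phi(t)^{-2}$ near the Dirichlet endpoint implicit; your explicit attention to both is a sound refinement that does not change the underlying argument.
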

\begin{proof}{Lemma \ref{lem:gexpression}} 
We see from the definition of $g(y)$ from \eqref{eqn:gdef} and the equation that $\paeigx$ satisfies in \eqref{eqn:paeigx}, that we have
\begin{align*}
 \left(  -\frac{d^2}{dy^2} + V(x,y) -\mu(x)\right)g(y) = \mu'(x) \eigx - \pa_xV(x,y) \eigx + (V(x,y) - \mu(x))\alpha. 
\end{align*}
The right hand side of the above equation is equal to $G(x,y)$, so that
\begin{align} \label{eqn:geqn}
( \mathcal{L}(x) -\mu(x))(g(y)-c_0(x)\phi(y)) = G(x,y). 
\end{align}
Since $\mathcal{L}(x)$ is a second order ordinary differential operator, to find an expression for $g(y)$ we will apply the method of variation of parameters to \eqref{eqn:geqn}. From Definition \ref{def:phitilde}, we know that 
\begin{align*}
( \mathcal{L}(x) -\mu(x))\phi(y) = 0,
\end{align*}
with $\phi(g_2(x)) = 0$. It is straightforward to check that the function $\tphi(y)$ from Definition \ref{def:phitilde} also satisfies
\begin{align*}
( \mathcal{L}(x) -\mu(x))\tphi(y) = 0,
\end{align*}
for $y\geq y_1$, and is equal to $0$ at $y=y_1$. Thus, since the function $g(y) - c_0(x)\phi(y)$ is equal to $0$ at $y=y_1$ and $y=g_2(x)$, using \eqref{eqn:geqn} and variation of parameters, we can write
\begin{align*}
g(y) - c_0(x)\eigx = \phi(y)\int_{y_1}^y \tphi(t)G(x,t) \ud t + \tphi(y)\int_y^{g_2(x)}\phi(t)G(x,t) \ud t.
\end{align*}
\end{proof}

Looking at this expression for $g(y)$, we see that we will need to study how the magnitude of the functions $\phi(y)$ and $\tphi(y)$ depends on the size of the potential $V(x,y)$, and its derivative with respect to $x$, $\pa_xV(x,y)$. Also, since $g(y) = \paeigx - \alpha$, where $\alpha = \pa_x\psi_1^{(x)}(g_2(x))$, we will also need to estimate the size of $\paeigx$ at the endpoints of the interval $\Om(x)$.

\subsection{Properties of $\phi(y)$}

We first study the function $\phi(y)$, where we recall that it satisfies
\begin{align*}
\left(-\frac{d^2}{dy^2} + V(x,y) - \mu(x)\right) \phi(y) = 0.
\end{align*}
For $x$ fixed in the support of $I$, let us set $L(x)$ to be the largest value such that $V(x,y)$ varies from its minimum value by $L(x)^{-2}$ on an interval in $y$ of length at least $L(x)$. Then, as we remarked in the proof of Lemma \ref {lem:mubound}, $L(x)$ is comparable to $L_1$. Thus, from Lemma 2.4 (b), (d) in \cite{J1}, we immediately get the following estimates on $\phi(y)$ (uniformly in $x$).

\begin{lem} \label{lem:phibasic}
There exists an absolute constant $C_1$ such that the eigenfunction $\phi(y)$ (which we recall will depend on $x$) satisfies
\begin{align*}
|\phi'(y)| \leq C_1/L_1 \text{ for all } y \in \Om(x),
\end{align*}
and
\begin{align*}
\phi(y) \leq C_1e^{-c|y-y_1|/L_1} ,
\end{align*}
where $y_1$ is the point in the \lqu centre' given in Definition \ref{def:y}.
\end{lem}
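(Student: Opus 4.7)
The plan is to reduce the lemma to the corresponding 1D eigenfunction estimates of Lemma 2.4 (b), (d) of \cite{J1}, which are stated in terms of the length scale $L(x)$ attached to the slice potential $V(x,\cdot)$. The first step is therefore to observe (as was already done in the proof of Lemma \ref{lem:mubound}) that for every $x$ in the support of $\chi$, the sublevel set $\Om_{L_1^{-2}}$ has height comparable to $L_1$ in the $y$-direction at this value of $x$, so that $V(x,\cdot)$ rises by an amount comparable to $L_1^{-2}$ over a $y$-interval of length comparable to $L_1$. This means $L(x)$ is comparable to $L_1$, uniformly in $x$, so any bound expressed in terms of $L(x)$ becomes a bound in terms of $L_1$ with only an adjustment of absolute constants.

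With this reduction in place, I would either invoke Lemma 2.4 (b), (d) of \cite{J1} directly, or sketch a self-contained derivation as follows. For the exponential decay statement I would run a standard 1D Agmon argument. Since $\mu(x)\le 1+CL_1^{-2}$ (by Lemma \ref{lem:mubound}) and $V(x,y)\ge 1$ everywhere with $V(x,y)\ge 1+cL_1^{-2}$ outside an interval of length $\sim L_1$ around $y_1$ (by the convexity of $V$ and the definition of $L_1$), the quantity $V(x,y)-\mu(x)$ is bounded below by a positive multiple of $L_1^{-2}$ for $|y-y_1|\gtrsim L_1$. Plugging the Agmon weight $\rho(y)=c|y-y_1|/L_1$ into the identity
\begin{equation*}
\int_{\Om(x)} \bigl(\phi'\bigr)^2 e^{2\rho}\,dy + \int_{\Om(x)}\bigl(V(x,y)-\mu(x)-(\rho')^2\bigr)\phi^2 e^{2\rho}\,dy = 0,
\end{equation*}
with $c>0$ chosen small compared to the constant appearing in the lower bound on $V-\mu$, absorbs the $(\rho')^2$ term and gives an $L^2$ bound on $e^{\rho}\phi$. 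A bootstrap via the eigenfunction equation then upgrades this to the pointwise bound $\phi(y)\le C_1 e^{-c|y-y_1|/L_1}$.

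For the gradient bound, I would use the first integral of the eigenfunction equation. Multiplying $-\phi''+(V(x,y)-\mu(x))\phi=0$ by $\phi'$, integrating from the maximum point $y_{\max}$ (where $\phi'=0$) to $y$, and integrating by parts in the $(V-\mu)\phi\phi'=\tfrac{1}{2}(V-\mu)(\phi^2)'$ term yields
\begin{equation*}
\bigl(\phi'(y)\bigr)^2 = \bigl(V(x,y)-\mu(x)\bigr)\phi(y)^2 - \int_{y_{\max}}^{y}\pa_y V(x,t)\,\phi(t)^2\,dt.
\end{equation*}
On the centre, both terms are controlled by $L_1^{-2}$ since $|V-\mu|\le CL_1^{-2}$ and $\phi\le 1$. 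Outside the centre, the already-established exponential decay of $\phi$ combined with the polynomial-type growth of $V(x,\cdot)-\mu(x)$ forced by convexity (the convex function $V$ can grow at most like a constant times $(|y-y_1|/L_1)^2 L_1^{-2}$ over a scale relevant here, as can be checked by a one-sided slope argument) makes the right-hand side bounded by $C/L_1^2$ uniformly.

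The main technical obstacle is the apparent tension in the derivative bound: $\sqrt{V-\mu}$ can be large in the classically forbidden region, while the target bound $C_1/L_1$ is uniform. The resolution, and the step that requires the most care, is checking that the exponential decay of $\phi$ from the Agmon estimate beats the polynomial growth of $V-\mu$ on every scale; this is where the convexity of $V$ (through Definition \ref{def:V}) and the comparability $L(x)\sim L_1$ are both essential. Once these inputs are in hand, everything else is routine 1D Sturm--Liouville bookkeeping.
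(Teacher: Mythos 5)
Your primary approach --- establishing that $L(x)$ is comparable to $L_1$ uniformly in $x$ (as in the proof of Lemma \ref{lem:mubound}) and then citing Lemma 2.4 (b), (d) of \cite{J1} --- is exactly what the paper does. The alternative self-contained sketch you offer is a reasonable bonus, though note the Agmon identity should read $\int_{\Om(x)}\bigl((e^{\rho}\phi)'\bigr)^2\,dy+\int_{\Om(x)}\bigl(V-\mu(x)-(\rho')^2\bigr)\phi^2 e^{2\rho}\,dy=0$ rather than with $(\phi')^2 e^{2\rho}$ in the first term; this does not affect the conclusion.
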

This second inequality gives an $L^{\infty}$ exponential decay estimate for $\phi(y)$ as we move away from the minimum of $V(x,y)$ on a length scale comparable to $L_1$. In particular, this means that the $L^2(\Om(x))$ norm of $\phi(y)$ is bounded above by a multiple of $L_1^{1/2}$. (In fact, it follows from Lemma 2.4 in \cite{J1} that the $L^2(\Om(x))$-norm also has a lower bound that is comparable to $L_1^{1/2}$.)

We now want to sharpen this $L^{\infty}$ exponential decay estimate for $\phi(y)$ as $V(x,y)$ increases from its minimum.
\begin{prop} \label{prop:phi}
Define the interval $J_k$ by,
\begin{align} \label{eqn:Jk}
J_k= [t_k,t_{k+1}] \coloneqq \{ t\geq y_3: \pa_tV(x,t) \in [2^{-k}, 2^{-k+1}] \}.
\end{align}
Then, for all $t_k\leq t \leq g_2(x)$,
\begin{align*}
  \phi(t) \leq \phi(t_k) \exp(-(t-t_k)2^{-k/3}/10), 
  \end{align*}
for all $y_3 \leq t\leq t_{k+1}$,
\begin{align*}
 \phi(t_{k+1}) \leq \phi(t) \exp(-(t_{k+1}-t)2^{-k/3}/10) 
 \end{align*}
 and for all $t \in J_k$,
\begin{align*}
 \phi(t) \leq |\phi'(t)| 2^{k/3}. 
 \end{align*}
For the interval $\tilde{J}_k$ defined by,
\begin{align}  \label{eqn:Jktilde}
\tilde{J}_k = [\tilde{t}_k,\tilde{t}_{k+1}] \coloneqq \{ t\geq y_3: V(x,t) - \min_{t}V(x,t) \in [2^{-2k/3}, 2^{-2(k-1)/3}]\}, 
\end{align}
we have the analogous bounds on $\phi(t)$.
\end{prop}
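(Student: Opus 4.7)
The plan is to reduce all three bounds to a pointwise differential inequality for $(\log\phi)'$ via an energy identity, then to use the concavity of $h=V^{-1/2}$ to estimate $V(x,t)-\mu(x)$ on $J_k$.

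First I would derive the pointwise inequality $|\phi'(t)|\ge\sqrt{V(x,t)-\mu(x)}\,\phi(t)$ on $[y_1,g_2(x)]$. Setting $E(t)=(\phi'(t))^2-(V(x,t)-\mu(x))\phi(t)^2$ and using $\phi''=(V-\mu(x))\phi$, a direct computation gives $E'(t)=-(\pa_tV)\phi^2\le 0$ on the region past the minimum $y_1$ of $V(x,\cdot)$. Since $\phi(g_2(x))=0$ implies $E(g_2(x))\ge 0$, we conclude $E\ge 0$ throughout, which is exactly the claimed pointwise inequality. Equivalently, $(\log\phi)'(t)\le -\sqrt{V(x,t)-\mu(x)}$, and this is the differential inequality that will be integrated for the exponential estimates.

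Next I would establish the potential estimate $V(x,t)-\mu(x)\ge c\cdot 2^{-2k/3}$ on $J_k$. On $J_k$, the condition $V'\in[2^{-k},2^{-k+1}]$, combined with $V=h^{-2}$ and $h\approx 1$ near $y_3$, translates to $|h'|\sim 2^{-k}$ there. Using the concavity of $h$ (so that $|h'|$ is non-decreasing away from the maximum) and comparing with the model of approximately linear $h$, one obtains $|J_k|\sim 2^{k/3}$ and $V(x,t)-\mu(x)\sim 2^{-2k/3}$ on $J_k$, essentially identifying $J_k$ with $\tilde{J}_k$ up to constants. The analogous bounds on $\tilde{J}_k$ require less work, since $V-\mu(x)\sim 2^{-2k/3}$ holds there by definition.

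Combining these ingredients, bound (3) is immediate: on $J_k$, $\phi(t)/|\phi'(t)|\le 1/\sqrt{V(x,t)-\mu(x)}\le C\cdot 2^{k/3}$. Bounds (1) and (2) then follow by integrating $(\log\phi)'\le-\sqrt{V-\mu(x)}$: on $[t_k,s]$ one has $\sqrt{V-\mu(x)}\ge c\cdot 2^{-k/3}$, so $\log(\phi(s)/\phi(t_k))\le -c\cdot 2^{-k/3}(s-t_k)$, and adjusting the constants yields the stated rate $2^{-k/3}/10$. Bound (2) is analogous, using the monotonicity of the decay rate $|\phi'/\phi|$ that is inherited from the same energy identity.

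The main obstacle is the second step: establishing the quantitative lower bound $V-\mu(x)\ge c\cdot 2^{-2k/3}$ uniformly across $J_k$. For a generic convex $V$ with $V'\sim 2^{-k}$ near its minimum, $V-\min V$ can be arbitrarily much smaller than $2^{-2k/3}$; it is the specific concave structure of $h$ that enforces the relationship (as in the model $h=h_0-cs$, where a computation gives $V-V(0)\sim c^{-2/3}(V')^{2/3}$ once one is a little past the minimum). Quantifying this concavity argument uniformly on $J_k$, so that both the $J_k$ and $\tilde{J}_k$ forms of the bound follow with an absolute constant, is the technical core of the proposition.
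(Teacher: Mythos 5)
The energy argument in your first step is correct --- the same energy identity reappears in the paper's Lemma \ref{lem:energy}, for a different purpose --- but the paper's proof of this proposition is much shorter: it simply cites the inequality $|\phi'(t)|/\phi(t) \geq 2^{-k/3}/10$ on $J_k$ from the proof of Theorem A in \cite{J1}, and integrates. You are attempting to re-derive that inequality, and the reduction you propose, namely the pointwise potential bound $V(x,t) - \mu(x) \geq c\cdot 2^{-2k/3}$ on all of $J_k$, which you acknowledge you have not fully proved, is in fact false at the left end of $J_k$. There is exactly one $k$ with $\partial_t V(x,y_3) \in [2^{-k}, 2^{-k+1}]$, and for that $k$ the interval $J_k$ begins at $t_k = y_3$, where $V(x,y_3) - \mu(x) = 0$ by the very definition of $y_3$; no pointwise lower bound of the proposed form can hold near $t_k$, so the inequality $|\phi'/\phi| \geq \sqrt{V - \mu(x)}$ gives nothing there. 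What saves the logarithmic-derivative bound near such points is the non-local part of the energy, $(\phi'(t_k))^2 \geq \int_{t_k}^{g_2(x)} (\partial_t V)\,\phi^2 \, dt$, combined with the constraint $2^{-k} \geq c L_1^{-3}$ and control of the decay of $\phi$ over a window of length comparable to $L_1$ past $t_k$; this global argument is what \cite{J1} supplies and what the paper invokes, and it cannot be collapsed to the pointwise potential estimate you are relying on. Your treatment of $\tilde J_k$ is essentially fine and parallels the paper's own one-line remark for that case.
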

\begin{rem}
We have the analogous decay estimates for $\phi(y)$ as we move away from the region where $V(x,y) \leq \min_yV(x,y) + L_1^{-2}$ in the other direction.
\end{rem}
\begin{rem}
We recall that $y=y_3$ is the point where $V(x,y) - \mu(x) = 0$. Since $\min_{y}V(x,y) - \mu(x) \leq -cL_1^{-2}$, by convexity, $J_k$ and $\tilde{J}_k$ are only non-empty for those $k$ satisfying $2^k \leq CL_1^3$, for some absolute constant $C>0$.
\end{rem}

\begin{proof}{Proposition \ref{prop:phi}}
The proposition follows from the key inequality given in the proof of Theorem A in \cite{J1},
\begin{align*}
\left|\left(\log \phi(t)\right)' \right|= |\phi'(t)|/\phi(t) \geq 2^{-k/3}/10\qquad  \text{ for all } t \in J_k. 
\end{align*} 
Integrating this inequality from both $t=t_k$ and $t=t_{k+1}$ gives all of the desired estimates involving the intervals $J_k$.

By the definition of the intervals $\tilde{J}_k$, we have $V(x,t)-\mu(x) \geq 2^{-2k/3}$ for $t \in \tilde{J}_k$. Therefore, it is straightforward to obtain the same bounds for $\left(\log \phi(t)\right)' $, and hence $\phi(t)$ itself on $\tilde{J}_k$ as for the intervals $J_k$.
\end{proof}

We now show to what extent $\phi'(y)$ inherits this exponential decay as we move away from the centre.
\begin{prop} \label{prop:phiy}
Let the intervals $J_k$ be defined as in Proposition \ref{prop:phi}. Then, for all $t\geq t_k$,
\begin{align*}
 |\phi'(t)| \leq C|\phi'(t_k)|\exp(-c|t-t_k|2^{-k/3}), 
 \end{align*}
for some absolute constants $c$ and $C>0$.
\end{prop}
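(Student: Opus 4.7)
The plan is to mimic the proof of Proposition \ref{prop:phi}, where the exponential decay of $\phi$ came from a lower bound on the logarithmic derivative $w(t) := |\phi'(t)|/\phi(t)$. Here I would establish an analogous lower bound on $|(\log|\phi'|)'|$ and integrate. The starting point is that $|\phi'|$ is monotonically non-increasing on $[y_3, g_2(x)]$: since $\phi > 0$ on $(y_3, g_2(x))$ with $\phi(g_2(x)) = 0$ and $\phi'' = (V-\mu)\phi \geq 0$, the function $\phi$ is convex and decreasing, so $\phi' < 0$ and
\[
\bigl(|\phi'(t)|\bigr)' \;=\; -\phi''(t) \;=\; -(V(x,t) - \mu(x))\phi(t) \;\le\; 0.
\]
Taking the logarithmic derivative gives $\frac{d}{dt}\log|\phi'(t)| = -(V - \mu)/w$, so it suffices to prove that $(V - \mu)/w \gtrsim 2^{-k/3}$ on each interval $J_{k'}$ with $k' \leq k$.

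To establish this inequality on $J_k$ I need two bounds. First, a lower bound $V(x,t) - \mu(x) \gtrsim 2^{-2k/3}$: this follows from the convexity of $V$, the condition $\partial_t V \geq 2^{-k}$ on $[t_k, g_2(x)]$, and $V(x, y_3) = \mu(x)$, by integrating $\partial_t V$ over a sub-interval of $[y_3, t_k]$ whose length is comparable to $2^{k/3}$ thanks to the structure of the intervals $J_j$ for $j \geq k$. Second, a matching upper bound $w(t) \lesssim 2^{-k/3}$ on $J_k$, which is the counterpart of the key inequality from \cite{J1} used in Proposition \ref{prop:phi}: this follows from the Riccati equation $w' = w^2 - (V - \mu)$ together with $V - \mu \sim 2^{-2k/3}$ on $J_k$, which keeps $w$ close to its WKB value $\sqrt{V - \mu} \sim 2^{-k/3}$.

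Combining these and integrating the logarithmic derivative bound across $J_k$ gives $|\phi'(t)| \leq C|\phi'(t_k)|\exp(-c(t - t_k)2^{-k/3})$ for $t \in J_k$. For $t$ in the subsequent intervals $J_{k-1}, J_{k-2}, \ldots$ I would iterate the estimate; since the decay rate $2^{-k'/3}$ available on $J_{k'}$ is at least $2^{-k/3}$, the required rate is preserved at each step. The relations $|\phi'(t_{k'})| \sim 2^{-k'/3}\phi(t_{k'})$ at the transition points, which follow from the combined upper and lower bounds on $w$, ensure that the multiplicative constants do not accumulate across iterations. The main obstacle is the upper bound on $w$ on $J_k$, which must fail in a boundary layer near $t = g_2(x)$ where $\phi \to 0$ and $w \to \infty$; I would handle this region separately by using the monotonicity $|\phi'(t)| \leq |\phi'(t_k)|$ and the fact that $|\phi'|$ has already decayed to an exponentially small value by the time $w$ becomes large, so that a uniform bound of the claimed form still holds.
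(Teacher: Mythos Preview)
Your approach is genuinely different from the paper's, and it has a real gap.

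The paper does not attempt to control $(\log|\phi'|)'$ directly. Instead it observes that $|\phi'|$ is monotone non-increasing on $[y_3,g_2(x)]$ (exactly your first observation) and then transfers the already–proved exponential decay of $\phi$ from Proposition~\ref{prop:phi} to $|\phi'|$ by a mean-value argument: on each sub-interval of length $2^{k/3}$ one has
\[
\int |\phi'(t)|\,\ud t \;=\; \phi(\text{left end}) - \phi(\text{right end}) \;\le\; \phi(\text{left end}),
\]
and since $|\phi'|$ is decreasing, its value at the right endpoint is at most this integral divided by the interval length $2^{k/3}$. Combined with $\phi(t_k)\le 2^{k/3}|\phi'(t_k)|$ from Proposition~\ref{prop:phi}, this immediately gives the claimed decay of $|\phi'|$. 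No information about $w=|\phi'|/\phi$ beyond the lower bound already established is needed.

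Your route, by contrast, hinges on the \emph{upper} bound $w(t)\lesssim 2^{-k/3}$ on $J_k$, and your justification for it is incorrect. In the forward $t$-direction the equilibrium $w=\sqrt{V-\mu}$ of the Riccati equation $w'=w^2-(V-\mu)$ is \emph{unstable}: if $w>\sqrt{V-\mu}$ then $w'>0$ and $w$ moves further away. So the Riccati equation does not ``keep $w$ close to its WKB value'' in the direction you are integrating. The inequality $w\lesssim\sqrt{V-\mu}$ may well hold in the bulk (it can be obtained by integrating backward from $g_2(x)$, where the equilibrium is stable), but this requires a separate argument that you have not supplied; and it genuinely fails both in the boundary layer near $g_2(x)$ (which you acknowledge) and near $y_3$, where $V-\mu\to 0$ while $w(y_3)>0$. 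Your workaround for these regions is too vague to count as a proof. The paper's argument sidesteps all of this by never needing an upper bound on $w$.
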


\begin{proof}{Proposition \ref{prop:phiy}}
The function $\phi(t)$ satisfies the equation
\begin{align*}
 \phi''(t) = F(t)\phi(t), 
 \end{align*}
with the function $F(t) = V(x,t) - \mu(x)$ as before. On the intervals $J_k$, we know that $t \geq y_3$, and so certainly $F(t) \geq 0$. Also, $\phi'(t) \leq 0$, and so this mean that $|\phi'(t)|$ is decreasing. Thus, for $t \geq t_k$, we have
\begin{align*}
|\phi'(t)| \leq |\phi'(t_k)|.
\end{align*}
If $|t-t_k| \leq 2^{k/3}$, then this is enough to establish the required bound.

Now suppose that $|t-t_k| \in [N2^{k/3},(N+1)2^{k/3}]$ for some $N\geq1$. Then, by Proposition \ref{prop:phi}, we know that $\phi(t)$  satisfies
\begin{align*}
 \phi(t) \leq C2^{k/3}|\phi'(t_k)| \exp(-cN2^{-k/3}). 
 \end{align*}
 In particular, $\phi(t)$ changes by at most $C2^{k/3}|\phi'(t_k)|\exp(-cN2^{-k/3})$, as $t$ ranges over this interval of length $2^{k/3}$. Since $\phi'(t)$ is negative here, this gives us a bound on the integral of $|\phi'(t)|$ over this interval. 
 
 Moreover, as we noted above, by convexity, $|\phi'(t)|$ decreases as $t$ increases.  In particular, since the interval $[N2^{k/3},(N+1)2^{k/3}]$ has length $2^{k/3}$, this means that
\begin{align*}
 |\phi'(t)| \leq C2^{k/3}|\phi'(t_k)|\exp(-cN2^{-k/3}). 2^{-k/3} = C|\phi'(t_k)|\exp(-cN2^{-k/3}), 
 \end{align*}
for $t$ at the right endpoint of the interval. This concludes the proof of the proposition.
\end{proof}

It will often be important to measure the distance of a point $(x,y)$ from the level sets $\{(x,y)\in\Om: V(x,y) = 1+L_1^{-2}\}$.
\begin{defn} \label{def:ystar}
Fix a large absolute constant $C^*$. Then, suppressing the dependence on $x$, let $y^*\geq y_1$ be the first point where $V(x,y) \geq 1+C^*L_1^{-2}$.
\end{defn}

We can now write down an immediate corollary of Proposition \ref{prop:phiy}.
\begin{cor} \label{cor:phiy}
For any $t\geq t_k$, we have the first derivative estimate
\begin{align*}
 |\phi'(t)| \leq CL_1^{-1}\exp(-c|t-t_k|2^{-k/3})\exp(-c|t_k-y^*|/L_1). 
 \end{align*}
\end{cor}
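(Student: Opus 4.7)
The plan is to reduce the corollary to an estimate on $|\phi'(t_k)|$ alone. Proposition \ref{prop:phiy} already supplies the first exponential factor, giving
\[
|\phi'(t)|\leq C|\phi'(t_k)|\exp(-c|t-t_k|2^{-k/3})
\]
for $t\geq t_k$, so what remains is to prove the pointwise bound
\[
|\phi'(t_k)|\leq CL_1^{-1}\exp(-c|t_k-y^*|/L_1). \qquad(\ast)
\]

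To obtain $(\ast)$, I would pick a distinguished index $k^*$ with $2^{-k^*/3}$ comparable to $L_1^{-1}$, so that Proposition \ref{prop:phiy} applied with this index produces decay at the rate $L_1^{-1}$. The natural choice is the $k^*$ for which $J_{k^*}$ contains $y^*$ (or has an endpoint very near it): by the definition of $y^*$ one has $V(x,y^*)-\min_y V(x,y)\sim L_1^{-2}$, and combined with the convexity of $V$ and the scale $L_1$ from Definition \ref{def:L1}, this forces $\pa_y V(x,y^*)\sim L_1^{-3}$, so indeed $2^{-k^*}\sim L_1^{-3}$ and the length of $J_{k^*}$ is $O(L_1)$. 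For $t_k\geq y^*$, Proposition \ref{prop:phiy} (which holds for all $t\geq t_{k^*}$, not only for $t\in J_{k^*}$) then yields
\[
|\phi'(t_k)|\leq C|\phi'(t_{k^*})|\exp(-c|t_k-t_{k^*}|2^{-k^*/3})\leq C|\phi'(t_{k^*})|\exp(-c'|t_k-y^*|/L_1),
\]
the last step absorbing the $O(L_1)$ discrepancy between $|t_k-t_{k^*}|$ and $|t_k-y^*|$ into the constant. Lemma \ref{lem:phibasic} gives $|\phi'(t_{k^*})|\leq C/L_1$, which closes $(\ast)$. The remaining case $t_k\leq y^*$ is immediate: both $t_k$ and $y^*$ lie in the region $\{V-\min_y V\leq CL_1^{-2}\}$, whose $y$-extent is $O(L_1)$ by the definition of $L_1$, so $|t_k-y^*|\leq CL_1$, the exponential in $(\ast)$ is a harmless constant, and Lemma \ref{lem:phibasic} supplies $|\phi'(t_k)|\leq C/L_1$ directly.

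The main obstacle I anticipate is the geometric input: namely verifying both that $\pa_y V(x,y^*)\sim L_1^{-3}$ (so $k^*$ lands in the right place with decay rate $\sim L_1^{-1}$) and that the length of $J_{k^*}$ is $O(L_1)$. Neither is deep, but both require a careful use of convexity of $V$ together with the scale $L_1$, in the spirit of Lemma \ref{lem:concave1} which has already been invoked repeatedly in this section.
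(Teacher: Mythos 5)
Your proposal is correct and follows essentially the same route as the paper, just with the bookkeeping made explicit. The paper's one-line proof reads Proposition \ref{prop:phiy} with $y^*$ in the role of the base point (implicitly using that $y^*$ lies in a $J_{k^*}$ with $2^{k^*}$ comparable to $L_1^3$, so that $2^{-k^*/3}$ is at least comparable to $L_1^{-1}$, and that $|\phi'(y^*)| \leq CL_1^{-1}$ by Lemma \ref{lem:phibasic}); you instead locate the endpoint $t_{k^*}$ of that $J_{k^*}$, apply the proposition between $t_{k^*}$ and $t_k$, and then absorb $|t_{k^*}-y^*|$ into the constant. That absorption is legitimate: $y_3 \leq t_{k^*} \leq y^*$ and $|y^*-y_3| = O(L_1)$, which follows from Lemma \ref{lem:mubound} (the one-dimensional length scale $L(x)$ at the relevant cross-sections is comparable to $L_1$) together with the convexity of $V$. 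For the geometric inputs you flag as obstacles: only the lower bound $\pa_yV(x,y^*) \gtrsim L_1^{-3}$ is actually needed (so that the decay rate $2^{-k^*/3}$ is at least of order $L_1^{-1}$); if $\pa_yV(x,y^*)$ is larger, the decay is only faster and $J_{k^*}$ is correspondingly shorter, so the argument still closes. Your separate treatment of $t_k \leq y^*$, where the exponential is a harmless constant and Lemma \ref{lem:phibasic} finishes directly, is also correct and is something the paper leaves implicit.
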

\begin{proof}{Corollary \ref{prop:phiy}}
We can apply Proposition \ref{prop:phiy} with $t$ replaced by $t_k$ and $t_k$ replaced by $y^*$ to obtain a bound on $|\phi'(t_k)|$ of the form
\begin{align*}
|\phi'(t_k)| \leq CL_1^{-1} \exp(-c|t_k-y^*|/L_1).
\end{align*}
We then use this bound in the right hand side of the estimate for $|\phi'(t)|$ in Proposition \ref{prop:phiy} to get the desired result.
\end{proof}

\subsection{Properties of $\tphi(y)$}

From Lemma \ref{lem:gexpression}, we see that as well as $\phi(y)$, it will also be important to study the properties of $\tphi(y)$, where we recall that for $y \geq y_1$, we have
\begin{align*}
\tphi(y) = \phi(y) \int_{y_1}^{y} \phi(t)^{-2} \ud t.
\end{align*}
We recall from Definition \ref{def:y} that $y_2\geq y_1$ is the largest value of $y_2$ such that $[y_1,y_2]$ is contained in the middle half of the \lqu centre', where $V(x,y) \leq \min_{t}V(x,t) + CL_1^{-2}$  and that $y_3\geq y_2$ is the value of $y$ for which $F(y_3) = V(x,y_3) - \mu(x) = 0.$ We now prove:

\begin{lem} \label{lem:tphi}
The function $\tphi(y)$ satisfies
\begin{align*}
 \tphi(y) \leq C_1L_1,
 \end{align*}
for $y_1\leq y \leq y_3$ and
\begin{align*}
 \tphi(y) \leq C_1L_1 + C_1|\phi'(y)|^{-1},
 \end{align*}
for $y_3\leq y \leq g_2(x)$.
\end{lem}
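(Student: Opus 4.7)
The plan is to exploit that $\tphi$ is the second linearly independent solution of $(\mathcal{L}(x) - \mu(x))\psi = 0$ obtained from $\phi$ by reduction of order, so that $\tphi(y_1) = 0$ and the Wronskian identity $\phi\tphi' - \phi'\tphi \equiv 1$ holds on $[y_1, g_2(x)]$. Two preliminary inputs I would establish first are the lower bound $\phi(y_1) \geq c$ for an absolute $c > 0$, coming from the $L^\infty$-normalization of $\phi$ together with the lower-bound analogs of Lemma \ref{lem:phibasic} (as in Lemma 2.4 of \cite{J1}), and the bound $y_3 - y_1 \leq CL_1$, which holds because $\mu(x) \leq \min_y V(x,y) + C'L_1^{-2}$ (as in Lemma \ref{lem:mubound}) places both points in a set $\{V(x,\cdot) \leq \min_y V + C''L_1^{-2}\}$ of width comparable to $L_1$.

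For the range $y_1 \leq y \leq y_3$, I would note that $V - \mu \leq 0$ forces $\phi'' \leq 0$, so $\phi$ is concave there. Concavity gives $\phi(\theta y_1 + (1-\theta) y) \geq \theta\phi(y_1) + (1-\theta)\phi(y)$, and an elementary change of variables then yields $\int_{y_1}^y \phi(t)^{-2}\, dt \leq (y - y_1)/(\phi(y_1)\phi(y))$. Multiplying through by $\phi(y)$ gives $\tphi(y) \leq (y - y_1)/\phi(y_1) \leq CL_1$, which is the first claim.

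For the range $y_3 \leq y \leq g_2(x)$, I would split $\tphi(y) = A\phi(y) + \tphi_2(y)$ with $A := \int_{y_1}^{y_3}\phi^{-2}\,dt$ and $\tphi_2(y) := \phi(y)\int_{y_3}^y \phi^{-2}\,dt$. Specializing the concavity bound at $y = y_3$ gives $A \leq CL_1/\phi(y_3)$. On $[y_3, g_2(x)]$ the sign of $V - \mu$ flips, so $\phi$ is convex, positive, and vanishes at $g_2(x)$; a short convexity argument then forces $\phi$ to be monotonically non-increasing on this interval, so $\phi(y) \leq \phi(y_3)$ and hence $A\phi(y) \leq CL_1$. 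For $\tphi_2$, observe that it solves the same ODE as $\phi$, with $\tphi_2(y_3) = 0$ and $\tphi_2'(y_3) = 1/\phi(y_3) > 0$ (read off from the Wronskian at $y_3$), and that $\tphi_2'' = (V - \mu)\tphi_2 > 0$ on $[y_3, g_2(x)]$, so $\tphi_2'$ is increasing and therefore strictly positive throughout. Combining $\phi, \tphi_2, \tphi_2' > 0$ and $\phi' \leq 0$ with $\phi\tphi_2' - \phi'\tphi_2 = 1$ forces $|\phi'(y)|\tphi_2(y) \leq 1$, i.e., $\tphi_2(y) \leq 1/|\phi'(y)|$. Adding the two contributions produces the claimed bound $\tphi(y) \leq C_1L_1 + C_1/|\phi'(y)|$.

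The main obstacle I anticipate is justifying the lower bound $\phi(y_1) \geq c$; this requires pulling the lower-bound one-dimensional eigenfunction estimates out of \cite{J1} and translating them into the present setting (with potential $V(x,\cdot)$ and length scale $L(x) \sim L_1$ as in the proof of Lemma \ref{lem:mubound}). Once this lower bound and the size estimate $y_3 - y_1 \leq CL_1$ are in hand, both halves of the lemma reduce to the concavity-of-$\phi$ identity on the classically allowed interval and the Wronskian-plus-convexity argument on the forbidden interval, which are both short and clean.
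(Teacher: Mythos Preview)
Your proposal is correct and somewhat cleaner than the paper's own argument. Both proofs rely on the same preliminary inputs (the lower bound $\phi(y_1)\geq c$ from Lemma~2.4 in \cite{J1} and the length estimate $y_3-y_1\leq CL_1$), but they organize the work differently.

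On the classically allowed interval $[y_1,y_3]$ the paper splits into $[y_1,y_2]$ and $[y_2,y_3]$: on the first subinterval it uses the uniform lower bound $\phi\geq c$ directly, and on the second it argues separately via the relation $\phi'(y)=\int_{\tilde y}^y F\phi$ to control $\int_{y_2}^{y_3}\phi^{-2}$. Your single-step concavity identity
\[
\int_{y_1}^{y}\phi(t)^{-2}\,dt \;\leq\; \frac{y-y_1}{\phi(y_1)\,\phi(y)}
\]
handles the whole interval at once and avoids the case analysis; this is a genuine simplification.

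On the forbidden interval $[y_3,g_2(x)]$ the two proofs are essentially equivalent reformulations of the same convexity fact. The paper uses the tangent-line lower bound $\phi(t)\geq \phi(y)+|\phi'(y)|(y-t)$ and integrates explicitly to get $\int_{y_3}^{y}\phi^{-2}\leq C\phi(y)^{-1}|\phi'(y)|^{-1}$, while you read off $|\phi'(y)|\,\tphi_2(y)\leq 1$ directly from the Wronskian identity $\phi\tphi_2'-\phi'\tphi_2=1$ together with the sign information $\phi,\tphi_2,\tphi_2'>0$, $\phi'\leq 0$. Your route is slicker but yields the same bound with the same constant. One minor advantage of your decomposition $\tphi=A\phi+\tphi_2$ is that it makes explicit why no lower bound on $\phi(y_3)$ is needed: even though you only get $A\leq CL_1/\phi(y_3)$, the monotonicity $\phi(y)\leq\phi(y_3)$ on $[y_3,g_2(x)]$ immediately recovers $A\phi(y)\leq CL_1$.
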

\begin{proof}{Lemma \ref{lem:tphi}}
We first consider the interval $[y_1,y_2]$. By the definition of the point $y_2$, Lemma 2.4 in \cite{J1} implies that we have an absolute lower bound on $\phi(t)$ for $t\in[y_1,y_2]$, and we know that this interval is of length comparable to $L_1$. Thus, for $y\in[y_1,y_2]$, we have
\begin{align*}
 \tphi(y) \leq C_1L_1 \phi(y).
 \end{align*}
Before considering $y\in[y_2,y_3]$, we first assume that $y\geq y_3$. Here $F(y) \geq 0$, and so $|\phi'(y)|$ is decreasing ($\phi'(y)$ is becoming less negative). Therefore, for $t\in[y_3,y]$, we have the lower bound
\begin{align*}
 \phi(t) \geq \phi(y) + |\phi'(y)|(y-t).
 \end{align*}
 This gives us the bound
\begin{align}  \label{eqn:tphilem1}
 \int_{y_3}^y \phi(t)^{-2} \ud t \leq C_1\phi(y)^{-1}|\phi'(y)|^{-1}.
 \end{align}
 We now want to bound
 \begin{align*}
  \int_{y_2}^{y_3} \phi(t)^{-2} \ud t.
 \end{align*}
 Since $\phi''(y) = F(y)\phi(y)$, we have
 \begin{align*}
 \phi'(y) = \int_{\tilde{y}}^yF(t)\phi(t) \ud t,
 \end{align*}
 where $\phi(y)$ attains its maximum of $1$ at $y=\tilde{y}$. For $t\in[y_2,y_3]$, $F(t) \leq 0$, and so $|\phi'(t)|$ is increasing from $0$, $\phi(t)$ is decreasing from $1$, and $|y_3-y_2| \leq C_1L_1$. Therefore, either $\phi(t)$ is bounded below by an absolute constant or else $|\phi'(y)| \geq C_1L_1^{-1}$. This gives us the bound
 \begin{align} \label{eqn:tphilem2}
   \int_{y_2}^{y_3} \phi(t)^{-2} \ud t \leq C_1L_1. 
   \end{align}
   Combining the bounds in \eqref{eqn:tphilem1} and \eqref{eqn:tphilem2} shows that
   \begin{align*}
   \tphi(y) \leq C_1L_1, 
   \end{align*}
for $y \in[y_2,y_3]$, and
\begin{align*}
\tphi(y) \leq C_1L_1 + C|\phi'(y)|^{-1} 
\end{align*}
for $y\geq y_3$, as required. 

\end{proof}

\subsection{An Estimate For $\paeigx$ At The Boundary}

We can now bound $\paeigx$ at the endpoints of the interval $\Om(x)$. For each fixed $x$, $\eigx$ has zero boundary conditions on $\Om(x)$. However, since the interval $\Om(x)$ will in general depend on $x$, $\pa_x\eigx$ will not necessarily be zero when $y$ is at the end-points of $\Om(x)$. 

We recall from Definition \ref{def:ystar}, that $y^*\geq y_1$ is the first point where $V(x,y) \geq 1+ C^*L_1^{-2}$, for a fixed large constant $C^*$. The upper endpoint of  the interval $\Om(x)$ is equal to $g_2(x)$, and we set
\begin{align} \label{eqn:Mdef}
M \coloneqq g_2(x) - y^*,
\end{align}
which is the distance between the endpoint of $\Om(x)$ and the region where the potential $V(x,y)$ is less than $1+C^*L_1^{-2}$. We can prove a bound on $\pa_x\psi_1^{(x)}(g_2(x))$ in terms of $M$.

\begin{prop} \label{prop:boundary}
For $y = g_2(x)$ equal to the upper endpoint of the interval $\Om(x)$, we have the bound
\begin{align*}
|\alpha| = \left|\pa_x\psi_1^{(x)}(g_2(x))\right| \leq CL_2^{-1}L_1^{-3/2}(L_1+M)\exp(-cML_1^{-1}). 
\end{align*}
We also have an analogous bound for $y$ equal to the lower endpoint of $\Om(x)$.
\end{prop}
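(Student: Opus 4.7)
The plan is to reduce the $x$-derivative of $\psi_1^{(x)}$ at the moving boundary point $g_2(x)$ to a product of three factors---a normalisation constant, a $y$-derivative of the unnormalised eigenfunction $\phi$, and the slope $g_2'(x)$---and estimate each separately using the preceding subsections together with the geometric setup.

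First, I would write $\psi_1^{(x)}(y) = c(x)^{-1}\phi(x, y)$, where $\phi(x,\cdot)$ is normalised to have maximum $1$ and $c(x) = \|\phi(x,\cdot)\|_{L^2(\Om(x))}$. Since $\phi(x, g_2(x)) \equiv 0$, differentiating the normalisation formula in $x$ kills the term involving $c'(x)$ at $y=g_2(x)$, and differentiating $\phi(x, g_2(x))\equiv 0$ in $x$ gives $\pa_x\phi(x, g_2(x)) = -g_2'(x)\pa_y\phi(x, g_2(x))$. Combining these yields
\[
\pa_x\psi_1^{(x)}(g_2(x)) \;=\; -\,\frac{g_2'(x)}{c(x)}\,\pa_y\phi(x, g_2(x)).
\]

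Next, I would bound $c(x)^{-1}$ and $|\pa_y\phi(x, g_2(x))|$. By Lemma \ref{lem:phibasic} (with the matching lower bound noted in the remark afterwards, coming from Lemma 2.4 in \cite{J1}) and $L(x)\sim L_1$ on the support of $\chi$ from Lemma \ref{lem:mubound}, one has $c(x)\sim L_1^{1/2}$ uniformly, so $c(x)^{-1}\le CL_1^{-1/2}$. For the $y$-derivative, I would apply Corollary \ref{cor:phiy}, choosing $t_k$ to be the left endpoint of the interval $J_k$ containing $g_2(x)$; monotonicity of $|\phi'|$ past $y_3$ and the second exponential in the corollary (noting $|t_k-y^*|\sim M$ once the short first-exponential gap $|g_2(x)-t_k|$ has been absorbed) give
\[
|\pa_y\phi(x, g_2(x))| \;\le\; CL_1^{-1}\exp(-cM/L_1).
\]

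The main step, and the delicate one, is the geometric bound $|g_2'(x)| \le CL_2^{-1}(L_1+M)$. Concavity of the upper boundary $g_2$ means that at $x$ in the middle three quarters of $I=[a_1,a_2]$ the tangent line at $x$ lies above the graph of $g_2$ at each endpoint, so for the appropriate sign of $g_2'(x)$,
\[
|g_2'(x)| \;\le\; \frac{g_2(x)-g_2(a_i)}{\min(x-a_1,\,a_2-x)} \;\le\; \frac{C}{L_2}\bigl(g_2(x)-g_2(a_i)\bigr).
\]
I would control the numerator by writing $g_2(x)-g_2(a_i) \le (y^*(x)-y^*(a_i))+M$, and arguing that $|y^*(x)-y^*(a_i)|\le CL_1$: this is where the orientation fixed in Section \ref{sec:L1} enters, since it forces $\Om_{L_1^{-2}}$ (and by convexity of $V$, also $\Om_{C^*L_1^{-2}}$) to be contained in a $y$-strip of width comparable to $L_1$.

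Multiplying the three estimates produces the claimed inequality for $|\alpha|$, and the argument for the lower endpoint $g_1(x)$ is identical with signs reversed. The main obstacle is the geometric step: one must verify that the variation of $y^*$ across $I$ really is $O(L_1)$ (not merely $O(\tilde L_1)$), which is why the orientation adopted in the definition of $L_2$ is essential here.
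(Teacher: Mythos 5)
Your proposal is correct and follows essentially the same route as the paper: factor $|\alpha|$ into $|g_2'(x)|$ times the normalisation constant $c(x)^{-1}\sim L_1^{-1/2}$ times $|\pa_y\phi(x,g_2(x))|\lesssim L_1^{-1}e^{-cM/L_1}$, then bound $|g_2'(x)|\le CL_2^{-1}(L_1+M)$ geometrically. The one mild difference is in that last geometric step: the paper argues via a case split ($M\lesssim L_1$, where the slope of the boundary of $\Om_{L_1^{-2}}$ is transferred to $\pa\Om$ ``by convexity''; and $M\gg L_1$, where $\Om$ contains an ellipse of height $\sim M$ and length $\sim L_2$), whereas you give a single tangent-line estimate for the concave function $g_2$, bounding $g_2(x)-g_2(a_i)\le M + |y^*(x)-y^*(a_i)|\le M+CL_1$ using the containment of $\Om_{C^*L_1^{-2}}$ in a $y$-strip of width $O(L_1)$. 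Your unified version is cleaner and makes the role of the chosen orientation (which forces the $y$-strip width to be $L_1$ rather than $\tilde L_1$) more transparent, but both arguments rest on the same convexity facts and yield the identical bound.
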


\begin{proof}{Proposition \ref{prop:boundary}}
We can view $\eigx$ as a function of two variables on the domain $\Om$, with $\eigx$ identically equal to $0$ on $\pa \Om$. In particular, for those $x$ in the support of the cut-off function $\chi(x)$, we have written the upper boundary of $\Om$ as the graph of the function $y=g_2(x)$, and so $\psi_1^{(x)}(g_2(x))$ is identically zero as a function of $x$. Differentiating this with respect to $x$ gives
\begin{align} \label{eqn:gradbound}
 \pa_x\psi_1^{(x)}(g_2(x)) = -g_2'(x)\pa_y\psi_1^{(x)}(g_2(x)).
\end{align}
Thus, to obtain a bound on $\pa_x\psi_1^{(x)}(g_2(x))$, it is enough to consider $\pa_y\psi_1^{(x)}(y)$, and the slope of $\pa \Om$ at $(x,g_2(x))$.

We remarked in the definition of $\phi(y)$ in Definition \ref{def:phitilde} that the eigenfunction $\eigx$ is equal to a multiple of $\phi(y)$. Since $\phi(y)$ has $L^2(\Om(x))$-norm comparable to $L_1^{1/2}$, whereas $\eigx$ is $L^2(\Om(x))$-normalised, this multiple is comparable to $L_1^{-1/2}$. Thus, by the bound on $\phi'(y)$ from Proposition \ref{prop:phiy}, with $2^k$ comparable to $L_1^3$, we have the bound
\begin{align*}
\left|\pa_y\psi_1^{(x)}(g_2(x))\right| \leq CL_1^{-3/2}\exp\left(-cML_1^{-1}\right).
\end{align*}
Therefore, by \eqref{eqn:gradbound},  to conclude the proof of the proposition it is enough to show that
\begin{align} \label{eqn:boundary1}
|g_2'(x)| \leq C(L_1+M)L_2^{-1},
\end{align}
for an absolute constant $C>0$. Recall the set $\Om_{L_1^{-2}} = \{(x,y)\in\Om: V(x,y) \leq 1 + L_1^{-2}\}$. This is a convex subset of $\Om$ with height comparable to $L_1$ in the $y$-direction, and length comparable to $\tilde{L}_1$ in the $x$-direction. Moreover, for $x$ fixed in the support of $\chi(x)$, we are at a distance at least comparable to $L_2$ from the left and right ends of $\Om_{L_1^{-2}}$. Therefore, if we write the upper boundary of $\Om_{L_1^{-2}}$ of this set as the graph of a function $y=v(x)$, then certainly we have the derivative bound
\begin{align*}
|v'(x)| \leq CL_1L_2^{-1}.
\end{align*}
In particular, if the distance $M$ is bounded above by a multiple of $L_1$, then by convexity, the part of $\pa\Om$ for $x$ contained in the support of $\chi(x)$ has slope bounded by a multiple of $L_1L_2^{-1}$. This gives the desired bound for $g_2'(x)$ in \eqref{eqn:boundary1},
\begin{align*}
|g_2'(x)| \leq CL_1L_2^{-1}
\end{align*}

If the distance $M$ is large compared to $L_1$, then the domain $\Om$ is convex, and contains an ellipse of height comparable to $M$ in the $y$-direction, and length comparable to $L_2$ in the $x$-direction. Thus, the part of $\pa\Om$ with $x$ in the support of $\chi(x)$ has slope bounded by a multiple of $ML_2^{-1}$.  Again we get a bound for $g_2'(x)$, 
\begin{align*}
|g_2'(x)| \leq CML_2^{-1}
\end{align*}
which implies the bound in \eqref{eqn:boundary1}.

This establishes the estimate in \eqref{eqn:boundary1} in all cases, and completes the proof of the proposition.
\end{proof}

We have now established the properties of the functions $\phi(y)$ and $\tphi(y)$ together with the bound required on $\alpha = \pa_x\psi_1^{(x)}(g_2(x))$. Thus, we return to the expression for
\begin{align*}
g(y) = \paeigx - \alpha
\end{align*}
that we derived in Lemma \ref{lem:gexpression}:
\begin{align} \label{eqn:gexpression2}
g(y) - c_0(x)\eigx = \phi(y)\int_{y_1}^y \tphi(t)G(x,t) \ud t + \tphi(y)\int_y^{g_2(x)}\phi(t)G(x,t) \ud t, 
\end{align}
where $G(x,y)$ equals
\begin{align} \label{eqn:Gexpression}
G(x,y)  =  \mu'(x) \eigx - \pa_xV(x,y) \eigx + (V(x,y) - \mu(x))\alpha . 
\end{align}

We will use \eqref{eqn:gexpression2} to obtain the desired bound on $\paeigx$:

\begin{prop} \label{prop:paxeigpointwise}
As usual, for $2^k \leq CL_1^3$, let the intervals $J_k$ be given by
\begin{align*}
J_k = [t_k,t_{k+1}] = \{t\geq y_3: \pa_tV(x,t) \in [2^{-k},2^{-k+1}] \}. 
\end{align*}
We have the pointwise bound 
\begin{align*}
\left| \paeigx - c_0(x)\psi_1^{(x)}(y) \right| \leq F_1(y) + F_2(y)
\end{align*}
for all $y\in \Om(x)$ with $y\geq y_1$. Here $F_1(y)$ is a positive function on $\Om(x)$, with a maximum comparable to $L_2^{-1}L_1^{-1/2}$ and decaying exponentially from this maximum on a length scale comparable to $L_1$ as $y$ moves away from the interval where $V(x,y) \leq 1+ L_1^{-2}$. The function $F_2(y)$  is also a positive function on $\Om(x)$, with a maximum comparable to $L_2^{-1}L_1^{-1/2}$  but it decays exponentially from this  maximum within each interval $J_k$ on a length scale comparable to $2^{k/3}$. We also have the analogous exponential decay estimate on the corresponding intervals as we move away from the \lqu centre' region where $V(x,y) \leq 1 + L_1^{-2}$ in the opposite direction with $y \leq y_1$.
\end{prop}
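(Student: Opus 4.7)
The plan is to apply the variation of parameters formula \eqref{eqn:gexpression2} of Lemma \ref{lem:gexpression} pointwise, decomposing the source term $G(x,y)$ in \eqref{eqn:Gexpression} into three pieces, and estimating each integral using the decay estimates already established for $\phi(y)$, $\tphi(y)$, and $\alpha$. Since $\paeigx - c_0(x)\eigx = g(y) + \alpha - c_0(x)\eigx$, I first absorb the constant $\alpha$ into $F_1(y)$ using Proposition \ref{prop:boundary}, which produces precisely a term with maximum $\lesssim L_2^{-1}L_1^{-1/2}$ and $L_1$-scale exponential decay in the distance $M$ from the centre.

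Next I would write $G = G_1 + G_2 + G_3$ with $G_1 = \mu'(x)\eigx$, $G_2 = -\pa_xV(x,y)\eigx$, $G_3 = (V(x,y)-\mu(x))\alpha$, and treat each separately. For $G_1$, the convexity of $\mu(x)$ from Lemma \ref{lem:mu(x)convex} together with Definition \ref{def:L2} forces $|\mu'(x)| \lesssim L_2^{-2}$ (or better) for $x$ in the middle of $I$, and after multiplication by $\eigx$ and integration against $\phi(y)$ or $\tphi(y)$ this contribution inherits only the $L_1$-scale exponential decay from Lemma \ref{lem:phibasic}, feeding into $F_1$. For $G_3$, I use $|V-\mu| \lesssim L_1^{-2}$ on the centre and the bound on $\alpha$ from Proposition \ref{prop:boundary} to pick up the $L_2^{-1}$ factor; again integration against $\phi$ and $\tphi$ produces only $L_1$-scale decay and contributes to $F_1$.

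The main work is the $G_2 = -\pa_xV(x,y)\eigx$ term, which is where the $J_k$-structure of $F_2(y)$ arises. The key geometric input — and the main obstacle — is the estimate $|\pa_xV(x,y)| \lesssim (L_1/L_2)\,|\pa_yV(x,y)|$ (at least for $(x,y)$ in the relevant range), which follows from the concavity of $h(x,y)=V(x,y)^{-1/2}$, the fact that the projections of $\Om_{L_1^{-2}}$ onto the $x$- and $y$-axes are comparable to $\tilde{L}_1 \geq L_2$ and $L_1$ respectively (Remark after Proposition \ref{prop:L2bound} and the orientation fixed before Definition \ref{def:L2}), and the geometric argument already used in Proposition \ref{prop:boundary} to bound $g_2'(x)$. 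On the interval $J_k$ one then has $|\pa_xV(x,t)| \lesssim (L_1/L_2)\,2^{-k}$, while $\eigx \lesssim L_1^{-1/2}\phi(t)$ since the $L^2(\Om(x))$-norm of $\phi$ is comparable to $L_1^{1/2}$.

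Finally, I would plug these bounds into \eqref{eqn:gexpression2}, splitting the integrals over $y_3 \leq t \leq g_2(x)$ according to the $J_k$ partition. Using the sharpened exponential decay of $\phi$ from Proposition \ref{prop:phi} on each $J_k$ (with rate $2^{-k/3}$), the first-derivative decay from Corollary \ref{cor:phiy}, and the two-part bound on $\tphi(y)$ from Lemma \ref{lem:tphi} (of order $L_1$ on the centre and of order $|\phi'(y)|^{-1}$ beyond), the integrals telescope to produce a contribution whose maximum on each $J_k$ is $\lesssim L_2^{-1}L_1^{-1/2}$ and which decays exponentially within $J_k$ at rate $2^{-k/3}$; this is the content of $F_2(y)$. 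Assembling the three contributions and running the symmetric argument on $y \leq y_1$ yields the stated pointwise estimate.
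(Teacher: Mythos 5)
Your overall strategy is the paper's: write $\paeigx - c_0(x)\eigx$ via the variation-of-parameters formula \eqref{eqn:gexpression2}, split $G = G_1 + G_2 + G_3$ exactly as you do, absorb $\alpha$ into $F_1$ via Proposition \ref{prop:boundary}, and estimate each piece against the decay of $\phi$ and $\tphi$ on the dyadic intervals $J_k$. That much is correct. But two of the key estimates you invoke are not right as stated, and one idea you skip is essential and not recoverable from the tools you cite.

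The estimate $|\pa_xV(x,y)| \lesssim (L_1/L_2)\,|\pa_yV(x,y)|$ fails once $|y-y^*|$ is large compared to $L_1$: the level set of $V$ through $(x,y)$ then has height comparable to $|y-y^*|$ rather than $L_1$, so the correct slope bound carries a factor $(|y-y^*|+L_1)L_2^{-1}$ (Lemma \ref{lem:paxV}); absorbing that extra polynomial growth is exactly what forces the $J_k$-by-$J_k$ bookkeeping in Lemmas \ref{lem:G2} and \ref{lem:G2b}. Similarly, for $G_3 = (V-\mu)\alpha$ you only use $|V-\mu|\lesssim L_1^{-2}$ ``on the centre,'' but beyond it $|V(x,t)-\mu(x)|$ is unbounded near $\pa\Om(x)$; the paper introduces a second dyadic family $\tilde{J}_k$ in \eqref{eqn:Jktilde}, keyed to the level of $V-\min V$, and on the outer part uses $\phi'' = (V-\mu)\phi$ rather than any pointwise bound on $V-\mu$. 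Most importantly, the integral $\tphi(y)\int_y^{g_2(x)}\phi(t)^2\pa_tV(x,t)\,(\cdots)\,\ud t$ is not controlled by the decay estimates you cite: $\pa_tV$ may blow up at $g_2(x)$, and no bound on $\phi$ alone tames $\phi^2\pa_tV$ there. The paper closes this with the energy identity of Lemma \ref{lem:energy}, obtained by differentiating $\mathcal{E}(t) = (\phi'(t))^2 - F(t)\phi(t)^2$ to get $\int_{\ty}^{g_2(x)}\phi(t)^2\pa_tV(x,t)\,\ud t \le (\phi'(\ty))^2$, which then pairs cleanly with $\tphi(\ty)\lesssim L_1 + |\phi'(\ty)|^{-1}$; without this (or a genuine substitute) the contribution to $F_1$ and $F_2$ from the second integral does not close. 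A smaller point: $|\mu'(x)|\lesssim L_2^{-2}$ is not strong enough for the stated maximum $L_2^{-1}L_1^{-1/2}$ --- what is needed is $|\mu'(x)|\lesssim L_1^{-2}L_2^{-1}$, and convexity of $\mu$ over the interval of length $L_2$ actually gives $|\mu'(x)|\lesssim L_2^{-3}$ (Lemma \ref{lem:muprime}), which suffices since $L_1\le L_2$.
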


Before we prove this proposition, let us show how it implies the $L^2(\Om(x))$-bound on $\paeigx$ given in Proposition \ref{prop:paxeigupperbound}: We saw in the proof of Proposition \ref{prop:eigboundinter} that $\paeigx$ and $\eigx$ satisfy the orthogonality relation
\begin{align*}
\int_{\Om(x)} \paeigx \eigx \ud y = 0.
\end{align*}
Thus, since $\eigx$ is $L^2(\Om(x))$-normalised, we have the expression
\begin{align*}
c_0(x) = \int_{\Om(x)} \left(c_0(x)\eigx - \paeigx\right)\eigx \ud y .
\end{align*}
Using the bound on $c_0(x)\eigx - \paeigx$ in Proposition \ref{prop:paxeigpointwise} we obtain
\begin{align} \label{eqn:c0bound}
|c_0(x)| \leq  C_1L_2^{-1}L_1^{-1/2}\int_{\Om(x)} \eigx \ud y  \leq C_1L_2^{-1},
\end{align}
where the final inequality holds since $\eigx$ has $L^2(\Om(x))$-norm equal to $1$, and decays exponentially away from its maximum on a length scale comparable to $L_1$.

Combining this bound on $c_0(x)$ in \eqref{eqn:c0bound} with Proposition \ref{prop:paxeigpointwise}, we see that $\paeigx$ can be bounded by functions $F_1(y) + F_2(y)$ with the same properties as in the statement of Proposition \ref{prop:paxeigpointwise}. This gives us an $L^2(\Om(x))$-bound on $\paeigx$ of the form
\begin{align*}
\int_{\Om(x)} \left(\paeigx \right)^2 \ud y \leq C_1L_2^{-2} + \sum_{2^k \leq CL_1^3} 2^{k/3}L_2^{-2} L_1^{-1} \leq C_1L_2^{-2}.
\end{align*}
This completes the proof of Proposition \ref{prop:paxeigupperbound}.

\end{proof}

Since Proposition \ref{prop:paxeigupperbound} implies the desired upper bound on the eigenvalue $\la$ in Proposition \ref{prop:eigbound}, we just need to prove Proposition \ref{prop:paxeigpointwise}.

\begin{proof}{Proposition \ref{prop:paxeigpointwise}}

From Proposition \ref{prop:boundary} we know that
\begin{align*}
|\paeigx - g(y)| = | \pa_x\psi_1^{(x)}(g_2(x))| = |\alpha| \leq CL_2^{-1}L_1^{-3/2}(L_1+ M)\exp\left(-cML_1^{-1}\right),
\end{align*}
and this bound has the same properties as the function $F_1(y)$ in the statement of the proposition. Therefore, to prove Proposition \ref{prop:paxeigpointwise}, it is enough to show that 
\begin{align*}
g(y) - c_0(x)\psi_1^{(x)}(y)
\end{align*}
has the desired bounds. 

To do this, we want to bound the right hand side of \eqref{eqn:gexpression2}, which contains the functions $\phi(y)$, $\tphi(y)$ together with $G(x,y)$. The two remaining functions which we have not discussed above are the functions $\mu'(x)$ and $\pa_xV(x,y)$ appearing in $G(x,y)$. Therefore, let us prove two simple lemmas concerning these functions, and then we will be in a position to bound \eqref{eqn:gexpression2}.

\begin{lem} \label{lem:muprime}
Let $x$ be in the support of the cut-off function $\chi(x)$. Then, we have the bound
\begin{align*}
|\mu'(x)| \leq C_1L_2^{-3},
\end{align*}
for an absolute constant $C_1>0$.
\end{lem}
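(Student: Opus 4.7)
The plan is to deduce the estimate directly from two ingredients already in hand: the convexity of $\mu(x)$ established in Lemma \ref{lem:mu(x)convex}, and the control of the oscillation of $\mu(x)$ on the interval $I$ of length comparable to $L_2$ from Definition \ref{def:L2}.

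First, I would recall that, by Definition \ref{def:L2}, the function $f(x) = \mu(x) - \mu^*$ is nonnegative on the interval $I$, of length comparable to $L_2$, and satisfies $0 \leq f(x) \leq L_2^{-2}$ for all $x \in I$. Next, I would note that by the construction of the cut-off in Definition \ref{def:chi}, the support of $\chi(x)$ is contained in the middle three quarters of $I$, so any $x$ in the support of $\chi$ is at distance comparable to $L_2$ from both endpoints of $I$.

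The key step is then a standard convex-analysis fact: if $f$ is a nonnegative convex function on an interval $[a,b]$ with $f \leq M$ throughout, then for any $x_0 \in [a,b]$ one has
\begin{align*}
|f'(x_0)| \leq \max\left(\frac{M}{x_0 - a}, \frac{M}{b - x_0}\right).
\end{align*}
Indeed, if $f'(x_0) \geq 0$, the tangent line inequality $f(b) \geq f(x_0) + f'(x_0)(b - x_0) \geq f'(x_0)(b-x_0)$ combined with $f(b) \leq M$ yields $f'(x_0) \leq M/(b - x_0)$; the case $f'(x_0) < 0$ is symmetric, using the tangent line at $a$.

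Applying this to $f(x) = \mu(x) - \mu^*$, which is convex by Lemma \ref{lem:mu(x)convex}, with $M = L_2^{-2}$ and distance to either endpoint of $I$ comparable to $L_2$, I obtain
\begin{align*}
|\mu'(x)| = |f'(x)| \leq \frac{C L_2^{-2}}{L_2} = C L_2^{-3},
\end{align*}
for all $x$ in the support of $\chi$, which is the desired estimate. There is no substantive obstacle here; the lemma is essentially an immediate consequence of convexity plus the controlled oscillation, and the only minor point to verify is the distance of the support of $\chi$ from the ends of $I$, which was built into Definition \ref{def:chi}.
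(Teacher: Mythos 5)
Your proof is correct and follows the same route as the paper: convexity of $\mu(x)$ from Lemma \ref{lem:mu(x)convex} together with the bound $0 \leq \mu(x) - \mu^* \leq L_2^{-2}$ on the interval $I$ and the fact that the support of $\chi$ is bounded away from the endpoints of $I$ by a distance comparable to $L_2$. You simply make explicit the tangent-line inequality that the paper leaves as "by convexity."
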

\begin{proof}{Lemma \ref{lem:muprime}}
We recall from Lemma \ref{lem:mu(x)convex} that the function $\mu(x)$ is a convex function of $x$. Moreover, by the definition of the parameter $L_2$, we know that $\mu(x)$ varies by $L_2^{-2}$ for $x$ in an interval of length at least $L_2$. Since the support of $\chi(x)$ is contained within the middle half of this interval, we immediately obtain the bound
\begin{align*}
|\mu'(x)| \leq C_1L_2^{-3},
\end{align*}
by convexity.
\end{proof}

\begin{lem} \label{lem:paxV}
Let $x$ be in the support of $\chi(x)$, and as in Definition \ref{def:ystar} let $y=y^*$ be the first point where $V(x,y) \geq 1+C^*L_1^{-2}$. Then, for $y\geq y^*$, 
\begin{align*}
 |\pa_xV(x,y)| \leq C_1(|y-y^*|+L_1)L_2^{-1}|\pa_yV(x,y)|,
 \end{align*}
and for $y_1\leq y\leq y^*$,
\begin{align*}
 |\pa_xV(x,y)| \leq C_1L_1^{-2}L_2^{-1} + C_1L_1L_2^{-1}|\pa_yV(x,y)|.
 \end{align*}
\end{lem}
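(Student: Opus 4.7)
The plan is to interpret $|\pa_x V(x,y)|/|\pa_y V(x,y)|$ (where $\pa_y V \neq 0$) as the absolute slope of the tangent to the convex level curve $\{V = V(x,y)\}$ at $(x,y)$, and to bound this slope using the elongated geometry of the sublevel sets of $V$. The main tool will be the supporting-hyperplane property of convex sets.

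For the case $y \geq y^*$, I would argue as follows. Since $V(x,y) \geq 1 + C^*L_1^{-2} \geq 1 + L_1^{-2}$, the convex sublevel set $S = \{V \leq V(x,y)\}$ contains $\Om_{L_1^{-2}}$. The tangent line to $\pa S$ at $(x,y)$, with slope $m = -\pa_x V/\pa_y V$, is therefore a supporting hyperplane for $\Om_{L_1^{-2}}$ and must lie on or above its upper boundary $y = v(x')$. Using the slope bound $|v'(x')| \leq CL_1/L_2$ on the support of $\chi$ (already established during the proof of Proposition \ref{prop:boundary}), I evaluate the supporting-line inequality at $x' = x \mp cL_2$ (sign chosen by $\mathrm{sgn}(m)$) to obtain
\[
|m| \cdot cL_2 \leq y - v(x \mp cL_2) \leq (y - v(x)) + CL_1 \leq (y - y^*) + C'L_1,
\]
where the final step uses $y^* - v(x) = O(L_1)$, a consequence of $V$ growing from $1 + L_1^{-2}$ to $1 + C^*L_1^{-2}$ over an interval of length $O(L_1)$ in $y$ (by the length-scale estimates of Lemma 2.4 in \cite{J1}).

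For the case $y_1 \leq y \leq y^*$, the point $(x,y)$ now lies strictly inside the convex centre $\{V \leq 1 + C^*L_1^{-2}\}$, so the direct tangent-line argument no longer applies. I plan to instead exploit convexity of $V$ in $x$: the one-sided inequality $t |\pa_x V(x,y)| \leq \max(V(x+t, y), V(x-t, y)) - V(x, y)$ reduces matters to estimating $V(x \pm cL_2, y) - V(x, y)$. I would then decompose this via a telescoping path through the minimum-in-$y$ curve $\bar y(x') = \mathrm{argmin}_{y'} V(x', y')$: the horizontal leg along the min curve contributes only $O(L_1^{-2})$, because $\min_{y'} V(x', y')$ agrees with $\mu(x')$ up to $O(L_1^{-2})$ by Lemma \ref{lem:mubound} and $\mu(x')$ varies by at most $L_2^{-2}$ on $I$ by Definition \ref{def:L2}; the two vertical legs, each of length $\leq CL_1$ (the centre width), are each bounded by $CL_1 |\pa_y V|$ evaluated at the appropriate point, using convexity of $V(x', \cdot)$ and the monotonicity of $|\pa_y V(x', \cdot)|$ away from its minimum in $y$. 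Dividing by $2cL_2$ produces the bound $|\pa_x V(x,y)| \leq CL_1^{-2} L_2^{-1} + CL_1 L_2^{-1} |\pa_y V(x,y)|$.

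The hard part will be the second case, specifically controlling how $|\pa_y V(x \pm cL_2, y)|$ on the vertical legs compares to $|\pa_y V(x, y)|$, since this comparison requires joint monotonicity information about $V$ rather than just one-variable convexity. I expect to need a case split depending on whether $(x \pm cL_2, y)$ remains inside the centre (where $|\pa_y V| = O(L_1^{-3})$ uniformly and the first term $CL_1^{-2}L_2^{-1}$ absorbs the contribution) or exits it (where the geometric slope bound from Case 1 at the exit point, together with the convexity of $V$ in $x$, must be used to transfer the estimate back to $(x,y)$).
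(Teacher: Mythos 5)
For $y \geq y^*$ your argument is essentially the one in the paper: both proofs identify $|\pa_x V|/|\pa_y V|$ with the slope of the tangent to the level curve through $(x,y)$, then bound that slope by the elongated convex geometry of the sublevel set $\Om_{L_1^{-2}}$ (the paper does this by differentiating $V(x,f(x))=\text{const}$ and bounding $|f'(x)|$; your supporting-line computation is a made-explicit version of the same thing). That part is fine.

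For $y_1 \leq y \leq y^*$ your route is genuinely different, and the gap you flag is a real one, not just a technicality to be filled in. Your plan produces, after the one-sided convexity step and the telescoping through the minimum-in-$y$ curve, a bound involving $|\pa_y V(x \pm cL_2, y)|$ rather than $|\pa_y V(x,y)|$, and there is no straightforward way to convert one to the other: convexity of $V$ gives you monotonicity of $\pa_x V$ along the horizontal segment (and of $\nabla V$ along any line), but it does not give a comparison of $\pa_y V$ at two points with different $x$-coordinates. Your two proposed fixes do not close this. In the ``remains inside the centre'' subcase, the intermediate claim ``$|\pa_y V| = O(L_1^{-3})$ uniformly'' is false in general --- for example $h(y) = 1$ for $|y| \leq (1-\eps)L_1$ and $h(y) = 1 - \alpha(|y|-(1-\eps)L_1)$ with $\alpha \sim L_1^{-3}/\eps$ gives $|\pa_y V| \sim L_1^{-3}/\eps \gg L_1^{-3}$ at the edge of the centre while still having $V^{-1/2}$ concave --- although that subcase can be salvaged directly, since $(x\pm cL_2, y)$ inside $\Om_{C'L_1^{-2}}$ already gives $V(x\pm cL_2,y) - V(x,y) \leq C'L_1^{-2}$ without any $\pa_y V$ appearing. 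The real problem is the ``exits the centre'' subcase: the Case-1 slope bound applied at the exit point $(x'',y)$ gives $|\pa_x V(x'',y)| \leq CL_1L_2^{-1}|\pa_y V(x'',y)|$, and convexity in $x$ gives $\pa_x V(x,y) \leq \pa_x V(x'',y)$, but you are then left with $|\pa_y V(x'',y)|$, not $|\pa_y V(x,y)|$, and no tool to bridge them.

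The paper's Case 2 avoids the transfer problem entirely: it picks a direction $\mathbf{e}$ making angle $\sim L_1 L_2^{-1}$ with the $x$-axis such that the segment $\{(x,y)+t\mathbf{e}: |t| \leq cL_2\}$ lies inside a sublevel set $\Om_{C'L_1^{-2}}$ (this is possible because $\Om_{C'L_1^{-2}}$ is convex with height $\sim L_1$, length $\sim \tilde{L}_1$, and $(x,y)$ is at distance $\sim L_2$ from its ends, so it contains a chord of length $\sim L_2$ through $(x,y)$ with slope $\lesssim L_1/L_2$). One-variable convexity of $t \mapsto V((x,y)+t\mathbf{e})$ then bounds the directional derivative \emph{at $(x,y)$ itself}: $|\nabla_{\mathbf{e}}V(x,y)| \leq C L_1^{-2}L_2^{-1}$. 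Writing $\mathbf{e} = (e_1,e_2)$ with $e_1 \sim 1$ and $|e_2| \lesssim L_1/L_2$, one gets $\pa_x V(x,y) = e_1^{-1}\nabla_{\mathbf{e}}V(x,y) - e_1^{-1}e_2\,\pa_y V(x,y)$, hence $|\pa_x V(x,y)| \leq CL_1^{-2}L_2^{-1} + CL_1L_2^{-1}|\pa_y V(x,y)|$, with every quantity evaluated at $(x,y)$. This is the key structural difference from your plan: move along a slanted chord so that the convexity estimate and the decomposition of $\pa_x V$ both live at the single point $(x,y)$, rather than moving horizontally and then needing to carry $\pa_y V$ across in $x$.
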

\begin{proof}{Lemma \ref{lem:paxV}}
Given, $c$, let $y=f(x)$ be  a parameterisation of the upper part of the level set $\{(x,y) \in \Om : V(x,y) = c\}$, so that
\begin{align*}
 V(x,f(x)) = c = \text{constant}.
 \end{align*}
 Differentiating this with respect to $x$, we see that
\begin{align} \label{eqn:paxV1}
 \pa_xV(x,f(x)) = -f'(x)\pa_yV(x,f(x)).
 \end{align}
Assume first that $y=f(x) \geq y^*$. The sublevel set $\{(x,y)\in\Om :V(x,y) \leq 1+ C^*L_1^{-2}\}$ is convex with height comparable to $L_1$ in the $y$-direction and length comparable to $\tilde{L}_1$ in the $x$-direction, and $x$ is at distance comparable to $L_2$ from the ends of this set. Thus, by the convexity of the sublevel sets, we certainly have a bound on the slope of 
\begin{align*}
|f'(x)| \leq C_1(|y-y^*|+L_1)L_2^{-1}.
\end{align*}
Using this bound in the right hand side of \eqref{eqn:paxV1} gives the desired bound for $y\geq y^*$.

We now suppose that $y_1 \leq y=f(x) \leq y^*$.  If $y$ is in the middle half of the interval $\{t: V(x,t) \leq 1+L_1^{-2}\}$, then we certainly have the bound
\begin{align*}
 |\pa_xV(x,f(x))| \leq C_1L_1^{-2}L_2^{-1},
 \end{align*}
by the convexity of the potential $V(x,y)$. For the remaining points $(x,f(x))$ of interest, we can again use the shape of the level set to obtain the desired bound
\begin{align*}
 |\pa_xV(x,f(x))| \leq C_1L_1^{-2}L_2^{-1} + C_1L_1L_2^{-1}|\pa_yV(x,f(x))|.
 \end{align*}
 This is because for these points we can find a direction $\mathbf{e}$, such that the directional derivative of $V$ at $(x,f(x))$ is bounded by $L_1^{-2}L_2^{-1}$, and this direction makes an angle comparable to $L_1L_2^{-1}$ with the $x$-axis.

\end{proof}

Combining Lemmas \ref{lem:muprime} and \ref{lem:paxV}, we see from \eqref{eqn:Gexpression} that
\begin{align} \label{eqn:Gexpression2} \nonumber
|G(x,t)| & \leq C_1L_1^{-2}L_2^{-1}\psi_1^{(x)}(t) + C_1(|t-y^*|+L_1)L_2^{-1}|\pa_tV(x,t)|\psi_1^{(x)}(t) + |V(x,t) - \mu(x)|\alpha \\
& \leq C_1L_1^{-5/2}L_2^{-1}\phi(t) + C_1(|t-y^*|+L_1)L_1^{-1/2}L_2^{-1}|\pa_tV(x,t)|\phi(t) + |V(x,t) - \mu(x)|\alpha.
\end{align}
The final inequality comes from
\begin{align*}
 \psi_1^{(x)}(t) \leq C_1L_1^{-1/2}\phi(t),
\end{align*}
which holds since $\psi_1^{(x)}(t)$ is $L^2(\Om(x))$-normalised, whereas $\phi(t)$ has $L^{2}(\Om(x))$-norm comparable to $L_1^{1/2}$.

Everything is now set up to show that the two integrals in \eqref{eqn:gexpression2} have the bounds required in the statement of Proposition \ref{prop:paxeigpointwise}.

\subsection{A Bound on $\phi(y)\int_{y_1}^y \tphi(t)G(x,t) \ud t$}

We start by considering the first integral in \eqref{eqn:gexpression2},
\begin{align} \label{eqn:key1}
 \left| \phi(y)\int_{y_1}^y\tphi(t)G(x,t) \ud t \right| \leq \phi(y)\int_{y_1}^y \tilde{\phi}(t)|G(x,t)| \ud t .
 \end{align}
 Using \eqref{eqn:Gexpression2}, it is enough to bound
 \begin{align} \label{eqn:key2}
\phi(y) \int_{y_1}^y \tphi(t)  \left(C_1L_1^{-5/2}L_2^{-1}\phi(t) + C_1(|t-y^*|+L_1)L_1^{-1/2}L_2^{-1}|\pa_tV(x,t)|\phi(t) + |V(x,t) - \mu(x)|\alpha \right) \ud t.
\end{align}
We now bound the three terms in equation \eqref{eqn:key2}.

\begin{lem} \label{lem:G1}
We have a bound on the first term in \eqref{eqn:key2},
\begin{align*}
\phi(y) \int_{y_1}^y \tphi(t)L_1^{-5/2}L_2^{-1} \phi(t) \ud t \leq C_1L_1^{-1/2}L_2^{-1}.
\end{align*}
\end{lem}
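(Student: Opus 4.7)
The goal is to show $\phi(y)\int_{y_1}^y\tphi(t)\phi(t)\,\ud t\leq C L_1^{2}$; multiplying by the prefactor $L_1^{-5/2}L_2^{-1}$ then gives the claimed bound $C_1 L_1^{-1/2}L_2^{-1}$. The plan is to split the integration at $y_3$ and apply the two cases of Lemma~\ref{lem:tphi} separately.

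On $[y_1,y_3]$, Lemma~\ref{lem:tphi} gives $\tphi(t)\leq CL_1$, and $\phi\leq 1$ everywhere. Since $\mu(x)\leq 1+CL_1^{-2}$ while $\min_t V(x,t)\geq 1$, the point $y_3$ where $V(x,\cdot)=\mu(x)$ lies in the ``centre'' region, so $|y_3-y_1|\leq CL_1$. This piece is therefore bounded by $\phi(y)\cdot CL_1\cdot CL_1\leq CL_1^{2}$.

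On $[y_3,y]$, I would split using $\tphi(t)\leq CL_1+C|\phi'(t)|^{-1}$. The first term contributes $\phi(y)\cdot CL_1\int_{y_3}^y\phi(t)\,\ud t\leq CL_1^{2}$, where the integral of $\phi$ is controlled by the $L^{\infty}$ exponential decay $\phi(t)\leq Ce^{-c(t-y_1)/L_1}$ from Lemma~\ref{lem:phibasic}, giving $\int_{y_3}^{g_2(x)}\phi(t)\,\ud t\leq CL_1$.

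The main obstacle is the $|\phi'(t)|^{-1}$ term $\phi(y)\int_{y_3}^y|\phi'(t)|^{-1}\phi(t)\,\ud t$, because the pointwise size of $|\phi'(t)|^{-1}\phi(t)$ grows across the intervals $J_k$. The approach here is to use Proposition~\ref{prop:phi}: on $J_k=[t_k,t_{k+1}]$ we have $|\phi'(t)|^{-1}\phi(t)\leq 2^{k/3}$. Since $V(x,t)-\mu(x)\geq 0$ on $[y_3,g_2(x)]$, the function $\phi$ is convex and decreasing there (its maximum at $\phi=1$ lies inside the centre, so below $y_3$), hence $\phi(y)\leq\phi(t)$ for $t\leq y$, and
\[
\phi(y)\int_{J_k\cap[y_3,y]}|\phi'(t)|^{-1}\phi(t)\,\ud t\leq\int_{J_k}2^{k/3}\phi(t)\,\ud t.
\]
Applying the exponential bound $\phi(t)\leq\phi(t_k)\exp(-(t-t_k)2^{-k/3}/10)$ from Proposition~\ref{prop:phi} gives $\int_{J_k}\phi(t)\,\ud t\leq C\cdot 2^{k/3}\phi(t_k)$, so each $J_k$ contributes at most $C\cdot 2^{2k/3}\phi(t_k)\leq C\cdot 2^{2k/3}$. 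Since $2^k\leq CL_1^{3}$ restricts $k$ to a finite range with largest term $\leq CL_1^{2}$, the geometric sum $\sum_k 2^{2k/3}$ is bounded by $CL_1^{2}$. Adding the three contributions yields the required bound.
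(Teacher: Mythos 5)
Your proof is correct, but it takes a more granular route than the paper's. The paper flattens the entire integrand at once: on $[y_1,y_3]$, $\tphi(t)\phi(t)\leq C L_1$ since $\tphi\leq C L_1$ and $\phi\leq 1$, and on $[y_3,y]$ the troublesome piece obeys $|\phi'(t)|^{-1}\phi(t)\leq 2^{k/3}\leq C L_1$, using the cap $2^k\leq C L_1^3$ on the nonempty $J_k$. The integrand is therefore uniformly $\leq C L_1^{-3/2}L_2^{-1}$, and the last factor of $L_1$ is recovered from the outer $\phi(y)$ via the elementary estimate $\phi(y)(y-y_1)\leq C L_1$, a consequence of the $L^\infty$ exponential decay in Lemma~\ref{lem:phibasic}. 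You instead keep the $J_k$-scale structure explicit: pull $\phi(y)\leq\phi(t)$ inside, integrate the $J_k$-scale decay from Proposition~\ref{prop:phi} to get $\int_{J_k}\phi\leq C\,2^{k/3}\phi(t_k)$, and sum the geometric series $\sum_k 2^{2k/3}\leq C L_1^2$. Both arguments ultimately rest on the same cap $2^k\leq C L_1^3$, but yours imports the finer $J_k$-by-$J_k$ machinery that the paper only reaches for in Lemma~\ref{lem:G2}, while the paper's single uniform bound is cleaner. One side effect worth noting: the paper's route keeps $\phi(y)$ visible as an overall factor, which is what lets it classify this term into $F_1(y)$ in the remark following the lemma; your treatment of the $|\phi'(t)|^{-1}$ piece absorbs $\phi(y)$ into $\phi(t)$ and so does not directly exhibit that decay in $y$. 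Since the lemma's statement only asks for the numerical bound, your proof is nonetheless complete for the stated claim.
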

\begin{rem}
We will see in the proof of the lemma that the function decays exponentially from its maximum  away from the region where $V(x,y) \leq 1+ L_1^{-2}$ on a length scale comparable to $L_1$. Therefore we can include this term in the function $F_1(y)$ in the statement of Proposition \ref{prop:paxeigpointwise}.
\end{rem}

\begin{proof}{Lemma \ref{lem:G1}}
By Lemma \ref{lem:tphi}, we can bound the left hand side by
\begin{align*}
  \phi(y)\int_{y_1}^{y_3} C_1L_1L_1^{-5/2}L_2^{-1} \phi(t)\ud t +  \phi(y)\int_{y_3}^{y} (C_1L_1+C_1|\phi'(t)|^{-1})L_1^{-5/2}L_2^{-1} \phi(t)\ud t.
  \end{align*}
  Using Proposition \ref{prop:phi} we have the bound, $\phi(t) \leq 2^{k/3}|\phi'(t)| \leq C_1L_1|\phi'(t)|$ for $t\in J_k$, and so these integrals can be bounded by
\begin{align} \label{eqn:G1a}
C_1 \phi(y)\int_{y_1}^y L_2^{-1}L_1^{-3/2} \ud t.
\end{align}
The eigenfunction $\phi(y)$ has a maximum of $1$, and decays exponentially away from this maximum on a length scale comparable to $L_1$. Thus, we can bound \eqref{eqn:G1a} by $C_1L_1^{-1/2}L_2^{-1}$ as required, and it also has the decay properties of the function $F_1(y)$.
\end{proof}

\begin{lem} \label{lem:G2}
We have a bound on the second term in \eqref{eqn:key2},
\begin{align*}
 \phi(y) \int_{y_1}^y \tphi(t)(|t-y^*|+L_1)L_1^{-1/2}L_2^{-1}|\pa_tV(x,t)|\phi(t) \ud t \leq C_1L_1^{-1/2}L_2^{-1}.
 \end{align*}
\end{lem}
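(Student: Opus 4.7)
The plan is to split the integral at the transition point $y^*$ and treat the \emph{centre} $[y_1, y^*]$ and the \emph{tail} $[y^*, y]$ separately. For the centre contribution, I would use $\tphi(t) \leq C_1 L_1$ from Lemma \ref{lem:tphi}, $|t-y^*|+L_1 \leq CL_1$ (since this region has length at most $CL_1$), $\phi(t) \leq 1$, and the elementary bound $\int_{y_1}^{y^*}|\pa_tV(x,t)|\ud t = V(x,y^*) - V(x,y_1) \leq CL_1^{-2}$, which follows from the definitions of $y_1$ and $y^*$. Combining these with the prefactor $\phi(y) L_1^{-1/2} L_2^{-1}$ yields a centre contribution of at most $CL_1^{-1/2}L_2^{-1}\phi(y)$, within the target.

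For the tail, I would decompose $[y^*, y]$ into the intervals $J_k = [t_k, t_{k+1}]$ from Proposition \ref{prop:phi}, on which $|\pa_tV(x,t)| \leq 2 \cdot 2^{-k}$ and $\phi(t)$ decays exponentially from $\phi(t_k)$ on scale $2^{k/3}$. On each $J_k$, combining $\tphi(t) \leq C_1 L_1 + C_1|\phi'(t)|^{-1}$ from Lemma \ref{lem:tphi} with the pointwise estimate $\phi(t) \leq 2^{k/3}|\phi'(t)|$ from Proposition \ref{prop:phi} gives $\tphi(t)\phi(t) \leq C_1 L_1 \phi(t) + C_1 2^{k/3}$. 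The $C_1 L_1 \phi(t)$ piece, when multiplied by $|\pa_tV| \leq 2^{-k+1}$ and the factor $|t-y^*|+L_1$, is controlled using the exponential decay of $\phi$ on $J_k$ and of $\phi(t_k)$ on scale $L_1$ (Lemma \ref{lem:phibasic} or Corollary \ref{cor:phiy}), giving a summable contribution of order $L_1^{-1/2}L_2^{-1} \phi(t_k)((t_k - y^*) + L_1 + 2^{k/3}) 2^{-k/3}$. The $C_1 2^{k/3}$ piece contributes $C 2^{-2k/3} \int_{J_k}(|t-y^*|+L_1)\ud t$, and here one truncates the effective length of $J_k$ at scale $2^{k/3}$ by using the decay of $\phi$ on $J_k$.

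The principal obstacle is the $C_1 2^{k/3}$ piece, since the pointwise bound $\tphi(t)\phi(t) \leq C_1 2^{k/3}$ is uniform on $J_k$ and does not itself exhibit decay further into $J_k$ when $|J_k|$ is potentially large. To handle this, I would pair the integrand with the outside factor $\phi(y)$: if $y$ lies in some interval $J_{k_y}$, the exponential decay of $\phi(y)$ in $(y - t_{k_y}) 2^{-k_y/3}$ and of $\phi(t_{k_y})$ in $(t_{k_y}-y_1)/L_1$ balances against the possibly large factor $\int_{J_k}(|t-y^*|+L_1)\ud t$ arising in each term. Since $k$ ranges only over $2^k \leq C L_1^3$, the resulting sum in $k$ is geometric in $2^{-k/3}$ and converges to the desired bound $C_1 L_1^{-1/2} L_2^{-1}$.
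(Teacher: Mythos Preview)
Your approach is correct and essentially the same as the paper's. The paper splits at $y_3$ rather than $y^*$ (a harmless difference since both points are within $CL_1$ of $y_1$), and on each $J_k$ it combines the two pieces of $\tphi(t)\phi(t)$ in one line via $\phi(t)(L_1+|\phi'(t)|^{-1}) \leq C_1 2^{k/3}$ before pairing with the prefactor $\phi(y)$ through Proposition~\ref{prop:phi} and Corollary~\ref{cor:phiy}, exactly as you outline.
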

\begin{rem}
We will again see in the proof of the lemma that the function decays exponentially from its maximum on a length scale comparable to $2^{k/3}$ within each interval $J_k$. Therefore we can include this term in the function $F_2(y)$ in the statement of Proposition \ref{prop:paxeigpointwise}.
\end{rem}

\begin{proof}{Lemma \ref{lem:G2}}
We first consider the part of this integral over $[y_1,y_3]$. Here, $\tphi(t) \leq C_1L_1$, and by the convexity of the potential
\begin{align*}
 \int_{y_1}^{y_3} |\pa_tV(x,t)| \ud t \leq 2C_1L_1^{-2}.
 \end{align*}
Therefore, we immediately obtain a bound of $C_1L_1^{-1/2}L_2^{-1}\phi(y)$. This is certainly at most $C_1L_1^{-1/2}L_2^{-1}$, and by the properties of $\phi(y)$ it also has the decay properties of the function $F_2(y)$.

We now consider the part of the integral over $[y_3,y]$. Let $y$ be in the interval $J_{k^*}$ for some $k^*$, where as usual the intervals $J_k$ are as in \eqref{eqn:Jk}. We decompose the integral between $y_3$ and $y$ as an integral over the relevant intervals $J_k$ where $k\geq k^*$. 

By Proposition \ref{prop:phi}, 
\begin{align*}
 \phi(t) \leq |\phi'(t)|2^{k/3}, 
 \end{align*}
and so using the bound on $\tphi(t)$ from Lemma \ref{lem:tphi}, to estimate the contribution to the integral from $J_k$, we have to bound
\begin{multline}  \label{eqn:G2}
 \phi(y) \int_{J_k} \phi(t)(L_1+|\phi'(t)|^{-1})(|t-y^*|+L_1) L_1^{-1/2}L_2^{-1}|\pa_tV(x,t)| \ud t \\
 \leq C_1\phi(y)\int_{J_k} 2^{k/3}|\phi'(t)| (L_1+|\phi'(t)|^{-1})(|t-y^*|+L_1) L_1^{-1/2} L_2^{-1}2^{-k} \ud t \\
 \leq  C_12^{-2k/3}\phi(y)\int_{J_k} (|t-y^*|+L_1)L_1^{-1/2}L_2^{-1} \ud t .
 \end{multline}
 
 Using Proposition \ref{prop:phi} again, we find that for any $k\geq k^*$,
\begin{align*}
 \phi(y) \leq \phi(t_{k^*})\exp(-(y-t_{k^*})2^{-k^*/3}/10)\leq 2^{k^*/3}|\phi'(t_{k^*})|\exp(-(y-t_{k^*})2^{-k^*/3}/10).
 \end{align*}
By Corollary \ref{cor:phiy}, we can bound the factor of $|\phi'(t_{k^*})|$ as
\begin{align*}
 |\phi'(t_{k^*})| \leq CL_1^{-1} \exp(-c|t-t_k|/2^{k/3}) \exp(-c|t_k-y^*|/L_1).
 \end{align*}
Inserting these estimates into the integral in \eqref{eqn:G2} and integrating over the interval $J_k$, we have the bound
\begin{align*}
 C\exp(-(y-t_{k^*})2^{-k^*/3}/10)2^{k^*/3}2^{-k/3}L_1^{-1/2}L_2^{-1}.
 \end{align*}
 Summing over $k\geq k^*$ gives a bound for the integral over $y_3\leq t\leq y$ of the form
\begin{align*}
C_1 L_1^{-1/2}L_2^{-1}\exp(-(y-t_{k^*})2^{-k^*/3}/10).
 \end{align*}
Note that this quantity is bounded by a multiple of $L_1^{-1/2}L_2^{-1}$, and has the required decay properties that we can include it in the function $F_2(y)$.

\end{proof}

\begin{lem} \label{lem:G3}
We have a bound on the final term in \eqref{eqn:key2},
\begin{align} \label{eqn:G3a}
 \phi(y) \int_{y_1}^y \tphi(t)|V(x,t) - \mu(x)||\alpha| \ud t \leq C_1L_1^{-1/2}L_2^{-1}. 
 \end{align}
\end{lem}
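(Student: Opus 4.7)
The plan is to split the integral at $y_3$ and then exploit the fact that $\tphi$ satisfies the same homogeneous second-order ODE as $\phi$, which turns the integrand on $[y_3, y]$ into a total derivative. Specifically, differentiating $\tphi(t) = \phi(t)\int_{y_1}^t \phi(s)^{-2} \ud s$ twice and using $\phi'' = (V(x,t)-\mu(x))\phi$ immediately gives $\tphi''(t) = (V(x,t) - \mu(x))\tphi(t)$. Since $|\alpha|$ does not depend on $t$, this is the observation that will make the whole bound work.

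For the part of the integral over $[y_1, y_3]$, I would estimate directly. By Lemma \ref{lem:tphi}, $\tphi(t) \leq C_1 L_1$ on this interval; by the definitions of $y_1$ and $y_3$ together with the bound $\mu(x) \leq \min_y V(x,y) + CL_1^{-2}$, one has $|V(x,t) - \mu(x)| \leq C L_1^{-2}$ and $|y_3 - y_1| \leq C L_1$. Multiplying through and using $\phi(y) \leq 1$ gives a contribution to the left-hand side of \eqref{eqn:G3a} bounded by $C|\alpha|$.

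For the part over $[y_3, y]$ (if $y \geq y_3$; otherwise this part is absent and the bound is already established), the identity for $\tphi''$ together with $V - \mu \geq 0$ on $[y_3, g_2(x)]$ gives
\begin{align*}
\int_{y_3}^y \tphi(t)|V(x,t) - \mu(x)| \ud t = \tphi'(y) - \tphi'(y_3).
\end{align*}
The goal now is to show $\phi(y)|\tphi'(y) - \tphi'(y_3)| \leq C$. A direct differentiation yields $\phi(y)\tphi'(y) = \phi'(y)\tphi(y) + 1$, and the bound $\tphi(y) \leq C_1 L_1 + C_1 |\phi'(y)|^{-1}$ from Lemma \ref{lem:tphi} combined with $|\phi'(y)| \leq C_1 L_1^{-1}$ from Lemma \ref{lem:phibasic} gives $|\phi'(y)\tphi(y)| \leq C$, hence $\phi(y)|\tphi'(y)| \leq C$. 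The boundary contribution at $y_3$ is easier, since $\phi(y_3)$ is bounded below by an absolute constant (it lies in the centre), so $|\tphi'(y_3)| \leq |\phi'(y_3)|\cdot C L_1 + \phi(y_3)^{-1} \leq C$, and hence $\phi(y)|\tphi'(y_3)| \leq C$.

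To conclude, I would invoke Proposition \ref{prop:boundary} together with the elementary inequality $(L_1 + M)e^{-cM/L_1} \leq CL_1$ (valid since $u e^{-cu}$ is bounded on $u \geq 0$) to obtain $|\alpha| \leq C L_2^{-1} L_1^{-1/2}$. Multiplying the uniform constant bound established above by this estimate for $|\alpha|$ gives \eqref{eqn:G3a}. The only nontrivial conceptual step is recognising that $\tphi$ satisfies the homogeneous ODE, which converts an integrand on $[y_3, y]$ whose factors $\tphi(t)$ and $|V-\mu|$ can individually blow up near $g_2(x)$ into a boundary term that, after multiplication by $\phi(y)$, is controlled by absolute constants; the remaining estimates are routine applications of the previously established bounds on $\phi$, $\phi'$, and $\tphi$.
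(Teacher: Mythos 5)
Your approach is genuinely different from the paper's and cleaner for this term. For $t\in[y_3,y]$, the paper decomposes $[y_3,y]$ into the dyadic intervals $\tilde{J}_k$, applies the decay estimates of Propositions \ref{prop:phi} and \ref{prop:phiy} together with $\tphi(t)\leq C(L_1+|\phi'(t)|^{-1})$, and sums a geometric series. You instead observe that $\tphi''=(V-\mu)\tphi$ turns the integrand into a total derivative, so the estimate reduces to the Wronskian identity $\phi\tphi'-\phi'\tphi=1$. This makes manifest why the potentially large factors $\tphi(t)$ and $|V-\mu|$ near $g_2(x)$ cancel: both $\phi$ and $\tphi$ solve the same equation.

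Two issues should be addressed. First, the claim that $\phi(y_3)$ is bounded below by an absolute constant ``since it lies in the centre'' is unjustified: $y_3$ lies in the centre but not necessarily in its \emph{middle half} (that is what $y_2$ marks), and the proof of Lemma \ref{lem:tphi} explicitly allows the alternative $|\phi'(t)|\geq cL_1^{-1}$ on $[y_2,y_3]$, precisely because $\phi$ there may be small. Your intermediate inequality $|\tphi'(y_3)|\leq C$ therefore does not follow. The correct route is to apply the Wronskian at $y_3$ before dividing by $\phi(y_3)$: one has $\phi(y_3)|\tphi'(y_3)|=|\phi'(y_3)\tphi(y_3)+1|\leq C$ by Lemmas \ref{lem:phibasic} and \ref{lem:tphi}, and then $\phi(y)\leq\phi(y_3)$ since $\phi'\leq0$ on $[y_3,g_2(x)]$, which gives $\phi(y)|\tphi'(y_3)|\leq C$ without any lower bound on $\phi(y_3)$. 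Second, your final bound is the constant $C|\alpha|$, uniform in $y$, because the Wronskian produces the non-decaying $+1$ in $\phi\tphi'=\phi'\tphi+1$. The remark following Lemma \ref{lem:G3} records that this term decays exponentially in $y$ on scale $L_1$, which is what lets it be absorbed into $F_1(y)$ and gives $\int_{\Om(x)}F_1(y)^2\ud y\leq CL_2^{-2}$ in the proof of Proposition \ref{prop:paxeigupperbound}. Your bound does not carry that decay. It still closes the downstream $L^2$ estimate, but for a different reason: $|\alpha|$ decays in $M$ through $(L_1+M)\exp(-cM/L_1)$ and the $y$-interval $[y_1,g_2(x)]$ has length comparable to $L_1+M$, so $\int_{y_1}^{g_2(x)}|\alpha|^2\ud y\leq CL_2^{-2}$. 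That is a genuine change to the bookkeeping in Proposition \ref{prop:paxeigpointwise}, and you should either say so or retain the $\phi(y)$ factor in the $[y_1,y_3]$ estimate (and extract a decaying factor in the Wronskian step) to match the statement of that proposition.
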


\begin{rem}
We will see  that the function decays exponentially from its maximum  away from the region where $V(x,y) \leq 1+ L_1^{-2}$ on a length scale comparable to $L_1$. Therefore we can include this term in the function $F_1(y)$ in the statement of Proposition \ref{prop:paxeigpointwise}.
\end{rem}

\begin{proof}{Lemma \ref{lem:G3}}
We recall from Proposition \ref{prop:boundary} that we have an estimate on the boundary value of $\paeigx$ of the form
\begin{align} \label{eqn:G3b}
 |\alpha|  = |\pa_x\psi_1^{(x)}(g_2(x))| \leq CL_2^{-1}L_1^{-3/2}(L_1+M)\exp(-cML_1^{-1}).
 \end{align}
Here $M$ is the distance from $g_2(x)$ to the point $y^*$  where $V(x,y^*) =  1+C^*L_1^{-2}$.

For the part of the integral in \eqref{eqn:G3a} over $[y_1,y_3]$, we know that $|V(x,t) - \mu(x)| \leq C_1L_1^{-2}$, $|y_3-y_1| \leq C_1L_1$ and $\tphi(t) \leq C_1L_1$. Combining this with the bound on $\alpha$ from \eqref{eqn:G3b},  immediately gives us the desired bound of $L_2^{-1}L_1^{-1/2}\exp(-cML_1^{-1})$ for this part of the integral in \eqref{eqn:G3a}.

For $t\geq y_3$, we decompose $[y_3,y]$ into the intervals $\tilde{J}_k$ as in \eqref{eqn:Jktilde}:
\begin{align*}
\tilde{J}_k = [\tilde{t}_k,\tilde{t}_{k+1}]  = \{ t\geq y_3: V(x,t) - \min_{t}V(x,t) \in [2^{-2k/3}, 2^{-2(k-1)/3}]\} . 
\end{align*}
Since $\mu(x) \geq \min_{t}V(x,t)$, on $\tilde{J}_k$ we know that
\begin{align*}
|V(x,t) - \mu(x)| \leq 2^{-2k/3}.
\end{align*}
So, for the part of the integral in \eqref{eqn:G3a} over $\tilde{J}_k$, combining this with the bound on $\alpha$ in \eqref{eqn:G3b} and the usual bound on $\tphi(t)$ from Lemma \ref{lem:tphi}, we have 
\begin{align} \label{eqn:G3c} \nonumber
\phi(y) & \int_{\tilde{J}_k} \tphi(t)|V(x,t) - \mu(x)||\alpha| \ud t  \\
& \leq C_1L_2^{-1}L_1^{-3/2} \phi(y)\int_{\tilde{J}_k}(L_1+|\phi'(t)|^{-1})2^{-2k/3}(L_1+M)\exp(-cML_1^{-1}) \ud t.
\end{align}
Similarly to  the proof of Lemma \ref{lem:G2}, let us assume that $y \in \tilde{J}_{k^*}$ for some $k^*$. Then,
using Proposition \ref{prop:phi} and then Proposition  \ref{prop:phiy} twice, we obtain
\begin{align*}
\phi(y) \leq C_12^{k^*/3}|\phi'(y)| & \leq C_12^{k^*/3}|\phi'(t_{k^*})| \exp\left(-c|y-t_{k^*}|2^{-k^*/3}\right) \\
& \leq C_12^{k^*/3}|\phi'(t)|  \exp\left(-c|y-t_{k^*}|2^{-k^*/3}\right)  \exp\left(-c|t_{k^*}-t|2^{-k/3}\right)
\end{align*}
Inserting this bound for $\phi(y)$ into the right hand side of \eqref{eqn:G3c} and integrating over $\tilde{J}_k$ gives us the bound for the part of the integral over $\tilde{J}_{k}$ of
\begin{align*}
C_12^{k^*/3}2^{-k/3}L_2^{-1}L_1^{-1/2}\exp(-cML_1^{-1}/2). 
\end{align*}
We finally sum over those $k$ with $k \geq k^*$ to get the desired bound on the part of the integral \eqref{eqn:G3a} with $y_3 \leq t \leq y$.
\end{proof}

Combining Lemmas \ref{lem:G1}, \ref{lem:G2} and \ref{lem:G3}, we see that the part of $g(y) - c_0(x)\eigx$ coming from
\begin{align*}
\phi(y)\int_{y_1}^{y}\tphi(t)G(x,t) \ud t
\end{align*}
has the bounds required in Proposition \ref{prop:paxeigpointwise}.

 Therefore to finish the proof of Proposition \ref{prop:paxeigpointwise} we need to establish the analogous estimates for the other part of $g(y) - c_0(x)\eigx$ in \eqref{eqn:gexpression2},
\begin{align*}
 \tphi(y)\int_y^{g_2(x)}\phi(t)G(x,t) \ud t .
 \end{align*}

\subsection{A Bound on $\tphi(y)\int_y^{g_2(x)}\phi(t)G(x,t) \ud t$}

The estimates for the various parts of this integral will be similar to the estimates we used above. However, there will be places where we have to use different methods to obtain the desired bounds. 

We want to bound
\begin{align} \label{eqn:key1b}
 \left| \tphi(y)\int_{y}^{g_2(x)}\phi(t)G(x,t) \ud t \right| \leq \tphi(y)\int_{y}^{g_2(x)} \phi(t)|G(x,t)| \ud t .
 \end{align}
 We again use \eqref{eqn:Gexpression2} to bound this by
 \begin{align} \label{eqn:key2b}
\tphi(y) \int_{y}^{g_2(x)} \phi(t)  \left(C_1L_1^{-5/2}L_2^{-1}\phi(t) + C_1(|t-y^*|+L_1)L_1^{-1/2}L_2^{-1}|\pa_tV(x,t)|\phi(t) + |V(x,t) - \mu(x)|\alpha \right) \ud t,
\end{align}
and we split this into three terms that we need to estimate.

\begin{lem} \label{lem:G1a}
We have a bound on the first term in \eqref{eqn:key2b}
\begin{align*}
\tphi(y) \int_{y}^{g_2(x)} \phi(t)  L_1^{-5/2}L_2^{-1}\phi(t) \ud t \leq C_1L_1^{-1/2}L_2^{-1} 
\end{align*}
\end{lem}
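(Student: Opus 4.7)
The plan is to reduce Lemma~\ref{lem:G1a} to the estimate
\[
\tphi(y) \int_{y}^{g_2(x)} \phi(t)^2 \ud t \leq C L_1^2,
\]
since multiplying by $L_1^{-5/2} L_2^{-1}$ then yields the claimed bound $C L_1^{-1/2} L_2^{-1}$. I would split into the two regimes for $\tphi$ provided by Lemma~\ref{lem:tphi}, namely $y_1 \leq y \leq y_3$ and $y \geq y_3$.

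For $y_1 \leq y \leq y_3$, Lemma~\ref{lem:tphi} gives $\tphi(y) \leq C L_1$, and from the remark after Lemma~\ref{lem:phibasic} the eigenfunction $\phi$ has $L^2(\Om(x))$-norm comparable to $L_1^{1/2}$, so trivially $\int_y^{g_2(x)} \phi(t)^2 \ud t \leq C L_1$. Multiplying gives the target $C L_1^2$.

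For $y \geq y_3$, Lemma~\ref{lem:tphi} gives $\tphi(y) \leq C L_1 + C |\phi'(y)|^{-1}$. The contribution of the $CL_1$ piece is handled just as in the previous case. For the $|\phi'(y)|^{-1}$ piece, the key observation is that since $\phi'' = F\phi$ with $F \geq 0$ on $[y_3, g_2(x)]$, the function $\phi$ is convex and decreasing there, so $\phi(t) \leq \phi(y)$ for $t \geq y$. Combined with the exponential decay from Proposition~\ref{prop:phi}, whose slowest decay rate $2^{-k/3}/10$ is bounded below by $c/L_1$ since $2^k \leq CL_1^3$, one obtains $\int_y^{g_2(x)} \phi(t) \ud t \leq C L_1 \phi(y)$, and hence
\[
\int_y^{g_2(x)} \phi(t)^2 \ud t \leq C L_1 \phi(y)^2.
\]
Applying Proposition~\ref{prop:phi} once more, for $y \in J_k$ one has $\phi(y) \leq 2^{k/3} |\phi'(y)| \leq L_1 |\phi'(y)|$, and since also $\phi(y) \leq 1$, this upgrades to $\phi(y)^2 \leq L_1 |\phi'(y)|$. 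Consequently
\[
|\phi'(y)|^{-1} \int_y^{g_2(x)} \phi(t)^2 \ud t \leq C L_1^{2},
\]
completing the proof of the bound.

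The main obstacle is the factor $|\phi'(y)|^{-1}$ from $\tphi$, which can become large near the boundary $g_2(x)$; it is controlled by the inequality $\phi(y)/|\phi'(y)| \leq 2^{k/3} \leq L_1$ from Proposition~\ref{prop:phi}, which lets the two $\phi(y)$ factors in $\phi(y)^2$ each contribute a factor of $|\phi'(y)|$ (using $\phi(y) \leq 1$ for one of them) and cancel the $|\phi'(y)|^{-1}$. Finally, since the estimate in the second case carries an overall factor of $\phi(y)$, which decays exponentially on scale $L_1$ away from the centre by Lemma~\ref{lem:phibasic}, this contribution can be absorbed into the function $F_1(y)$ from Proposition~\ref{prop:paxeigpointwise}.
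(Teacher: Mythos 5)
Your proof is correct, and it follows a genuinely different route from the paper's (very terse) argument. The paper pulls $\tphi(y)$ inside the integral using the monotonicity claim $\tphi(y) \leq \tphi(t)$ for $t \geq y$ — a fact which is asserted but not proved anywhere in the paper — and then implicitly combines the product bound $\tphi(t)\phi(t) \lesssim L_1$ (from Lemma~\ref{lem:tphi} and Proposition~\ref{prop:phi}) with the exponential decay of $\phi$. You instead bound $\tphi(y)$ directly from Lemma~\ref{lem:tphi}, splitting at $y_3$, and kill the dangerous factor $|\phi'(y)|^{-1}$ by pairing $\int_y^{g_2(x)}\phi(t)^2\ud t \leq CL_1\phi(y)^2$ (from the decay estimate) with $\phi(y)^2 \leq L_1|\phi'(y)|$ (from $\phi \leq 2^{k/3}|\phi'|$ and $\phi \leq 1$). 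Both proofs use the same ingredients, but yours has a concrete advantage: it bypasses the unproven monotonicity of $\tphi$, which would otherwise require a separate Wronskian or Sturm-type argument. The one place you should be slightly more careful is the inference from Proposition~\ref{prop:phi} to $\int_y^{g_2(x)}\phi(t)\ud t \leq CL_1\phi(y)$: the proposition states decay from the left endpoint $t_k$ of each $J_k$, and passing to decay from an interior point $y\in J_{k^*}$ requires noting that the rate $2^{-k/3}/10$ only increases as $t$ moves rightward past $y$ (since $\pa_tV$ increases by convexity, so the interval index decreases), hence the worst rate on all of $[y,g_2(x)]$ is $2^{-k^*/3}/10 \geq cL_1^{-1}$. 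You gesture at this but it is worth stating explicitly. Finally, your closing observation that the resulting bound carries a factor of $\phi(y)$ and so can be absorbed into $F_1(y)$ matches the paper's remark.
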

\begin{rem}
The function also decays exponentially from its maximum away from the region where $V(x,y) \leq 1+ L_1^{-2}$ on a length scale comparable to $L_1$. Therefore we can include this term in the function $F_1(y)$ in the statement of Proposition \ref{prop:paxeigpointwise}.
\end{rem}
\begin{proof}{Lemma \ref{lem:G1a}}
We know that $\tphi(y) \leq \tphi(t)$, and $\phi(t)$ decays exponentially on a length scale comparable to $L_1$ as we move away from $y^*$. Therefore, this bound follows in a very straightforward manner.
\end{proof}

Before bounding the second term in \eqref{eqn:key2b}, we first want to establish the following lemma.
\begin{lem} \label{lem:energy}
 For any $ \ty\geq y_3$, we have the bound
 \begin{align*}
  \int_{\ty}^{g_2(x)}\phi(t)^2 \pa_tV(x,t) \ud t \leq (\phi'(\ty))^{1/2}.
  \end{align*}
\end{lem}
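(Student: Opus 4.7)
The plan is to derive an energy (Pohozaev-type) identity by multiplying the ODE satisfied by $\phi$ by the integrating factor $\phi'(t)$. Recall $\phi$ solves $\phi''(t)=F(t)\phi(t)$ on $\Om(x)$ with $\phi(g_2(x))=0$, where $F(t)=V(x,t)-\mu(x)$. Multiplication by $\phi'(t)$ gives
\begin{align*}
\tfrac{1}{2}\frac{d}{dt}\bigl[(\phi'(t))^2\bigr] \;=\; F(t)\phi(t)\phi'(t) \;=\; \tfrac{1}{2}\,F(t)\frac{d}{dt}\bigl[\phi(t)^2\bigr].
\end{align*}
Integrating from $\ty$ to $g_2(x)$ and then integrating by parts on the right-hand side yields
\begin{align*}
(\phi'(g_2(x)))^2 - (\phi'(\ty))^2 \;=\; \bigl[F(t)\phi(t)^2\bigr]_{\ty}^{g_2(x)} - \int_{\ty}^{g_2(x)} \pa_t V(x,t)\,\phi(t)^2\,dt.
\end{align*}
Using the Dirichlet condition $\phi(g_2(x))=0$ the boundary term at $g_2(x)$ drops out, and rearranging produces the identity
\begin{align*}
\int_{\ty}^{g_2(x)} \phi(t)^2\,\pa_t V(x,t)\,dt \;=\; (\phi'(\ty))^2 - (\phi'(g_2(x)))^2 - F(\ty)\,\phi(\ty)^2.
\end{align*}

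For $\ty \geq y_3$, Definition \ref{def:y} gives $F(\ty)=V(x,\ty)-\mu(x)\geq 0$, so both of the last two terms on the right are nonpositive. We therefore obtain the sharp bound
\begin{align*}
\int_{\ty}^{g_2(x)} \phi(t)^2\,\pa_t V(x,t)\,dt \;\leq\; (\phi'(\ty))^2.
\end{align*}

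To reconcile this with the stated right-hand side $(\phi'(\ty))^{1/2}$ (which, since $\phi'(\ty)<0$ past the maximum of $\phi$, must be read as $|\phi'(\ty)|^{1/2}$), I would invoke the first-derivative bound from Lemma \ref{lem:phibasic}, namely $|\phi'(t)|\leq C_1/L_1$ for all $t\in\Om(x)$. Since $L_1$ is large compared to the absolute constant $C_1$ (Definition \ref{def:Om} assumes the inner radius, hence $L_1$, to be bounded below by a large absolute constant), we have $|\phi'(\ty)|\leq 1$, which forces $|\phi'(\ty)|^{2}\leq |\phi'(\ty)|^{1/2}$. Combined with the energy inequality above, this yields the stated lemma. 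The only delicate point is the sign accounting at the endpoint $g_2(x)$: the term $-(\phi'(g_2(x)))^{2}$ appears with the favorable sign and so can be discarded without cost, which is what makes the identity directly usable as a one-sided estimate.
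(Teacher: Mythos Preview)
Your energy-identity argument is essentially the paper's own proof: the paper packages the same computation as the derivative of $\mathcal{E}(t)=(\phi'(t))^2-F(x,t)\phi(t)^2$, obtaining $\mathcal{E}'(t)=-\pa_tV(x,t)\phi(t)^2$ and hence $\int_{\ty}^{g_2(x)}\phi(t)^2\pa_tV(x,t)\,dt=\mathcal{E}(\ty)-\mathcal{E}(g_2(x))\le(\phi'(\ty))^2$. So the core approach is identical, and the bound you reach, $(\phi'(\ty))^2$, is exactly what the paper proves and what it actually \emph{uses} downstream (see the application in Lemma~\ref{lem:G2b}). The exponent $1/2$ in the displayed statement is a typo for $2$; your effort to bridge the gap is therefore unnecessary, and in any case the justification you give is not quite right: Definition~\ref{def:Om} only asserts that the inner radius is bounded below by an absolute constant, not a large one, so you cannot infer $|\phi'(\ty)|\le 1$ in general.

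There is, however, one genuine gap in your argument that the paper handles more carefully. You write ``Using the Dirichlet condition $\phi(g_2(x))=0$ the boundary term at $g_2(x)$ drops out,'' discarding $F(g_2(x))\phi(g_2(x))^2$. But Definition~\ref{def:V} places no upper bound on $V$, and indeed $V(x,y)=h(x,y)^{-2}$ may blow up as $y\to g_2(x)$, so $F(g_2(x))\phi(g_2(x))^2$ is an indeterminate form $\infty\cdot 0$. The paper closes this by observing that $\phi'\in L^\infty(\Om(x))$ forces $\phi''\in L^1(\Om(x))$, hence $F\phi\in L^1(\Om(x))$, which rules out $F(x,y)\phi(y)$ growing like $(y-g_2(x))^{-1}$; combined with the linear vanishing of $\phi$ this yields $\liminf_{y\to g_2(x)}F(x,y)\phi(y)^2=0$, i.e.\ $\mathcal{E}(g_2(x))\ge 0$. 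You should include this step.
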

\begin{proof}{Lemma \ref{lem:energy}}
To prove this lemma, we will consider the \lqu energy'
\begin{align} \label{eqn:energy}
\mathcal{E}(t) \coloneqq (\phi'(t))^2 - F(x,t)\phi(t)^2 .
\end{align}
Differentiating $\mathcal{E}(t)$ we find that
\begin{align*}
\mathcal{E}'(t) = 2\phi'(t)(\phi''(t) - F(x,t)\phi(t)) - \pa_tF(x,t)\phi(t)^2 = -  \pa_tF(x,t)\phi(t)^2,
\end{align*}
where the final equality holds because $\phi''(t) = F(x,t)\phi(t)$. Since $F(x,t) = V(x,t)-\mu(x)$, we have
\begin{align*}
\pa_tF(x,t) =\pa_tV(x,t),
\end{align*}
and so
\begin{align*}
\int_{\ty}^{g_2(x)} \pa_tV(x,t) \phi(t)^2 \ud t = -\int_{\ty}^{g_2(x)} \mathcal{E}'(t) \ud t = \mathcal{E}(\ty) - \mathcal{E}(g_2(x)). 
\end{align*}
Since $F(x,t) \geq 0$ for $t\geq y_3$, we know that
\begin{align*}
\mathcal{E}(\ty) = (\phi'(\ty))^2 - F(x,\ty)\phi(\ty)^2 \leq (\phi'(\ty))^2.
\end{align*}
Thus, to finish the proof of the lemma we need to show that $\mathcal{E}(g_2(x)) \geq 0$. We know that
\begin{align*}
\mathcal{E}(g_2(x)) \geq -F(x,g_2(x))\phi(g_2(x))^2,
\end{align*}
and that $\phi(g_2(x)) = 0$. However, we are not assuming that the potential $V(x,y)$ remains bounded as $y$ approaches $g_2(x)$, and so we cannot immediately deduce that $F(x,g_2(x))\phi(g_2(x))^2 = 0$. Instead we argue as follows. The function $\phi'(t)$ is in $L^{\infty}(\Om(x))$, and this has two consequences. First, the eigenfunction $\phi(y)$ decays at least linearly to $0$ at $y=g_2(x)$. It also means that $\phi''(y)$ is in $L^1(\Om(x))$, and hence $F(x,y)\phi(y)$ is in $L^1(\Om(x))$. This means that $F(x,y)\phi(y)$ cannot grow as fast as $(y-g_2(x))^{-1}$ as we approach the boundary and so
\begin{align*}
\liminf_{y\to g_2(x)} \phi(y)F(x,y)\phi(y) = 0.
\end{align*}
This implies that $\mathcal{E}(g_2(x)) \geq0$ and concludes the proof of the lemma.
\end{proof}
\begin{rem}
This energy $\mathcal{E}(t)$ has also been used in \cite{GJ2} in their proof of Theorem 2.1 (B). There they obtain a pointwise estimate comparing  the first eigenfunction of the two dimensional domain with the first eigenfunction of the associated ordinary differential operator.
\end{rem}

We can now bound the contribution from the second term in $G(x,t)$.
\begin{lem} \label{lem:G2b}
We have a bound on the second term in \eqref{eqn:key2b},
\begin{align*}
 \tphi(y) \int_{y}^{g_2(x)} \phi(t)^2(|t-y^*|+L_1)L_1^{-1/2}L_2^{-1}|\pa_tV(x,t)| \ud t \leq C_1L_1^{-1/2}L_2^{-1}.
 \end{align*}
\end{lem}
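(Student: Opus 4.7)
The plan is to mimic the treatment of the analogous term in Lemma \ref{lem:G2}, but replace the pointwise bound $\phi(t) \leq |\phi'(t)|2^{k/3}$ on $J_k$ with the global ``energy'' identity of Lemma \ref{lem:energy}, since that identity already has a factor of $\pa_tV$ built in. First I would split the factor $(|t-y^*|+L_1)$ into the two pieces $L_1$ and $|t-y^*|$, and handle each separately.

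For the $L_1$ piece, I would apply Lemma \ref{lem:energy} (interpreting the right-hand side as $(\phi'(\ty))^2$, which is what its proof actually gives) on the interval $[\max(y,y_3), g_2(x)]$, where $\pa_tV \geq 0$ by the convexity of $V(x,\cdot)$. On the possible remaining piece $[y,y_3]$ one just uses the crude bounds $\phi(t)^2 \leq 1$, $\tphi(y) \leq C_1 L_1$, and $\int_{y_1}^{y_3}|\pa_tV|\,\ud t \leq CL_1^{-2}$ from the convexity of $V$, which are enough because $|y_3-y|\leq CL_1$ in that regime. Combined with the bound $\tphi(y) \leq C_1(L_1 + |\phi'(y)|^{-1})$ from Lemma \ref{lem:tphi} and $|\phi'(y)| \leq C/L_1$ from Lemma \ref{lem:phibasic}, one obtains
\[ L_1\,\tphi(y)\,(\phi'(y))^2 \leq C_1 L_1^2 (\phi'(y))^2 + C_1 L_1 |\phi'(y)| \leq C_1,\]
as required.

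For the $|t-y^*|$ piece, the main idea is to write $(t-y^*) = (y-y^*) + \int_y^t \ud s$ and apply Fubini's theorem, reducing the integral (for $y \geq y^*$) to
\[ (y-y^*)\int_y^{g_2(x)}\phi(t)^2\pa_tV\,\ud t + \int_y^{g_2(x)}\!\!\int_s^{g_2(x)}\phi(t)^2\pa_tV\,\ud t\,\ud s.\]
Lemma \ref{lem:energy} applied to the inner integral of the second term gives $\int_s^{g_2(x)}\phi(t)^2\pa_tV\,\ud t \leq (\phi'(s))^2$, so the second term is dominated by $\int_y^{g_2(x)}(\phi'(s))^2\,\ud s$. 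A second integration by parts using $\phi''=F(x,\cdot)\phi$ and $\phi(g_2(x))=0$, combined with $F\geq 0$ past $y_3$, bounds this by $\phi(y)|\phi'(y)|$. Hence the contribution is at most $\tphi(y)\bigl[(y-y^*)(\phi'(y))^2 + \phi(y)|\phi'(y)|\bigr]$, and each term is shown to be $\leq C_1$ by combining $\tphi(y) \leq C_1(L_1+|\phi'(y)|^{-1})$ with the decay estimates on $\phi$ and $\phi'$ from Proposition \ref{prop:phi} and Corollary \ref{cor:phiy}: the factor $(y-y^*)$ is absorbed by the exponential decay of $|\phi'(y)|$ on scale $L_1$, while the remaining factors combine as in the $L_1$-piece analysis. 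The case $y_1 \leq y < y^*$ is reduced to the preceding by splitting $[y,g_2(x)]$ at $y^*$ and observing $|t-y^*|\leq CL_1$ on $[y,y^*]$ (which merges with the $L_1$ case).

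The main obstacle is managing the extra $(t-y^*)$ weight, since it cannot simply be factored out of the energy integral as in Lemma \ref{lem:G1a}; the Fubini/integration-by-parts manoeuvre above is needed to reduce the weighted energy to an unweighted $L^2$-norm of $\phi'$, which can then be controlled by a further integration by parts against the equation $\phi''=F\phi$. The final verification that the bound decays exponentially on scale $2^{k/3}$ within each $J_k$, so that it can be absorbed into the function $F_2(y)$ of Proposition \ref{prop:paxeigpointwise}, follows from tracking the decay of $\phi(y)|\phi'(y)|$ and $(\phi'(y))^2$ through Corollary \ref{cor:phiy}; this is essentially the same bookkeeping as at the end of the proof of Lemma \ref{lem:G2}.
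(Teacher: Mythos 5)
Your proposal is correct, and it reaches the bound by a genuinely different decomposition from the paper's. The paper decomposes the domain of integration into blocks $[\tilde y,\tilde y+L_1]$, replaces the weight $(|t-y^*|+L_1)$ by $(|\tilde y - y^*|+L_1)$ on each block, applies Lemma \ref{lem:energy} from the left endpoint $\tilde y$ of each block, and then sums the resulting geometric series (using Proposition \ref{prop:phiy} to supply the exponential decay of $|\phi'(\tilde y)|$ in $\tilde y$). You instead decompose the \emph{weight}, writing $(|t-y^*|+L_1)$ as an $L_1$-piece (to which Lemma \ref{lem:energy} applies directly) plus a $|t-y^*|$-piece, which you handle with the Fubini identity $t - y^* = (y-y^*) + \int_y^t \ud s$ followed by a second integration by parts against $\phi'' = F\phi$ to bound $\int_y^{g_2(x)}(\phi')^2$ by $\phi(y)|\phi'(y)|$. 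Both routes use the same key input (the energy identity and the decay estimates from Proposition \ref{prop:phi}, Proposition \ref{prop:phiy}, and Lemma \ref{lem:tphi}), and both rely on the identity $\tphi(y)|\phi'(y)|\leq C_1$. Your version avoids the block summation and handles the weight exactly, at the cost of one extra integration by parts; the paper's version is slightly more mechanical and matches the style used in Lemma \ref{lem:G2}. One small discrepancy: the paper's remark following Lemma \ref{lem:G2b} assigns this contribution to $F_1(y)$ (decay on scale $L_1$), whereas you assign it to $F_2(y)$ (decay on scale $2^{k/3}$ within $J_k$). Since each $2^{k/3}\leq CL_1$ your claim is the stronger one, and either choice is absorbed in the final $L^2(\Om(x))$ estimate of Proposition \ref{prop:paxeigupperbound}, so this does not affect the argument. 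Your parenthetical observation that the exponent $1/2$ in the statement of Lemma \ref{lem:energy} should be $2$ (as the proof delivers, and as the paper itself uses) is also correct.
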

\begin{rem}
We will see in the proof that the function also decays exponentially from its maximum away from the region where $V(x,y) \leq 1+ L_1^{-2}$  on a length scale comparable to $L_1$. Therefore we can include this term in the function $F_1(y)$ in the statement of Proposition \ref{prop:paxeigpointwise}.
\end{rem}
\begin{proof}{Lemma \ref{lem:G2b}}
If $y \leq y_3$, we first consider the part of the integral where $t$ lies in the interval $[y,y_3]$ of length at most $C_1L_1$. In this case, we know that $\tphi(y) \leq C_1L_1$ and the estimates follow easily.

For $t\geq y_3$, we first consider the integral between $\ty$ and $\ty+L_1$, where $\ty$ is some point with $\ty\geq y_3$ and $\ty\geq y$. Since $\pa_tV(x,t)\geq 0$ here, we have
\begin{align} \label{eqn:G2b1} \nonumber
 \tphi(y) \int_{\ty}^{\ty+L_1} & \phi(t)^2(|t-y^*|+L_1)L_2^{-1}|\pa_tV(x,t)| L_1^{-1/2}\ud t \\
 & \leq C_1\tphi(y)(|\ty-y^*|+L_1)L_2^{-1}L_1^{-1/2}\int_{\ty}^{g_2(x)}\phi(t)^2 \pa_tV(x,t)  \ud t.
\end{align}
 Applying Lemma \ref{lem:energy}, we can bound the right hand side of \eqref{eqn:G2b1} by
 \begin{align*}
 C_1\tphi(y) (|\ty-y^*|+L_1)L_2^{-1}L_1^{-1/2}(\phi'(\ty))^2 .
 \end{align*}
 Lemma \ref{lem:tphi} shows that
 \begin{align*}
 \tphi(\ty)|\phi'(\ty)| \leq C_1,
 \end{align*}
 and using Proposition \ref{prop:phiy} with $2^k$ comparable to $L_1^{3}$, we have the derivative bound
 \begin{align*}
 |\phi'(\ty)| \leq C_1L_1^{-1}\exp\left(-c|\ty-y^*|/L_1\right).
 \end{align*}
 Thus the right hand side of \eqref{eqn:G2b1} has the bound
 \begin{align*}
 C_1L_2^{-1}L_1^{-1/2} \exp\left(-c|\ty-y^*|/L_1\right).
 \end{align*}
 Summing over $\ty$ between $y$ and $g_2(x)$ at intervals of length comparable to $L_1$ then gives the desired bound.

\end{proof}

We finally have to bound the contribution from the third term in $G(x,t)$.

\begin{lem} \label{lem:G3b}
We have a bound on the third term in \eqref{eqn:key2b}
\begin{align} \label{eqn:G3b1}
 \tphi(y) \int_{y}^{g_2(x)} \phi(t)|V(x,t) - \mu(x)||\alpha| \ud t \leq C_1L_1^{-1/2}L_2^{-1}.
 \end{align}
\end{lem}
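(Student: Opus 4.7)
The plan mirrors Lemma \ref{lem:G3}, but with the roles of $\phi$ and $\tphi$ essentially swapped on the integration range; this forces a different treatment of the portion with $t \geq y_3$, where $|V - \mu|$ may be large and the outer factor $\tphi(y)$ grows rather than decays. Since $\alpha$ does not depend on $t$, I first pull $|\alpha|$ out of the integral and use Proposition \ref{prop:boundary} together with the observation $M \geq y - y^*$ (valid whenever $y^* \leq y \leq g_2(x)$) to rewrite the bound as $|\alpha| \leq CL_2^{-1}L_1^{-1/2}\exp(-c(y-y^*)/(2L_1))$, with half of the exponential $\exp(-cM/L_1)$ absorbing the polynomial factor $(L_1+M)/L_1$. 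It therefore suffices to prove the $y$-independent estimate
\begin{align*}
I(y) \coloneqq \tphi(y)\int_{y}^{g_2(x)} \phi(t)|V(x,t) - \mu(x)|\ud t \leq C.
\end{align*}

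If $y \leq y_3$, I split the integral at $y_3$. On $[y, y_3]$, the elementary estimates $\tphi(y) \leq C_1L_1$, $|V - \mu| \leq CL_1^{-2}$, $|y_3 - y| \leq CL_1$ and $\phi \leq 1$ immediately produce a contribution of size at most $C$. The main step is the portion over $[\max(y, y_3), g_2(x)]$. Here the key observation is that $\phi(t)(V(x,t) - \mu(x)) = \phi''(t)$ and $V - \mu \geq 0$, so
\begin{align*}
\int_{a}^{g_2(x)} \phi(t)(V(x,t)-\mu(x))\ud t = \phi'(g_2(x)) - \phi'(a) \leq |\phi'(a)|,
\end{align*}
with $a = \max(y, y_3)$, using that $\phi$ is convex and decreasing on $[y_3, g_2(x)]$, so that $|\phi'|$ is non-increasing there. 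When $y \geq y_3$, Lemma \ref{lem:tphi} together with the uniform derivative bound $|\phi'| \leq CL_1^{-1}$ from Lemma \ref{lem:phibasic} gives $\tphi(y)|\phi'(y)| \leq C_1L_1|\phi'(y)| + C_1 \leq C$; when $y \leq y_3$, the bounds $\tphi(y) \leq C_1L_1$ and $|\phi'(y_3)| \leq CL_1^{-1}$ yield the same conclusion.

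Combining these two contributions gives $I(y) \leq C$, and multiplying by the bound on $|\alpha|$ produces the desired estimate, with the exponential decay in $y$ needed to place this term in $F_1$ coming from $|\alpha|$ itself. The main obstacle is to recognise that, although $|V - \mu|$ can be unbounded as $t \to g_2(x)$ and $\tphi(y)$ itself grows, the eigenvalue equation converts the integral into a $|\phi'|$-type boundary term which, by Lemma \ref{lem:tphi}, pairs with $\tphi(y)$ to give a universally bounded product.
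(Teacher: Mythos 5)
Your proof is correct and follows essentially the same route as the paper: split the integral at $y_3$, use the elementary bounds $\tphi(y)\leq C_1L_1$, $|V-\mu|\leq CL_1^{-2}$ on $[y,y_3]$, and for $t\geq y_3$ convert $\int\phi(t)(V(x,t)-\mu(x))\ud t$ into the boundary term $|\phi'(\max\{y,y_3\})|$ via $\phi''=F\phi$ and the monotonicity of $|\phi'|$, then pair it with $\tphi(y)$ through Lemma \ref{lem:tphi}. The only cosmetic difference is that you factor out $|\alpha|$ at the start and recover the $y$-dependent exponential decay needed for $F_1$ from $\exp(-cM/L_1)$ together with $M\geq y-y^*$, whereas the paper carries the $M$-decay along implicitly.
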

\begin{rem}
As for the previous two lemmas, we will see in the proof that the function also decays exponentially from its maximum away from the region where $V(x,y) \leq 1+ L_1^{-2}$ on a length scale comparable to $L_1$. Therefore we can include this term in the function $F_1(y)$ in the statement of Proposition \ref{prop:paxeigpointwise}.
\end{rem}
\begin{proof}{Lemma \ref{lem:G3b}}
From Proposition \ref{prop:boundary} we have the estimate on $\alpha$ of the form
\begin{align} \label{eqn:G3b1}
 |\alpha|  = |\pa_x\psi_1^{(x)}(g_2(x))| \leq CL_2^{-1}L_1^{-3/2}(L_1+M)\exp(-cML_1^{-1}).
 \end{align}
 For the part of the integral in \eqref{eqn:key2} for $t$ between $y$ and $y_3$, we know that $|V(x,t) - \mu(x)|$ is at most $C_1L_1^{-2}$, and $\tphi(y) \leq C_1L_1$. Thus, we immediately get a bound of
\begin{align*}
C_1L_1^{-1/2}L_2^{-1}\exp(-cML_1^{-1})
\end{align*}
for this part.

For $t \geq y_3$, we know that $F(x,t) = V(x,t) - \mu(x) \geq 0$. Thus, we can bound this part of the integral in \eqref{eqn:key2} by
\begin{align} \label{eqn:G3b2}
 C_1\tphi(y)L_2^{-1}L_1^{-3/2}(L_1+M)\exp(-cML_1^{-1}) \int_{\max\{y_3,y\}}^{g_2(x)} \phi(t)(V(x,t) - \mu(x)) \ud t.
\end{align} 
Since
\begin{align*}
\phi''(t) = (V(x,t) - \mu(x))\phi(t),
\end{align*}
and $|\phi'(t)|$ is decreasing for $t \geq y_3$, we find that \eqref{eqn:G3b2} can be bounded by
\begin{align*}
 C_1\tphi(y)L_2^{-1}L_1^{-3/2}(L_1+M)\exp(-cML_1^{-1}) |\phi'(y)| \leq C_1L_1^{-1/2}L_2^{-1}\exp(-cML_1^{-1}/2),
 \end{align*}
 where the last inequality comes from Lemma \ref{lem:tphi} as usual. This concludes the proof of the lemma.

\end{proof}

We recall that
\begin{align*}
g(y) - c_0(x)\eigx = \phi(y)\int_{y_1}^y \tphi(t)G(x,t) \ud t + \tphi(y)\int_y^{g_2(x)}\phi(t)G(x,t) \ud t.
\end{align*}
Then by the bounds on the right hand side in Lemmas \ref{lem:G1}, \ref{lem:G2}, \ref{lem:G3} and Lemmas \ref{lem:G1a}, \ref{lem:G2b}, \ref{lem:G3b}, we have shown that
\begin{align} \label{eqn:eigenvaluefinal}
\left| g(y) - c_0(x)\eigx \right| \leq F_1(y) + F_2(y).
\end{align}
Here the functions $F_1(y)$ and $F_2(y)$ have the desired properties from the statement of Proposition \ref{prop:paxeigpointwise}. As we remarked at the beginning of the proof, by the bound on $\alpha$ that we obtained in Proposition \ref{prop:boundary}, the estimate in \eqref{eqn:eigenvaluefinal} is sufficient to conclude the proof of Proposition \ref{prop:paxeigpointwise}.

\end{proof}

After the statement of Proposition \ref{prop:paxeigpointwise}, we showed that this implied Proposition \ref{prop:paxeigupperbound} and the bound
\begin{align*}
\int_{\Om(x)} \left( \paeigx \right)^2 \ud y \leq C_1L_2^{-2}.
\end{align*}
Combining this with the estimate on the first eigenvalue $\la$ from Proposition \ref{prop:eigboundinter}
\begin{align*}
\la \leq \mu + \int_{\Om} \chi(x)^2\left( \paeigx \right)^2 \ud x \ud y + C_1L_2^{-2}
\end{align*}
 gives
\begin{align*}
\la \leq \mu + CL_2^{-2}.
\end{align*}
This completes the proof of the upper bound on $\la$ in Proposition \ref{prop:eigbound}. 
\end{proof}

By Propositions \ref{prop:lalower} and \ref{prop:eigbound}, we see that the first eigenvalue $\la$ satisfies
\begin{align*}
\mu \leq \la \leq \mu + CL_2^{-2},
\end{align*}
and so we have established Theorem \ref{thm:eigenvalue}.

\section{$L^2(\Om)$ Bounds For The First Eigenfunction $u(x,y)$} \label{sec:L2}

Now that we have established the improved eigenvalue bound on $\la$ in Theorem \ref{thm:eigenvalue}, we want to use it to study the corresponding eigenfunction $u(x,y)$. We recall that $u(x,y)$ satisfies 
\begin{eqnarray*}
    \left\{ \begin{array}{rlcc}
    (-\Delta_{x,y} + V(x,y))u(x,y) & = \la u(x,y) && \text{in } \Om \\
   u(x,y) & = 0&& \text{on } \pa \Om,
    \end{array} \right.
\end{eqnarray*}
and is normalised to be positive inside $\Om$ with a maximum of $1$. Our main aim is to prove Theorem \ref{thm:shape} and show that the level sets $\{(x,y)\in\Om: u(x,y) = c\}$ have lengths comparable to $L_2$ and $L_1$ in the $x$ and $y$-directions respectively, whenever $c$ is bounded away from $0$ and $1$.

Before we prove this theorem, in this section we will first establish an $L^2(\Om)$-bound for $u(x,y)$. More precisely, we will prove the following proposition.

\begin{prop} \label{prop:uL2bound}
There exists an absolute constant $C>0$ such that
\begin{align*}
\int_{\Om} u(x,y)^2 \ud x \ud y \leq CL_1L_2.
\end{align*}
\end{prop}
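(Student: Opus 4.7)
Set $H(x) := \int_{\Om(x)} u(x,y)^2 \ud y$, the cross-sectional $L^2$-mass of $u$ at $x$. The proposition reduces to two ingredients: (i) $H$ decays exponentially in $x$ on scale $L_2$ away from a bounded interval, and (ii) the uniform bound $\max_x H(x) \leq CL_1$. Combining, $\int_\Om u^2 \ud x \ud y = \int H(x)\ud x \leq C L_1 L_2$, since the exponential decay makes the total integral comparable to $L_2 \cdot \max H$.

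For (i), I would establish a Carleman-type identity for $H$. Since $u$ vanishes at the cross-sectional endpoints $y=g_i(x)$, the moving-boundary terms from Leibniz's rule drop out, and substituting $\pa_x^2 u = -\pa_y^2 u + (V-\la)u$ and integrating by parts in $y$ gives
\begin{align*}
H''(x) = 2\int_{\Om(x)}(\pa_x u)^2 \ud y + 2\int_{\Om(x)}\bigl[(\pa_y u)^2 + (V-\la) u^2\bigr]\ud y \geq 2(\mu(x)-\la) H(x),
\end{align*}
where the last inequality uses the variational characterisation of $\mu(x)$ applied to $u(x,\cdot)\in H^1_0(\Om(x))$. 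Theorem \ref{thm:eigenvalue} yields $\la - \mu \leq CL_2^{-2}$, and by the convexity of $\mu(x)$ (Lemma \ref{lem:mu(x)convex}) together with Definition \ref{def:L2}, $\mu(x)-\mu^* \geq cL_2^{-2}$ for $x$ at distance $\gtrsim L_2$ from $I$. On this exterior region $H''(x)\geq cL_2^{-2}H(x)$, and a standard ODE comparison with the boundary condition $H\to 0$ at the ends of $\Om$'s projection gives the exponential decay $H(x) \leq H_{\max}\exp(-c\,\mathrm{dist}(x,I)/L_2)$.

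For (ii), fix $x$ and expand $u(x,\cdot) = \sum_k a_k(x)\psi_k^{(x)}$ in the eigenbasis of $\mathcal{L}(x)$, so $H(x) = \sum_k a_k(x)^2$. The leading coefficient is controlled by
\begin{align*}
|a_1(x)| \leq \max u \cdot \|\psi_1^{(x)}\|_{L^1(\Om(x))} \leq CL_1^{1/2},
\end{align*}
using that $\psi_1^{(x)}$ is $L^2$-normalised and, by Lemma \ref{lem:phibasic}, decays exponentially on scale $L_1$ so that its $L^1$-norm is at most $CL_1^{1/2}$. For the orthogonal component, I would exploit the cross-sectional spectral gap $\mu_k(x)-\mu_1(x) \geq cL_1^{-2}$ and project the two-dimensional eigenvalue equation onto $\{\psi_k^{(x)}\}_{k\geq 2}$, using the sharp bound $\la-\mu \leq CL_2^{-2}$ from Theorem \ref{thm:eigenvalue}, to control $\sum_{k\geq 2}a_k(x)^2$ in terms of $\|\pa_x u(x,\cdot)\|_{L^2(\Om(x))}^2$. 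Upgrading this to the pointwise bound $H(x)\leq CL_1$ requires a complementary $L^2(\Om)$-gradient estimate on $u$ and a cross-sectional Agmon-type weighted estimate in $y$, both of which the introduction states will be proved in parallel in Section \ref{sec:L2}.

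The main obstacle is step (ii): while $a_1(x)^2 \leq CL_1$ is immediate, the uniform pointwise control of the higher-mode contribution forces a careful matching of the $L_2$-scale behaviour of $\pa_x u$ against the $L_1$-scale cross-sectional spectral gap, and it is here that the full sharpness of Theorem \ref{thm:eigenvalue}, rather than the cruder bound of Proposition \ref{prop:simpleeigenvalue}, becomes essential.
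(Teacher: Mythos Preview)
Your step (i) is correct and matches the paper exactly: the Carleman identity $H''(x)\geq 2(\mu(x)-\la)H(x)$ combined with Theorem~\ref{thm:eigenvalue} and the convexity of $\mu(x)$ gives exponential decay of $H$ on scale $L_2$ outside a bounded interval (this is the paper's Proposition~\ref{prop:H(x)decay} and Corollary~\ref{cor:H(x)decay}).

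Step (ii) is where you diverge from the paper, and your route has a gap. Your bound $a_1(x)^2\leq CL_1$ via $\|\psi_1^{(x)}\|_{L^1}\leq CL_1^{1/2}$ is correct and pleasant, but the higher-mode control is not carried out. Projecting the PDE onto $\psi_k^{(x)}$ produces, after integration in $x$, an identity of the form
\[
\int(\pa_x u)^2\,dx\,dy+\int\sum_{k\geq 2}(\mu_k(x)-\la)a_k(x)^2\,dx=-\int(\mu_1(x)-\la)a_1(x)^2\,dx,
\]
and the right-hand side is bounded by $CL_2^{-2}\int a_1^2\,dx$, which you do not yet control---it is part of the very quantity $\int_\Om u^2$ you are estimating. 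Moreover this only yields integrated, not pointwise, control of $\sum_{k\geq 2}a_k^2$. You concede that the upgrade to a pointwise bound needs the gradient estimates of Section~\ref{sec:L2}; but those estimates are themselves proved in the paper \emph{after} and \emph{using} the localized $L^2$ bound, so you are borrowing the conclusion to prove the hypothesis.

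The paper avoids mode expansion entirely. It first proves the localized bound $\int_\Om\chi_1(x)u^2\leq CL_1L_2$ directly (Proposition~\ref{prop:ydecayupper}) by splitting $V-\la=(V-\la)_+-(V-\la)_-$: the negative part is $\leq CL_1^{-2}$ and supported on a strip of $y$-width $\sim L_1$ and $x$-length $\sim L_2$, contributing $\leq CL_1^{-1}L_2$, while the positive part gives coercivity $\geq C_2L_1^{-2}\int\chi_1 u^2$ up to the same error. The cutoff error $\int\chi_1'' u^2$ is absorbed using the already-proved exponential decay of $H$. From this localized $L^2$ bound the paper bootstraps to $\int\chi_1(\pa_x u)^2\leq CL_1L_2^{-1}$ (Proposition~\ref{prop:xdecayupper}), and then shows $A\leq CL_1$ (Proposition~\ref{prop:Aupper}) by arguing that if $H(x^*)$ were large, the $\pa_x u$ bound would force $H(x)$ to stay large over an interval of length $\sim L_2$, contradicting the localized $L^2$ bound. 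This route is self-contained, whereas yours, as written, is circular.
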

\begin{rem}
Note that this $L^2(\Om)$ bound is consistent with the shape of the level sets described in Theorem \ref{thm:shape}. We will use the eigenvalue bound on $\la$ from Theorem \ref{thm:eigenvalue} in a critical way in the proof.
\end{rem}
\begin{proof}{Proposition \ref{prop:uL2bound}}
Before beginning the proof of this proposition, we make the following definition.
\begin{defn} \label{def:H}
We define the function $H(x)$ by
\begin{align*}
H(x) \coloneqq \int_{\Om(x)}u(x,y)^2 \ud y.
\end{align*}
That is, $H(x)$ is equal to the square of the $L^2(\Om(x))$-norm of the cross-section of the eigenfunction $u(x,\cdot)$.
\end{defn}
To prove Proposition \ref{prop:uL2bound} we will first study the rate at which the function $H(x)$ decays from its maximum, and we will then prove an estimate for the maximum of $H(x)$.

To study the decay of $H(x)$, we prove a Carleman-type inequality. For the convex function $\mu(x)$ let $x^*$ be a point where it achieves its minimum of $\mu^*$. We now prove:
\begin{prop} \label{prop:H(x)decay}
For any $x$ we have the differential inequality,
\begin{align*}
 H''(x) \geq 2(\mu(x) - \la)H(x).
 \end{align*}
In particular, for $|x-x^*| \geq CL_2$,  with $C$ a sufficiently large absolute constant, we have
\begin{align*}
 H''(x) \geq \frac{1}{L_2^2}H(x).
 \end{align*} 
\end{prop}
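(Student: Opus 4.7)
The plan is to prove the differential inequality by directly differentiating $H(x)$ twice, using the eigenvalue equation for $u$ and the variational characterization of $\mu(x)$. Writing $\Om(x) = [g_1(x), g_2(x)]$ as usual for $x$ in the support of interest, Leibniz's rule gives
\begin{align*}
H'(x) = 2\int_{\Om(x)} u\,\pa_x u\,\ud y + u(x,g_2(x))^2 g_2'(x) - u(x,g_1(x))^2 g_1'(x).
\end{align*}
The boundary terms vanish because $u|_{\pa\Om}=0$. A second differentiation, again with vanishing boundary contributions, yields
\begin{align*}
H''(x) = 2\int_{\Om(x)} (\pa_x u)^2\,\ud y + 2\int_{\Om(x)} u\,\pa_x^2 u\,\ud y.
\end{align*}

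Next I would substitute the eigenvalue equation $\pa_x^2 u = -\pa_y^2 u + (V - \la)u$ into the second integral, and integrate $-u\,\pa_y^2 u$ by parts in $y$. The boundary terms vanish since $u(x,g_i(x))=0$, producing
\begin{align*}
H''(x) = 2\int_{\Om(x)} (\pa_x u)^2\,\ud y + 2\int_{\Om(x)}\left[(\pa_y u)^2 + V(x,y)u^2 - \la u^2\right]\,\ud y.
\end{align*}
Now $u(x,\cdot)$ is an admissible test function for the one-dimensional operator $\mathcal{L}(x) = -\tfrac{d^2}{dy^2}+V(x,y)$ on $\Om(x)$, so the variational formulation gives $\int_{\Om(x)}\left[(\pa_y u)^2 + Vu^2\right]\,\ud y \geq \mu(x) H(x)$. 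Discarding the nonnegative $\int (\pa_x u)^2$ term yields the first claim $H''(x) \geq 2(\mu(x) - \la)H(x)$.

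For the quantitative statement, the task is to show $\mu(x) - \la \geq \tfrac{1}{2}L_2^{-2}$ whenever $|x-x^*| \geq CL_2$. From Theorem \ref{thm:eigenvalue} we have $\la \leq \mu + CL_2^{-2}$, and by using the $L^2$-normalized cutoff function $\chi$ of Definition \ref{def:chi} as a test function in the variational principle for $\mathcal{A}$ together with Definition \ref{def:L2}, we obtain $\mu \leq \mu^* + C L_2^{-2}$. Hence $\la \leq \mu^* + C_1 L_2^{-2}$ for an absolute constant $C_1$. The main obstacle is converting the definition of $L_2$ into a pointwise lower bound on $\mu(x) - \mu^*$: by Lemma \ref{lem:mu(x)convex}, $\mu(x)$ is convex, and by Definition \ref{def:L2}, on every interval of length slightly exceeding $L_2$ the function $\mu(x)$ must exceed $\mu^* + L_2^{-2}$ somewhere. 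Convexity then forces $\mu(x) - \mu^* \geq c|x-x^*|L_2^{-3}$ for $|x-x^*|$ larger than a fixed multiple of $L_2$. Choosing $C$ large enough that $c\,C L_2^{-2} \geq C_1 L_2^{-2} + \tfrac12 L_2^{-2}$, we conclude $\mu(x) - \la \geq \tfrac{1}{2}L_2^{-2}$, which combined with the first inequality yields $H''(x) \geq L_2^{-2} H(x)$ as required.
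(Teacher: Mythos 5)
Your proof matches the paper's argument step by step: differentiating $H(x)$ twice with vanishing boundary terms, substituting the eigenfunction equation, integrating by parts in $y$, dropping the nonnegative $\int(\pa_x u)^2$ term, and invoking the variational characterization of $\mu(x)$ to get $H''(x)\geq 2(\mu(x)-\la)H(x)$. The quantitative step is likewise the same, combining Theorem \ref{thm:eigenvalue} with the observation $|\mu-\mu^*|\leq CL_2^{-2}$ and the convexity of $\mu(x)$ to force $\mu(x)-\la\geq L_2^{-2}$ once $|x-x^*|\geq CL_2$.
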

\begin{rem}
This type of Carleman inequality has been used frequently in the study of the ground state Dirichlet eigenfunction of Schr\"odinger operators. For example, in Lemma 3.9 \cite{GJ2} it has been used to establish the exponential decay of the first Fourier mode of the ground state eigenfunction of the two dimensional convex domain. This first Fourier mode comes from a Fourier decomposition of the cross-section of the domain at each fixed $x$. A similar argument has also been used in Section 3 of \cite{FS1} to study the decay of the $L^2$-norm of the cross-section at $x$ of the eigenfunction for a two dimensional domain which is periodic in the $x$-direction and with height in the $y$-direction depending on a small parameter $\eps>0$.
\end{rem}
\begin{proof}{Proposition \ref{prop:H(x)decay}}
The eigenfunction $u(x,y)$ is equal to $0$ when $y$ is at the endpoints of the interval $\Om(x)$. This allows us to differentiate $H(x)$ twice and pass the derivative inside the integral to obtain
\begin{align*}
 H''(x) &= 2\int_{\Om(x)} u(x,y)\pa_{x}^2u(x,y) + (\pa_xu(x,y))^2 \ud y \\
 &= 2\int_{\Om(x)} (V(x,y) - \la)u(x,y)^2 - u(x,y)\pa_y^2u(x,y) +(\pa_xu(x,y))^2 \ud y. 
\end{align*}
Integrating by parts one time in $y$ in the term containing a factor of $\pa_y^2u(x,y)$, we can rewrite this as
\begin{align} \label{eqn:H(x)decayA} \nonumber
 H''(x) & = 2\int_{\Om(x)} (V(x,y) - \la)u(x,y)^2  + (\pa_yu(x,y))^2 + (\pa_xu(x,y))^2 \ud y \\
 & \geq 2\int_{\Om(x)} (V(x,y) - \la)u(x,y)^2  + (\pa_yu(x,y))^2 \ud y 
 \end{align}
 Since $\mu(x)$ is the first eigenvalue of the operator
 \begin{align*}
 \mathcal{L}(x) = - \frac{d^2}{dy^2} + V(x,y),
 \end{align*}
 and $u(x,\cdot)$ vanishes at the endpoints of $\Om(x)$, \eqref{eqn:H(x)decayA} gives us the lower bound
\begin{align} \label{eqn:H(x)decay1}
 H''(x) \geq 2(\mu(x) - \la)\int_{\Om(x)} u(x,y)^2 \ud y = 2(\mu(x)-\la)H(x) . 
 \end{align}
Since $\mu(x^*) = \mu^*$ is the minimum value of the function $\mu(x)$, by the definition of the length scale $L_2$, we know that 
\begin{align*}
 |\mu(x^*) - \mu| \leq C_1L_2^{-2}.
 \end{align*}
 Thus, applying Theorem \ref{thm:eigenvalue}, we have the bound
\begin{align} \label{eqn:H(x)decay2}
 |\la - \mu(x^*)| \leq C_1L_2^{-2}.
 \end{align}
 The function $\mu(x)$ increases from its minimum by $L_2^{-2}$ as $x$ varies in an interval of length comparable to $L_2$ from $x^*$. Moreover, $\mu(x)$ is a convex function. Therefore, provided we choose $C>0$ sufficiently large, we have
 \begin{align} \label{eqn:H(x)decay3}
 \mu(x) - \mu(x^*) \geq (C_1+1)L_2^{-2}
 \end{align}
 whenever $x$ satisfies $|x-x^*| \geq CL_2$. Combining the inequalities in \eqref{eqn:H(x)decay2} and \eqref{eqn:H(x)decay3} shows that
 \begin{align*}
 \mu(x) - \la \geq L_2^{-2},
 \end{align*}
 and using this bound in \eqref{eqn:H(x)decay1} gives
 \begin{align*}
  H''(x) \geq 2L_2^{-2}H(x) 
 \end{align*}
as required.
\end{proof}

Before giving a corollary of this proposition, we recall the generalised maximum principle.
\begin{prop} \label{prop:GMP}
Suppose that the functions $v_1$ and $v_2$ satisfy
\begin{align*}
\Delta v_1 + c(x)v_1 = 0,  \qquad \Delta v_2 + c(x)v_2 \leq 0,
\end{align*}
in a bounded domain $D$, where $c(x)$ is a continuous function. If in addition $v_1$ and $v_2$ are continuous in $\bar{D}$, $v_1>0$ in $D$ and $v_2>0$ in $\bar{D}$, then
\begin{align*}
\max_{\bar{D}} v_1/v_2 \leq \max_{\pa D} v_1/v_2 .
\end{align*}
\end{prop}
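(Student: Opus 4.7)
The plan is to reduce Proposition \ref{prop:GMP} to the classical weak maximum principle for a second-order elliptic operator without a zeroth-order term, applied to the quotient $w = v_1/v_2$. Since $v_2 > 0$ on $\bar{D}$, the quotient is well-defined and continuous on $\bar{D}$, so it attains its maximum somewhere on $\bar{D}$. The goal is to show the maximum must be attained on $\partial D$.

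First I would derive the PDE satisfied by $w$. Writing $v_1 = w v_2$ and applying the product rule gives
\begin{align*}
\Delta v_1 = v_2 \Delta w + 2\nabla v_2 \cdot \nabla w + w\, \Delta v_2.
\end{align*}
Substituting the equation $\Delta v_1 + c(x) v_1 = 0$ and using $v_1 = w v_2$ yields
\begin{align*}
v_2 \Delta w + 2\nabla v_2 \cdot \nabla w + w\bigl(\Delta v_2 + c(x) v_2\bigr) = 0.
\end{align*}
Since $v_2 > 0$ in $\bar{D}$ and $v_1 > 0$ in $D$ we have $w > 0$ in $D$, and the hypothesis $\Delta v_2 + c(x)v_2 \leq 0$ gives $w(\Delta v_2 + c v_2) \leq 0$. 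Rearranging and dividing by $v_2 > 0$ produces the pure elliptic inequality
\begin{align*}
\Delta w + \frac{2\nabla v_2}{v_2}\cdot \nabla w \geq 0 \quad \text{in } D,
\end{align*}
with no zeroth-order term, and with coefficients that are continuous on compact subsets of $D$ (since $v_2$ is continuous and strictly positive on $\bar{D}$, so $1/v_2$ is bounded).

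The second step is to invoke the standard weak maximum principle for the operator $L = \Delta + b(x)\cdot \nabla$ with bounded drift $b = 2\nabla v_2/v_2$: any function satisfying $Lw \geq 0$ on a bounded domain and continuous on $\bar{D}$ attains its supremum on $\partial D$. Applying this to $w$ immediately yields
\begin{align*}
\max_{\bar{D}} \frac{v_1}{v_2} = \max_{\bar{D}} w \leq \max_{\partial D} w = \max_{\partial D} \frac{v_1}{v_2},
\end{align*}
as required.

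The only subtle point, and the one I would take some care over, is the regularity needed to apply the classical maximum principle: strictly speaking the standard statement presumes $w \in C^2(D) \cap C(\bar{D})$ with $b$ locally bounded. The hypotheses of the proposition provide $v_1, v_2 \in C(\bar{D})$ and implicitly enough smoothness in $D$ for $\Delta v_i$ to make sense, and since $v_2$ is bounded away from zero on $\bar{D}$, the coefficient $2\nabla v_2/v_2$ is locally bounded in $D$. So the hypotheses of the weak maximum principle are in force and no further argument is needed. I would not expect a genuine obstacle here; the content of the proposition is really just the algebraic observation that the quotient of a $(\Delta + c)$-harmonic function by a positive $(\Delta + c)$-supersolution satisfies a drift-diffusion inequality without zeroth-order term.
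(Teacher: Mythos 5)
Your proof is correct and follows the same line the paper indicates: the paper itself only cites Protter--Weinberger, Theorem 10, p.\ 73, remarking that the result ``follows from applying the usual maximum principle to the function $v_1/v_2$'', and your computation of the drift-diffusion inequality satisfied by $w = v_1/v_2$ is precisely that argument. The one place you are a bit too quick is the concluding appeal to the weak maximum principle: local boundedness of $b = 2\nabla v_2/v_2$ in $D$ is not what the textbook statement requires --- one needs $b$ bounded on all of $D$, and nothing in the hypotheses prevents $\nabla v_2$ from blowing up near $\partial D$ even though $v_2$ itself is bounded away from zero there. The routine fix is either to apply the strong maximum principle for $\Delta + b\cdot\nabla$ on a compactly contained ball around any putative interior maximum of $w$ (forcing $w$ to be constant on that component), or to apply the weak maximum principle on the exhausting domains $\{x \in D : \mathrm{dist}(x,\partial D) > \epsilon\}$ and let $\epsilon \to 0$, using the continuity of $w$ on $\bar{D}$ to control the boundary maxima. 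This does not change the substance of your argument, but ``no further argument is needed'' slightly overstates the case.
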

This is proven in \cite{PW}, Theorem 10, page 73, and follows from applying the usual maximum principle to the function $v_1/v_2$. We now prove a corollary of Proposition \ref{prop:H(x)decay}.

\begin{cor} \label{cor:H(x)decay}
Let $A \coloneqq \max_{x}H(x)$. Then, the function $H(x)$ satisfies the upper bound
\begin{align*}
 H(x) \leq C_1A\exp{(-c|x-x^*|/L_2)}.
 \end{align*}
\end{cor}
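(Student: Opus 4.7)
The plan is to deduce the exponential decay of $H(x)$ from the differential inequality $H''(x) \geq L_2^{-2} H(x)$ (valid for $|x-x^*| \geq CL_2$ by Proposition \ref{prop:H(x)decay}) via a standard comparison argument in the spirit of Proposition \ref{prop:GMP}. The idea is to construct an explicit exponential supersolution and use the maximum principle.

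First, by symmetry it suffices to prove the bound for $x \geq x^* + CL_2$ (the case $x \leq x^* - CL_2$ is handled identically, and the range $|x-x^*| \leq CL_2$ is absorbed into the constant $C_1$ since $H(x) \leq A$ there trivially). On the interval $[x^* + CL_2,\, x_{\max}]$, where $x_{\max}$ denotes the right-most point of the $x$-projection of $\Om$, I would introduce the comparison function
\begin{align*}
w(x) \coloneqq e^{-(x-x^*)/L_2},
\end{align*}
which satisfies the equation $w''(x) = L_2^{-2}\, w(x)$ exactly.

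Next, set $\Phi(x) \coloneqq H(x) - M w(x)$, where $M > 0$ is a constant to be chosen. Proposition \ref{prop:H(x)decay} gives $H'' - L_2^{-2} H \geq 0$ on the region, and $w$ satisfies the corresponding equation with equality, so
\begin{align*}
\Phi''(x) - L_2^{-2}\,\Phi(x) \geq 0 \qquad \text{on } [x^* + CL_2,\, x_{\max}].
\end{align*}
Since the coefficient $L_2^{-2}$ is strictly positive, the usual maximum principle (or Proposition \ref{prop:GMP} applied to $\Phi/w$ after noting the ratio cannot attain a positive interior maximum) prevents $\Phi$ from attaining a positive maximum in the interior. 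I would then check the boundary: at $x = x_{\max}$, the Dirichlet condition $u \equiv 0$ on $\pa\Om$ forces $H(x_{\max}) = 0$, so $\Phi(x_{\max}) \leq 0$; at $x = x^* + CL_2$, we have $w = e^{-C}$ and $H \leq A$, so choosing $M = A e^{C}$ yields $\Phi(x^* + CL_2) \leq 0$.

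Combining these boundary estimates with the maximum principle gives $\Phi(x) \leq 0$ throughout, i.e.,
\begin{align*}
H(x) \leq A e^{C} e^{-(x-x^*)/L_2} \qquad \text{for } x \geq x^* + CL_2,
\end{align*}
which is the required bound with $C_1 = e^{C}$ and $c = 1$. The same argument with $w(x) = e^{(x-x^*)/L_2}$ handles $x \leq x^* - CL_2$, and enlarging $C_1$ if necessary covers the central region. There is no real obstacle here beyond verifying the two boundary inequalities and the sign of the coefficient in the comparison — the argument is routine once the Carleman-type inequality of Proposition \ref{prop:H(x)decay} is in hand, and the only place where the improved eigenvalue bound of Theorem \ref{thm:eigenvalue} enters is in ensuring that $\mu(x) - \la \geq \tfrac{1}{2} L_2^{-2}$ outside a $CL_2$-neighborhood of $x^*$, which was already used in Proposition \ref{prop:H(x)decay}.
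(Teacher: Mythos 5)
Your proposal is correct and follows essentially the same route as the paper: both deduce the exponential decay from the Carleman inequality $H'' \geq L_2^{-2}H$ of Proposition \ref{prop:H(x)decay} by comparing $H$ with the explicit exponential supersolution on $\{x \geq x^* + CL_2\}$ and invoking the maximum principle (the paper compares the ratio via Proposition \ref{prop:GMP}, you compare the difference $\Phi = H - Mw$, which is an immaterial variant). The boundary checks at $x = x^* + CL_2$ (where $H \leq A$) and at $x_{\max}$ (where $H = 0$ by the Dirichlet condition) are exactly as in the paper.
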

\begin{proof}{Corollary \ref{cor:H(x)decay}}
With $C>0$ as in the statement of Proposition \ref{prop:H(x)decay}, let $x_1 = x^* + CL_2$. We also define the function $R(x)$ for $x>x_1$ by
\begin{align*}
 R(x) \coloneqq  Ae^{-(x-x_1)/L_2}.
 \end{align*}
 Then, $R(x)$ satisfies $R''(x) = L_2^{-2}R(x)$, and $H(x_1) \leq A = R(x_1)$. By Proposition \ref{prop:H(x)decay} we know that
 \begin{align*}
 H''(x) \geq L_2^{-2}H(x)
 \end{align*}
 for all $x \geq x_1$. Therefore, setting $D$ to be the interval $\{x\geq x_1\}$, the conditions of the generalised maximum principle are satisfied and hence
 \begin{align*}
 H(x) \leq R(x)
 \end{align*}
 for all $x \geq x_1$. There is also an analogous bound for $x \leq x^* - CL_2$, and this completes the proof.
\end{proof}
\begin{rem}
In fact, we see from the proof that we can replace $A$ by $H(x_1)$ and conclude that for any $x_1\geq x^*  + CL_2$ we have the bound
\begin{align} \label{rem:H(x)decay}
H(x) \leq H(x_1)e^{-(x-x_1)/L_2}
\end{align}
for all $x>x_1$.
\end{rem}

In particular, as a result of this corollary, we see that $H(x)$ decays exponentially from its value at $x=x^*$ at least at a length scale comparable to $L_2$.

Our next aim is to obtain an upper bound for
\begin{align} \label{eqn:maxH}
A = \max_x H(x) = \max_x \int_{\Om(x)} u(x,y)^2 \ud y.
\end{align}
Suppose that we can show that $A$ satisfies
\begin{align*}
A \leq C_1L_1
\end{align*}
for an absolute constant $C_1$. Then, by Proposition \ref{prop:H(x)decay} we have
\begin{align*}
H(x) \leq C_1L_1 e^{-c|x-x^*|/L_2},
\end{align*}
and so  integrating over $x$ gives
\begin{align*}
\int_{\Om}u(x,y)^2 \ud x \ud y = \int H(x) \ud x \leq CL_1L_2.
\end{align*}
Therefore to complete the proof of Proposition \ref{prop:uL2bound}, it is sufficient to prove this upper bound on $A$. To do this we first define a cut-off function $\chi_1(x)$ as follows.
\begin{defn} \label{def:chi1}
We define $\chi_1(x)$ to be a smooth cut-off function, which satisfies
\begin{align*}
0 \leq \chi_1(x) \leq 1,
\end{align*}
and is equal to $1$ on the interval $[x^*-2CL_2,x^*+2CL_2]$ of length  $4CL_2$, with $C$ as in the statement of Proposition \ref{prop:H(x)decay}. Moreover, the function $\chi_1(x)$ is supported on the interval $[x^*-3CL_2,x^*+3CL_2]$  and has the derivative estimate
\begin{align*}
\left | \pa^k \chi_1(x) \right| \leq (CL_2)^{-k},
\end{align*}
for $k=1,2$.
\end{defn}

We now prove the following.
\begin{prop} \label{prop:ydecayupper}
Let $\chi_1(x)$ be the cut-off function above in Definition \ref{def:chi1}. Then,
\begin{align*}
\int_{\Om} \chi_1(x)u(x,y)^2 \ud x \ud y \leq C_1L_1L_2,
\end{align*}
for an absolute constant $C_1>0$. Note that this is consistent with $u(x,y)$ decaying on a length scale comparable to $L_1$ in the $y$-direction.
\end{prop}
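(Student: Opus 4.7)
The plan is to split $u$ along the fibres using the transverse first eigenfunction $\psi_1^{(x)}$ from Definition \ref{def:psi1}. Set
\[
T(x) \coloneqq \int_{\Om(x)} u(x,y)\psi_1^{(x)}(y)\,\ud y, \qquad u^\perp(x,y) \coloneqq u(x,y) - T(x)\psi_1^{(x)}(y),
\]
so that $u^\perp(x,\cdot)$ is $L^2(\Om(x))$-orthogonal to $\psi_1^{(x)}$ and vanishes on $\pa\Om(x)$. The orthogonality gives $H(x) = T(x)^2 + \|u^\perp(x,\cdot)\|^2_{L^2(\Om(x))}$, and hence
\[
\int_\Om \chi_1(x)u^2\,\ud x\,\ud y = \int \chi_1(x) T(x)^2\,\ud x + \int \chi_1(x)\|u^\perp(x,\cdot)\|^2\,\ud x.
\]
I will estimate the two summands separately.

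For the first summand I would argue pointwise. Since $\|u\|_\infty \le 1$ and Lemma \ref{lem:phibasic} shows that $\phi \propto \psi_1^{(x)}$ (normalised so $\max\phi = 1$) decays exponentially on scale $L_1$ with $\|\phi\|_{L^2(\Om(x))}$ comparable to $L_1^{1/2}$, one has $\|\psi_1^{(x)}\|_{L^1(\Om(x))} \le CL_1^{1/2}$. Therefore $|T(x)| \le CL_1^{1/2}$ uniformly in $x$, and $\int \chi_1(x)T(x)^2\,\ud x \le CL_1\int\chi_1 \le CL_1 L_2$.

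For the second summand I plan to invoke a transverse spectral gap $\mu_2(x) - \mu(x) \ge cL_1^{-2}$, which via Bessel's inequality in $L^2(\Om(x))$ upgrades the 1D variational bound to
\[
\int_{\Om(x)}\bigl((\pa_y u)^2 + V u^2\bigr)\,\ud y \ge \mu(x) H(x) + cL_1^{-2}\|u^\perp(x,\cdot)\|^2.
\]
Integrating over $x$, subtracting from the 2D energy identity $\int(|\nabla u|^2 + Vu^2) = \la\int u^2$ and discarding the nonnegative term $\int(\pa_x u)^2$ yields
\[
cL_1^{-2}\int \|u^\perp(x,\cdot)\|^2\,\ud x \le \int (\la - \mu(x))_+ H(x)\,\ud x.
\]
By Theorem \ref{thm:eigenvalue} and the definition of $L_2$, one has $(\la-\mu(x))_+ \le CL_2^{-2}$, supported in $\{|x-x^*|\le C'L_2\}$ by the convexity of $\mu(x)$. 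Substituting $H = T^2 + \|u^\perp\|^2$, using the pointwise bound $T^2 \le CL_1$, and absorbing the resulting $\|u^\perp\|^2$ contribution (valid since $L_2 \gg L_1$) gives $\int \|u^\perp(x,\cdot)\|^2\,\ud x \le CL_1^3/L_2$, which is at most $CL_1 L_2$ because $L_1 \le L_2$ by Proposition \ref{prop:L2bound}. Summing the two contributions yields the proposition.

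The main obstacle is the transverse spectral gap $\mu_2(x) - \mu(x) \ge cL_1^{-2}$. Lemma 2.4(a) of \cite{J1} provides the matching upper and lower bounds $\mu(x) = 1 + \Theta(L_1^{-2})$, but separating $\mu_2(x)$ from $\mu(x)$ by an amount of the same order requires additional input: it is a 1D fundamental-gap estimate for a convex potential on a cross section of inner-radius $\sim L_1$. A Courant-Fischer argument, choosing a test function orthogonal to a good approximation of $\psi_1^{(x)}$ and supported where $V(x,\cdot) \le 1 + 2L_1^{-2}$, should provide the required bound, but this is the step carrying the most analytic work.
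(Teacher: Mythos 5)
Your decomposition $u(x,\cdot) = T(x)\psi_1^{(x)} + u^\perp(x,\cdot)$ together with a transverse spectral gap is a genuinely different route from the paper's. The paper never introduces the gap $\mu_2(x)-\mu_1(x)$: it integrates the eigenfunction equation against $\chi_1 u$, converts $\chi_1'$ to $\chi_1''$ by a second integration by parts, splits $V-\la$ into positive and negative parts, and uses the pointwise fact that for each fixed $x$ the set $\{y: V(x,y)-\la \leq C_2L_1^{-2}\}$ has measure $O(L_1)$; this yields $\int\chi_1(V-\la)_+u^2 \geq C_2L_1^{-2}\int\chi_1 u^2 - CL_1^{-1}L_2$ directly, and the remaining $\chi_1''$ term is absorbed using the Carleman decay of Corollary \ref{cor:H(x)decay}. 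Your route avoids both the integration by parts and the absorption step, and would give the extra information $\int\|u^\perp(x,\cdot)\|^2_{L^2(\Om(x))}\,\ud x \leq CL_1^3/L_2$, but it hinges entirely on the fundamental-gap estimate.

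That gap is where the argument is genuinely incomplete, and the fix you sketch points in the wrong direction. A Courant--Fischer argument with a trial function orthogonal to (an approximation of) $\psi_1^{(x)}$ produces an \emph{upper} bound on $\mu_2(x)$, whereas you need a \emph{lower} bound on $\mu_2(x)-\mu_1(x)$. The obstruction is real: $\Om(x)$ can be much longer than $L_1$, so the generic fundamental-gap estimate $\mu_2-\mu_1 \geq 3\pi^2/|\Om(x)|^2$ of \cite{AC} is far too weak, and one must exploit the growth of $V(x,\cdot)$ on scale $L_1$ to localize the second mode --- plausible in the spirit of the convexity arguments of \cite{J1}, but a substantial piece of analysis not carried out here and not needed in the paper's proof. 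You also only need the gap for $x$ in the support of $\chi_1$, where the local scale $L(x)$ is comparable to $L_1$ by the argument of Lemma \ref{lem:mubound}; for general $x$ it need not scale as $L_1^{-2}$, so the restriction is worth making. Two smaller items to record explicitly: the bound $(\la-\mu(x))_+ \leq CL_2^{-2}$ on an interval of length comparable to $L_2$ requires the additional variational estimate $\mu \leq \mu^* + CL_2^{-2}$ (which follows as in Proposition \ref{prop:simpleeigenvalue}), and the $L^1$-bound on $\psi_1^{(x)}$ underlying $|T(x)| \leq CL_1^{1/2}$ is stated in Lemma \ref{lem:phibasic} only for $x$ in the support of the cut-off, so the pointwise estimate on $T$ should be restricted accordingly --- which is all you use.
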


\begin{proof}{Proposition \ref{prop:ydecayupper}}
The first eigenfunction $u(x,y)$ satisfies
\begin{align*}
-\Delta_{x,y}u(x,y) + (V(x,y) - \la)u(x,y) = 0 \text{ in } \Om
\end{align*}
with zero boundary conditions on $\pa\Om$. We integrate this against the function $\chi_1(x)u(x,y)$ to obtain
\begin{align*}
\int_{\Om} -\chi_1(x)u(x,y)\Delta_{x,y}u(x,y) + \chi_1(x)(V(x,y) - \la)u(x,y)^2 \ud x \ud y = 0,
\end{align*}
and integrating by parts one time in $x$ and $y$ gives
\begin{align} \label{eqn:ydecayupper1} \nonumber
 \int_{\Om} \chi_1(x)|\nabla_{x,y}u(x,y)|^2 &  \ud x \ud y  + \int_{\Om}\chi_1'(x) \pa_xu(x,y)u(x,y) \ud x \ud y \\
& + \int_{\Om} \chi_1(x)(V(x,y) - \la)u(x,y)^2  \ud x \ud y = 0.
\end{align}
In the second integral in \eqref{eqn:ydecayupper1} we can write
\begin{align*}
 \chi_1'(x)\pa_xu(x,y)u(x,y) = \tfrac{1}{2}\chi_1'(x)\pa_x(u(x,y)^2)
 \end{align*}
and integrate by parts in $x$ again to rewrite this integral as
\begin{align*}
 -\frac{1}{2} \int_{\Om} \chi_1''(x)u(x,y)^2\ud x \ud y.
 \end{align*}
 Thus, from \eqref{eqn:ydecayupper1} we have
 \begin{align} \label{eqn:ydecayupper2} \nonumber
 & \int_{\Om} \chi_1(x)|\nabla_{x,y}u(x,y)|^2 \ud x \ud y +  \int_{\Om} \chi_1(x)(V(x,y) - \la)_{+}u(x,y)^2  \ud x \ud y   \\ & =   \frac{1}{2}\int_{\Om}\chi_1''(x) u(x,y)^2 \ud x \ud y 
 + \int_{\Om} \chi_1(x)(V(x,y) - \la)_{-}u(x,y)^2  \ud x \ud y,
 \end{align}
 where we have decomposed $V(x,y) - \la$ into its positive and negative parts via
 \begin{align*}
 V(x,y) - \la = (V(x,y) - \la)_{+} - (V(x,y)-\la)_{-}.
 \end{align*}
 By the simple eigenvalue bound for $\la$ from Proposition \ref{prop:simpleeigenvalue}, we know that
 \begin{align*}
 (V(x,y) - \la)_{-} \leq C_1L_1^{-2}.
 \end{align*} 
 This also means that for any fixed $x$, we can only have $V(x,y) - \la \leq 0$ for $y$ in an interval of length at most comparable to $L_1$. Since the eigenfunction is normalised to have a maximum of $1$, and $\chi_1(x)$ is only non-zero in an interval of length comparable to $L_2$, this gives us a bound on the final term in the right hand side of \eqref{eqn:ydecayupper2} of
 \begin{align}  \label{eqn:ydecayupper3} 
  \int_{\Om} \chi_1(x)(V(x,y) - \la)_{-}u(x,y)^2  \ud x \ud y \leq C_1L_1^{-2}L_1L_2 = C_1L_1^{-1}L_2. 
 \end{align}
 We now turn to the second integral on the left hand side of \eqref{eqn:ydecayupper2}
 \begin{align*}
  \int_{\Om} \chi_1(x)(V(x,y) - \la)_{+}u(x,y)^2  \ud x \ud y .
 \end{align*}
 Fix a large constant $C_2>0$. For each fixed $x$, $V(x,y) - \la$ is only bounded above by $C_2L_1^{-2}$ on an interval in $y$ of length comparable to $L_1$. Therefore, again combining this with the bound $u(x,y) \leq 1$, we can write
 \begin{align}  \label{eqn:ydecayupper4} 
 C_2L_1^{-2} \int_{\Om}\chi_1(x)u(x,y)^2 \ud x \ud y - C_1L_1^{-1}L_2 \leq   \int_{\Om} \chi_1(x)(V(x,y) - \la)_{+}u(x,y)^2  \ud x \ud y .
 \end{align}
 Inserting the estimates in \eqref{eqn:ydecayupper3} and \eqref{eqn:ydecayupper4} back into \eqref{eqn:ydecayupper2} we see that
  \begin{align} \label{eqn:ydecayupper5} \nonumber
 & \int_{\Om} \chi_1(x)|\nabla_{x,y}u(x,y)|^2 \ud x \ud y +  C_2L_1^{-2} \int_{\Om} \chi_1(x) u(x,y)^2  \ud x \ud y   \\ & \leq   \frac{1}{2}\int_{\Om}\chi_1''(x) u(x,y)^2 \ud x \ud y  +C_1L_1^{-1}L_2.
 \end{align}
 The first integral in \eqref{eqn:ydecayupper5} is positive, and so we can drop it from the estimate. Therefore, dividing by $C_2L_1^{-2}$ gives us
 \begin{align}  \label{eqn:ydecayupper6} 
 \int_{\Om} \chi_1(x) u(x,y)^2  \ud x \ud y    \leq  \frac{1}{2} C_2^{-1}L_1^2\int_{\Om}\chi_1''(x) u(x,y)^2 \ud x \ud y  + C_1L_1L_2.
 \end{align} 
 To conclude the proof of the proposition, we will use Corollary \ref{cor:H(x)decay} and the remark following it. By \eqref{rem:H(x)decay}, for any $x_1 \geq x^* + CL_2$ and any $x \geq x_1$, we have
 \begin{align*}
 \int_{\Om(x)}u(x,y)^2 \ud y \leq e^{-(x-x_1)/L_2}  \int_{\Om(x_1)}u(x_1,y)^2 \ud y. 
 \end{align*}
 Therefore, we certainly have the estimate
 \begin{align}  \label{eqn:ydecayupper7} 
\int_{x^*+2CL_2}^{x^*+3CL_2}  \int_{\Om(x)}u(x,y)^2\ud x \ud y \leq \int_{x^*+CL_2}^{x^*+2CL_2} \int_{\Om(x)}u(x,y)^2 \ud x \ud y,
 \end{align}
 and an analogous estimate for $x_1 \leq x^*-CL_2$ and  $x \leq x_1$. By the definition of the cut-off function $\chi_1(x)$, the second derivative $\chi_1''(x)$ is supported on the intervals $[x^*-3CL_2,x^*-2CL_2]$ and $[x^*+2CL_2,x^*+3CL_2]$, and is of order $L_2^{-2}$ here. Also, $\chi_1(x)$ is equal to $1$ on the intervals $[x^*-2CL_2,x^*-CL_2]$ and $[x^*+CL_2,x^*+2CL_2]$. Therefore, using the estimate in \eqref{eqn:ydecayupper7} the integral on the right hand side of \eqref{eqn:ydecayupper6} is certainly at most $\tfrac{1}{2}$ the size of the integral on the left hand side. This means that in \eqref{eqn:ydecayupper6} we can bring over the integral to the left hand side and get the bound
 \begin{align*}
 \int_{\Om} \chi_1(x) u(x,y)^2  \ud x \ud y    \leq  C_1L_1L_2,
 \end{align*}
 as required.
\end{proof}

\begin{cor} \label{cor:ydecayupper}
We have the derivative bound
\begin{align*}
\int_{\Om} \chi_1(x) |\nabla_{x,y}u(x,y)|^2 \ud x \ud y \leq C_1L_1^{-1}L_2.
\end{align*}
\end{cor}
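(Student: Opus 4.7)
The plan is to repeat the integration-by-parts identity used in the proof of Proposition \ref{prop:ydecayupper}, but this time to \emph{retain} the $\int_\Om \chi_1 |\nabla u|^2$ term rather than discarding it. Multiplying the eigenfunction equation $-\Delta u + (V - \la)u = 0$ by $\chi_1(x)u(x,y)$, integrating over $\Om$, and integrating by parts in both variables (using $u = 0$ on $\pa\Om$, together with $\chi_1'(x)u\pa_xu = \tfrac{1}{2}\chi_1'(x)\pa_x(u^2)$ and a second integration by parts in $x$) gives
\begin{align*}
\int_\Om \chi_1(x)|\nabla_{x,y} u|^2 \ud x \ud y + \int_\Om \chi_1(x)(V(x,y) - \la)u^2 \ud x \ud y = \tfrac{1}{2}\int_\Om \chi_1''(x)u^2 \ud x \ud y.
\end{align*}
Splitting $V - \la = (V-\la)_+ - (V-\la)_-$ and discarding the non-negative $(V - \la)_+$ contribution on the left yields
\begin{align*}
\int_\Om \chi_1|\nabla u|^2 \ud x \ud y \leq \tfrac{1}{2}\int_\Om |\chi_1''(x)|u^2 \ud x \ud y + \int_\Om \chi_1(x)(V - \la)_- u^2 \ud x \ud y.
\end{align*}

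The second integral on the right can be controlled exactly as in estimate \eqref{eqn:ydecayupper3}: Proposition \ref{prop:simpleeigenvalue} gives $(V-\la)_- \leq C_1 L_1^{-2}$, this quantity is nonzero only on a $y$-interval of length at most $C_1 L_1$ for each fixed $x$, we have $u \leq 1$, and the $x$-support of $\chi_1$ has length at most $C_1 L_2$; these together give a contribution of at most $C_1 L_1^{-1}L_2$.

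The main work lies in controlling the first integral. We have $|\chi_1''(x)| \leq C_1 L_2^{-2}$, but $\chi_1''$ is supported in the transition annulus $|x-x^*| \in [2CL_2, 3CL_2]$, where $\chi_1$ itself may vanish; one therefore cannot dominate $|\chi_1''|$ by a multiple of $\chi_1$ directly. The plan is instead to introduce a slightly enlarged cut-off $\tilde\chi_1(x)$ satisfying the same qualitative hypotheses as $\chi_1$ but with $\tilde\chi_1 \equiv 1$ on the support of $\chi_1$ (and hence on the support of $\chi_1''$), and supported on a concentric interval of comparable but strictly larger radius. The proof of Proposition \ref{prop:ydecayupper} applies verbatim to $\tilde\chi_1$ in place of $\chi_1$ and yields $\int_\Om \tilde\chi_1 u^2 \ud x \ud y \leq C_1 L_1 L_2$. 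Since $|\chi_1''| \leq C_1 L_2^{-2}\tilde\chi_1$, this gives
\begin{align*}
\int_\Om |\chi_1''(x)| u^2 \ud x \ud y \leq C_1 L_2^{-2}\int_\Om \tilde\chi_1 u^2 \ud x \ud y \leq C_1 L_1 L_2^{-1}.
\end{align*}
Since this section operates under the standing assumption $L_2 \gg L_1$, we have $L_1 L_2^{-1} \leq L_1^{-1} L_2$, and combining the two estimates yields $\int_\Om \chi_1 |\nabla u|^2 \ud x \ud y \leq C_1 L_1^{-1} L_2$ as claimed.

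The only genuine obstacle is the bookkeeping surrounding the $\chi_1''$ term; this is cleanly resolved by enlarging the cut-off so that the $L^2$ mass of $u$ on $\mathrm{supp}(\chi_1'')$ itself falls under the umbrella of Proposition \ref{prop:ydecayupper}. No new analytic input is required beyond what was already deployed there.
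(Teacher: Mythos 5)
Your proposal is correct and follows essentially the same route as the paper: starting from the same integration-by-parts identity \eqref{eqn:ydecayupper2}, the paper's Corollary \ref{cor:ydecayupper} simply quotes the already-established estimate \eqref{eqn:ydecayupper5} and the bound $L_1^2\int\chi_1''u^2\lesssim\int\chi_1u^2$ proved en route to Proposition \ref{prop:ydecayupper} (via the Carleman comparison \eqref{eqn:ydecayupper7}), then invokes Proposition \ref{prop:ydecayupper}. Your enlarged-cutoff trick with $\tilde\chi_1$ is a valid, self-contained alternative for dominating the $\chi_1''$ term, but it is really repackaging the same ingredient, since Proposition \ref{prop:ydecayupper} applied to $\tilde\chi_1$ itself rests on the Carleman estimate; the paper's version is slightly more economical in that it reuses \eqref{eqn:ydecayupper7} directly rather than rerunning the full proposition.
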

\begin{proof}{Corollary \ref{cor:ydecayupper}}
In the proof of Proposition \ref{prop:ydecayupper} in \eqref{eqn:ydecayupper5} we established the estimate
\begin{align} \label{cor:ydecayupper1} \nonumber
 & \int_{\Om} \chi_1(x)|\nabla_{x,y}u(x,y)|^2 \ud x \ud y +  C_2L_1^{-2} \int_{\Om} \chi_1(x) u(x,y)^2  \ud x \ud y   \\ & \leq   \frac{1}{2}\int_{\Om}\chi_1''(x) u(x,y)^2 \ud x \ud y  +C_1L_1^{-1}L_2.
\end{align}
We also showed that 
\begin{align*}
L_1^2\int_{\Om}\chi_1''(x) u(x,y)^2 \ud x \ud y
\end{align*}
is bounded by $\int_{\Om} \chi_1(x) u(x,y)^2  \ud x \ud y $, and hence by Proposition \ref{prop:ydecayupper}  is bounded by $C_1L_1^{-1}L_2$. Using this estimate in \eqref{cor:ydecayupper1} gives the desired result.
\end{proof}

The derivative bound
\begin{align*}
\int_{\Om} \chi_1(x) |\nabla_{x,y}u(x,y)|^2 \ud x \ud y \leq C_1L_1^{-1}L_2 
\end{align*}
 is of order $L_1^{-2}$ smaller than the bound we obtained for the eigenfunction $u(x,y)$ itself in Proposition \ref{prop:ydecayupper}. For the $y$-derivative $\pa_yu(x,y)$, this bound is consistent with our eventual aim to show that $u(x,y)$ decays away from its maximum on a length scale comparable to $L_1$. However, in the $x$-direction, our aim is to show that $u(x,y)$ decays away from its maximum on a length scale comparable to $L_2$. Therefore, we want to improve the bound on $\pa_xu(x,y)$ given in Corollary \ref{cor:ydecayupper}. 

\begin{prop} \label{prop:xdecayupper}
Let $\chi_1(x)$ be as in Definition \ref{def:chi1}. Then, there exists an absolute constant $C_1>0$ such that
\begin{align*}
\int_{\Om} \chi_1(x) (\pa_xu(x,y))^2 \ud x \ud y \leq CL_1L_2^{-1}.
\end{align*}
Note that for $L_2\gg L_1$ this is an improvement on the bound in Corollary \ref{cor:ydecayupper}.
\end{prop}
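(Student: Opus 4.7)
The plan is to test the eigenvalue equation $-\Delta u + Vu = \la u$ against $\chi_1(x)^2 u(x,y)$ and integrate by parts in both variables. After separating $|\nabla u|^2 = (\pa_x u)^2 + (\pa_y u)^2$ and handling the cross derivative produced by hitting $\chi_1^2$, this gives
\begin{align*}
\int_\Om \chi_1^2 (\pa_x u)^2 \ud x \ud y = \la \int_\Om \chi_1^2 u^2 - \int_\Om \chi_1^2 (\pa_y u)^2 - \int_\Om \chi_1^2 V u^2 - 2\int_\Om \chi_1 \chi_1' u \pa_x u.
\end{align*}
For each fixed $x$, $u(x,\cdot)$ vanishes on $\pa\Om(x)$, so it is an admissible test function in the variational characterisation of $\mu(x)$; this yields the cross-sectional inequality $\int_{\Om(x)} (\pa_y u)^2 + V u^2 \ud y \geq \mu(x) \int_{\Om(x)} u^2 \ud y$, which I multiply by $\chi_1(x)^2$ and integrate to eliminate the $(\pa_y u)^2$ and $V u^2$ terms in favour of $\mu(x) u^2$:
\begin{align*}
\int_\Om \chi_1^2 (\pa_x u)^2 \leq \int_\Om \chi_1^2 (\la - \mu(x)) u^2 - 2\int_\Om \chi_1 \chi_1' u \pa_x u.
\end{align*}

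The crucial input is now that $\la - \mu(x) \leq CL_2^{-2}$ uniformly in $x$. This follows by combining three facts: $\mu(x) \geq \mu^*$ by definition; $\mu \leq \mu^* + CL_2^{-2}$ by testing $\mathcal{A}$ against the cut-off from Definition \ref{def:chi} just as in Proposition \ref{prop:simpleeigenvalue}; and the upper bound $\la \leq \mu + CL_2^{-2}$ from Theorem \ref{thm:eigenvalue}. Using this together with Proposition \ref{prop:ydecayupper} and $\chi_1^2 \leq \chi_1$, the main term becomes
\begin{align*}
\int_\Om \chi_1^2 (\la - \mu(x)) u^2 \leq CL_2^{-2} \int_\Om \chi_1 u^2 \leq CL_2^{-2}\cdot L_1 L_2 = CL_1 L_2^{-1}.
\end{align*}

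For the cross-term I use Cauchy--Schwarz, $2|\chi_1 \chi_1' u \pa_x u| \leq \tfrac12 \chi_1^2 (\pa_x u)^2 + 2(\chi_1')^2 u^2$, absorbing the first half-derivative piece into the left-hand side. Since $|\chi_1'|^2 \leq C L_2^{-2}$ and the support of $\chi_1'$ lies inside a slightly enlarged cut-off $\tilde\chi_1$ of the same type as $\chi_1$, the remainder is bounded, via the same argument used to prove Proposition \ref{prop:ydecayupper} applied to $\tilde\chi_1$, by
\begin{align*}
\int_\Om (\chi_1')^2 u^2 \leq CL_2^{-2} \int_\Om \tilde\chi_1 u^2 \leq CL_2^{-2} \cdot L_1 L_2 = CL_1 L_2^{-1}.
\end{align*}
Combining gives $\int_\Om \chi_1^2 (\pa_x u)^2 \leq CL_1 L_2^{-1}$, and the statement with $\chi_1$ in place of $\chi_1^2$ follows by replacing $\chi_1$ at the outset with an auxiliary cut-off $\tilde\chi_1$ satisfying $\tilde\chi_1 \equiv 1$ on $\operatorname{supp} \chi_1$, since then $\chi_1 \leq \tilde\chi_1^2$.

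The real obstacle, and the point where this proposition improves on Corollary \ref{cor:ydecayupper}, is precisely the uniform control $\la - \mu(x) \leq CL_2^{-2}$. The naive bound from Proposition \ref{prop:simpleeigenvalue} would only yield $\la - \mu(x) \leq CL_1^{-2}$, which reproduces the weaker estimate $CL_1^{-1}L_2$ of Corollary \ref{cor:ydecayupper}; the factor-of-$(L_1/L_2)^2$ gain recorded here is a direct consequence of the sharp eigenvalue bound of Theorem \ref{thm:eigenvalue}, whose proof occupied all of Section \ref{sec:la}.
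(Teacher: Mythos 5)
Your proof is correct and follows essentially the same route as the paper's: both use the starting identity obtained by testing the eigenfunction equation against a cut-off times $u$, then invoke the cross-sectional variational inequality for $\mathcal{L}(x)$ together with the sharp bound $\la - \mu(x) \leq CL_2^{-2}$ that combines Theorem \ref{thm:eigenvalue} with $\mu \leq \mu^* + CL_2^{-2}$. The only cosmetic difference is that you test against $\chi_1^2 u$ (producing a cross-term $\chi_1\chi_1' u\,\pa_x u$ that you absorb by Cauchy--Schwarz) whereas the paper tests against $\chi_1 u$ and integrates by parts once more to produce a $\chi_1''$ term, which avoids the final $\chi_1^2$-versus-$\chi_1$ adjustment you need at the end.
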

\begin{proof}{Proposition \ref{prop:xdecayupper}}
We begin by proceeding as in the proof of Proposition \ref{prop:ydecayupper} to obtain the equality in \eqref{eqn:ydecayupper2}:
\begin{align} \label{eqn:xdecayupper1} \nonumber
 \int_{\Om} \chi_1(x)|\nabla_{x,y}u(x,y)|^2 \ud x \ud y&  +  \int_{\Om} \chi_1(x)(V(x,y) - \la)u(x,y)^2  \ud x \ud y \\
 &   -  \frac{1}{2}\int_{\Om}\chi_1''(x) u(x,y)^2 \ud x \ud y  = 0 . 
\end{align}
We know that the integral of $\chi_1(x)u(x,y)^2$ is at most $C_1L_1L_2$. Since $|\chi_1''(x)| \leq C_1L_2^{-2}$ this means that
\begin{align*}
  \frac{1}{2}\int_{\Om}\left|\chi_1''(x)\right| u(x,y)^2 \ud x \ud y   \leq C_1L_1L_2^{-1},
\end{align*}
and so from \eqref{eqn:xdecayupper1} we have
\begin{align} \label{eqn:xdecayupper2} \nonumber
&  \int_{\Om} \chi_1(x) (\pa_xu(x,y))^2 \ud x \ud y +  \int_{\Om} \chi_1(x)(\pa_{y}u(x,y))^2 \ud x \ud y \\
 & +   \int_{\Om} \chi_1(x)(V(x,y) - \la)u(x,y)^2  \ud x \ud y \leq C_1L_1L_2^{-1} . 
 \end{align}
 For each fixed $x$, the eigenfunction $u(x,y)$ is an admissible test function for our usual ordinary differential operator
 \begin{align*}
 \mathcal{L}(x) = - \frac{d^2}{dy^2} + V(x,y).
 \end{align*}
 Since this operator has first eigenvalue equal to $\mu(x)$, we obtain the lower bound
 \begin{align} \label{eqn:xdecayupper3}
  \int_{\Om(x)} (\pa_{y}u(x,y))^2 +  (V(x,y) - \la)u(x,y)^2   \ud y \geq (\mu(x) - \la)\int_{\Om(x)}u(x,y)^2 \ud y.
 \end{align}
 Multiplying the inequality in \eqref{eqn:xdecayupper3} by $\chi_1(x)$ and integrating over $x$, \eqref{eqn:xdecayupper2} becomes
 \begin{align} \label{eqn:xdecayupper4}
  \int_{\Om} \chi_1(x) (\pa_xu(x,y))^2 \ud x \ud y + \int_{\Om} \chi_1(x)(\mu(x) - \la)u(x,y)^2 \ud x \ud y \leq C_1L_1L_2^{-1}.
 \end{align}
 By the definition of $L_2$, we have
 \begin{align*}
 \mu(x) - \mu \geq -C_1L_2^{-2},
 \end{align*}
 and by the eigenvalue bounds in Theorem \ref{thm:eigenvalue}, we know that
 \begin{align*}
 \mu - \la \geq -C_1L_2^{-2}.
 \end{align*}
 Therefore, \eqref{eqn:xdecayupper4} tells us that
 \begin{align*}
  \int_{\Om} \chi_1(x) (\pa_xu(x,y))^2 \ud x \ud y \leq C_1L_2^{-2} \int_{\Om}\chi_1(x)u(x,y)^2 \ud x \ud y + C_1L_1L_2^{-1}.
 \end{align*}
 Applying Proposition \ref{prop:ydecayupper} then gives the desired bound.
\end{proof}

Now that we have established $L^2$-bounds for the first derivative, $\nabla_{x,y}u(x,y)$, in Propositions \ref{prop:ydecayupper} and \ref{prop:xdecayupper}, we can return to establishing the required upper bound for
\begin{align*}
A = \max_{x}H(x)  = \max_{x}\int_{\Om(x)} u(x,y)^2 \ud y. 
\end{align*}
\begin{prop} \label{prop:Aupper}
$A$ is bounded by $L_1$ multiplied by an absolute constant.
\end{prop}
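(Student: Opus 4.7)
The plan is to show that $H$ does not drop below $A/4$ on an interval of length comparable to $L_2$ around its maximum, and then to combine this with the $L^2(\Om)$-bound from Proposition \ref{prop:ydecayupper} to deduce the required bound on $A$. Let $x_0$ be a point at which $H$ achieves its maximum $A$. Applying Corollary \ref{cor:H(x)decay} at $x=x_0$ gives $A \leq C_1 A \exp(-c|x_0-x^*|/L_2)$, so $|x_0-x^*| \leq C_3 L_2$ for an absolute constant $C_3$. The constant $C$ appearing in Proposition \ref{prop:H(x)decay} (and hence in Definition \ref{def:chi1}) is only required to be sufficiently large, so, enlarging it if necessary, we may assume that $[x_0-L_2/4,\,x_0+L_2/4]$ is contained in the region where $\chi_1(x)=1$.

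Since $u(x,y)$ vanishes on the lateral boundary of $\Om$, the boundary contributions cancel when differentiating $H$ under the integral sign, giving $H'(x) = 2\int_{\Om(x)} u(x,y)\pa_x u(x,y)\ud y$. Cauchy-Schwarz then yields $|H'(x)| \leq 2 H(x)^{1/2} g(x)^{1/2}$, where $g(x) \coloneqq \int_{\Om(x)}(\pa_x u(x,y))^2\ud y$. Setting $K(x) \coloneqq H(x)^{1/2}$, this becomes the pointwise bound $|K'(x)| \leq g(x)^{1/2}$. For $|x-x_0| \leq L_2/4$, two applications of Cauchy-Schwarz together with Proposition \ref{prop:xdecayupper} then give
\begin{align*}
|K(x)-K(x_0)|^2 \leq \frac{L_2}{4}\int_{|s-x_0|\leq L_2/4} g(s)\ud s \leq \frac{L_2}{4}\int_\Om \chi_1(s)(\pa_x u(s,y))^2\ud s\,\ud y \leq C_4 L_1,
\end{align*}
and therefore $K(x) \geq A^{1/2}-C_5 L_1^{1/2}$ throughout this interval.

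We now split into cases. If $A \leq 4C_5^2 L_1$ there is nothing to prove. Otherwise $A^{1/2} > 2C_5 L_1^{1/2}$, and the preceding display then forces $K(x) \geq A^{1/2}/2$, i.e.\ $H(x) \geq A/4$, for all $x \in [x_0-L_2/4,x_0+L_2/4]$. Since $\chi_1 \equiv 1$ on this interval, Proposition \ref{prop:ydecayupper} implies
\begin{align*}
\frac{AL_2}{8} \leq \int_{|x-x_0|\leq L_2/4} H(x)\ud x \leq \int_\Om \chi_1(x) u(x,y)^2\ud x\,\ud y \leq C_1 L_1 L_2,
\end{align*}
and rearranging gives $A \leq 8C_1 L_1$, as required. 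The main ingredient is the pointwise inequality $|K'|\leq g^{1/2}$ combined with the estimate $\int \chi_1 g \lesssim L_1/L_2$ from Proposition \ref{prop:xdecayupper}, which forces $H$ to remain close to its maximum on the $L_2$-scale; this improved bound on $\pa_x u$ is in turn where the sharp eigenvalue estimate of Theorem \ref{thm:eigenvalue} enters the argument.
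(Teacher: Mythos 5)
Your proof is correct and follows the same core strategy as the paper: use the $L^2$ bound on $\pa_x u$ from Proposition \ref{prop:xdecayupper} to show that $H$ stays within a constant factor of its maximum on an interval of length comparable to $L_2$, then play this off against the $L^2$ bound from Proposition \ref{prop:ydecayupper}. The only cosmetic differences are that you argue via $K = H^{1/2}$ and Cauchy--Schwarz on $H'$ where the paper integrates the pointwise inequality $u(x,y)^2 \geq \tfrac12 u(x^*,y)^2 - C|x-x^*|\int(\pa_t u)^2\ud t$ over $y$, and that you locate the maximiser $x_0$ of $H$ explicitly via Corollary \ref{cor:H(x)decay} rather than tacitly assuming, as the paper does, that it lies near $x^*$.
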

\begin{proof}{Proposition \ref{prop:Aupper}}
Suppose that we have
\begin{align} \label{eqn:Aupper1}
\max_{x}H(x) = H(x^*) = \int_{\Om(x^*)} u(x^*,y)^2 \ud y \geq C^* L_1,
\end{align}
where $C^*>0$ is a large absolute constant that we will specify later. Then, for any $(x,y)$, extending $u(x,y)$ to be $0$ outside of $\Om$, we can write 
\begin{align*}
u(x,y) = u(x^*,y) + \int_{x^*}^{x} \pa_tu(t,y) \ud t,
\end{align*}
and so
\begin{align} \label{eqn:Aupper2}
u(x,y)^2 \geq \tfrac{1}{2}u(x^*,y)^2 - C_1|x-x^*|\int_{x^*}^x (\pa_tu(t,y))^2 \ud t,
\end{align}
for a fixed constant $C_1$. Integrating the inequality in \eqref{eqn:Aupper2} over $y$ we find that
\begin{align*}
 H(x) \geq \tfrac{1}{2}H(x^*) - C_1|x-x^*|\int_{x^*}^x\int_{\Om(t)} (\pa_tu(t,y))^2 \ud y \ud t,
 \end{align*}
 and so by the assumption on $H(x^*)$ in \eqref{eqn:Aupper1}, this gives
 \begin{align} \label{eqn:Aupper3}
 H(x) \geq \tfrac{1}{2}C^*L_1- C_1|x-x^*|\int_{x^*}^x\int_{\Om(t)} (\pa_tu(t,y))^2 \ud y \ud t.
 \end{align}
Let us restrict to those values of  $x$ with $|x-x^*|\leq c_1L_2$ for a small constant $c_1>0$. Then by the derivative bound  on $\pa_tu(t,y)$ in Proposition \ref{prop:xdecayupper}, we can ensure that the second term in \eqref{eqn:Aupper3} is small compared to $\tfrac{1}{4}C^*L_1$. Moreover, this constant $c_1$ can be chosen to be independent of $C^*$. Therefore, this tells us that for all $x$ in an interval of length $2c_1L_2$, we have the lower bound
\begin{align*}
H(x) \geq \tfrac{1}{4}C^*L_1 .
\end{align*}
In particular, this shows that
\begin{align*}
\int_{\Om}\chi_1(x)u(x,y)^2 \ud x \ud y \geq  \int_{x^*-c_1L_2}^{x^*+c_1L_2}H(x) \ud x \geq \tfrac{1}{2}c_1C^*L_1L_2.
\end{align*}
Since $c_1$ is independent of $C^*$, we can contradict the $L^2(\Om)$-bound from Proposition \ref{prop:ydecayupper} by choosing $C^*$ sufficiently large.
\end{proof}

By the discussion after the proof of Corollary \ref{cor:H(x)decay}, this upper bound on $A$ from Proposition \ref{prop:Aupper} implies the $L^2(\Om)$-bound
\begin{align*}
\int_{\Om} u(x,y)^2 \ud x \ud y \leq CL_1 L_2.
\end{align*}
This completes the proof of Proposition \ref{prop:uL2bound}.

\end{proof}

In Proposition \ref{prop:uL2bound} we derived an $L^2(\Om)$-bound for the first eigenfunction $u(x,y)$. For our purposes of studying the shape of the level sets of $u(x,y)$ near to its maximum this will be sufficient. However, another interesting question is to study the rate at which $u(x,y)$ decays from its maximum. Therefore, before continuing with our study of the level sets, let us give some indication about the decay of $u(x,y)$ as we move away from its maximum.

We will do this by using an Agmon-type estimate, but  first  we need some definitions.
\begin{defn} \label{def:Om1}
Fix a large absolute constant $C>0$, and let $\Om_1$ be the subset of $\Om$ given by
\begin{align*}
\Om_1 \coloneqq \{ (x,y) \in \Om: V(x,y) \geq 1 + CL_1^{-2} \} .
\end{align*}
Note that the boundary of $\Om_1$ consists of parts of the two convex curves coming from $\pa\Om$ and the level set $\{ (x,y)\in\Om :V(x,y) = 1+ CL_1^{-2} \}$.

\end{defn}

\begin{defn} \label{def:h(x,y)}
With $ \Om_1 \subset \Om$ as above, we  also define the distance function 
\begin{align*}
h^* : \Om_1 \to [0,\infty) 
\end{align*}
as follows.  We first define the function $\nu^*(x,y)$ to be equal to $V(x,y)-\la$. For $(x,y)$ in $\Om_1$ we then  define $h^*(x,y)$ by
\begin{align*}
h^*(x,y) = \inf_{\gamma} \frac{1}{2} \int_{0}^{1} \nu^*(\gamma(t))^{1/2}|\gamma'(t)| \ud t,
\end{align*}
where the infimum is taken over all paths $\gamma:[0,1]\to \Om_1$ between the inner boundary of $\Om_1$ and $(x,y)$.
\end{defn}

We are now in a position to state our Agmon-type estimate.
\begin{prop} \label{prop:Agmon}
For $\Om_1$ and $h^*(x,y)$ defined as above, we have
\begin{align*}
\int_{\Om_1} u(x,y)^2 e^{2h^*(x,y)} \ud x \ud y  \leq C_2L_1L_2 ,
\end{align*}
for some absolute constant $C_2>0$.
\end{prop}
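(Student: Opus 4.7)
The plan is to prove this via a standard Agmon-style integration by parts, using the weight $e^{2h^*}$ together with the substitution $w = u e^{h^*}$, and exploiting the eikonal inequality $|\nabla h^*|^2 \leq \tfrac{1}{4}(V-\la)$ that is built into the definition of $h^*$. To avoid having to control an awkward trace integral on the inner boundary $\{V = 1+CL_1^{-2}\}$, I would first extend $h^*$ by zero to all of $\Om$; since $h^*\equiv 0$ on that level set (being the starting set for the Agmon geodesic), the extension is Lipschitz with $\nabla h^* = 0$ almost everywhere on $\Om\setminus\Om_1$.

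With this extension in place, $ue^{2h^*}$ is a Lipschitz test function vanishing on $\pa\Om$, so pairing it with the eigenvalue equation $-\Delta u + (V-\la)u = 0$ and integrating by parts gives
\begin{align*}
\int_\Om |\nabla u|^2 e^{2h^*} + 2\int_\Om (\nabla u\cdot\nabla h^*) u e^{2h^*} + \int_\Om (V-\la)u^2 e^{2h^*}\,\ud x\,\ud y = 0.
\end{align*}
Writing $w = u e^{h^*}$, the first two terms combine pointwise as $|\nabla w|^2 - u^2|\nabla h^*|^2 e^{2h^*}$, so after rearranging I obtain the identity
\begin{align*}
\int_\Om |\nabla w|^2\,\ud x\,\ud y + \int_\Om \bigl(V - \la - |\nabla h^*|^2\bigr) u^2 e^{2h^*}\,\ud x\,\ud y = 0.
\end{align*}

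I now estimate the coefficient $V - \la - |\nabla h^*|^2$ in two regions. On $\Om_1$, the eikonal inequality $|\nabla h^*|^2 \leq \tfrac{1}{4}(V-\la)$ (which is the standard Lipschitz bound for the distance function in the Agmon metric and is the reason for the factor $\tfrac{1}{2}$ in Definition \ref{def:h(x,y)}) yields
\begin{align*}
V - \la - |\nabla h^*|^2 \geq \tfrac{3}{4}(V-\la) \geq cL_1^{-2},
\end{align*}
provided the absolute constant $C$ in Definition \ref{def:Om1} is chosen large compared to the constant $C_1$ in Proposition \ref{prop:simpleeigenvalue}. Outside $\Om_1$, $|\nabla h^*| = 0$ a.e., so the coefficient is simply $V-\la$, and its negative part is bounded by $C_1 L_1^{-2}$, again by Proposition \ref{prop:simpleeigenvalue}. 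Dropping the nonnegative term $\int|\nabla w|^2$ and splitting the remaining integral between $\Om_1$ and $\Om\setminus\Om_1$ gives
\begin{align*}
cL_1^{-2}\int_{\Om_1} u^2 e^{2h^*}\,\ud x\,\ud y \leq C_1 L_1^{-2}\int_{\Om\setminus\Om_1} u^2\,\ud x\,\ud y \leq C_1 L_1^{-2}\cdot CL_1L_2,
\end{align*}
where the last inequality is the $L^2(\Om)$-bound from Proposition \ref{prop:uL2bound}. Dividing by $cL_1^{-2}$ yields the desired $\int_{\Om_1}u^2 e^{2h^*} \leq C_2 L_1 L_2$.

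The only real subtlety is justifying the eikonal inequality $|\nabla h^*|^2 \leq \tfrac14 (V-\la)$ for the extended function: away from cut loci this is immediate from the first variation of length in the Agmon metric, and one only needs the a.e. statement (Lipschitz regularity of a geodesic distance in a bounded metric), which is classical. Everything else reduces to bookkeeping, and the mildly surprising feature is that the final bound matches, up to a constant, the unweighted $L^2$ estimate, so the Agmon weight comes essentially for free once one pays the $L_1^{-2}$ loss against the $L_1^{-2}$ gain from the lower bound on $V-\la$ in $\Om_1$.
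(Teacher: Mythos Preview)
Your argument is correct and is, at its core, the same Agmon estimate the paper uses; the difference is purely in presentation. The paper invokes Agmon's Theorem~1.5 as a black box, extending $u$ by zero outside $\Om$ and working on the exterior $D$ of the inner boundary of $\Om_1$, then reading off the conclusion from the general inequality with $s=1$. You instead carry out the underlying integration by parts directly on $\Om$, extending $h^*$ by zero into $\Om\setminus\Om_1$ so that the boundary term on the inner level set disappears for free. Your route is slightly more self-contained and avoids the bookkeeping of the sets $D_s$; the paper's route has the advantage of citing a standard reference. One technical point you should make explicit: since $V$ (and hence $h^*$) may be unbounded near $\pa\Om$, the test function $ue^{2h^*}$ is not a priori in $H^1_0(\Om)$, so the clean way to justify the pairing is to first replace $h^*$ by $\min(h^*,M)$, derive the estimate with a constant independent of $M$, and then let $M\to\infty$ by monotone convergence. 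This is routine and does not affect the structure of your proof.
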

\begin{rem}
Since we certainly have the lower bound $V(x,y)-\la \geq C_1L_1^{-2}$ on $\Om_1$, roughly speaking this proposition shows that, in an $L^2(\Om)$-sense, the function $u(x,y)$ decays at least on a length scale comparable to $L_1$ as we move away from the region where $V(x,y) \leq 1+CL_1^{-2}$. However, as $V(x,y)-\la$ grows, this rate of exponential decay also increases.  
\end{rem}
\begin{proof}{Proposition \ref{prop:Agmon}}
This proposition will follow from a classical Agmon estimate in \cite{Ag}. Let us restate Theorem 1.5 from \cite{Ag} (using slightly different notation).
\begin{thm}[Theorem 1.5 in \cite{Ag}] \label{thm:Agmon}
Let $D$ be a bounded connected open set in $\R^2$. Let $q(x,y)$ be a real valued function on $D$, and suppose that $\nu(x,y)$ is a positive continuous function on $D$ such that
\begin{align} \label{eqn:Agmonnu} 
\int_{D} |\nabla_{x,y} \psi(x,y)|^2  + q(x,y)\psi(x,y)^2 \ud x \ud y\geq \int_{D} \nu(x,y)\psi(x,y)^2 \ud x \ud y
\end{align}
for all $\psi\in C_0^{\infty}(D)$.

Fix a point $(x_0,y_0) \in D$, and define the distance $\rho_\nu(x,y)$ by
\begin{align} \label{eqn:Agmonrho}
\rho_\nu(x,y) \coloneqq \inf_{\gamma} \int_{0}^{1} \nu(\gamma(t))^{1/2} |\gamma'(t)| \ud t,
\end{align}
where the infimum is taken over all continuous paths $\gamma:[0,1]\to D$ in $D$ between $(x_0,y_0)$ and $(x,y)$. We also define $\rho_{\nu}((x,y),\{\infty\})$ to be the distance from the point $(x,y)$ to $\pa D$ under the distance function $\rho_{\nu}(x,y)$, and define $D_s$ by
\begin{align*}
D_s \coloneqq  \{(x,y)\in D: \rho_{\nu}((x,y),\{\infty\}) > s\} .
\end{align*} 

Finally, suppose that
\begin{align*}
-\Delta_{x,y} W(x,y) + q(x,y)W(x,y) = 0
\end{align*}
and that  the function $g(x,y)$ satisfies
\begin{align} \label{eqn:Agmong} 
|\nabla_{x,y} g(x,y)|^2 < \nu(x,y)
\end{align}
in $D$. Then, we have the estimate
\begin{align} \label{eqn:AgmonW} \nonumber
\int_{D_s} & W(x,y)^2 (\nu(x,y) - |\nabla_{x,y}g(x,y)|^2)e^{2g(x,y)} \ud x \ud y \\
& \leq \frac{2(1+2s)}{s^2} \int_{D\backslash D_s} W(x,y)^2 \nu(x,y)e^{2g(x,y)} \ud x \ud y.
\end{align}
\end{thm}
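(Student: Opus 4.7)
The plan is to apply the variational hypothesis on $\nu$ in \eqref{eqn:Agmonnu} to a carefully chosen test function $\psi = e^g W \chi$, where $\chi$ is a Lipschitz cutoff designed to isolate $D_s$. A natural choice is $\chi(x,y) = \min\bigl(\rho_\nu((x,y),\{\infty\})/s,\,1\bigr)$; then $\chi \equiv 1$ on $D_s$, $\chi$ vanishes on $\partial D$, and $|\nabla \chi| \leq \nu^{1/2}/s$ almost everywhere, since $\rho_\nu(\cdot,\{\infty\})$ is $\nu^{1/2}$-Lipschitz directly from its definition in \eqref{eqn:Agmonrho} as a geodesic distance for the conformal metric with density $\nu^{1/2}$.

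The heart of the proof is the algebraic identity, valid for $\psi = fW$ with $f = e^g \chi$ compactly supported in $D$ and $W$ solving $-\Delta W + qW = 0$:
\begin{align*}
\int_D \left(|\nabla \psi|^2 + q \psi^2\right) \ud x \ud y = \int_D W^2 |\nabla f|^2 \ud x \ud y.
\end{align*}
To derive this, I would expand $|\nabla(fW)|^2 = f^2|\nabla W|^2 + 2fW\nabla W \cdot \nabla f + W^2|\nabla f|^2$, and separately multiply the PDE by $f^2 W$ and integrate by parts to obtain
\begin{align*}
\int_D f^2|\nabla W|^2 \ud x \ud y = -2\int_D fW\nabla W \cdot \nabla f \ud x \ud y - \int_D q f^2 W^2 \ud x \ud y.
\end{align*}
Substituting this into the expansion of $|\nabla \psi|^2$ makes the cross term cancel, leaving the clean identity above. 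This avoids any appeal to second derivatives of $f$, which matters since $\chi$ is only Lipschitz.

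Combining the identity with \eqref{eqn:Agmonnu} yields
\begin{align*}
\int_D \nu\,e^{2g}\chi^2 W^2 \ud x \ud y \leq \int_D e^{2g}W^2 \left(\chi^2|\nabla g|^2 + 2\chi \nabla g\cdot\nabla\chi + |\nabla\chi|^2 \right) \ud x \ud y.
\end{align*}
Bringing the $\chi^2|\nabla g|^2$ term to the left and using $\chi \equiv 1$ on $D_s$, the left side dominates $\int_{D_s}(\nu-|\nabla g|^2)e^{2g}W^2$. On the complementary region $D\setminus D_s$, the pointwise bounds $|\nabla g|^2 < \nu$ and $|\nabla \chi|^2 \leq \nu/s^2$, together with Cauchy--Schwarz $|2\chi\nabla g \cdot \nabla\chi| \leq 2\nu/s$, give $|2\chi\nabla g\cdot\nabla\chi| + |\nabla\chi|^2 \leq (2s+1)\nu/s^2$, producing the estimate \eqref{eqn:AgmonW} (with the stated constant $2(1+2s)/s^2$ arising if one splits the cross term slightly less tightly).

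The main obstacle is recognising the algebraic identity and, in particular, the observation that one should test the PDE against $f^2 W$ rather than $\psi$ itself: this is precisely what produces the cancellation between the cross term in $|\nabla \psi|^2$ and the cross term from integration by parts. Once the identity is in hand, everything else is routine: the construction of the cutoff via the Agmon distance $\rho_\nu$ is natural, and the final estimate reduces to pointwise Cauchy--Schwarz. One should also check admissibility of $\psi$ as a test function in \eqref{eqn:Agmonnu}, but this follows since $\chi$ vanishes on $\partial D$ and the assumed regularity $|\nabla g|^2 < \nu$ implicitly provides enough regularity on $g$.
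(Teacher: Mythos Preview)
The paper does not actually prove this theorem: it is quoted verbatim from Agmon's monograph \cite{Ag} and then applied to derive Proposition~\ref{prop:Agmon}. So there is no ``paper's own proof'' to compare against.

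That said, your argument is essentially the classical proof from \cite{Ag}. The key identity
\[
\int_D \bigl(|\nabla(fW)|^2 + q\,(fW)^2\bigr) \ud x \ud y = \int_D W^2|\nabla f|^2 \ud x \ud y
\]
for $W$ solving $-\Delta W + qW = 0$ and $f$ compactly supported is exactly Agmon's starting point, and your choice of cutoff $\chi = \min(\rho_\nu(\cdot,\{\infty\})/s,1)$ together with the Lipschitz bound $|\nabla\rho_\nu| \le \nu^{1/2}$ is the standard one. Your final pointwise estimate gives the constant $(1+2s)/s^2$ rather than $2(1+2s)/s^2$; this is harmless for the application here, and the discrepancy just reflects a slightly looser splitting of the cross term in the original.

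One small technical point worth tightening: the hypothesis \eqref{eqn:Agmonnu} is stated for $\psi \in C_0^\infty(D)$, whereas your test function $\psi = e^g\chi W$ is only Lipschitz (since $\chi$ is). You should remark that \eqref{eqn:Agmonnu} extends to $H^1_0(D)$ by density, and that $\psi \in H^1_0(D)$ because $\chi$ vanishes continuously on $\partial D$ and $D$ is bounded. This is routine but should be said explicitly.
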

We will now apply this theorem with $W(x,y) = u(x,y)$ and $q(x,y) = V(x,y) - \la$. We will choose the set $D$ as follows: We recall that $\Om_1$ consists of those points $(x,y)$ with $V(x,y)-\la \geq 1 + CL_1^{-2}$. We then define $D$ to be all points in $\R^2$ outside of the inner boundary of $\Om_1$.

Since $u(x,y) = 0$ on $\pa\Om$, we can extend $u(x,y)$ to $D$ by setting it to be $0$ for $D \backslash \Om_1$, and we extend the potential $V(x,y)$ to $D$ arbitrarily. 

We clearly have the estimate 
\begin{align*}
\int_{D} |\nabla_{x,y} \psi(x,y)|^2  + (V(x,y)-\la) \psi(x,y)^2 \ud x \ud y\geq \int_{D} (V(x,y)-\la) \psi(x,y)^2 \ud x \ud y\end{align*}
for all $\psi\in C_0^{\infty}(D)$. Also, $V(x,y)-\la\geq C_1L_1^{-2}$ for $(x,y)\in D$. As a result of this, from \eqref{eqn:Agmonnu} we see that we we can set $\nu^*(x,y)$ to be equal to the function described in the definition of $h^*(x,y)$ in Definition \ref{def:h(x,y)}.

In Theorem \ref{thm:Agmon} we are free to choose the value for $s$, and we will choose $s=1$. Then, we see that
\begin{align*}
D\backslash D_{1} = \{(x,y)\in D: \rho_{\nu}((x,y),\{\infty\}) \leq 1 \}
\end{align*}
consists of the region near the inner boundary of $D$ with width comparable to at most $L_1$. This is because we have ensured that $\nu(x,y) \geq cL_1^{-2}$ when the point $(x,y)$ is  within a distance $L_1$ of the boundary of $D$.

We finally need to choose $g(x,y)$ to ensure that \eqref{eqn:Agmong} holds, and so we need
\begin{align*}
|\nabla_{x,y} g(x,y)|^2 <  \nu^*(x,y) =  V(x,y) -\la.
\end{align*}
We can achieve this by setting $g(x,y)$  to be equal to the function $h^*(x,y)$ as in Definition \ref{def:h(x,y)}. This certainly satisfies the required derivative bound.

Thus, we can apply Theorem \ref{thm:Agmon} to get
\begin{align} \label{eqn:Agmonfinal} \nonumber
\int_{D_1} & u(x,y)^2 (\nu(x,y) - |\nabla_{x,y}h^*(x,y)|^2)e^{2h^*(x,y)} \ud x \ud y \\
& \leq 6\int_{D\backslash D_1} u(x,y)^2 \nu(x,y)e^{2h^*(x,y)} \ud x \ud y.
\end{align}
On $D\backslash D_1$, we know that $\nu(x,y) \leq L_1^{-2}$, and $h^*(x,y) \leq 1$. Therefore, by the $L^2$ bound on $u(x,y)$ from Proposition \ref{prop:uL2bound}, the right hand side of \eqref{eqn:Agmonfinal} is bounded by 
\begin{align*}
C_1L_1^{-2}L_1L_2 = C_1L_1^{-1}L_2.
\end{align*}
Since for $(x,y) \in D$, we have $ \nu(x,y) - |\nabla_{x,y}h^*(x,y)|^2 \geq c_1L_1^{-2}$, we can therefore conclude from \eqref{eqn:Agmonfinal} that
\begin{align*}
\int_{\Om_2}  u(x,y)^2 e^{2h^*(x,y)} \ud x \ud y \leq C_1L_1L_2
\end{align*}
as required. 
\end{proof}

\section{The Shape Of The Level Sets Of $u(x,y)$} \label{sec:shape}

We now return to the problem of studying the shape of the level sets of the first eigenfunction $u(x,y)$. As we have mentioned earlier, since the potential $V(x,y)$ is convex, a theorem of Brascamp and Lieb, \cite{BL2} tells us that $u(x,y)$ is log concave. In particular, this means that the superlevel sets of $u(x,y)$ are convex subsets of $\Om$.

We will use the results of the previous section to estimate the lengths of the projections of these level sets onto the $x$ and $y$-axis. In particular, in this section we will establish Theorem \ref{thm:shape} about the shape of the level sets. Throughout this section we let $c_1>0$ be a small absolute constant as in the statement of Theorem \ref{thm:shape}. The constant $c>0$ which appears in the propositions below is bounded away from $0$ and $1$ by satisfying
\begin{align*}
c_1 < c < 1-c_1,
\end{align*}
and all other constants will depend on the choice of $c_1$.

 We first use the bound on $A$ from Proposition \ref{prop:Aupper} to find an upper bound on the behaviour of the level sets of $u(x,y)$ in the $y$-direction.

\begin{prop} \label{prop:levelyupper}
Let $0<c<1$ be a fixed absolute constant. Then, for any fixed $x$, the cross-section of the superlevel set $\{(x,y)\in\Om:u(x,y) \geq c\}$ at $x$ consists of an interval of length at most $L_1$ multiplied by an absolute constant.
\end{prop}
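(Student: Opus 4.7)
The plan is to reduce this to Proposition \ref{prop:Aupper} together with the log-concavity of $u(x,y)$. First, fix $x$ and consider the cross-section at $x$ of the superlevel set $W_c = \{(x,y)\in\Om: u(x,y) \geq c\}$. Since $V(x,y)$ is convex and $\Om$ is convex, the Brascamp--Lieb theorem (quoted from \cite{BL2} in the introduction) says that $u(x,y)$ is log-concave, so $W_c$ is a convex subset of $\Om$. Its slice at a fixed $x$ is therefore a convex subset of the line, i.e., an interval $[a,b]$ (possibly empty, in which case the claim is immediate).

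Next, I would use the pointwise lower bound $u(x,y)^2 \geq c^2$ on $[a,b]$ to estimate
\begin{align*}
c^2 (b-a) \leq \int_a^b u(x,y)^2 \, \mathrm{d}y \leq \int_{\Om(x)} u(x,y)^2 \, \mathrm{d}y = H(x).
\end{align*}
By Proposition \ref{prop:Aupper}, $H(x) \leq A \leq C L_1$ for an absolute constant $C$, and therefore
\begin{align*}
b - a \leq \frac{C L_1}{c^2} \leq \frac{C}{c_1^2} L_1,
\end{align*}
which is $L_1$ multiplied by an absolute constant (since $c \geq c_1$). This gives the claim.

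There is no significant obstacle here; the content of the proposition is essentially packaging the $L^2$-bound on cross-sections from Proposition \ref{prop:Aupper} with the convexity of superlevel sets (so that the cross-section is a single interval rather than a more complicated set) and the trivial pointwise bound. The only small point to note is that any $c$ bounded away from $0$ works, and the dependence on $c$ comes only through the absolute constant $c_1$ fixed at the start of the section.
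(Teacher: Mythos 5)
Your argument is correct and is essentially the same as the paper's: you use the log-concavity of $u$ to see that the cross-section is an interval, then bound its length by comparing the pointwise lower bound $u \geq c$ on that interval against the $L^2$ estimate $H(x) \leq A \leq C L_1$ from Proposition \ref{prop:Aupper}. The paper states this more tersely as a contradiction argument, but the content is identical.
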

\begin{proof}{Proposition \ref{prop:levelyupper}}
By Proposition \ref{prop:Aupper} we know that
\begin{align*}
A = \max_{x}H(x)  = \max_{x} \int_{\Om(x)}u(x,y)^2 \ud y \leq C_1L_1.
\end{align*}
If $u(x,y) \geq c$ for $y$ in an interval of length $CL_1$ for $C$ sufficiently large, this immediately gives a contradiction.
\end{proof}

We can also prove an upper bound on the length of the projection of the level sets of $u(x,y)$ in the $y$-direction.
\begin{prop} \label{prop:firstlocation}
For sufficiently small $\delta>0$ fixed, there exists an $\eta>0$ such that if the point $(x,y)$ is within a distance $\eta L_1$ of the level set $\{ (x,y)\in\Om: V(x,y) = 1+ \eta^{-1}L_1^{-2}\}$, then
\begin{align*}
 u(x,y) \leq \delta.
 \end{align*}
In particular, the level sets $\{(x,y)\in\Om: u(x,y) = c\}$ are at a distance comparable to $L_1$ away from the level set $\{ (x,y)\in\Om: V(x,y) = 1+CL_1^{-2}\}$, for some absolute constant $C>0$.
\end{prop}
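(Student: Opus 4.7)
The plan is to combine the Agmon-type estimate of Proposition \ref{prop:Agmon} with local elliptic regularity to upgrade weighted $L^{2}$ control of $u$ into pointwise control. Fix $\delta>0$ small, and define $S_\eta \coloneqq \{(x,y)\in\Om : V(x,y) \geq 1+\tfrac12\eta^{-1}L_{1}^{-2}\}$, a convex subset of $\Om_{1}$ once $\eta$ is small enough. The first step is a lower bound on the Agmon distance $h^{*}$ of Definition \ref{def:h(x,y)}: for $(x,y) \in S_\eta$, I would show
\[
h^{*}(x,y) \geq H(\eta),
\]
with $H(\eta) \to \infty$ as $\eta \to 0$. This uses two ingredients. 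First, on any admissible path $\gamma$ from the inner boundary of $\Om_{1}$ to $(x,y)$, once $\gamma$ enters $S_\eta$ the integrand $\sqrt{V-\la}$ is bounded below by a multiple of $\eta^{-1/2}L_{1}^{-1}$. Second, the Euclidean distance between $\{V = 1+CL_{1}^{-2}\}$ and $\{V = 1+\tfrac12\eta^{-1}L_{1}^{-2}\}$ can be bounded below by a positive function of $\eta$ times $L_{1}$, using the concavity of the height function $V^{-1/2}$ along rays emanating from the minimum of $V$, via an argument of the same type as Lemma \ref{lem:concave1}.

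Plugging this into Proposition \ref{prop:Agmon} yields
\[
\int_{S_\eta} u(x,y)^{2}\,\ud x\,\ud y \leq CL_{1}L_{2}\,e^{-2H(\eta)},
\]
which is as small as we like for $\eta$ small. To convert this into a pointwise bound at a point $(x_{0},y_{0})$ with $V(x_{0},y_{0}) \geq 1+\eta^{-1}L_{1}^{-2}$, pick a Euclidean ball $B_{\rho}(x_{0},y_{0}) \subset S_\eta$ of radius $\rho$ comparable to $\eta L_{1}$. On this ball $u$ solves $-\Delta u = (\la-V)u$ with $0 \leq u \leq 1$ and bounded coefficients, so a standard interior Moser estimate gives
\[
u(x_{0},y_{0}) \leq C\rho^{-1}\|u\|_{L^{2}(B_{\rho})} \leq C\rho^{-1}(L_{1}L_{2})^{1/2}e^{-H(\eta)},
\]
which is smaller than $\delta$ for $\eta$ small enough. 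A point within distance $\eta L_{1}$ of $\{V = 1+\eta^{-1}L_{1}^{-2}\}$ may lie on either side, but by replacing $\eta^{-1}$ with $2\eta^{-1}$ in the definition of $S_\eta$ one arranges that the whole tubular neighborhood sits inside $S_{\eta/2}$, and the argument applies uniformly there.

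For the ``in particular'' consequence, apply the main statement with $\delta = c/2$ to get $\eta=\eta(c)>0$ with $u \leq c/2$ on the $\eta L_{1}$-neighborhood of $\{V=1+\eta^{-1}L_{1}^{-2}\}$; hence $\{u=c\}$ is contained in $\{V \leq 1+\eta^{-1}L_{1}^{-2}\}$ at distance $\geq \eta L_{1}$ from its boundary, and the Euclidean distance between $\{V=1+CL_{1}^{-2}\}$ and $\{V=1+\eta^{-1}L_{1}^{-2}\}$ is itself of order $L_{1}$ by the same concavity argument, giving the claimed $\sim L_{1}$ separation. The main technical obstacle is the quantitative Agmon-distance bound: we need $H(\eta)$ to grow fast enough in $1/\eta$ to beat the prefactor $\rho^{-1}(L_{1}L_{2})^{1/2}$, and this is the place where the assumption that $V^{-1/2}$ is concave (rather than just $V$ convex) is used in an essential way, since it controls quantitatively how quickly the level sets of $V$ separate in Euclidean distance.
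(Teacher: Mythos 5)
Your approach is genuinely different from the paper's, which proves this proposition by a direct barrier argument: after rotating so that the nearest point of $\{V=1+\eta^{-1}L_1^{-2}\}$ lies straight below $(x',y')$, the paper constructs an explicit piecewise $\sin$/$\exp$ supersolution $v_1(x,y)$ (using only that $V-\la\gtrsim \eta^{-1}L_1^{-2}$ past that level set and $V-\la\gtrsim -L_1^{-2}$ everywhere, via Proposition \ref{prop:simpleeigenvalue}) and invokes the generalized maximum principle of Proposition \ref{prop:GMP}. That proof is one-dimensional in nature and never sees $L_2$. Your route, through Proposition \ref{prop:Agmon} and local elliptic regularity, runs into two concrete problems.

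The main one you yourself flag, and it is fatal rather than merely technical. The proposition requires $\eta$ to depend only on $\delta$ (and absolute constants); see Remark \ref{rem:uniformconstants} and the "absolute constant $C$" in the "in particular" clause. Your chain of estimates gives, at best,
\begin{align*}
u(x_0,y_0) \;\leq\; C\,\rho^{-1}\,(L_1L_2)^{1/2}\,e^{-H(\eta)} \;=\; C\,\eta^{-1}\Bigl(\tfrac{L_2}{L_1}\Bigr)^{1/2}\,e^{-H(\eta)},
\end{align*}
with $H$ a function of $\eta$ alone. Since $L_2/L_1$ is unbounded over the admissible class of domains and potentials, no choice of $\eta=\eta(\delta)$ can make this $\leq\delta$ uniformly. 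The Agmon distance $h^*$ over $S_\eta$ cannot rescue this, because $h^*$ is determined by the one-dimensional profile of $V$ transverse to $\Om_{L_1^{-2}}$ and carries no information about $L_2/L_1$.

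A second, independent error is the claimed Euclidean-distance lower bound between $\{V=1+CL_1^{-2}\}$ and $\{V=1+\eta^{-1}L_1^{-2}\}$. Concavity of $h=V^{-1/2}$ gives exactly the opposite control: Lemma \ref{lem:concave1} is an \emph{upper} bound on how far $h$ can remain near its maximum. Once $h$ has dropped by $\beta$ at distance $z$ from the maximum, concavity forces $h(y)\leq 1-\beta y/z$ for $y>z$, but places no lower bound on $h$, so $h$ may drop arbitrarily steeply immediately afterward (a concave corner). Consequently the two level sets may be arbitrarily Euclidean-close, and the quantity you call $H(\eta)$ need not tend to $\infty$ at all. (One can make the Agmon distance from $\pa\Om_1$ to $\{V=1+\eta^{-1}L_1^{-2}\}$ stay bounded as $\eta\to0$ by taking the slope of $h$ past the level $1-CL_1^{-2}$ large.) A smaller issue is that the Moser step needs care, since $V$ is not bounded on a Euclidean ball that can reach $\{h=0\}$, though a one-sided subsolution estimate would sidestep this. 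The paper's barrier construction avoids all of these issues: it only uses a lower bound on $V$ in a half-plane, not any quantitative separation of level sets or any $L^2$ information about $u$.
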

\begin{rem}
The proof of this proposition follows closely the proof of Lemma 3.17 in \cite{GJ2}, where an analogous property has been established for the first eigenfunction of a two dimensional convex domain.
\end{rem}
Before proving this proposition, let us show the following corollary:
\begin{cor} \label{cor:firstlocationA}
Let $0<c<1$ be a fixed absolute constant. Then, the projection of the level set $\{(x,y)\in\Om:u(x,y) = c\}$ onto the $y$-axis has length bounded from above by an absolute constant multiplied by $L_1$.
\end{cor}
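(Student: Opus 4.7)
The plan is to reduce the problem, via the log-concavity of $u$ and Proposition~\ref{prop:firstlocation}, to bounding the $y$-projection of a sublevel set of the convex potential $V$, which can then be controlled by the orientation fixed in Section~\ref{sec:L1}.

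First I would apply Proposition~\ref{prop:firstlocation} with any $\delta \in (0, c)$ (depending on $c_1$) to obtain an $\eta > 0$ such that $u \leq \delta < c$ on the $\eta L_1$-neighbourhood of the curve $\{V = 1 + \eta^{-1} L_1^{-2}\}$. Consequently $\{u \geq c\}$ does not meet this curve, and since it is convex by the log-concavity of $u$, it lies entirely on one side. The maximum of $u$ equals $1 > c$, and Proposition~\ref{prop:firstlocation} (or the Agmon estimate of Proposition~\ref{prop:Agmon}) forces this maximum to lie on the \emph{inside}, in $\{V < 1 + \eta^{-1} L_1^{-2}\}$; equivalently, the second assertion of Proposition~\ref{prop:firstlocation} tells us that $\{u = c\}$ sits inside the convex set $\{V \leq 1 + K L_1^{-2}\}$ at distance comparable to $L_1$ from its boundary, for some absolute constant $K$ depending on $c$. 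Since $\{u \geq c\}$ is the convex hull of its boundary $\{u = c\}$, and $\{V \leq 1 + K L_1^{-2}\}$ is convex, we deduce $\{u \geq c\} \subset \{V \leq 1 + K L_1^{-2}\}$. Thus the $y$-projection of $\{u = c\}$ is bounded by the $y$-projection of $\{V \leq 1 + K L_1^{-2}\}$.

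The main technical step is then to bound the $y$-projection of $\{V \leq 1 + K L_1^{-2}\}$ by an absolute multiple of $L_1$, using only the convexity of $V$ and the orientation that was fixed. Pick a point $(x_0, y_0) \in \Om_{L_1^{-2}}$ with $V(x_0, y_0) = 1$, and given any $(x, y)$ in the sublevel set with $(x,y)$ outside $\Om_{L_1^{-2}}$, parametrise the line segment from $(x_0, y_0)$ through $(x, y)$ by a parameter $s$; it crosses $\pa \Om_{L_1^{-2}}$ at some $(x_1, y_1)$ with $V(x_1, y_1) = 1 + L_1^{-2}$, at parameter $s = s_1$. Convexity of $V$ along this segment yields
\begin{align*}
V \geq 1 + L_1^{-2} s / s_1 \quad \text{for } s \geq s_1,
\end{align*}
so $V(x, y) \leq 1 + K L_1^{-2}$ forces $s/s_1 \leq K$. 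Reading this in the $y$-coordinate gives $|y - y_0| \leq K |y_1 - y_0|$, and since $(x_0, y_0), (x_1, y_1) \in \Om_{L_1^{-2}}$ whose $y$-projection is comparable to $L_1$ by the orientation fixed in Section~\ref{sec:L1}, we obtain $|y - y_0| \leq C K L_1$ as required.

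The main obstacle is justifying that $\{u \geq c\}$ lies inside rather than outside the relevant $V$-sublevel set; once this side-determination is in hand, the rest is a short one-variable convexity computation together with the geometric information from Section~\ref{sec:L1}. If needed, this can be pinned down rigorously by Proposition~\ref{prop:Agmon}, which makes $u$ exponentially small in regions where $V$ is appreciably larger than $\la$, ruling out the alternative configuration in which $\{u \geq c\}$ would have to live in $\{V > 1 + \eta^{-1} L_1^{-2}\}$.
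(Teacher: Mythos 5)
Your proposal is correct and follows essentially the same route as the paper's own proof: invoke Proposition~\ref{prop:firstlocation} to place $\{u=c\}$ near a $V$-sublevel set, then use the convexity of $V$ together with the orientation of $\Om_{L_1^{-2}}$ fixed when defining $L_2$ to bound the $y$-projection of that sublevel set by a multiple of $L_1$. The two things you spell out more fully than the paper — the side-determination (that $\{u\geq c\}$ lies \emph{inside} the relevant $V$-sublevel set rather than outside) and the explicit one-variable convexity estimate for the $y$-projection — are both genuinely part of the argument, but they are already implicit in the paper's shorter presentation: the barrier/maximum-principle proof of Proposition~\ref{prop:firstlocation} forces $u\leq\delta$ at \emph{all} points sufficiently far in the direction of increasing $V$, not merely in a thin annulus, so the side is determined without any appeal to the Agmon estimate; and the paper compresses your segment computation into the phrase ``by the convexity of the potential $V(x,y)$.''
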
  
\begin{proof}{Corollary \ref{cor:firstlocationA}}
By the definition of the length scale $L_1$ and the orientation of the level set $\Om_{L_1^{-2}} = \{(x,y)\in\Om:V(x,y) = 1+L_1^{-2}\}$  we used when defining $L_2$, we know that the projection of the level set $\Om_{L_1^{-2}}$ onto the $y$-axis has length comparable to $L_1$. Moreover, by the convexity of the potential $V(x,y)$, this is true for any level set $ \{(x,y)\in\Om:V(x,y) = 1+CL_1^{-2}\}$, for any absolute constant $C>0$. Therefore, the upper bound on the length of the projection of the level sets $\{(x,y)\in\Om:u(x,y) = c\}$ onto the $y$-axis follows from Proposition \ref{prop:firstlocation}.  
\end{proof}

\begin{proof}{Proposition \ref{prop:firstlocation}}
Let $(x',y')$ be a point which is within a distance  $\eta L_1$ of the level set  $\{(x,y)\in\Om :V(x,y) = 1+ \eta^{-1}L_1^{-2}\}$. After a rotation, we may assume that the nearest point of $\{(x,y)\in\Om : V(x,y) = 1+ \eta^{-1}L_1^{-2}\}$ to $(x',y')$ is equal to $(x',y_1)$, with $y_1<y'$ and $y'-y_1 <\eta L_1$.

We will need to use two properties of the potential $V(x,y)$. Firstly, by the simple eigenvalue bounds on $\la$ in Proposition \ref{prop:simpleeigenvalue} we have seen before that
\begin{align} \label{eqn:firstlocation1} 
 \Delta_{x,y}u(x,y) = (V(x,y) - \la)u(x,y) \geq -\frac{C_1^2}{L_1^2}u(x,y)
 \end{align}
for all values of $(x,y)$, for some absolute constant $C_1$. Moreover, $V(x,y)$ has convex sublevel sets and by the rotation we made above we have $V(x,y_1) = 1+\eta^{-1}L_1^{-2}$. Therefore,
\begin{align} \label{eqn:firstlocation2} 
 \Delta_{x,y}u(x,y) = (V(x,y) - \la)u(x,y) \geq \frac{1}{2\eta L_1^2}u(x,y),
 \end{align}
whenever $y \leq y'-\eta L_1 < y_1$.

We define the comparison function $v_1(x,y)$ by
\begin{align} \label{eqn:firstlocation3}
v_1(x,y) = \sin\left(\frac{C_1(y-y')}{2L_1} + \frac{C_1\eta}{2} + C_1\delta\right)
\end{align}
for $y \geq y'-\eta L_1$, and by
\begin{align} \label{eqn:firstlocation4}
v_1(x,y) = (\sin(C_1\delta))\exp\left(\frac{\delta}{2}+\frac{(y-y')}{2\delta L_1}\right)
\end{align}
for $y < y'-\eta L_1$. We make the choice $\eta = \delta^2$, and this ensures that $v_1(x,y)$ is continuous at $y = y'-\eta L_1$ for all values of $x$.

For $\delta>0$ sufficiently small, using $\sin(C_1\delta) >C_1\delta \cos(C_1\delta)$, we find that $\pa_y^2 v_1(x,y)$ has a negative delta function along $y = y'-\eta L_1$. Everywhere else, calculating $\Delta_{x,y} v_1(x,y)$ from its definition in \eqref{eqn:firstlocation3} and \eqref{eqn:firstlocation4}, and using the inequalities for $\Delta_{x,y}u(x,y)$ in \eqref{eqn:firstlocation1} and \eqref{eqn:firstlocation2}, we see that
\begin{align*}
\frac{ \Delta_{x,y} v_1(x,y)}{v_1(x,y)} \leq \frac{ \Delta_{x,y} u(x,y)}{u(x,y)}.
\end{align*}
Moreover, for those $(x,y) \in \pa\Om$, with $y \leq y' + \left(\frac{\pi}{C_1} - \eta -2\delta\right)L_1$ we have
\begin{align*}
v_1(x,y) > 0 = u(x,y)|_{\pa\Om}
\end{align*}
and for $(x,y) \in \Om$ with $y= y' + \left(\frac{\pi}{C_1} - \eta -2\delta\right)L_1$, we have
\begin{align*}
v_1\left(x,y' + (\pi/C_1 - \eta -2\delta)L_1\right) = 1 \geq u\left(x,y' + (\pi/C_1 - \eta -2\delta)L_1\right).
\end{align*}
Thus, applying the generalised maximum principle in Proposition \ref{prop:GMP} to those $(x,y)$ in $\Om$ with $y \leq y' + \left(\frac{\pi}{C_1} - \eta -2\delta\right)L_1$ , $v_1(x,y)$ is a positive supersolution, and in particular
\begin{align*}
 u(x',y') \leq v_1(x',y') = \sin\left(\frac{C_1\eta}{2}+C_1\delta\right) \leq C_2\delta.
\end{align*}
Thus, repeating the argument with a suitable multiple of $\delta$ gives the desired result.

\end{proof}

We now want to obtain a lower bound on the height of the level sets in the $y$-direction.
\begin{prop} \label{prop:levelylower}
Let $0<c<1$ be a fixed absolute constant. Then, the superlevel set $\{(x,y)\in\Om:u(x,y) \geq c\}$ has inner radius bounded below by an absolute constant multiplied by $L_1$. In particular, the projection of the level set $\{(x,y)\in\Om:u(x,y) = c\}$ onto the $y$-axis has length bounded from below by an absolute constant multiplied by $L_1$.
\end{prop}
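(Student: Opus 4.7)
The plan is to show that the superlevel set $\{u \geq c\}$ contains a disk of radius comparable to $L_1$ centered at the maximum of $u$, using interior elliptic regularity to bound $|\nabla u|$ there. Let $p_0 = (x_0,y_0)$ be a point where $u$ attains its maximum value $1$. Since $\nabla u(p_0) = 0$ and $\Delta u(p_0) \leq 0$, the equation $\Delta u = (V - \la)u$ forces $V(p_0) \leq \la$, and by Proposition \ref{prop:simpleeigenvalue} this gives $V(p_0) \leq 1 + C_1L_1^{-2}$.

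Apply Proposition \ref{prop:firstlocation} with $\delta = 1/2$: there is an absolute constant $\eta>0$, which I take small enough that $\eta^{-1} > C_1$, such that any point within $\eta L_1$ of the level set $\{V = 1 + \eta^{-1}L_1^{-2}\}$ satisfies $u \leq 1/2$. Since $u(p_0) = 1$, the point $p_0$ lies at distance greater than $\eta L_1$ from this level set, on the side where $V \leq 1 + \eta^{-1}L_1^{-2}$. Hence the disk $B = B_{\eta L_1}(p_0)$ does not cross this level set and $V \leq 1 + \eta^{-1}L_1^{-2} \leq CL_1^{-2}$ throughout $B$; in particular $B \subset \Om$.

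Next I apply standard interior $W^{2,p}$ estimates to $-\Delta u = (\la - V)u$ on $B$. Since $|\la - V| \leq CL_1^{-2}$ and $|u| \leq 1$ there, rescaling by a factor of $\eta L_1$ and applying Sobolev embedding in dimension two yields
\begin{align*}
\|\nabla u\|_{L^{\infty}(B_{\eta L_1/2}(p_0))} \leq CL_1^{-1}.
\end{align*}
Integrating this bound along rays from $p_0$ gives $u(z) \geq 1 - CL_1^{-1}|z - p_0|$ on $B_{\eta L_1/2}(p_0)$, so $u(z) \geq c$ whenever $|z - p_0| \leq (1-c)L_1/C$. Consequently $\{u \geq c\}$ contains a disk of radius $c'L_1$ with $c' = \min(\eta/2,(1-c)/C)$, an absolute constant depending only on $c_1$. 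This establishes the inner radius bound, and since any such disk has $y$-projection of length $2c'L_1$, the projection of $\{u=c\}$ onto the $y$-axis has length at least comparable to $L_1$.

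The delicate point in the above is confirming that the disk $B$ lies inside $\Om$. This is immediate in the generic case where the level set $\{V = 1+\eta^{-1}L_1^{-2}\}$ lies in the interior of $\Om$ and separates $p_0$ from $\pa\Om$, since the sublevel set of $V$ containing $p_0$ is then a closed convex subset of $\Om$. In the exceptional configuration where $V$ stays below $1 + \eta^{-1}L_1^{-2}$ on a portion of $\pa\Om$ adjacent to $p_0$, the potential remains bounded by $CL_1^{-2}$ up to that piece of the boundary; a standard boundary $C^{1,\alpha}$ estimate for the convex domain $\Om$ combined with the Dirichlet condition $u = 0$ on $\pa\Om$ and the normalisation $u(p_0) = 1$ then forces $p_0$ to sit at distance at least $cL_1$ from $\pa\Om$, after which the same interior gradient argument produces the disk on which $u \geq c$.
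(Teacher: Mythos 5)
Your approach is genuinely different from the paper's, but it has a gap at exactly the point you flag as delicate.

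The paper proves this proposition with a single slab barrier: assume the superlevel set $\{u\geq c\}$ lies between two parallel lines at distance $\alpha$, compare $u$ against $W(x,y)=\tfrac12\sin\bigl(\tfrac{\pi}{6}+\tfrac{2\pi}{3\alpha}y\bigr)$ via the generalised maximum principle, and observe that if $\alpha\ll L_1$ then $u\leq\tfrac12$ in the slab, contradicting $\max u=1$. This is one stroke, makes no reference to where the maximum sits relative to $\pa\Om$, and uses only the simple eigenvalue bound. You instead locate the maximum $p_0$, deduce $V(p_0)\leq\la\leq 1+C_1L_1^{-2}$, invoke Proposition \ref{prop:firstlocation} to put $p_0$ at distance $>\eta L_1$ from $\{V=1+\eta^{-1}L_1^{-2}\}$, and then use interior $W^{2,p}$ estimates on a ball to get $|\nabla u|\lesssim L_1^{-1}$ and integrate. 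That is a different and in principle reasonable route, and the interior-regularity step and the integration step are fine as far as they go.

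The gap is the one you yourself raise: distance from the level set $\{V=1+\eta^{-1}L_1^{-2}\}$ does not control distance from $\pa\Om$, since the sublevel set can touch the boundary, so ``in particular $B\subset\Om$'' does not follow. To close it you invoke ``a standard boundary $C^{1,\alpha}$ estimate for the convex domain $\Om$,'' but that is not a standard result here: $\Om$ is only assumed convex, so it may have corners, and there is no $C^{1,\alpha}$ boundary regularity to appeal to, nor is it clear that such an estimate would carry the correct $L_1$-scaling. What you actually need is a barrier argument near $\pa\Om$: pick a supporting line at the nearest boundary point to $p_0$, compare $u$ with $\sin(\pi y/(2R))$ for $R\sim L_1$ using $\la-V\leq CL_1^{-2}$, and conclude $u$ grows at rate $\lesssim L_1^{-1}$ from $\pa\Om$, so $u(p_0)=1$ forces $\mathrm{dist}(p_0,\pa\Om)\gtrsim L_1$. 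But that is precisely the comparison-function machinery the paper uses to prove the whole proposition directly; once you commit to it for the boundary case, the interior-regularity detour is no longer saving any work. (A minor slip: you write $V\leq 1+\eta^{-1}L_1^{-2}\leq CL_1^{-2}$, which should be $V-\la\leq CL_1^{-2}$ or $V\leq 1+CL_1^{-2}$, since $V\geq 1$.)
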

\begin{rem}
The proof of this proposition only considers the parameter $L_1$, and does not use any properties of the eigenvalue or eigenfunction that depend on $L_2$. In particular, this means that we do not need to fix the orientation of the level set $\Om_{L_1^{-2}} = \{(x,y)\in \Om:V(x,y) = 1+ L_1^{-2}\}$, and we are free to rotate $\Om$ in the course of the proof.
\end{rem}
\begin{proof}{Proposition \ref{prop:levelylower}}
Let us consider the case $c=1/4$, and study the level set $\{(x,y)\in\Om: u(x,y) = \tfrac{1}{4}\}$. Suppose that the shortest projection of the set onto any direction is of length $\alpha$. By the convexity of the superlevel sets of $u(x,y)$, after a rotation and a translation, we may then assume that this level set lies between the two lines $y=0$ and $y=\alpha$.

We will use the comparison function
\begin{align*}
 W(x,y) \coloneqq \frac{1}{2}\sin \left(\frac{\pi}{6}+ \frac{2\pi}{3\alpha}y \right).
 \end{align*}
 This function is equal to $1/4$ when $y=0$ or $y = \alpha$, and satisfies
\begin{align} \label{eqn:levelylower1}
 (\Delta_{x,y} -V(x,y)+\la)W(x,y) = -\left(\frac{2\pi}{3\alpha}\right)^2W(x,y) + (\la-V(x,y))W(x,y). 
\end{align}
 Since $V(x,y) \geq 1$, by the straightforward eigenvalue bound on $\la$ from Proposition \ref{prop:simpleeigenvalue} we have
\begin{align*}
\la - V(x,y) \leq C^2L_1^{-2},
\end{align*}
for an absolute constant $C>0$. Therefore, from \eqref{eqn:levelylower1} we obtain
\begin{align} \label{eqn:levelylower2}
 (\Delta_{x,y} -V(x,y)+\la)W(x,y) \leq  \left(-\left(\frac{2\pi}{3\alpha}\right)^2 + C^2L_1^{-2}\right)W(x,y) .
\end{align}
Let us assume that
\begin{align} \label{eqn:levelylower3}
\alpha < \frac{2\pi L_1}{3C}.
\end{align}
Then, from \eqref{eqn:levelylower2} we see that
\begin{align*}
 (\Delta_{x,y} -V(x,y)+\la)W(x,y) < 0,
\end{align*}
while $ (\Delta_{x,y} -V(x,y)+\la)u(x,y)  = 0$ in $\Om$. Also, for all points $(x,y)$ with $y=0$, $\alpha$ we have
\begin{align*}
 u(x,y) \leq W(x,y) = \frac{1}{4},
\end{align*}
and $u(x,y) = 0 < W(x,y)$  for $(x,y) \in \pa \Om$, with $0 \leq y \leq \alpha$. Therefore, by the generalised maximum principle in Proposition \ref{prop:GMP} we find that
\begin{align*}
u(x,y) \leq W(x,y) \qquad \text{ for }(x,y)\in D \text{ with } 0 \leq y \leq \alpha.
\end{align*}
However, $W(x,y) \leq \tfrac{1}{2}$, while $u(x,y)$ attains its maximum of $1$ at some point $(x,y)$ with $0 \leq y \leq \alpha$. This gives a contradiction, and so from \eqref{eqn:levelylower3} we must have
\begin{align*}
\alpha > \frac{2\pi L_1}{3C}. 
\end{align*}
Therefore the projection of the superlevel set $\{(x,y)\in \Om: u(x,y) \geq \tfrac{1}{4}\}$ onto any direction has length at least comparable to $L_1$, and this gives us the required lower bound on the inner radius of this superlevel set. We can also repeat the argument above for the superlevel set $\{(x,y)\in\Om: u(x,y) \geq c\}$ for any fixed absolute constant $c$ with $c_1<c<1-c_1$ to obtain the same result.
\end{proof}

\begin{cor} \label{cor:levelylower}
As an immediate consequence of Proposition \ref{prop:levelylower}, we see that
\begin{align*}
A = \max_{x} \int_{\Om(x)}u(x,y)^2 \ud y \geq \tilde{c}L_1,
\end{align*}
for an absolute constant $\tilde{c}>0$.
\end{cor}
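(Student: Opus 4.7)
The plan is that this corollary is genuinely immediate from Proposition \ref{prop:levelylower}: once we know the superlevel set $\{u \geq c\}$ contains a disk of radius comparable to $L_1$, we simply evaluate $H$ at the $x$-coordinate of the center of that disk.

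More concretely, I would fix some convenient value such as $c = 1/2$ and apply Proposition \ref{prop:levelylower} to obtain an inscribed disk $B \subset \{(x,y)\in\Om : u(x,y) \geq 1/2\}$ of radius $c'L_1$ for an absolute constant $c' > 0$. Let $(x_0,y_0)$ denote the center of $B$. The vertical line $\{x = x_0\}$ intersects $B$ in the interval $[y_0 - c'L_1,\, y_0 + c'L_1]$, and on this interval $u(x_0,y) \geq 1/2$.

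Integrating, I would conclude
\begin{align*}
H(x_0) = \int_{\Om(x_0)} u(x_0,y)^2 \, \ud y \geq \int_{y_0 - c'L_1}^{y_0 + c'L_1} \tfrac{1}{4} \, \ud y = \tfrac{c'}{2} L_1,
\end{align*}
so that $A = \max_x H(x) \geq H(x_0) \geq \tilde{c} L_1$ with $\tilde{c} = c'/2$. There is no real obstacle here; the content of the corollary is entirely contained in the inner-radius lower bound of Proposition \ref{prop:levelylower}, and the only additional ingredient is the trivial observation that a disk of radius $\sim L_1$ inside a superlevel set produces a chord of length $\sim L_1$ on which $u^2$ is bounded below by an absolute constant.
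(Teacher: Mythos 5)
Your proof is correct, and since the paper simply asserts this corollary as an immediate consequence of Proposition \ref{prop:levelylower} without further detail, your argument is precisely the one the author intends: use the inner-radius lower bound to produce a vertical chord of length comparable to $L_1$ on which $u \geq 1/2$, then integrate.
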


Combining Propositions \ref{prop:levelyupper} and \ref{prop:levelylower}, the height of the level set $\{(x,y)\in\Om:u(x,y) = c\}$ in the $y$-direction is comparable to $L_1$. We now turn to the studying the length of the level sets of $u(x,y)$ in the $x$-direction. We first use Corollary \ref{cor:H(x)decay} to obtain an upper bound on the length of the level sets.

\begin{prop} \label{prop:levelxupper}
Let $0<c<1$ be a fixed absolute constant. Then, the projection of the level set $\{(x,y)\in\Om: u(x,y) = c\}$ onto the $x$-axis has length bounded by an absolute constant multiplied by $L_2$.
\end{prop}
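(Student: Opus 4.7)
The plan is to combine Corollary~\ref{cor:H(x)decay}, which gives exponential decay of $H(x) = \int_{\Om(x)} u(x,y)^2\,\ud y$ away from $x^*$ on the scale $L_2$, with Proposition~\ref{prop:levelylower}, which guarantees that the convex superlevel set $W_c = \{(x,y)\in\Om : u(x,y) \geq c\}$ has inner radius comparable to $L_1$.

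First I would observe that, since $u \equiv 0$ on $\pa\Om$ and $c>0$, the set $W_c$ does not touch $\pa\Om$, so the level set $\{u=c\}$ equals the relative boundary of $W_c$ inside $\Om$ and has the same projection onto the $x$-axis as $W_c$; call this projection length $\ell$. By Proposition~\ref{prop:levelylower}, $W_c$ contains a disk $D$ of some radius $r$ comparable to $L_1$ centred at a point $(x_0,y_0)$. The projection $[x_0-r,x_0+r]$ of $D$ lies inside the projection $[a,b]$ of $W_c$, where $b-a=\ell$, so one of the two gaps $x_0-a$, $b-x_0$ is at least $\ell/2$; I pick a point $(x_1,y_1)\in W_c$ realising that farther endpoint, giving $|x_1-x_0|\geq \ell/2$ and $u(x_1,y_1)\geq c$.

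The key convexity step: because $W_c$ is convex (log-concavity of $u$), it contains the convex hull of $D\cup\{(x_1,y_1)\}$. Slicing this hull at the vertical line $x=x_0+t(x_1-x_0)$ for $t\in[0,1]$ yields a segment on which $u\geq c$ of length $2r(1-t)$. Taking $t=0$ and $t=\tfrac{1}{2}$,
\begin{align*}
H(x_0)\geq 2c^2 r, \qquad H\!\left(\tfrac{x_0+x_1}{2}\right)\geq c^2 r,
\end{align*}
both of which are comparable to $L_1$. Substituting these two lower bounds into the upper bound $H(x)\leq C_1 A\exp(-c_2|x-x^*|/L_2)$ from Corollary~\ref{cor:H(x)decay}, and using $A\leq C_3 L_1$ from Proposition~\ref{prop:Aupper}, forces $|x_0-x^*|\leq C_4 L_2$ and $|(x_0+x_1)/2 - x^*|\leq C_4 L_2$ for an absolute constant $C_4$. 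The triangle inequality then gives $|x_1-x_0|\leq C_5 L_2$, and combined with $|x_1-x_0|\geq \ell/2$ this yields $\ell\leq C L_2$.

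I do not anticipate a serious obstacle here: the proof is a direct packaging of the cross-sectional decay estimate of Section~\ref{sec:L2} with a short convex-geometry argument. The only delicate point is that one must apply the decay estimate \emph{twice}, once at $x_0$ (to anchor $x_0$ near $x^*$) and once at the midpoint $(x_0+x_1)/2$ (to bound the separation $|x_1-x_0|$); evaluating only at the midpoint would not suffice, since we do not know a priori how far $x_0$ sits from $x^*$.
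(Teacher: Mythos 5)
Your proof is correct and uses essentially the same ingredients and strategy as the paper: the exponential decay of $H(x)$ from Corollary~\ref{cor:H(x)decay}, the bound $A\leq C L_1$ from Proposition~\ref{prop:Aupper}, the inner-radius lower bound on the superlevel set from Proposition~\ref{prop:levelylower}, and the convexity of that superlevel set. The paper phrases the convex-geometry step slightly more compactly (arguing directly that the cross-section exceeds half its maximum length, hence $H(x)\gtrsim L_1$, for $x$ in an interval of at least half the projection length, which contradicts the decay once the projection exceeds $2CL_2$), but this is the same argument as your two-point cone estimate.
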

\begin{proof}{Proposition \ref{prop:levelxupper}}
Suppose that the length of the projection of $\{(x,y)\in\Om: u(x,y)  = c\}$ onto the $x$-axis is bounded below by $2CL_2$, where $C>0$ is a large absolute constant that we will specify later in the proof. For each fixed $x$, the cross-section of the superlevel set $\{(x,y) \in\Om: u(x,y) \geq c\}$ at $x$ consists of an interval. Since the superlevel set is convex, the length of this interval is greater than half of its maximum length for $x$ lying in an interval of length $CL_2$. 

By Proposition \ref{prop:levelylower}, this maximum length is bounded below by $2C_1L_1$ for an absolute constant $C_1>0$. In other words, $u(x,y) \geq c$ for all $(x,y)$ in a rectangle of height $C_1L_1$ and width $CL_2$.

As a result of this, we have 
\begin{align} \label{eqn:levelxupper1}
H(x) = \int_{\Om(x)}u(x,y)^2 \ud y \geq c^2C_1L_1,
\end{align}
for all $x$ in an interval of length $CL_2$. By Proposition \ref{prop:Aupper}, $A$ is bounded by an absolute constant multiplied by $L_1$, and by Corollary \ref{cor:H(x)decay} we have the bound
\begin{align}  \label{eqn:levelxupper2}
H(x) \leq A e^{-c|x-x^*|/L_2}.
\end{align}
Therefore, combining \eqref{eqn:levelxupper1} and \eqref{eqn:levelxupper2}, we obtain a contradiction if we choose $C$ to be sufficiently large. This completes the proof of the proposition.
\end{proof}

To complete the proof of Theorem \ref{thm:shape} we finally want to obtain a comparable lower bound on the length of the level set of $u(x,y)$ in the $x$-direction. To do this we will use the $L^2$-bound on the first derivative $\pa_xu(x,y)$ from Proposition \ref{prop:xdecayupper}.

\begin{prop} \label{prop:levelxlower}
Let $0<c<1$ be a fixed absolute constant. Then, the projection of the level set $\{(x,y)\in\Om: u(x,y) = c\}$ onto the $x$-axis has length bounded from below by an absolute constant multiplied by $L_2$.
\end{prop}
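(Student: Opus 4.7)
\emph{Plan.} The plan is to argue by contradiction using the $L^2$-estimate on $\pa_x u$ from Proposition \ref{prop:xdecayupper}. Suppose that the projection of $\{u = c\}$ onto the $x$-axis has length less than $2\delta L_2$, for an absolute constant $\delta>0$ to be chosen at the end. Set $c' = (1+c)/2$, so that $c_1 < c < c' < 1 - c_1/2$. Applying Proposition \ref{prop:levelylower} at level $c'$, the superlevel set $\{u \geq c'\}$ has inner radius at least $rL_1$ for some absolute constant $r > 0$. Let $(x_0,y_0)$ be the centre of an inscribed disk of radius $r L_1$; then $u(x_0,y) \geq c'$ for every $y$ with $|y-y_0| \leq r L_1$.

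The main obstacle is that the derivative bound of Proposition \ref{prop:xdecayupper} involves the cut-off $\chi_1$, so I need to verify that the segment $[x_0-2\delta L_2, x_0+2\delta L_2]$ lies inside the plateau where $\chi_1 \equiv 1$. Integrating $u(x_0,y)^2 \geq (c')^2$ over $y \in [y_0-rL_1, y_0+rL_1]$ yields $H(x_0) \geq 2r(c')^2 L_1$, while Corollary \ref{cor:H(x)decay} combined with the upper bound $A \leq CL_1$ from Proposition \ref{prop:Aupper} gives $H(x_0) \leq C_1 C L_1 \exp(-c|x_0-x^*|/L_2)$. Rearranging shows $|x_0-x^*| \leq C''L_2$ for an absolute constant $C''$ depending only on $c_1$. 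Choosing the constant $C$ in Definition \ref{def:chi1} sufficiently large, the segment $[x_0-2\delta L_2, x_0+2\delta L_2]$ then lies inside the plateau of $\chi_1$.

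Since $x_0$ lies in the projection of $\{u \geq c\}$ onto the $x$-axis and by hypothesis this projection has length less than $2\delta L_2$, there exists $x_1$ with $|x_1 - x_0| \leq 2\delta L_2$ and $u(x_1,y) \leq c$ for every $y$. For $y \in [y_0 - rL_1, y_0 + rL_1]$ we then have $u(x_0, y) - u(x_1, y) \geq (1-c)/2$. Squaring, integrating in $y$, and applying Cauchy--Schwarz to the $x$-integral,
\begin{align*}
\tfrac{r(1-c)^2}{2} L_1
&\leq \int_{y_0-rL_1}^{y_0+rL_1} \left| \int_{x_0}^{x_1} \pa_t u(t,y) \ud t \right|^2 \ud y \\
&\leq |x_1-x_0| \int_{\Om} \chi_1(t) (\pa_t u(t,y))^2 \ud t \ud y \\
&\leq 2\delta L_2 \cdot C L_1 L_2^{-1},
\end{align*}
where the middle inequality extends the $y$-integration from $[y_0-rL_1, y_0+rL_1]$ to $\Om(t)$ and inserts the factor $\chi_1(t) = 1$ on $[x_0,x_1]$, and the final inequality invokes Proposition \ref{prop:xdecayupper}. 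Choosing $\delta$ smaller than $r(1-c)^2/(4C)$ produces the desired contradiction, establishing the lower bound of order $L_2$ on the projection.
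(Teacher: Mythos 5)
Your proof is correct and follows essentially the same route as the paper's: both use Proposition \ref{prop:levelylower} at a slightly higher level to produce a cross-section of length comparable to $L_1$ on which $u$ exceeds a fixed constant, and both propagate this in the $x$-direction over a distance of order $L_2$ via Cauchy--Schwarz and the $L^2$-bound on $\pa_x u$ from Proposition \ref{prop:xdecayupper}. The one place you add detail that the paper leaves implicit is the verification, via Corollary \ref{cor:H(x)decay} and Proposition \ref{prop:Aupper}, that the interval you integrate over actually lies inside the plateau $\{\chi_1\equiv 1\}$; this is a worthwhile clarification since Proposition \ref{prop:xdecayupper} only controls $\int \chi_1 (\pa_x u)^2$.
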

\begin{proof}{Proposition \ref{prop:levelxlower}}
We first prove the proposition for $c=1/4$. By applying Proposition \ref{prop:levelylower} with $c=\tfrac{1}{2}$, there exists a point $x=x_0$, and an interval $J$ of length equal to $2c^*L_1$ for a constant $c^*>0$, such that $u(x_0,y) \geq \tfrac{1}{2}$ for all $y$ in $J$. Therefore, 
\begin{align} \label{eqn:levelxlower1}
\int_{J}u(x_0,y)^2 \ud y \geq \tfrac{1}{4}c^*L_1,
\end{align}
Extending $u(x,y)$ to be zero outside of $\Om$, for any other $x$, we can write
\begin{align*}
 u(x,y) = u(x_0,y) + \int^x_{x_0}\pa_tu(t,y) \ud t,
 \end{align*}
and so, 
\begin{align*}
 u(x,y)^2 \geq \tfrac{3}{4}u(x_0,y)^2 - C_1|x-x_0|\int_{I(x)}(\pa_tu(t,y))^2 \ud t,
 \end{align*}
 where $I(x)$ consists of those points between $x_0$ and $x$.  Integrating this over $y \in J$, we find that 
 \begin{align} \label{eqn:levelxlower2}
 \int_{J} u(x,y)^2 \ud y \geq \tfrac{3}{4} \int_{J} u(x_0,y)^2 \ud y - C_1|x-x_0|\int_{J} \int_{I(x)}(\pa_tu(t,y))^2 \ud t \ud y
 \end{align}
 By \eqref{eqn:levelxlower1}, the first term on the right hand side of \eqref{eqn:levelxlower2} is bounded from  below by $\tfrac{3}{8}c^*L_1$. Provided $|x-x_0| \leq c_2L_2$ for $c_2>0$ sufficiently small, we can use Proposition \ref{prop:xdecayupper} to show that the second term on the right hand side of \eqref{eqn:levelxlower2} is bounded above by
 \begin{align*}
 C_1|x-x_0|L_1L_2^{-1},
 \end{align*}
 for an absolute constant $C_1>0$. Thus, if $|x-x_0| \leq c_3L_2$ for $c_3>0$ sufficiently small, we can ensure that
 \begin{align*}
 \int_{J}u(x,y)^2 \ud y \geq \tfrac{3}{16}c^*L_1 - \tfrac{1}{16}c^*L_1 = \tfrac{1}{8} c^*L_1.
 \end{align*}
 Since the interval $J$ has length equal to $c^*L_1$, this means that for each $x$ with $|x-x_0|\leq c_3L_2$,  $u(x,y)$ must be at least $\tfrac{1}{4}$ at some point $y\in J$. In particular, the level set $\{(x,y)\in\Om: u(x,y) = 1/4\}$ must have length in the $x$-direction of at least $c_3L_2$ as required. A lower bound on the length in the $x$-direction of the other level sets of $u(x,y)$ follows in an analogous way.

\end{proof}

Combining Corollary \ref{cor:firstlocationA}  and Proposition \ref{prop:levelylower} concerning the height of the level sets in the $y$-direction with Propositions \ref{prop:levelxupper} and \ref{prop:levelxlower} concerning the length of the level sets in the $x$-direction we have established the following: For any $c$ with $c_1<c<1-c_1$, the projections of the level sets $\{(x,y)\in\Om: u(x,y) = c\}$ onto the $y$ and $x$-axes are of lengths comparable to $L_1$ and $L_2$ respectively, and moreover, the inner radius of the corresponding superlevel set is comparable to $L_1$ while the diameter is comparable to $L_2$. This implies that the level sets have the desired shape and  completes the proof of Theorem \ref{thm:shape}.

\section{The Location Of The Maximum Of $u(x,y)$} \label{sec:max}

In Theorem \ref{thm:shape} we have described the shape of the level sets $\{(x,y)\in\Om:u(x,y)=c\}$ where $c$ is bounded away from $0$ and $1$ by $c_1< c< 1-c_1$. In Proposition \ref{prop:Agmon}, we gave an indication of the behaviour of $u(x,y)$ as $c$ becomes small. In this section, we instead want to focus on the behaviour of the eigenfunction near its maximum.

The main aim in this section is to prove the following:
\begin{prop} \label{prop:location}
Suppose that the eigenfunction $u(x,y)$ attains its maximum at the point $(x^*,y^*)$. Then, there exists an absolute constant $c^*>0$ such that
\begin{align*}
V(x^*,y^*) - \la \leq -c^*L_1^{-2}.
\end{align*}
\end{prop}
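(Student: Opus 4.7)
The plan is to prove Proposition~\ref{prop:location} by a bootstrap in three stages, exactly following the strategy indicated in the introduction to this section. First, I apply Proposition~\ref{prop:firstlocation} with $\delta = 1/2$: since $u(x^*,y^*) = 1 > \delta$, the point $(x^*,y^*)$ must lie at distance at least $\eta L_1$ from the level set $\{V = 1 + \eta^{-1}L_1^{-2}\}$, giving the coarse bound $V(x^*,y^*) \leq 1 + CL_1^{-2}$ for an absolute constant $C$; combined with $\la \leq 1 + C_1L_1^{-2}$ from Proposition~\ref{prop:simpleeigenvalue}, this places the whole neighborhood of the maximum in the flat part of the potential where $|V-\la| \leq C'L_1^{-2}$. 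This is the ``maximum principle" step announced in the overview.

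Second, on this region the PDE gives $|\Delta u| = |(V-\la)u| \leq CL_1^{-2}$ pointwise. Combined with the $L^2$-bound $\int_\Om \chi_1 |\nabla u|^2 \leq CL_1^{-1}L_2$ from Corollary~\ref{cor:ydecayupper} and the sharper $x$-derivative bound of Proposition~\ref{prop:xdecayupper}, I would apply standard interior elliptic estimates on discs of radius comparable to $L_1$ to upgrade these into pointwise derivative bounds $|\pa_y u| \leq CL_1^{-1}$, $|\pa_x u| \leq CL_2^{-1}$, along with $|u_{xx}|,|u_{yy}|,|u_{xy}| \leq CL_1^{-2}$ in a neighborhood of $(x^*,y^*)$. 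Integrating the gradient bounds from $u(x^*,y^*) = 1$ then yields $u \geq 1/2$ on a rectangle $R = [x^* - cL_2, x^* + cL_2] \times [y^* - cL_1, y^* + cL_1]$ for a small absolute constant $c > 0$.

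For the third step, evaluating the PDE at the interior maximum gives
\[
V(x^*,y^*) - \la = u_{xx}(x^*,y^*) + u_{yy}(x^*,y^*),
\]
with both second derivatives nonpositive by log-concavity; it therefore suffices to show $-u_{yy}(x^*,y^*) \geq c^*L_1^{-2}$. I would argue by contradiction: if $|u_{yy}(x^*,y^*)| < \epsilon L_1^{-2}$, propagating this estimate along the slice $v(y) = u(x^*,y)$ via the ODE $-v''(y) = (\la - V(x^*,y))v(y) - u_{xx}(x^*,y)$, and using the pointwise bounds from Step~2 together with $|\la - V(x^*,y)| \leq CL_1^{-2}$ throughout the flat region, forces $v(y) \geq 1 - C\epsilon$ on an interval of length comparable to $L_1$; a parallel argument in the $x$-direction using the $L_2$-scale gives $u \geq 1 - C\epsilon$ on the full rectangle $R$, which for $\epsilon$ small violates the sharp constant in the $L^2$-bound $\int_\Om u^2 \leq CL_1L_2$ of Proposition~\ref{prop:uL2bound} (or, more cleanly, the bound $A \leq CL_1$ of Proposition~\ref{prop:Aupper}). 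The main obstacle is tracking constants sharply through this propagation step so that the contradiction actually closes, which in turn requires understanding how $V-\la$ varies across the flat region (using that $V = h^{-2}$ with $h$ concave gives $|\nabla V|$ controlled there); an attractive shortcut would be to combine $\mu(x_H) \leq \la$ (from differentiating the Carleman inequality of Proposition~\ref{prop:H(x)decay} at the maximizer $x_H$ of $H(x)$) with the 1D lower bound $\mu(x_H) \geq \min_y V(x_H,y) + cL_1^{-2}$ from Lemma~\ref{lem:mubound}, reducing matters to showing $x_H$ is close to $x^*$ and $y^*$ is close to the minimizer of $V(x^*,\cdot)$, both of which follow from the pointwise structure established in Step~2.
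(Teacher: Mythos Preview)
Your first two stages are broadly in line with the paper, but Stage~3 contains a genuine gap. The propagation argument fails: the hypothesis $|u_{yy}(x^*,y^*)|<\epsilon L_1^{-2}$ is a pointwise condition at a single point, while the ODE $v''=(V-\la)v-u_{xx}$ has right-hand side of size $O(L_1^{-2})$ throughout the flat region. Hence $|v''|$ can be of order $L_1^{-2}$ immediately away from $y^*$, and integrating yields only $v(y)\ge 1-C(y-y^*)^2 L_1^{-2}$, not $v(y)\ge 1-C\epsilon$; there is no contradiction with Proposition~\ref{prop:Aupper}. A secondary issue is that the two-sided bounds $|u_{xx}|,|u_{yy}|\le CL_1^{-2}$ require Schauder-type input, whereas the paper imposes no regularity on $V$ beyond $V^{-1/2}$ concave and works exclusively with the one-sided bound $\pa_x^2 u\le(\pa_x u)^2/u$ from log-concavity (Lemma~\ref{lem:logcon}). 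Your shortcut $\mu(x_H)\le\la$ combined with $\mu(x_H)\ge 1+cL_1^{-2}$ does give $\la\ge 1+cL_1^{-2}$, but you still need $V(x^*,y^*)$ to be within $\tfrac12 cL_1^{-2}$ of the global minimum of $V$, and that is essentially the proposition itself.

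The paper's route is different and rests on a \emph{sharpened} gradient estimate near the maximum: $|\nabla u|\le C\epsilon^{1/2}L_1^{-1}$ on the level set $\{u=1-\epsilon\}$ (Proposition~\ref{prop:maxderivy}), obtained by first showing via a sine-comparison argument that $\{u\ge 1-\epsilon\}$ has inner radius at least $c\epsilon^{1/2}L_1$ (Proposition~\ref{prop:yloweruniform}) and then rescaling. In the case $L_2\gg L_1$ the paper then studies the weighted integral
\[
\int_0^{\beta L_1}(\beta L_1-y)(V(0,y)-\la)\,u(0,y)\,dy
\]
along the slice through the maximum (placed at the origin, with $u(0,\beta L_1)=\tfrac12$ and $\pa_y V(0,0)\ge 0$). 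Convexity of $V$ bounds this below by $\tfrac12\beta^2 L_1^2(V(0,0)-\la)$; substituting $\Delta u$ and integrating by parts bounds it above by $-\tfrac14$, since the $\pa_y^2 u$ term contributes exactly $-\tfrac12$ while the $\pa_x^2 u$ term is controlled via $\pa_x^2 u\le(\pa_x u)^2/u$ together with the $\epsilon^{1/2}$ gradient bound near the maximum and the bound $|\pa_x u|\le CL_2^{-1}$ on intermediate level sets (Proposition~\ref{prop:pointwisederivx}). Comparing the two gives $V(0,0)-\la\le -c^*L_1^{-2}$. The case $L_1\sim L_2$ is handled by a separate rescaled contradiction argument (Lemma~\ref{lem:umaxL1}), again driven by the $\epsilon^{1/2}$ gradient bound.
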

\begin{proof}{Proposition \ref{prop:location}}
To prove this proposition, we first notice that from Proposition \ref{prop:firstlocation}, we can restrict our attention to the region where $V(x,y) \leq 1+CL_1^{-2}$, for a sufficiently large absolute constant $C>0$. Before we can prove the sharper estimate on the location of the maximum in Proposition \ref{prop:location} we need more information about the first derivatives of $u(x,y)$. We will use Proposition \ref{prop:firstlocation} to obtain a pointwise bound on the first derivatives of $u(x,y)$ near its maximum. 

 To do this, we need to introduce the following function:
\begin{defn} \label{def:J(r)}
Let $K_0(r)$ be the $0$th modified Bessel function of the second kind. Then, for $r>0$, we define the function $J(r)$ as follows: Let $0<c_1<c_2$ be small absolute constants. We first set 
\begin{align*}
 J(r) \coloneqq K_0(r/L_1),
 \end{align*}
for $0< r \leq c_1L_1$, and then we require that $J(r)$ decays smoothly to $0$ on a length scale comparable to $L_1$ for $c_1L_1 \leq r \leq c_2L_1$, and is identically $0$ for $r>c_2L_1$.
\end{defn}
\begin{lem}[Properties of $J(r)$] \label{lem:J(r)}
For $0<r \leq c_1L_1$, $J(r)$ satisfies the equation,
\begin{align*}
 \frac{1}{r^{2}} \left( r^2 \frac{d^2}{dr^2} + r\frac{d}{dr} \right) J(r) = L_1^{-2} J(r),
 \end{align*}
and for $c_1L_1\leq r \leq c_2L_1$,
\begin{align*}
 \frac{d^m}{dr^m}J(r) \leq CL_1^{-m}
 \end{align*}
for $m\leq 3$. Moreover, $J(r)$ has a singularity equal to a multiple of $\log r$ as we approach $r=0$.
\end{lem}
\begin{proof}{Lemma \ref{lem:J(r)}}
These properties follow immediately  from the definition of $J(r)$ in Definition \ref{def:J(r)} and the corresponding properties of the modified Bessel function $K_0(r)$.
\end{proof}

We will use the function $J(r)$ with $r$ defined by
\begin{align} \label{eqn:rdefn}
r^2 \coloneqq (x-x')^2 + (y-y')^2
\end{align}
to obtain a pointwise bound on the first derivatives of $u(x,y)$.
\begin{prop} \label{prop:pointwisederivy}
Fix an absolute constant $c$ with $0<c<1$. There exists an absolute constant $C>0$ such that for any point $(x',y')$ with $u(x',y') \geq c$, we have the bound
\begin{align*}
 |\nabla_{x',y'}u(x',y')| \leq CL_1^{-1}.
 \end{align*}
\end{prop}
\begin{proof}{Proposition \ref{prop:pointwisederivy}}
The strategy of the proof to obtain this pointwise estimate for $\nabla_{x',y'}u(x',y')$ is to use the function $J(r)$ together with the eigenfunction equation
\begin{align*}
 -\Delta_{x,y}u(x,y) + (V(x,y) - \la)u(x,y) = 0,
 \end{align*}
 to obtain an expression for the first derivatives of $u(x',y')$.

We fix $(x',y')$ as in the statement of the proposition, and by Proposition \ref{prop:firstlocation}, we see that by choosing $c_1<c_2$ sufficiently small in the definition of $J(r)$, the support of $J(r)$ is contained in the region where $V(x,y) \leq 1+C_1L_1^{-2}$.

With $r$ as in \eqref{eqn:rdefn}, we begin by considering the integral
\begin{align} \label{eqn:J(r)1}
  \lim_{\eps\to0}\int_{r>\eps}\nabla_{x,y}J(r).\nabla_{x,y}\pa_xu(x,y) \ud x \ud y.
  \end{align}
  Since $J(r)$ has a singularity of the form $\log r$ at $r=0$, $\nabla_{x,y}J(r)$ is integrable and the above limit is well-defined. 
  
  Integrating by parts one time to move the derivative away from $\pa_{x}u(x,y)$, we obtain a boundary term at $r=0$, and the integral in \eqref{eqn:J(r)1} becomes
  \begin{align} \label{eqn:J(r)2}
 \pa_xu(x',y') - \lim_{\eps\to0}\int_{r>\eps} \Delta_{x,y}J(r) \pa_x u(x,y) \ud x \ud y.
 \end{align}
 Note that by the support properties of $J(r)$, there are no other boundary terms appearing from this integration by parts.
  
  By Lemma \ref{lem:J(r)}, $\Delta_{x,y}J(r) = L_1^{-2}J(r)$ for $r\leq c_1L_1$ and is $\leq C_1L_1^{-2}$ elsewhere. Thus, we can integrate by parts again in the integral in \eqref{eqn:J(r)2} to get
  \begin{align} \label{eqn:J(r)3}
 -  \lim_{\eps\to0}\int_{r>\eps} \pa_x\Delta_{x,y}J(r)  u(x,y) \ud x \ud y.
  \end{align}
The function  $ \Delta_{x,y}J(r) = L_1^{-2}J(r)$ only has a logarithmic singularity at $r=0$, and so for this integral we do not get a boundary term at $r=0$ in the integration by parts. Instead, since $J(r)$ is supported in a region of area $L_1^2$ and $u(x,y) \leq 1$ everywhere, this integral is bounded by $CL_1^{-1}$ for an absolute constant $C>0$.

Using this bound we see that the integral in \eqref{eqn:J(r)1} is equal to $\pa_{x}u(x',y')$ plus a contribution which is bounded by $CL_1^{-1}$.

We can also write the integral in \eqref{eqn:J(r)1} as
\begin{align*}
 \lim_{\eps\to0}\int_{r>\eps}\pa_xJ(r)\pa^2_xu(x,y) + \pa_y J(r) \pa_y\pa_xu(x,y) \ud x \ud y.
  \end{align*}
  Using the eigenfunction equation, we can rewrite this as
  \begin{align} \label{eqn:J(r)4}
   \lim_{\eps\to0}\int_{r>\eps}-\pa_xJ(r)\pa^2_yu(x,y) +\pa_xJ(r)(V(x,y) - \la)u(x,y) + \pa_y J(r) \pa_y\pa_xu(x,y) \ud x \ud y . 
  \end{align} 
We consider the contribution to this integral from
\begin{align} \label{eqn:J(r)5}
  \lim_{\eps\to0}\int_{r>\eps}-\pa_xJ(r)\pa^2_yu(x,y)  + \pa_y J(r) \pa_y\pa_xu(x,y) \ud x \ud y.
\end{align}
If we integrate by parts in $y$ in the first term and in $x$ in the second term, we find that that the boundary terms at $r=0$ vanish, and there are no other boundary terms. Therefore, the integral in \eqref{eqn:J(r)5} is equal to
\begin{align*}
  \lim_{\eps\to0}\int_{r>\eps}\pa_y\pa_xJ(r)\pa_yu(x,y)  - \pa_x\pa_y J(r) \pa_yu(x,y) \ud x \ud y = 0.
\end{align*}
Thus, from \eqref{eqn:J(r)4}, the original integral in \eqref{eqn:J(r)1} becomes
\begin{align*}
  \lim_{\eps\to0}\int_{r>\eps} \pa_xJ(r)(V(x,y) - \la)u(x,y)  \ud x \ud y =  \int_{\Om} \pa_xJ(r)(V(x,y) - \la)u(x,y)  \ud x \ud y .
\end{align*}
This means that we have shown that 
\begin{align} \label{eqn:J(r)6}
\pa_xu(x',y') =   \int_{\Om} \pa_xJ(r)(V(x,y) - \la)u(x,y)  \ud x \ud y,
\end{align}
plus a contribution which is bounded by $CL_1^{-1}$.

Since $u(x',y') \geq c$, we know from Proposition \ref{prop:firstlocation}  that $|V(x,y) - \la| \leq C_1L_1^{-2}$ on the support of $J(r)$. Therefore, the integral on the right hand side of \eqref{eqn:J(r)6} is bounded by
\begin{align*}
C_1L_1^{-2} \int_{\Om} \left| \pa_xJ(r) \right| u(x,y) \ud x \ud y \leq CL_1^{-1}.
\end{align*}
This gives the required bound for $\pa_xu(x',y')$, and the bound for $\pa_yu(x',y')$ follows in exactly the same way. 
\end{proof}

We recall from Theorem \ref{thm:shape} that the level sets of $u(x,y)$ are of height comparable to $L_1$ in the $y$-direction and of length comparable to $L_2$ in the $x$-direction. Therefore, this is consistent with the derivative bound for $\pa_yu(x,y)$ from Proposition \ref{prop:pointwisederivy} above. However, since in general we have $L_2\gg L_1$, we want to improve the bound given for $\pa_xu(x,y)$.

To do this we first prove a corollary of Propositions \ref{prop:firstlocation} and \ref{prop:pointwisederivy} about the location of the level sets of $u(x,y)$ in the $x$-direction.
\begin{cor} \label{cor:firstlocation}
Fix an absolute constant $c$, with $0<c<1$. Then, there exists an absolute constant $C>0$ such that for any point $(x,y)$ in the level set $\{(x,y)\in\Om:u(x,y) = c\}$, there exists points $(x_1,y_1)$ with $x_1-x$ both positive or negative that $|x_1 - x|$ is comparable to $L_2$  and $V(x_1,y_1) \leq 1+CL_1^{-2}$.
\end{cor}
\begin{proof}{Corollary \ref{cor:firstlocation}}
Let $(x',y')$ be the left most point of the level set $\{(x,y)\in\Om:u(x,y) = c\}$. That is, $u(x',y') = c$, and $u(x,y) < c$ for any point $(x,y)$ with $x<x'$. Then, by the bound on $\pa_yu(x,y)$ from Proposition \ref{prop:pointwisederivy}, we know that
\begin{align*}
u(x',y) \geq \tfrac{1}{2}c,
\end{align*}
for all $y$ in an interval $J$ of length comparable to $L_1$. In particular,
\begin{align*}
\int_{J}u(x',y)^2 \ud y \geq \tilde{c}L_1,
\end{align*}
for some absolute constant $\tilde{c}>0$. Therefore, using the $L^2$-bound on $\pa_xu(x,y)$ from Proposition \ref{prop:xdecayupper}, exactly as in the proof of Proposition \ref{prop:levelxlower}, we find that
\begin{align*}
\int_{J}u(x,y)^2 \ud y \geq \tilde{c}L_1/2,
\end{align*}
for all $x<x'$ with $x'-x>c_2L_2$, for an absolute constant $c_2>0$. Thus, for any such $x=x_1$, there exists a $y_1 \in J$ such that $u(x_1,y_1)$ is bounded below by an absolute constant. In particular, by Proposition \ref{prop:firstlocation}, this means that $V(x_1,y_1) \leq 1+ CL_1^{-2}$, and this completes the proof of the corollary. 

\end{proof}

This corollary allows us to partially improve the estimate on $\pa_xu(x,y)$ from Proposition \ref{prop:pointwisederivy}.  We recall that the maximum of $u(x,y)$ of $1$ is achieved at $(x,y) = (x^*,y^*)$.
\begin{prop} \label{prop:pointwisederivx}
Consider the level set $\{(x,y) \in\Om: u(x,y) = c\}$ for a fixed constant $c$, with $c_1<c<1-c_1$. Then, on part of the upper and lower boundaries of this level set, with $x$ in an interval around $x=x^*$ of length comparable to $L_2$, we have the pointwise derivative bound
\begin{align*}
 |\pa_xu(x,y)| \leq CL_2^{-1},
 \end{align*}
for some absolute constant $C>0$.
\end{prop}
\begin{proof}{Proposition \ref{prop:pointwisederivx}}
For fixed $c$, with $c_1<c<1-c_1$, the level set $\{(x,y)\in\Om: u(x,y) = c\}$ extends a distance comparable to $L_2$ in the $x$-direction on either side of the point $x^*$. Let $y= g(x)$ be a parametrisation of the upper boundary of the level set. Since this level set is the boundary of a convex set of height comparable to $L_1$ in the $y$-direction, this means that 
\begin{align} \label{eqn:pointwisederivx1}
|g'(x)| \leq C_1L_1L_2^{-1}
\end{align}
for all $x$ in an interval around $x^*$ of length comparable to $L_2$. On this part of the level set we have
\begin{align*}
u(x,g(x)) = c,
\end{align*}
and so differentiating this with respect to $x$ gives
\begin{align*}
\pa_x u(x,g(x)) = - g'(x)\pa_yu(x,g(x)).
\end{align*}
Combining the estimate for $g'(x)$ in \eqref{eqn:pointwisederivx1} with the pointwise bound on $\pa_yu(x,y)$ from Proposition \ref{prop:pointwisederivy} gives the required bound for $\pa_xu(x,y)$.

\end{proof}

To prove that the maximum of $u(x,y)$ has the required properties of Proposition \ref{prop:location}, we will also need to improve the derivative estimate of Proposition \ref{prop:pointwisederivy} for those points $(x,y)$ near the maximum. To do this we will prove the following two propositions.
\begin{prop} \label{prop:yloweruniform}
Let $\eps>0$ be sufficiently small. Then, the convex superlevel set $\{(x,y)\in\Om: u(x,y) \geq 1-\eps\}$ has inner radius at least $c {\eps}^{1/2}L_1$, for a small absolute constant $c>0$, which is independent of $\eps$.
\end{prop}

\begin{prop} \label{prop:maxderivy}
Let $u(x',y') = 1-\eps$, where $\eps>0$ is sufficiently small. Then, there exists an absolute constant $C>0$, which is independent of $\eps$, such that
\begin{align*}
 |\nabla_{x,y}u(x',y')| \leq C{\eps}^{1/2}L_1^{-1}.
 \end{align*}
In particular, this is an improvement on Proposition \ref{prop:pointwisederivy} for small $\eps$.
\end{prop}
\begin{rem}
Consider a function $f(x)$ defined on the interval $[-L_1,L_1]$ by
\begin{align*}
f(y) \coloneqq 1 - L_1^{-2}y^2. 
\end{align*}
Then,  $f(y) = 1-\eps$ for $y = \pm \eps^{1/2}L_1$. In other words, the interval on which $f(y) \geq 1-\eps$ has length $2\eps^{1/2}L_1$. Thus, the lower bound on the inner radius of the superlevel set of $u(x,y)$ in Proposition \ref{prop:yloweruniform}  is consistent with the eigenfunction $u(x,y)$ being bounded from below by such a parabola as we move away from the maximum in the $y$-direction.

Also,  at the two points where $f(y)$ is equal to $1-\eps$ we have the derivative bound
\begin{align*}
f'(y) = -2L_1^{-2}y  = \mp 2\eps^{1/2}L_1^{-1}. 
\end{align*}
Therefore, the derivative bound in Proposition \ref{prop:maxderivy} is again consistent with the eigenfunction $u(x,y)$ being bounded from below by such a parabola as we move away from the maximum in the $y$-direction. It is also consistent with having a bound comparable to $L_1^{-2}$ on the second derivatives of the eigenfunction.
\end{rem}

\begin{proof}{Proposition \ref{prop:yloweruniform}}
Suppose that the proposition does not hold. Then, after a translation and rotation, we may assume that the level set $\{(x,y)\in\Om:u(x,y) = 1-\eps\}$ lies between the lines $y=\pm\alpha$, where $\alpha < c_1 \eps^{1/2}L_1$ for a small absolute constant $c_1$ to be chosen later. Note that we are considering the length scale $L_1$, and we will not use any of the properties of $\la$ and $u(x,y)$ that depend on $L_2$. This means that we do not have to fix the orientation of $\Om_{L_1^{-2}} = \{(x,y)\in\Om:V(x,y)=1+L_1^{-2}\}$ and so there is no issue in applying the rotation above.

We will use the comparison function
\begin{align*}
 v_2(x,y) \coloneqq \left(1-\tfrac{1}{2}\eps \right) \sin\left(\frac{\pi}{2} + \frac{{\eps}^{1/2}y}{C_1\alpha}\right),
 \end{align*}
where $C_1>0$ is chosen so that
\begin{align*}
 v_2(x,y) \geq 1-\eps
\end{align*}
for  all $(x,y)$ with $y = \pm\alpha$. This means that
\begin{align} \label{eqn:yloweruniform1}
u(x,y) \leq v_2(x,y)
\end{align}
for any $(x,y)\in\Om$ with $y=\pm \alpha$. Also, for all $(x,y)\in\pa\Om$ with $-\alpha \leq y \leq \alpha$, we see that
\begin{align} \label{eqn:yloweruniform1A}
u(x,y) = 0 \leq v_2(x,y).
\end{align}
 Moreover, the function $v_2(x,y)$ satisfies
\begin{align*}
\left(\Delta_{x,y} - V(x,y) + \la \right)v_2(x,y) = - \left(\frac{\eps}{C_1^2\alpha^2}\right)v_2(x,y) + (\la-V(x,y))v_2(x,y).
\end{align*}
We know that $\la-V(x,y) \leq C_2^2L_1^{-2}$, for some absolute constant $C_2>0$, and so
\begin{align} \label{eqn:yloweruniform2}
\left(\Delta_{x,y} - V(x,y) + \la\right) v_2(x,y) <\left(- \left(\frac{\eps}{C_1^2\alpha^2}\right) +C_2^2L_1^{-2}\right)v_2(x,y) .
\end{align}
Provided $\alpha <c_1\eps^{1/2}L_1$, for $c_1$ sufficiently small (depending only on $C_1$ and $C_2$), we can ensure from \eqref{eqn:yloweruniform2} that
\begin{align} \label{eqn:yloweruniform3}
\left(\Delta_{x,y} - V(x,y) + \la\right)v_2(x,y) < 0.
\end{align}
Combining \eqref{eqn:yloweruniform1}, \eqref{eqn:yloweruniform1A} and \eqref{eqn:yloweruniform3}, we see that by the generalised maximum principle in Proposition \ref{prop:GMP} that
\begin{align*}
u(x,y) \leq v_2(x,y) \qquad \text{ for } (x,y)\in\Om \text{ with } -\alpha \leq y \leq \alpha.
\end{align*}
However, $v_2(x,y) \leq 1-\frac{1}{2}\eps$ everywhere, whereas we know that $u(x,y)$ attains its maximum of $1$ for some $(x,y) \in\Om$ with $-\alpha \leq y \leq \alpha$. This contradiction completes the proof of the proposition. 

\end{proof}

\begin{proof}{Proposition \ref{prop:maxderivy}}

In the proof of Proposition \ref{prop:maxderivy}, as well as Proposition \ref{prop:yloweruniform},  we will also make use of the following proposition:
\begin{prop} \label{prop:A4}
Suppose that the function $v(x,y)$ satisfies
\begin{align*}
\Delta_{x,y} v(x,y) + W_1(x,y)v(x,y) = W_2(x,y),
\end{align*}
in a convex domain $D$, with $|W_1(x,y)|$, $|W_2(x,y)| \leq C_1$. Let $(x_0,y_0) \in \pa D$, and let $B_s$ be the disc of radius $s$ around $(x_0,y_0)$. If  we have $v \geq 0$ in $D$, then
\begin{align*}
|\nabla_{x,y}v(x_0,y_0)| \leq C \sup_{B_{1/4}\cap D}v.
\end{align*}
\end{prop}
\begin{proof}{Proposition \ref{prop:A4}}
This proposition is a variation of Proposition A.4 in \cite{GJ2}, and is proved in exactly the same way.
\end{proof}

We can now begin the proof of Proposition \ref{prop:maxderivy}. After a translation, we may assume that $u(x,y) = 1$ at the point $(x,y) = (0,0)$, and we define the function $u_1(x,y)$ by
\begin{align*}
 u_1(x,y) \coloneqq u\left({\eps}^{1/2}L_1x,{\eps}^{1/2}L_1y \right). 
\end{align*}
This satisfies the equation
\begin{align*}
 -\Delta_{x,y}u_1(x,y) + \eps L_1^2\left(V\left({\eps}^{1/2}L_1x,{\eps}^{1/2}L_1y \right) - \la\right)u_1(x,y) = 0.
 \end{align*}
We next define $u_2(x,y)$ by
\begin{align*}
 u_2(x,y) \coloneqq \eps^{-1}(u_1(x,y) - (1-\eps)).
 \end{align*}
 Consider the convex superlevel set $\{(x,y)\in\Om:u_1(x,y) \geq 1-\eps\}$. By Proposition \ref{prop:yloweruniform} and the definition of the function $u_1(x,y)$, this set has inner radius bounded from below by an absolute constant. 
Moreover, on the boundary of this set the function $u_2(x,y)$ is equal to $0$ and inside the superlevel set it takes all values between $0$ and $1$. It satisfies the equation
\begin{align} \label{eqn:maxderivy1} \nonumber
 -\Delta_{x,y}u_2(x,y) & +  \eps L_1^2\left(V\left({\eps}^{1/2}L_1x,{\eps}^{1/2}L_1y \right)  - \la\right)u_2(x,y) \\
& + (1-\eps)L_1^2\left(V\left({\eps}^{1/2}L_1x,{\eps}^{1/2}L_1y \right)  - \la\right) = 0. 
\end{align}
From Proposition \ref{prop:firstlocation}, we certainly know that inside the convex set $\{(x,y)\in\R^2:u_1(x,y) = 1-\eps\}$ we have the bounds
\begin{align*}
\left|L_1^2 \left(V\left({\eps}^{1/2}L_1x,{\eps}^{1/2}L_1y \right)  - \la\right)\right| \leq C_1,
\end{align*}
for an absolute constant $C_1>0$. Therefore, in \eqref{eqn:maxderivy1} we have the bounds
\begin{align*}
\left|\eps L_1^2\left(V\left({\eps}^{1/2}L_1x,{\eps}^{1/2}L_1y \right)  - \la\right)\right| \leq C_1, \qquad \left|(1-\eps)L_1^2\left(V\left({\eps}^{1/2}L_1x,{\eps}^{1/2}L_1y \right)  - \la\right)\right|  \leq C_1.
\end{align*}
We can thus use Proposition \ref{prop:A4}  with $D = \{(x,y)\in\R^2:u_1(x,y) \geq 1-\eps\}$ to conclude that
\begin{align*}
|\nabla_{x,y}u_2(x,y)| \leq C,
\end{align*}
for $(x,y)$ on the boundary of $D$. Recalling the definitions of $u_1(x,y)$ and $u_2(x,y)$, this shows that
\begin{align*}
 |\nabla_{x,y}u(x',y')| \leq C \eps(\sqrt{\eps}L_1)^{-1} = C\sqrt{\eps}L_1^{-1}
\end{align*}
as required. 
\end{proof}

We need to write down one more consequence of the log concavity of $u(x,y)$, and then we can start to complete the proof of Proposition \ref{prop:location}.
\begin{lem} \label{lem:logcon}
We have an upper bound on $\pa_x^2u(x,y)$ of the form
\begin{align*}
 \pa_x^2u(x,y) \leq \frac{(\pa_xu(x,y))^2}{u(x,y)},
 \end{align*}
and an analogous upper bound for $\pa^2_yu(x,y)$.
\end{lem}
\begin{proof}{Lemma \ref{lem:logcon}}
Differentiating the function $\log u(x,y)$ twice with respect to $x$, we find that
\begin{align} \label{eqn:logcon1}
 \pa_x^2(\log u(x,y)) =\frac{ \pa_x^2u(x,y)}{u(x,y)} - \frac{(\pa_xu(x,y))^2}{u(x,y)^2}.
 \end{align}
However, the eigenfunction $u(x,y)$ is log concave, and so
\begin{align} \label{eqn:logcon2}
 \pa_x^2(\log u(x,y)) \leq 0.
 \end{align}
Combining \eqref{eqn:logcon1} and \eqref{eqn:logcon2} gives the desired bound.
\end{proof}

To complete the proof of Proposition \ref{prop:location} we split into two cases.

\subsection{Case $1$: $L_2 \gg L_1$}

We will first assume that $L_2 \gg L_1$, or in other words, $L_2 \geq \tilde{C}L_1$ for a large absolute constant $\tilde{C}>0$, which we will specify below.

After a translation, we may assume that $u(x,y)$ attains its maximum at the point $(0,0)$. Let $u(0,-\alpha L_1)$, $u(0,\beta L_1) = 1/2$, where we know from Theorem \ref{thm:shape} that $\alpha$ and $\beta$ are both comparable to $1$. Moreover, without loss of generality, we may assume that $\pa_yV(0,0) \geq 0$. We want to study the integral
\begin{align} \label{eqn:max1}
\int_0^{\beta L_1} (\beta L_1 - y)(V(0,y) -\la)u(0,y) \ud y.
\end{align}
We will prove the following two lemmas:
\begin{lem} \label{lem:max1lower}
The integral in \eqref{eqn:max1} is bounded from below by
\begin{align*} 
\tfrac{1}{2}\beta^2L_1^2(V(0,0) - \la).
\end{align*}
\end{lem}
\begin{lem} \label{lem:max1upper}
The integral in \eqref{eqn:max1} is bounded from above by $-\tfrac{1}{4}$.
\end{lem}
Combining Lemmas \ref{lem:max1lower} and \ref{lem:max1upper}, we see that
\begin{align*}
V(0,0) - \la \leq -\tfrac{1}{2}\beta^{-2}L_1^{-2},
\end{align*}
and so we have established Proposition \ref{prop:location} in the case where $L_2\gg L_1$. Thus, we are left to prove these two lemmas. 

\begin{proof}{Lemma \ref{lem:max1lower}}
Since $\pa_yV(0,0) \geq 0$, and $V(x,y)$ is convex, we must have $\pa_yV(0,y) \geq0$ for all $y \geq0$. Thus, for $0 \leq y\leq \beta L_1$ we have 
\begin{align*}
V(0,y)-\la \geq V(0,0) -\la
\end{align*}
and $u(0,y) \geq \tfrac{1}{2}$. The lower bound then follows immediately.
\end{proof}

\begin{proof}{Lemma \ref{lem:max1upper}}
Since $u(x,y)$ satisfies the eigenfunction equation, we can rewrite \eqref{eqn:max1} as
\begin{align} \label{eqn:max1upper1}
 \int_0^{\beta L_1} (\beta L_1 - y)\Delta_{x,y}u(0,y) \ud y.
 \end{align}
Let us first consider the term containing a factor of $\pa_y^2u(0,y)$. Since $\pa_yu(0,0) = 0$, integrating by parts, this becomes
\begin{align*}
 \int_0^{\beta L_1} \pa_yu(0,y) \ud y = -u(0,0) + u(0,\beta L_1) = -\tfrac{1}{2}.
 \end{align*}
We are left to bound the contribution to \eqref{eqn:max1upper1} from
\begin{align} \label{eqn:max1upper2}
\int_0^{\beta L_1} (\beta L_1 - y)\pa_x^2u(0,y) \ud y,
\end{align}
and to do this we will use Proposition \ref{prop:maxderivy} and Lemma \ref{lem:logcon}.

We first fix $0<c_1<\beta$ such that $u(0,c_1L_1) = 1- c_2$ for a small constant $c_2>0$. Then, by Proposition \ref{prop:maxderivy}, we have the bound
\begin{align*}
|\pa_xu(0,y)| \leq  Cc_2^{1/2}L_1^{-1}
\end{align*}
for all $y$ with $0 \leq y \leq c_1L_1$, with $C$ independent of $c_2$. In particular, by Lemma \ref{lem:logcon}, we have
\begin{align*}
\pa_x^2u(x,y) \leq Cc_2L_1^{-2},
\end{align*}
and so
\begin{align}  \label{eqn:max1upper3}
\int_0^{c_1 L_1}(\beta L_1 - y)\pa_x^2 u(0,y) \ud y  \leq \frac{1}{8},
\end{align}
provided $c_1>0$ is sufficiently small. We now need to consider the part of the integral in \eqref{eqn:max1upper2} with $y$ between $c_1L_1$ and $\beta L_1$. For $y$ in this range, we can use the derivative bound on $\pa_xu(x,y)$ from Proposition \ref{prop:pointwisederivx}, which after applying Lemma \ref{lem:logcon} gives
\begin{align*}
\pa_x^2u(0,y) \leq CL_2^{-2}.
\end{align*}
Therefore, provided $L_2/L_1$ is sufficiently large, we also have the bound
\begin{align} \label{eqn:max1upper4}
\int_{c_1L_1}^{\beta L_1} (\beta L_1 - y)\pa_x^2u(0,y) \ud y \leq \frac{1}{8}.
\end{align}
Combining \eqref{eqn:max1upper3} and \eqref{eqn:max1upper4}, we see that the integral in \eqref{eqn:max1upper2} is bounded above by $\tfrac{1}{4}$, and hence
\begin{align*}
\int_0^{\beta L_1} (\beta L_1 - y) \Delta_{x,y}u(0,y) \ud y < -\frac{1}{4}.
\end{align*}
This completes the proof of the lemma.

\end{proof}
As we discussed after the statement of Lemmas \ref{lem:max1lower} and \ref{lem:max1upper}, this completes the proof of Proposition \ref{prop:location} in the case where $L_2\gg L_1$.

\subsection{Case $2$: $L_1$ and $L_2$ are comparable}

In this case, we assume that $V(x,y)$ attains its minimum of $1$ at $(0,0)$, and we rescale the eigenfunction $u(x,y)$ by $L_1$ in the $x$ and $y$-directions. Then, $\tilde{u}(x,y) = u(L_1x,L_1y)$ satisfies the equation 
\begin{align*}
\Delta_{x,y}\tilde{u}(x,y) = \tilde{F}(x,y)\tilde{u}(x,y),
\end{align*}
where $\tilde{F}(x,y) = L_1^2\left(V(L_1x,L_1y) - \la\right)$. We know that $\tilde{u}(x,y)$ must attain its maximum at some point inside the region where $\tilde{F}(x,y) \leq 0$. We now want to improve this estimate on the location of the maximum of $\tilde{u}(x,y)$.

\begin{lem} \label{lem:umaxL1}
In the case where $L_1$ and $L_2$ are comparable there exists a small absolute constant $\eps>0$ such that $\tilde{u}(x,y)$ attains its maximum at a distance at least $\eps^{2/3}$ from the boundary of the region where $\tilde{F}(x,y) \leq 0$.
\end{lem}
The function $\tilde{F}(x,y)$ is convex, has a minimum of $-c$ for some $c>0$, and the inner radius and diameter of the region where $\tilde{F}(x,y) \leq 0$ are comparable to $1$. Therefore, this lemma implies that
\begin{align*}
\tilde{F}(x,y) \leq -c_1 
\end{align*}
for some constant $c_1>0$ at the point where $\tilde{u}(x,y)$ attains its maximum. Returning to $u(x,y)$ and $F(x,y) = V(x,y) - \la$, this proves Proposition \ref{prop:location} in the case where $L_1$ and $L_2$ are comparable. Thus, we are left to prove Lemma \ref{lem:umaxL1}.
\begin{proof}{Lemma \ref{lem:umaxL1}}
Suppose that $\tilde{u}(x,y) = 1$ at a point $(x,y)$ within $\eps^{2/3}$ of the boundary of the region $\{(x,y):\tilde{F}(x,y) \leq 0\}$, where $\eps>0$ is a small constant that we will specify later. 

The function $\tilde{F}(x,y)$ attains a negative minimum, is convex, and is negative on a region with diameter comparable to $1$. Therefore, $|\nabla_{x,y}\tilde{F}(x,y) | \geq c_1$ on the set where $\tilde{F}(x,y)  = 0$. In particular, we have the lower bound
\begin{align} \label{eqn:umaxL11}
\tilde{F}(x,y) \geq \eps^{2/3}
\end{align}
when we are at a distance comparable to $\eps^{2/3}$ outside the region where $\tilde{F}(x,y) \leq 0$. Also, by the pointwise derivative bounds on $u(x,y)$ from Proposition \ref{prop:pointwisederivy}, $\tilde{u}(x,y)$ is comparable to $1$ at a distance of $\eps^{2/3}$ from its maximum.

By the log concavity of $u(x,y)$, we know that 
\begin{align*}
 \Delta_{x,y}\log u(x,y) = \frac{\Delta_{x,y}u(x,y)}{u(x,y)} - \frac{|\nabla_{x,y}u(x,y)|^2}{u(x,y)^2}  \leq 0.
 \end{align*}
Rearranging, and using the eigenfunction equation, this tells us that
\begin{align} \label{eqn:umaxL12}
 |\nabla_{x,y}u(x,y)|^2 \geq (V(x,y) - \la)u(x,y)^2 = F(x,y)u(x,y)^2.
\end{align}
Thus, from \eqref{eqn:umaxL11} and \eqref{eqn:umaxL12}, we have the lower bound
\begin{align} \label{eqn:umaxL13}
|\nabla_{x,y}\tilde{u}(x,y)| \geq \tilde{c}\eps^{1/3}
\end{align}
for some point $(x_1,y_1)$ which is at a distance comparable to $\eps^{2/3}$ from the point where $\tilde{u}(x,y)$ attains its maximum.

However, by Proposition \ref{prop:maxderivy}, we know that when $\tilde{u}(x',y') = 1-\eps$, we have the derivative bound
\begin{align} \label{eqn:umaxL14}
|\nabla_{x,y}\tilde{u}(x',y')| \leq C\eps^{1/2}.
\end{align}
For $\eps>0$ sufficiently small, we have $\tilde{c}\eps^{1/3} > C\eps^{1/2}$, and so from \eqref{eqn:umaxL13} and \eqref{eqn:umaxL14}, we see that $\tilde{u}(x_1,y_1) < 1-\eps$.

In other words, the function $\tilde{u}(x,y)$ changes from $1$ to $1-\eps$ on a line segment of length comparable to $\eps^{2/3}$. However, using Proposition \ref{prop:maxderivy} again, we know that
\begin{align*}
|\nabla_{x,y}\tilde{u}(x,y)| \leq C\eps^{1/2}
\end{align*}
whenever $\tilde{u}(x,y) \geq 1- \eps$, and so $\tilde{u}(x,y)$ can only change by an amount comparable to 
\begin{align*}
\eps^{1/2}\eps^{2/3} = \eps^{7/6},
\end{align*}
on this line segment of length $\eps^{2/3}$. For $\eps>0$ sufficiently small, we see that $\eps^{7/6} \ll \eps$, and so this gives us a contradiction.
\end{proof}

As we discussed after the statement of Lemma \ref{lem:umaxL1} this also completes the proof of Proposition \ref{prop:location} in the case where $L_1$ and $L_2$ are comparable.
\end{proof}

Let us finish by giving two consequences of the location of the maximum of $u(x,y)$ derived in Proposition \ref{prop:location}. The first is to show that the lower bound on the inner radius of the superlevel set $\{(x,y)\in\Om:u(x,y) \geq 1-\eps\}$ given in Proposition \ref{prop:yloweruniform} is sharp.

\begin{cor} \label{cor:yupperuniform}
For $\eps>0$ sufficiently small, the superlevel set  $\{(x,y)\in\Om:u(x,y) \geq 1-\eps\}$ has inner radius at most $C_1\eps^{1/2}L_1$, where $C_1>0$ is an absolute constant.
\end{cor}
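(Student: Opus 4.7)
The plan is to identify a direction $\mathbf{e}$ along which $S_\eps := \{(x,y)\in\Om : u(x,y) \geq 1-\eps\}$ has width at most $C_1 \eps^{1/2} L_1$; since for a convex planar set the inradius is bounded by half the width in any direction, this yields the corollary. I would take $\mathbf{e}$ to be the unit eigenvector of the Hessian $D^2 u(x^*,y^*)$ corresponding to its smaller eigenvalue. By Proposition~\ref{prop:location}, $\Delta u(x^*,y^*) = V(x^*,y^*) - \la \leq -c^* L_1^{-2}$; since $(x^*,y^*)$ is a maximum both eigenvalues of the Hessian are nonpositive, and their sum is $\leq -c^* L_1^{-2}$, so the smaller eigenvalue satisfies $\pa^{2}_{\mathbf{ee}} u(x^*,y^*) \leq -c^*L_1^{-2}/2$.

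The next step is to propagate this pointwise bound to a uniform neighborhood of $(x^*,y^*)$, which requires second-derivative regularity of $u$. Rescaling by $L_1$, set $\tilde u(\xi,\eta) = u(x^* + L_1 \xi, y^* + L_1 \eta)$; this satisfies $\Delta \tilde u = \tilde F \tilde u$ with $\tilde F = L_1^{2}(V - \la)$. Convexity of $V$ together with the fact that $\Om_{L_1^{-2}}$ has inradius at least $L_1$ and that $V$ varies by $O(L_1^{-2})$ across it gives $|\nabla V| \lesssim L_1^{-3}$ near $(x^*,y^*)$, so $\tilde F$ is Lipschitz with constant independent of $L_1$. Interior Schauder estimates then yield $\tilde u \in C^{2,\alpha}$ with a uniform bound, so the Hessian is H\"older continuous and $\pa^{2}_{\mathbf{ee}} u \leq -c^* L_1^{-2}/4$ on a disk of radius $r_0 L_1$ about $(x^*,y^*)$ for some absolute constant $r_0 > 0$.

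For $\eps$ sufficiently small, the set $S_\eps$ lies inside this disk; more generally, inside a tubular neighborhood of the maximum set $\{u=1\}$, which is itself contained in a line through $(x^*,y^*)$ in direction $\mathbf{e}^\perp$ by the strict concavity just established, with the Hessian bound extended along this line by a covering argument. On any chord $\ell$ of $S_\eps$ in direction $\mathbf{e}$, setting $v(s) = u|_{\ell}$, one has $v(s) \leq 1$ globally and $v''(s) \leq -c^*L_1^{-2}/4$ throughout $\ell \cap S_\eps$. Taking $s_0$ to be the argmax of $v$ on the chord and integrating the second-derivative bound gives $v(s) \leq 1 - (c^*/8)L_1^{-2}(s-s_0)^2$, so the condition $v(s) \geq 1-\eps$ forces $|s-s_0| \leq 2\sqrt{2/c^*}\,\eps^{1/2}L_1$. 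Hence every chord has length at most $C_1 \eps^{1/2} L_1$ with $C_1 = 4\sqrt{2/c^*}$, bounding the width of $S_\eps$ in direction $\mathbf{e}$ by $C_1 \eps^{1/2} L_1$ and its inradius by $(C_1/2)\eps^{1/2}L_1$. The main obstacle is the Hessian-propagation step, requiring both the Lipschitz bound $|\nabla V| \lesssim L_1^{-3}$ coming from the convexity of $V$ and the geometry of $\Om_{L_1^{-2}}$, and the application of interior Schauder estimates in rescaled variables; a further subtlety is that the maximum set of $u$ need not reduce to a single point but may be a segment in the degenerate direction $\mathbf{e}^\perp$, so the propagation step must cover a tubular neighborhood rather than a single ball.
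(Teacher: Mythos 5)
Your approach is genuinely different from the paper's, which uses a Bessel-function comparison argument with the generalized maximum principle: the paper supposes for contradiction that $S_\eps$ contains a disk of radius $\alpha = C_1\eps^{1/2}L_1$ centered at $(0,0)$, notes (via Proposition~\ref{prop:location} and convexity) that $V-\la \leq -\tfrac{1}{2}c^*L_1^{-2}$ on a rectangle containing the disk, and then compares $u$ with $v(x,y) = (1+\eps)J_0(C\eps^{1/2}\alpha^{-1}r)$; the maximum principle forces $v\leq u$ in the disk, while $v(0,0)=1+\eps>1$, a contradiction. That argument stays automatically localized near the maximum and uses no regularity of $V$ beyond what is assumed.

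Your Hessian-propagation route has a real gap in the degenerate case $L_2\gg L_1$. The Schauder estimate only controls $D^2u$ on a disk $B_{r_0L_1}(x^*,y^*)$ for an absolute $r_0$, and the sign bound $\pa^2_{\mathbf{ee}}u\leq -c^*L_1^{-2}/2$ was derived entirely from $(x^*,y^*)$ being a maximum (both Hessian eigenvalues nonpositive, trace $\leq -c^*L_1^{-2}$). But when $L_2\gg L_1$, the superlevel set $S_\eps$ typically extends a length on the order of $\eps^{1/2}L_2$ in the flat ($\mathbf{e}^\perp$) direction, which is $\gg r_0 L_1$ unless $\eps \lesssim (L_1/L_2)^2$ — not an absolute threshold. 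So there are chords of $S_\eps$ in direction $\mathbf{e}$ far outside $B_{r_0L_1}$ for which you have no second-derivative bound. The statement ``For $\eps$ sufficiently small, the set $S_\eps$ lies inside this disk'' is therefore false in general, and the proposed ``covering argument'' to push the Hessian bound along $\mathbf{e}^\perp$ is not carried out: at a shifted center $(x_1,y_1)$ with $x_1\neq x^*$, the point is not a maximum of $u$, so the step ``both eigenvalues nonpositive'' is unavailable, and Schauder alone gives continuity of $D^2u$ but not its sign. You would need to re-establish strict concavity of $y\mapsto u(x_1,y)$ on each slice, which is essentially a separate argument. The paper sidesteps all of this because the comparison-function argument needs $V-\la\leq -\tfrac{1}{2}c^*L_1^{-2}$ only on the disk of radius $\alpha$, and the contradiction is produced at the disk's center; no second-derivative control of $u$ away from that disk is ever invoked.
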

\begin{proof}{Corollary \ref{cor:yupperuniform}}
By Proposition \ref{prop:location}, we know that $V(x,y) -\la \leq -c^*L_1^{-2}$ at the maximum of $u(x,y)$. Moreover, by Proposition \ref{prop:firstlocation}, inside the level set $\{(x,y)\in\Om:u(x,y) =1/2\}$, we have the bound
\begin{align*}
V(x,y) - \la \leq CL_1^{-2}.
\end{align*}
Since this level set has height  comparable to $L_1$ in the $y$-direction and length comparable to $L_2$ in the $x$-direction, by the convexity of the potential, we have
\begin{align} \label{eqn:yupperuniform1}
V(x,y) - \la \leq -\frac{1}{2}c^*L_1^{-2}
\end{align}
on a region of height $c_1L_1$ and length $c_2L_2$ in the $y$ and $x$-directions around the maximum.

Suppose that the superlevel set $\{(x,y)\in\Om:u(x,y) \geq 1-\eps\}$  has inner radius at least $\alpha$, where $\alpha = C_1\eps^{1/2}L_1$ for some large absolute constant $C_1>0$. Then, this superlevel set contains a circle of radius $\alpha$, and after a translation, centre at $(0,0)$.

Let $J_0(r)$ be the $0$th Bessel function of the first kind for $r>0$. This satisfies $J_0(0) = 1$, $J_0'(0) = 0$ and $J_0''(0) = -1/2$, as well as the equation
\begin{align} \label{eqn:yupperuniform2}
 r^2J_0''(r) + rJ_0'(r) = -r^2J_0(r) .
 \end{align}
 Setting $r^2 = x^2+y^2$, we will use the comparison function
 \begin{align*}
 v(x,y) = (1+\eps)J_0(C\eps^{1/2}\alpha^{-1}r)
 \end{align*}
for $x^2+y^2 \leq \alpha$. Here $C>0$ is chosen so that $(1+\eps)J_0\left(C\eps^{1/2}\right) \leq 1-\eps$. This is possible for $\eps>0$ sufficiently small, since for small $r$, $J_0(r)$ satisfies
\begin{align*}
 J_0(r) = 1-\tfrac{1}{2}r^2 + O(r^4).
 \end{align*}
 In particular, this ensures that
 \begin{align} \label{eqn:yupperuniform3}
 v(x,y) \leq u(x,y),
 \end{align}
 for $x^2+y^2 = \alpha^2$. By \eqref{eqn:yupperuniform2}, the function $v(x,y)$ also satisfies the equation
 \begin{align*}
  \Delta_{x,y}v(x,y) = -\frac{C^2\eps}{\alpha^2}v(x,y) .
 \end{align*}
 Thus,
 \begin{align}  \label{eqn:yupperuniform4}
 \Delta_{x,y}v(x,y) - (V(x,y) - \la)v(x,y) = -\frac{C^2\eps}{\alpha^2}v(x,y) - (V(x,y) - \la)v(x,y).
 \end{align}
 Provided that we take $\eps>0$ sufficiently small, we can ensure from \eqref{eqn:yupperuniform1} that
 \begin{align*}
 V(x,y) - \la \leq -\frac{1}{2}c^*L_1^{-2}
 \end{align*}
for $x^2 + y^2 \leq \alpha^2$. Therefore, provided $\alpha= C_1\eps^{1/2}L_1$ for $C_1$ sufficiently large, and $x^2+y^2 \leq \alpha^2$, we have
\begin{align*}
 -\frac{C^2\eps}{\alpha^2} - (V(x,y) - \la) \geq \frac{1}{4}c^*L_1^{-2} \geq 0,
\end{align*}
and so from \eqref{eqn:yupperuniform4} we see that
\begin{align}  \label{eqn:yupperuniform5}
 \Delta_{x,y}v(x,y) - (V(x,y) - \la)v(x,y) \geq 0
\end{align}
for $x^2+y^2 \leq \alpha^2$. Combining \eqref{eqn:yupperuniform3} and \eqref{eqn:yupperuniform5}, we can apply the generalised maximum principle from Proposition \ref{prop:GMP} to conclude that
\begin{align*}
v(x,y) \leq u(x,y)
\end{align*}
whenever $x^2 + y^2 \leq \alpha^2$. However, $v(0,0) = 1+\eps$, while $u(x,y) \leq 1$ everywhere, and so this gives us a contradiction. 
\end{proof}

The second consequence of Proposition \ref{prop:location} is to improve the pointwise bound on $\pa_xu(x,y)$ from Proposition \ref{prop:pointwisederivx} in the case where $L_2 \gg L_1$.

\begin{cor} \label{cor:2pointwisederivx}
There exists a constant $c>0$ such that we have the derivative bound
\begin{align*}
|\pa_xu(x,y)| \leq CL_2^{-1},
\end{align*}
for an absolute constant $C$, for all $(x,y)$ in a rectangle of side lengths $cL_2$ and $cL_1$ around the maximum of $u(x,y)$.
\end{cor}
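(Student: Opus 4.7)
The strategy is to exploit the log-concavity of $u$ via the convex function $\phi(x,y) := -\log u(x,y)$. For each fixed $y_0$, the one-variable function $x \mapsto \phi(x,y_0)$ is convex on its support, so convexity gives, for any $x_{-1} < x_0 < x_1$ in that support,
\begin{equation*}
\frac{\phi(x_0,y_0) - \phi(x_{-1},y_0)}{x_0 - x_{-1}} \leq \pa_x\phi(x_0,y_0) \leq \frac{\phi(x_1,y_0) - \phi(x_0,y_0)}{x_1 - x_0}.
\end{equation*}

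The plan is, given any point $(x_0,y_0)$ in the rectangle $R = [x^* - cL_2, x^* + cL_2] \times [y^* - cL_1, y^* + cL_1]$ centered at the maximum $(x^*, y^*)$, to locate two auxiliary points $x_{-1} < x_0 < x_1$ on the horizontal line $\{y = y_0\}$ satisfying $|x_{\pm 1} - x_0| \geq c'L_2$ and $u(x_{\pm 1}, y_0) \geq c_0$, for fixed absolute constants $c_0, c' > 0$. At such points $0 \leq \phi(x_{\pm 1}, y_0) \leq -\log c_0 =: C_0$, and, provided $c$ is small enough that $R \subset \{u \geq c_0\}$, also $0 \leq \phi(x_0, y_0) \leq C_0$. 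Hence both numerators above are bounded in absolute value by $C_0$, giving $|\pa_x\phi(x_0, y_0)| \leq C_0/(c'L_2) = CL_2^{-1}$. Finally, the identity $\pa_x u = -u\,\pa_x\phi$ together with $0 \leq u \leq 1$ yields the desired estimate $|\pa_x u(x_0,y_0)| \leq CL_2^{-1}$.

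The main obstacle is the geometric step of producing the auxiliary points $x_{\pm 1}$ uniformly over $R$. By Theorem \ref{thm:shape}, the superlevel set $\{(x,y) \in \Om : u(x,y) \geq c_0\}$ is sandwiched between an ellipse with axes of lengths comparable to $L_2$ in the $x$-direction and $L_1$ in the $y$-direction, and a fixed dilate of this ellipse. Combining this with Proposition \ref{prop:location}, Proposition \ref{prop:yloweruniform}, and Corollary \ref{cor:yupperuniform} (which together locate the maximum relative to these level sets), one shows that for $c > 0$ sufficiently small relative to the absolute constants from Theorem \ref{thm:shape}, the rectangle $R$ is contained in $\{u \geq c_0\}$, and for every $(x_0, y_0) \in R$ the horizontal slice $\{x : u(x, y_0) \geq c_0\}$ contains an interval centered roughly at $x^*$ of length a fixed fraction of $L_2$. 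Taking $x_{\pm 1}$ as two points within this interval (for instance, the endpoints) straddling $x_0$ at distance at least $c'L_2$ completes the construction. The delicate part is to coordinate the choice of $c$ with the absolute constants coming from Theorem \ref{thm:shape} so that both the containment $R \subset \{u \geq c_0\}$ and the separation $|x_{\pm 1} - x_0| \geq c'L_2$ hold uniformly over $R$.
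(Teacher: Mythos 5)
Your approach is genuinely different from the paper's, and conceptually cleaner. The paper proves this corollary with a barrier argument: it builds a comparison function $W(x,y)$ (a $\cosh\cdot\cos$ product, tuned so that $W\sim L_2^{-1}$ near $x=x^*$ but grows to $\sim L_1^{-1}$ at $|x-x^*|\sim L_2$), and then applies the generalised maximum principle to $((\pa_xu)_\pm+L_2^{-1})/W$ on a region bounded by a piece of a level set of $u$ (where Proposition \ref{prop:pointwisederivx} gives $|\pa_xu|\leq CL_2^{-1}$) and two vertical segments (where Proposition \ref{prop:pointwisederivy} gives $|\pa_xu|\leq CL_1^{-1}$). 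You instead exploit log-concavity directly: the one-dimensional convexity of $x\mapsto-\log u(x,y_0)$ yields $|\pa_x\log u|\leq CL_2^{-1}$ as soon as one can straddle $x_0$ by two points $x_{\pm1}$ at distance $\gtrsim L_2$ where $u\geq c_0$, and $\pa_xu=-u\,\pa_x(-\log u)$ with $0\leq u\leq 1$ does the rest. This avoids the PDE-theoretic machinery entirely and is, once the geometric step is in place, shorter.

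The gap is precisely in the geometric step, and the propositions you cite do not fill it. You need that, for every $(x_0,y_0)$ in the rectangle $R$, the \emph{horizontal slice} $\{x:u(x,y_0)\geq c_0\}$ contains an interval of length a fixed fraction of $L_2$ extending on \emph{both sides} of $x_0$. Theorem \ref{thm:shape} places an axis-aligned ellipse with semi-axes $\sim L_2,\ \sim L_1$ inside $\{u\geq c_0\}$, but it does not say that this ellipse's centre (or its long horizontal slices) are anywhere near $(x^*,y^*)$; Propositions \ref{prop:location}, \ref{prop:yloweruniform} and Corollary \ref{cor:yupperuniform} give information about the sublevel set of $V$ near the maximum and about the inner radius of $\{u\geq 1-\eps\}$, but they do not locate the maximum relative to the long axis of the superlevel set. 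In principle, with only what you cite, the maximum could sit near one end of the superlevel set in the $x$-direction, in which case $x_1-x_0\sim L_1$ rather than $\sim L_2$, and your upper difference quotient is only $O(L_1^{-1})$. To close the gap one needs the pointwise bound $|\nabla u|\leq CL_1^{-1}$ near the maximum from Proposition \ref{prop:pointwisederivy}, which shows that a ball of radius $\sim L_1$ about $(x^*,y^*)$ lies in $\{u\geq c_0\}$; combining this with the $L^2(\Om)$-estimate on $\pa_xu$ from Proposition \ref{prop:xdecayupper} (exactly as in the proof of Proposition \ref{prop:levelxlower}, but run from $x^*$ rather than from an arbitrary in-ball centre) shows that the projection of a superlevel set onto the $x$-axis extends $\gtrsim L_2$ on each side of $x^*$; a final convexity argument (the convex hull of the ball about $(x^*,y^*)$ and points of the superlevel set at $x^*\pm c_3L_2$ lies in the superlevel set) then produces the long horizontal slices at each $y_0$ with $|y_0-y^*|\lesssim L_1$. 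Without some version of this chain your construction of $x_{\pm1}$ is not justified; with it, your proof is complete and is a genuine alternative to the paper's.
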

\begin{proof}{Corollary \ref{cor:2pointwisederivx}}
From Corollary \ref{cor:yupperuniform} above, the superlevel sets $\{(x,y)\in\Om:u(x,y) \geq 1-\eps\}$  have inner radius bounded by $C\eps^{1/2}L_1$. Let the maximum of $u(x,y)$ be attained at $(0,0)$.  Then, we saw in the proof of Corollary \ref{cor:yupperuniform} that the sublevel set
\begin{align*}
\{(x,y) \in \Om: V(x,y) - \la \leq -\tfrac{1}{2}c^*L_1^{-2}\},
\end{align*}
contains a rectangle $R$, with centre at $(0,0)$, and of side lengths comparable to $L_2$ and $L_1$ in the $x$ and $y$-directions. We then construct a set $U \subset \Om$ as follows: It consists of the part of the superlevel set $\{(x,y)\in\Om:u(x,y) \geq 1-\tilde{c}\}$ with $x$ restricted to an interval of length $L_2$ around $0$, and $\tilde{c}>0$ sufficiently small so that $U$ is contained within the middle half of the rectangle $R$. 

The boundary of this set $U$ then consists of parts of the upper and lower boundaries of the level set $\{(x,y)\in\Om:u(x,y) = 1-\tilde{c}\}$, and two vertical lines with $x$ fixed. Moreover, by choosing $\tilde{c}$ to be sufficiently small, $U$ is contained between the two lines $y=\pm\tfrac{1}{2} c_1L_1$. We then define a comparison function $W(x,y)$ by
\begin{align*}
W(x,y) = \frac{1}{c_2L_2}\cosh \left(\frac{x\log(L_2/L_1)}{c_3L_2}\right)\cos \left(\frac{\pi y}{2c_1L_1}\right).
\end{align*}
Here $c_2$ and $c_3$ are small absolute constants depending on $c_1$ that we will specify below.
Firstly, we choose $c_2>0$ sufficiently small so that for all $|y| \leq c_1L_1/2$, we have
\begin{align*}
 W(x,y) \geq C_1L_2^{-1}.
 \end{align*}
This absolute constant $C_1$ is chosen so that
\begin{align} \label{eqn:2pointwisederivx1}
|\pa_xu(x,y)| \leq W(x,y)
\end{align}
for all points $(x,y)$ on the curved portion of $\pa U$ consisting of part of the upper and lower boundaries of $\{(x,y)\in\Om:u(x,y) = 1-\tilde{c}\}$. This is possible due to Proposition \ref{prop:pointwisederivx}.

We now let $x = cL_2$, where $c>0$ is chosen so that $x=\pm2cL_2$ is contained in the projection of the set $U$ onto the $x$-axis. Then, for all $|y| \leq c_1L_1/2$, we have the lower bound 
\begin{align*}
W(cL_2,y) \geq   \frac{1}{2c_2L_2}\cosh \left(\frac{c}{c_3}\log(L_2/L_1)\right)\geq  \frac{1}{4c_2L_2}\exp\left(\frac{c}{c_3}\log(L_2/L_1)\right) = \frac{1}{4c_2L_2}\left(\frac{L_2}{L_1}\right)^{c/c_3}. 
\end{align*}
We can thus choose $c_3>0$ sufficiently small, depending on $c$ only, so that
\begin{align*}
W(cL_2,y) \geq C_2L_1^{-1}.
\end{align*}
Here $C_2$ is chosen so that for $|x|\geq cL_2$, $(x,y)\in U$, we have
\begin{align} \label{eqn:2pointwisederivx2}
|\pa_xu(x,y)| \leq W(x,y).
\end{align}
This is possible due to Proposition \ref{prop:pointwisederivy}.

The function $W(x,y)$ satisfies the equation
\begin{align} \label{eqn:2pointwisederivx3}
 \Delta_{x,y}W(x,y) = \left(\left( \frac{\log(L_2/L_1)}{c_3L_2}\right)^2 - \left(\frac{\pi^2}{4c_1^2L_1^2}\right)\right)W(x,y) \leq -\frac{\pi^2}{8c_1^2L_1^2}W(x,y), 
\end{align}
provided $L_2/L_1$ is sufficiently large.

The first derivative $\pa_xu(x,y)$ satisfies
\begin{align} \label{eqn:2pointwisederivx4}
(-\Delta_{x,y} + V(x,y) - \la)\pa_xu(x,y) = -\pa_xV(x,y)u(x,y).
\end{align}
By the convexity of $V(x,y)$, we have the bound $|\pa_xV(x,y)u(x,y)| \leq C_3L_2^{-1}L_1^{-2}$ for all $(x,y) \in U$. Also, $|V(x,y) - \la| \leq C_4L_1^{-2}$.

We will apply the maximum principle to the functions
\begin{align*}
\Psi_{\pm}(x,y) \coloneqq   ( (\pa_xu)_\pm(x,y)+L_2^{-1})/W(x,y),
\end{align*}
where $\pm$ signifies taking the positive or negative part of the function. 

Let $U_{\pm}$ be any connected component of the support of $(\pa_{x}u)_{\pm}$ in $U$. Then, inside $U_{\pm}$, the functions $\Psi_{\pm}(x,y)$ satisfy
\begin{align*} 
 \Delta_{x,y}\Psi_{\pm}(x,y) &+ 2\nabla_{x,y}\log W(x,y).\nabla_{x,y}\Psi_{\pm}(x,y) =\\ \nonumber
  &W(x,y)^{-1}\left( \Delta_{x,y}\left(\left(\pa_x u\right)_{\pm}(x,y) + L_2^{-1}\right)- \left(\left(\pa_xu\right)_{\pm}(x,y)+L_2^{-1}\right)W(x,y)^{-1} \Delta_{x,y}W(x,y)\right),
\end{align*}
which by \eqref{eqn:2pointwisederivx3} and \eqref{eqn:2pointwisederivx4} implies
\begin{align} \label{eqn:2pointwisederivx5}
 & \Delta_{x,y}\Psi_{\pm}(x,y) + 2\nabla_{x,y}\log W(x,y).\nabla_{x,y}\Psi_{\pm}(x,y) \\ \nonumber
 & \geq   W(x,y)^{-1}\left(\pa_xV(x,y)u(x,y)+(V(x,y) - \la)\left((\pa_xu)_{\pm}(x,y)+L_2^{-1}\right) + \tfrac{1}{8}\pi^2 c_1^{-2}L_1^{-2}((\pa_xu)_{\pm}(x,y)+L_2^{-1})\right).
\end{align}
By the bound above on $|\pa_xV(x,y)u(x,y)|$, provided $c_1>0$ is sufficiently small, the right hand side of \eqref{eqn:2pointwisederivx5} is $\geq 0$. Combining this with the bounds from \eqref{eqn:2pointwisederivx1} and  \eqref{eqn:2pointwisederivx2} on the boundary of $U$, we can apply the maximum principle to conclude that
\begin{align*}
|\pa_xu(x,y)| \leq W(x,y).
\end{align*}
The function $W(x,y)$ satisfies $W(0,y) \leq CL_2^{-1}$, and we can repeat the argument above with $W(x,y)$ shifted by an amount comparable to $L_2$ in the $x$-direction. This gives us the required bound on $\pa_xu(x,y)$ and concludes the proof of the corollary. 
\end{proof}

\noindent
\setlength{\tabcolsep}{0ex}
\begin{tabular*}{\textwidth}{@{\extracolsep{\fill}}r}
  \begin{tabular}{l}\\
   \tmtextsc{{Department of Mathematics, Princeton University, Fine Hall, Washington Road,}} \\ 
   \tmtextsc{{Princeton, NJ 08544}}\\\\
   \tmtextsc{\it E-mail address: { \bf tdbeck@math.princeton.edu}}\\
    \normalsize
  \end{tabular}
\end{tabular*}


\begin{thebibliography}{9}

\bibitem[A]{Ag} S. Agmon, \emph{Lectures on exponential decay of solutions of second-order elliptic equations: bounds on eigenfunctions of N-body Schr\"odinger operators}, Mathematical Notes, 29. Princeton University Press; University of Tokyo Press, (1982).

\bibitem[AC]{AC} B. Andrews, J. Clutterbuck, \emph{Proof of the fundamental gap conjecture}, J. Amer. Math. Soc. 24 (2011), no. 3, 899-916.

\bibitem[B1]{B1} C. Borell, \emph{Hitting probabilities of killed Brownian motion; a study on geometric regularity}, Ann. Scient. \'Ecole Norm. Sup. 17 (1984), 451-467.

\bibitem[B2]{B2} C. Borell, \emph{Greenian potentials and concavity}, Math. Ann. 252 (1985), 155-160.

\bibitem[BL1]{BL1} H. J. Brascamp, E. H. Lieb, \emph{Some inequalities for Gaussian measures and the
long-range order of the one-dimensional plasma}, Functional Integration and Its
Applications (A.M. Arthurs, ed.), Clarendon Press, Oxford, (1975). 

\bibitem[BL2]{BL2}  H. J. Brascamp, E. H. Lieb, \emph{On extensions of the Brunn-Minkowski and Prekopa Leindler theorems, including inequalities for log concave functions, and with an application to the
diffusion equation}, J. Funct. Anal. 22 (1976), 366-389.

\bibitem[CF]{CF} L. A. Caffarelli, A. Friedman, \emph{Convexity of solutions of semilinear elliptic equations}, Duke Mathematical Journal, 52 (1985), no. 2, 431-456.

\bibitem[FS1]{FS1} L. Friedlander, M. Solomyak, \emph{On the Spectrum of Narrow Periodic Waveguides}, Russian J. of Math. Physics 15 (2008), 238-242.

\bibitem[FS2]{FS2} L. Friedlander, M. Solomyak, \emph{On the spectrum of the Dirichlet Laplacian in a narrow infinite strip},  Amer. Math. Soc. Transl. Ser. 2  225, (2008), 103-116.

\bibitem[FS3]{FS3} L. Friedlander, M. Solomyak,  \emph{On the spectrum of the Dirichlet Laplacian in a narrow strip}, Israel J. Math. 170 (2009), 337-354.

\bibitem[FJ]{FJ} S. J. Fromm, D. Jerison, \emph{Third derivative estimates for Dirichlet's problem in convex domains}, Duke Math. J. 73 (1994), no. 2, 257-268.

\bibitem[GJ1]{GJ1} D. Grieser, D. Jerison, \emph{Asymptotics of the first nodal line of a convex domain}, Invent. Math. 125 no. 2 (1996), 197-219.

\bibitem[GJ2]{GJ2} D. Grieser, D. Jerison, \emph{The size of the first eigenfunction of a convex planar domain}, J. Amer. Math. Soc. 11, no. 1 (1998), 41-72.

\bibitem[J1]{J1} D. Jerison, \emph{The diameter of the first nodal line of a convex domain}, Ann. of Math. 141 (1995), 1-33.

\bibitem[Jo]{Jo} F. John, \emph{Extremum problems with inequalities as subsidiary conditions}, Studies and Essays Presented to R. Courant on his 60th Birthday. January 8 (1948), pp. 187-204.

\bibitem[K]{K} N. J. Korevaar, \emph{Convexity properties of solutions to elliptic PDEs}, Variational methods for free surface interfaces, (1987), 115-121.

\bibitem[KL]{KL} N. J. Korevaar, J. L. Lewis, \emph{Convexity solutions of certain elliptic equations have constant rank Hessians}, Arch. Rational Mech. Anal., 97 (1987), 19-32.

\bibitem[PW]{PW} M. H. Protter, H. F. Weinberger, \emph{Maximum Principles in Differential Equations}, Springer Verlag, New York, (1984).

\end{thebibliography}
\end{document}